\date{\today}
\author{Andreas Heider}
\title{Two results from Morita theory \\ of stable model categories}
\newcommand{\MMODD}{\ensuremath{\textnormal{-}\mathrm{Mod}\textnormal{-}}}
\newcommand{\MMOD}{\ensuremath{\textnormal{-}\mathrm{Mod}}}
\newcommand{\MODD}{\ensuremath{\mathrm{Mod}\textnormal{-}}}
\newcommand{\MODDalpha}{\ensuremath{\mathrm{Mod}_\alpha \textnormal{-}}}
\newcommand{\id}{\ensuremath{\mathrm{id}}}
\newcommand{\iso}{\cong}
\newcommand{\tensor}{\otimes}
\newcommand{\sm}{\wedge}
\newcommand{\HOM}{\ensuremath{\mathrm{Hom}}}
\newcommand{\loc}{\ensuremath{\textnormal{-}\mathrm{loc}}}
\newcommand{\Ab}{{\mathrm {Ab}}}
\newcommand{\Sp}{{\mathrm{Sp}}^\Sigma}
\newcommand{\mS}{{\mathbb S}}
\newcommand{\mZ}{{\mathbb Z}}
\newcommand{\mL}{{\mathrm L}}
\newcommand{\mF}{{\mathrm F}}
\newcommand{\mFF}{{\mathcal F}}
\newcommand{\mGG}{{\mathcal G}}
\newcommand{\mOO}{{\mathcal O}}
\newcommand{\mAA}{{\mathcal A}}
\newcommand{\mCC}{{\mathcal C}}
\newcommand{\mEE}{{\mathcal E}}
\newcommand{\mTT}{{\mathcal T}}
\newcommand{\mSS}{{\mathcal S}}
\newcommand{\mUU}{{\mathcal U}}
\newcommand{\mMM}{{\mathcal M}}
\newcommand{\mNN}{{\mathcal N}}
\newcommand{\mRR}{{\mathcal R}}
\newcommand{\mKK}{{\mathcal K}}
\newcommand{\mWW}{{\mathcal W}}
\newcommand{\mII}{{\mathcal I}}
\newcommand{\mJJ}{{\mathcal J}}
\newcommand{\mDD}{{\mathcal D}}
\newcommand{\DE}{{\mD(\mEE^{\op})}}
\newcommand{\DAE}{{\mD_\alpha(\mEE^{\op})}}
\newcommand{\tI}{\widetilde{I}}
\newcommand{\tJ}{\widetilde{J}}
\newcommand{\tF}{\widetilde{F}}
\newcommand{\tH}{\widetilde{H}_0}
\newcommand{\mD}{{\mathsf D}}
\renewcommand{\to}{\,\longrightarrow\,}
\newcommand{\ot}{\,\longleftarrow\,}
\newcommand{\righthook}{\text{\usefont{OML}{cmm}{m}{it} \symbol{44}}}
\newcommand{\incl}{\,\mbox{$\hspace{1mm}\righthook\hspace{-2mm}
\longrightarrow\hspace{1mm}$}\,}
\newcommand{\cof}{\,\hskip-18pt\xymatrix@C=1.2pc{&\ar@{>->}[r]&}\,}
\newcommand{\fib}{\,\hskip-18pt\xymatrix@C=1.2pc{&\ar@{->>}[r]&}\,}
\newcommand{\weq}{\,\stackrel{\sim}{\to}\,}
\DeclareMathOperator{\ev}{ev}
\DeclareMathOperator{\Hom}{Hom}
\DeclareMathOperator{\End}{End}
\DeclareMathOperator{\RHom}{RHom}
\DeclareMathOperator{\map}{map}
\DeclareMathOperator{\colim}{colim}
\DeclareMathOperator{\cofiber}{cofiber}
\DeclareMathOperator{\essim}{essim}
\DeclareMathOperator{\supp}{supp}
\DeclareMathOperator{\cofr}{\, cof\,}
\DeclareMathOperator{\fibr}{\, fib\,}
\DeclareMathOperator{\op}{\, op}
\DeclareMathOperator{\Ho}{Ho}
\DeclareMathOperator{\Ch}{Ch}
\DeclareMathOperator{\Sk}{Sk}
\def\SSS{$S\kern-1.4pt\lower1pt\hbox{\bf.}$}
\numberwithin{equation}{section}
\newtheoremstyle{1}
     {18pt}
     {15pt}
     {\it}
     {}
     {\bfseries}
     {.}
     {.5em}
     {}
\newtheoremstyle{1'}
     {18pt}
     {18pt}
     {\it}
     {}
     {\bfseries}
     {.}
     {.5em}
     {}
\newtheoremstyle{1''}
     {18pt}
     {-15pt}
     {\it}
     {}
     {\bfseries}
     {.}
     {.5em}
     {}
\newtheoremstyle{1'''}
     {18pt}
     {-15pt}
     {\it}
     {}
     {\bfseries}
     {.}
     {\newline}
     {}
\newtheoremstyle{2}
     {18pt}
     {18pt}
     {}
     {}
     {\bfseries}
     {.}
     {.5em}
     {}
\theoremstyle{1}
  \newtheorem*{thm*}{Theorem}
  \newtheorem{thm}[equation]{Theorem}
  \newtheorem{lemma}[equation]{Lemma}
  \newtheorem{cor}[equation]{Corollary}
  \newtheorem{prop}[equation]{Proposition}
\theoremstyle{1'}
\theoremstyle{1''}
\theoremstyle{1'''}
\theoremstyle{2}
  \newtheorem{defn}[equation]{Definition}
  \newtheorem{ex}[equation]{Example}
  \newtheorem{exs}[equation]{Examples}
  \newtheorem{rem}[equation]{Remark}
  \newtheorem{notation}[equation]{Notation}
\begin{document}

\begin{abstract}
We prove two results from Morita theory of stable
model categories. Both can be regarded as topological versions
of recent algebraic theorems. One is on
re\-colle\-ments of triangulated categories, which have been studied in 
the algebraic case
by J{\o}rgensen. We
give a criterion which answers the following question: 
When is there a re\-colle\-ment
for the derived
category of a given symmetric ring spectrum in terms of two other
symmetric ring spectra?

The other result is on well generated triangulated categories in the
sense of Neeman.
Porta characterizes the algebraic well generated
categories as localizations of derived categories of
DG categories. 
We prove a topological analogon: a topological
triangulated category is well generated if and only if it is
triangulated equivalent to a 
localization of the derived category of a symmetric ring spectrum
with several objects. Here
`topological' means triangulated equivalent to the homotopy
category of a spectral model category.
Moreover, we show that every well generated spectral model category
is Quillen equivalent to a Bousfield localization of a category of modules
via a single Quillen functor.


\end{abstract}

\maketitle
\pagestyle{myheadings}
\markboth{{\small\textsc{Andreas Heider}}}
{{\small\textsc{Two results from Morita theory of stable model categories}}}

\vspace{1cm}

\tableofcontents
\newpage
\section*{Introduction}

In classical Morita theory \cite{Morita} questions like these are studied:
When are two rings Morita equivalent, that is, when do they
have equivalent module categories? When is an abelian category 
equivalent to the category of modules over some ring?
One result is the following.
An abelian category $\mAA$ with (arbitrary) coproducts is equivalent
to a category of modules if and only if it has a compact projective
generator $P$ \cite[Chapter~II, Theorem~1.3]{Bass}. 
In this case, an equivalence is given by the Hom-functor
\[
\Hom_\mAA(P,-):\mAA\to\MODD\End_\mAA(P) .
\]

A weaker notion than that of classical Morita equivalence is that of 
derived equivalence first considered by Happel: 
two rings are
derived equivalent if their derived categories are equivalent as triangulated
categories. 
Natural questions are: When are two rings derived equivalent? When
is a triangulated category equivalent to the derived category of a ring?
Here, ordinary rings can more generally be replaced by differential
graded rings (DG rings) or DG algebras over some fixed commutative ring 
-- or `several objects versions' of such (DG categories).
These questions about derived Morita equivalence
have been studied among others by Rickard \cite{MR1002456}
and Keller \cite{MR1258406}. As in the
classical case, compact generators and certain Hom-functors play an important
role.

Using the setting of model categories due to Quillen
(cf.~\cite{Quillen} or \cite{Hovey}), 
one can also consider
derived categories of other appropriate ring objects (with possibly
several objects), such as symmetric
ring spectra, and then study similar questions \cite{SS03}.


{\vskip1cm}

\textbf{Recollements.}
A re\-colle\-ment of triangulated categories is a diagram of triangulated
categories
\[
\xymatrix@M=.7pc{\mTT'\ar[rr]^{i_{\ast}}
  &&\mTT \ar@/_7mm/[ll]_{i^{\ast}}
  \ar@/^7mm/[ll]^{i^!}\ar[rr]^{j^{\ast}}
  && \mTT'' \ar@/_7mm/[ll]_{j_!}
  \ar@/^7mm/[ll]^{j_{\ast}}
}
\]
where $(i^{\ast},i_{\ast})$, $(i_{\ast},i^!)$, $(j_!,j^{\ast})$, 
and $(j^{\ast},j_{\ast})$ are adjoint pairs of triangulated functors 
satisfying some more conditions (see Definition~\ref{recollement}). 
This generalizes the notion of triangulated 
equivalence in so far as a re\-colle\-ment
with $\mTT'=0$ (resp.~$\mTT''=0$) is the same as a triangulated
equivalence between $\mTT$ and $\mTT''$ (resp.~$\mTT'$).  
In a re\-colle\-ment, the category $\mTT$ can be
viewed as glued together by $\mTT'$ and $\mTT''$.
The notion has its origins in the theory of perverse sheaves 
in algebraic geometry and appeared first in 
\cite{MR751966}, where the authors show among other things that
a re\-colle\-ment as above together with t-structures on $\mTT'$ and $\mTT''$ 
induces a t-structure on $\mTT$.

J{\o}rgensen \cite{Jorgensen} studies re\-colle\-ments in the
case where the involved triangulated categories are derived categories
of DG algebras
over some fixed commutative ground ring. 
He gives a criterion for the existence of DG algebras $S$ and $T$
and a re\-colle\-ment
\begin{align}\label{Troubardix}
\xymatrix@M=.7pc{\mathsf D(S)\ar[rr]^{i_{\ast}}
  &&\mathsf D(R) \ar@/_7mm/[ll]_{i^{\ast}}
  \ar@/^7mm/[ll]^{i^!}\ar[rr]^{j^{\ast}}
  && \mathsf D(T) \ar@/_7mm/[ll]_{j_!}
  \ar@/^7mm/[ll]^{j_{\ast}}
}\tag{$\ast$}
\end{align}  
of derived categories 
for a given DG algebra $R$ \cite[Theorem~3.4]{Jorgensen}.

The derived category of a DG algebra $R$ can be regarded
as the homotopy category of the model category of
differential graded $R$-modules. More generally, the homotopy category of
every stable model category is a triangulated category in a natural way
\cite[Chapter~7]{Hovey}. This holds in particular for the category of symmetric
spectra in the sense of \cite{HSS} and for the category of modules over a 
(symmetric) ring spectrum.
For 
a ring spectrum $R$ let $\mD (R)$ denote the homotopy category 
of modules over $R$.
Given a ring spectrum $R$ we ask, similar to the differential graded case,
for a criterion for the existence of ring spectra
$S$ and $T$ and a re\-colle\-ment as (\ref{Troubardix}).

One can also study the case where the category of
symmetric spectra is more generally replaced by any `reasonable' 
monoidal stable model
category, including both the case of symmetric spectra and the case of
chain complexes ($\mZ$-graded and unbounded, 
over some fixed commutative ground ring) -- here a monoid
is the same as a DG algebra.
The main theorem of Part~1 is Theorem~\ref{mainthm}, 
which states that a re\-colle\-ment (over a
reasonable monoidal stable model category)
of the form (\ref{Troubardix}) exists if and only if 
there are two objects in $\mD(R)$ which 
satisfy certain finiteness and generating conditions.
We will proceed in a way similar to J\o rgensen's \cite{Jorgensen}.
However, 
the proofs will sometimes be different and involve the model 
structure.

{\vskip1cm}

\textbf{Well generated categories.}
In his book \cite{Neeman}, Neeman introduces the notion of  well generated
(triangulated) categories, which generalize 
compactly generated categories. They 
satisfy, like the compactly generated categories, Brown representability.
One advantage over the compactly generated ones is that 
the class of well generated categories is
stable under passing to appropriate localizing subcategories and
localizations (cf.~Proposition~\ref{Neeman's}).
A classical example of a compactly generated triangulated category
occurring in algebra is the derived category $\mD(\mAA)$ 
of a DG algebra, or more generally, of a DG category
$\mAA$, which is 
just a `several objects version' 
of a DG algebra. By Proposition~\ref{Neeman's},
all (appropriate) localizations of $\mD(\mAA)$ are well generated again.
One could ask whether the converse is also true, that is, whether
every well generated triangulated category $\mTT$ is, up to triangulated
equivalence, a localization of the
derived category $\mD(\mAA)$ for an appropriate DG category $\mAA$.
Porta gives a positive answer if $\mTT$ is algebraic 
\cite[Theorem~5.2]{Porta}. 
This characterization of algebraic well generated
categories can be regarded as a refinement of \cite[Theorem~4.3]{MR1258406},
where Keller characterizes the algebraic compactly generated categories
with arbitrary coproducts,
up to triangulated equivalence, as the derived categories of DG categories.

A topological version of Keller's theorem 
has been proved in 
\cite[Theorem~3.9.3(iii)]{SS03}: the compactly generated topological categories
are characterized, up to triangulated equivalence, as the `derived categories
of ring spectra with several objects'. This needs some explanation. 
A spectral category is a ring spectrum with several objects, 
i.e., 
a small category enriched over the symmetric monoidal model category
of symmetric spectra in the sense of \cite{HSS}. Generalizing the
correspondence between ring spectra and DG algebras, spectral 
categories are the topological versions of DG categories.
The derived category of a spectral category $\mEE$ is the homotopy category
of the model category of $\mEE$-modules.
By a topological triangulated category we mean any triangulated category
equivalent to the homotopy category of a spectral model category.
This is not the same as (but closely related with)
a topological triangulated category in the sense 
of \cite{alg/top}, where any triangulated category equivalent to a full
triangulated subcategory of the homotopy category of a stable model
category is called topological.
By \cite[Theorem~3.8.2]{SS03}, the homotopy category of
any simplicial, cofibrantly
generated and proper stable model category is topological. 

The aim of Part~2 of this paper is to give
a characterization of the topological well generated categories.
We will prove that 
every topological well generated triangulated category is 
triangulated equivalent to a localization of the
derived category of a small spectral category
such that the acyclics of the localization are
generated by a set.
On the other hand, 
the derived category of a small spectral category is compactly 
generated  by the free modules \cite[Theorem~A.1.1(ii)]{SS03}
and the class of well generated categories is stable under localizations
(as long as the acyclics are generated by a set), 
cf.~Proposition~\ref{Neeman's}.
Hence we get the following characterization (Theorem~\ref{characterization}): 
The topological 
well generated categories
are, up to triangulated equivalence, exactly the localizations 
(with acyclics generated by a set) of derived categories
of spectral categories.

Finally, we use Hirschhorn's existence theorem for Bousfield localizations
\cite[Theorem~4.1.1]{Hirschhorn} 
to give a lift to the level of model categories
in the following sense (Theorem~\ref{lift}): Every 
spectral model category which has a well generated homotopy category
admits a Quillen
equivalence to a Bousfield localization of a model category of modules
(over some endomorphism spectral category).
While a rough slogan of a main result in \cite{SS03} is, 
`Compactly
generated stable model categories are categories of modules',
the corresponding slogan of our result is, 
`Well generated 
stable model categories
are localizations of categories of modules'.
\vspace{1cm}

\textbf{Terminology and conventions.}
Our main reference for triangulated category theory is Neeman's
book \cite{Neeman} and thus we use basically his terminology. One
exception concerns the definition of a triangulated category:
since we are interested in triangulated categories arising from topology
we allow the suspension functor $\Sigma :\mTT \to \mTT$ of a triangulated
category $\mTT$ to be a self-equivalence of $\mTT$ and do not require it
to be an automorphism.
In other words, we take the definition of 
a triangulated category that 
can be found, for example, in \cite[Appendix~2]{Margolis}.

Another point of difference is that all our categories are supposed to have
Hom-\emph{sets}, not only Hom-\emph{classes}. 
(In the terminology of \cite{Neeman},
the morphisms between
two objects are allowed to form a class. If, between
any two objects, 
they actually form a set, 
then the category is said to have `small Hom-sets' in
\cite{Neeman}.)
Such triangulated `meta'-categories with Hom-classes arise in the context
of Verdier quotients (cf.~Remark~\ref{Keller/Neeman}(2)). But it
turns out that all Verdier quotients we need to consider are in fact `honest'
categories, that is, the morphisms between any two objects form a set.

When we say that a category has (co-)products, we always mean
arbitrary set-indexed (co-)products. Adjoint pairs of functors will arise
throughout the paper. We use the convention according to
which in diagrams the left
adjoint functor is drawn above the right adjoint. If we have three functors
\[
\xymatrix@M=.7pc{\mCC \ar[rr]^G
  && \mDD \ar@/_7mm/[ll]_{F}
  \ar@/^7mm/[ll]^H
}
\]
such that $(F,G)$ and $(G,H)$ are adjoint pairs we will call $(F,G,H)$
an adjoint triple.

\vspace{1cm}

\textbf{Acknowledgements.}
First of all, I would like to thank my advisor Stefan Schwede 
for suggesting this project to me and for always motivating and
supporting me in carrying it out. 
I am deeply indebted to Bernhard Keller 
for several helpful conversations concerning in particular
the second part of this paper and to
Phil Hirschhorn for helpful discussions on cellular model categories.
I am grateful to Henning Krause and to Marco
Porta for their interest in the
subject of this paper and discussions about it.
Furthermore, it is a pleasure to thank
Steffen Sagave and Arne Weiner
for many
comments on an pre-version
of this paper.
Thanks for non-mathematical support go to my family in Str\"ohen and to
the Posaunenchor der Lutherkirche in Bonn.

\newpage
\part{Stable model categories and recollements}
We start in Section~1 with a recollection of some notions and lemmas from
triangulated category theory which will also be important in Part~2 of this
paper. We will then discuss the definition of re\-colle\-ments and some 
of their properties. 
Re\-colle\-ments are closely related to localizations
and colocalizations. We consider this relation in Section~1.2. 
An example of a re\-colle\-ment coming from stable
homotopy theory is described in Section~1.3.

In Section~2, we introduce `reasonable' stable model categories, that is,
closed symmetric monoidal model categories which are stable and have
some other nice properties that allow us to study Morita theory over
such categories. Both symmetric spectra and chain complexes are examples
of reasonable stable model categories. In \cite[Theorem~3.9.3]{SS03}, 
Schwede and Shipley 
relate spectral model categories to certain categories of modules via
a Quillen pair. We consider a version thereof over reasonable stable
model categories in Section~2.3. In Section~2.4, we prove our main result, 
Theorem~\ref{mainthm}, which 
gives a criterion for the existence of a re\-colle\-ment for the derived category
$\mD(R)$, where $R$ is a monoid in a reasonable stable model category. 
 
\section{Recollements}

\subsection{Definition and formal properties}

Let us recall some general notions from triangulated category theory. 

By a \emph{triangulated subcategory} $\mUU$ of $\mTT$ we mean a non-empty full 
subcategory which is 
closed under (de-)suspensions and triangles (if two out of
three objects in a triangle are in $\mUU$ then so is the third).
Note that $\mUU$ is then automatically 
closed under finite coproducts and it contains the whole isomorphism
class of an object (i.e., $\mUU$ is `replete').
One says $\mUU$ is
\emph{thick} if it is closed under direct summands. If $\mTT$ has (arbitrary)
coproducts, $\mUU$ 
is called \emph{localizing} whenever it is closed under coproducts.
If $\mUU$ is localizing it is automatically thick (since in this case,
idempotents split in $\mUU$ \cite[Proposition~1.6.8]{Neeman}).

If $\mTT$ and $\mTT'$ are triangulated categories with suspension functors 
$\Sigma$ and $\Sigma '$ a \emph{triangulated}
(or \emph{exact}) functor is a functor 
$F:\mTT\to\mTT'$ together with a natural isomorphism 
$F\circ \Sigma\stackrel{\iso}{\to}\Sigma '\circ F$ such that for every
exact triangle 
\[X\to Y \to Z \to \Sigma X\]
in $\mTT$ we get an exact triangle
\[F(X)\to F(Y)\to F(Z) \to \Sigma 'F(X)\] 
in $\mTT'$, whose last arrow involves the
natural isomorphism. Unless stated otherwise, by a functor between
triangulated
categories we always mean a triangulated one.
The \emph{kernel} of $F$ is the thick triangulated
subcategory of $\mTT$ containing the
objects which are mapped to zero in $\mTT'$, 
\[\ker F=\{X\in \mTT\, |\, F(X)\iso 0\}.\]
If $\mTT$ and $\mTT'$ have coproducts and $F$ preserves them, then $\ker F$ is 
localizing. One cannot expect the image of $F$ to be a triangulated 
subcategory of $\mTT'$. Even if $F$ is full the image need not be replete.
But the \emph{essential image} of $F$,
\[
\essim F=\{X'\in\mTT'\, |\,X'\iso F(X) \textnormal{ for some } X\in\mTT\},
\]
is a triangulated subcategory if $F$ is a full (!) triangulated functor.
It is localizing if $\mTT$ and $\mTT'$ contain coproducts 
and $F$ preserves them.

If $\mSS$ is a set of objects of a triangulated category $\mTT$ with coproducts
then $\langle\mSS\rangle$ 
denotes 
the smallest localizing triangulated subcategory of $\mTT$ containing $\mSS$.
(It does exist, it is just the intersection of all localizing triangulated
subcategories containing $\mSS$.) 

\begin{ex}
If $R$ is a DG algebra, 
that is, a monoid in the symmetric monoidal model category
of chain complexes, then $R$ considered as a module over itself is a generator
for $\mD(R)$, the derived category of $R$. 
This is a special case of \cite[Section~4.2]{MR1258406}. 

Similarly, if $R$ is a symmetric ring spectrum, that is a monoid in the 
symmetric monoidal model
category of symmetric spectra, then $R$ is a generator for the derived category
$\mD(R)$, which
is defined as the homotopy category of the stable
model category of $R$-modules \cite[Theorem~A.1.1]{SS03}.
\end{ex}

For $F: \mTT\to\mTT'$ let $F(\mSS)$ be the
set of all $F(X)$ with $X\in \mSS$. We have the following (probably
well-known)

\begin{lemma}\label{essim}
Let $F:\mTT\to\mTT'$ be a coproduct preserving triangulated functor between
triangulated categories with coproducts and $\mSS$ a set of objects
in $\mTT$. 
\begin{enumerate} 
\item[\textnormal{(i)}] There is an inclusion of (not necessarily triangulated)
full subcategories 
\[
\essim\left( F\!\mid _{\langle\mSS\rangle}\right) 
\subset \langle F(\mSS)\rangle. 
\]
\item[\textnormal{(ii)}] 
If $F$ is full then
\[
\essim \left(F\!\mid _{\langle\mSS\rangle}\right) = \langle F(\mSS)\rangle
\]
as triangulated categories
\end{enumerate}
\end{lemma}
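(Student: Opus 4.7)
The plan is to deduce both parts from the defining minimality of $\langle -\rangle$ by producing suitable localizing triangulated subcategories.

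For part (i), I would consider the full subcategory
\[
\mCC \;=\; \{X \in \mTT \mid F(X) \in \langle F(\mSS)\rangle\}\,.
\]
The goal is to show $\mCC$ is a localizing triangulated subcategory of $\mTT$ containing $\mSS$; then minimality of $\langle\mSS\rangle$ forces $\langle\mSS\rangle \subset \mCC$, so $F(\langle\mSS\rangle) \subset \langle F(\mSS)\rangle$, and since $\langle F(\mSS)\rangle$ is replete this gives the essential-image inclusion. The four closure properties of $\mCC$ are routine: containment of $\mSS$ is tautological; closure under $\Sigma^{\pm 1}$ uses the natural isomorphism $F\Sigma\cong\Sigma' F$ together with closure of $\langle F(\mSS)\rangle$ under suspension; closure under triangles uses that $F$ is triangulated, so an exact triangle in $\mTT$ with two vertices in $\mCC$ maps to an exact triangle in $\mTT'$ with two vertices in $\langle F(\mSS)\rangle$, whence the third vertex lies there too; and closure under coproducts uses the hypothesis that $F$ preserves them together with closure of $\langle F(\mSS)\rangle$ under coproducts.

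For part (ii), the reverse inclusion goes through the observation made in the paragraph preceding the lemma: for a \emph{full} triangulated functor, the essential image is a triangulated subcategory, and when the functor additionally preserves coproducts, the essential image is localizing. Applying this to the restriction $F\!\mid_{\langle\mSS\rangle}$ (which is still full, triangulated and coproduct-preserving, since $\langle\mSS\rangle$ is itself a localizing triangulated subcategory on which $F$ restricts sensibly), we see that $\essim(F\!\mid_{\langle\mSS\rangle})$ is a localizing triangulated subcategory of $\mTT'$. It contains $F(\mSS)$ because $\mSS\subset\langle\mSS\rangle$, so minimality of $\langle F(\mSS)\rangle$ yields $\langle F(\mSS)\rangle \subset \essim(F\!\mid_{\langle\mSS\rangle})$. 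Combining with (i) gives equality.

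There is no genuine obstacle here; the only point deserving care is the bookkeeping in (ii), namely verifying that the paragraph about essential images applies to the restricted functor $F\!\mid_{\langle\mSS\rangle}\colon\langle\mSS\rangle\to\mTT'$, which in turn requires only that $\langle\mSS\rangle$ is triangulated and closed under coproducts (both built into the definition of a localizing subcategory). Everything else is an application of the universal property of $\langle -\rangle$ as the smallest localizing triangulated subcategory containing a given set.
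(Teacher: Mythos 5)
Your proposal is correct and follows exactly the same route as the paper: for (i), define the subcategory of $X$ with $F(X)\in\langle F(\mSS)\rangle$ and invoke minimality of $\langle\mSS\rangle$; for (ii), observe that $\essim(F\!\mid_{\langle\mSS\rangle})$ is localizing triangulated when $F$ is full and coproduct-preserving, and invoke minimality of $\langle F(\mSS)\rangle$. You only spell out the routine closure verifications the paper leaves implicit.
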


\begin{proof}
Those $X$ in $\mTT$ for which $F(X)$ is in $\langle F(\mSS)\rangle$
form a localizing triangulated subcategory containing $\mSS$ and hence 
containing $\langle\mSS\rangle$. So the image (and, as a consequence,
the essential image) of 
$F\!\mid _{\langle\mSS\rangle}$ is contained in $\langle F(\mSS)\rangle$,
as was claimed in (i).
For the other inclusion note that since $F$ is full, 
$\essim F\!\mid _{\langle\mSS\rangle}$ is a localizing triangulated subcategory
of $\mTT'$ which contains $F(\mSS)$. This shows (ii).
\end{proof}

The following lemma is often useful, too.
\begin{lemma}\label{useful}
Let $F$, $G:\mTT\to\mTT'$ be coproduct preserving triangulated functors
between triangulated categories with coproducts and $\eta:F\to G$
a natural transformation of triangulated functors. Then those objects $X$
for which $\eta_X$ is an isomorphism form a localizing triangulated
subcategory of $\mTT$. \hfill $\square$
\end{lemma}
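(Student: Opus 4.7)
The plan is to fix $\mUU \subset \mTT$ to be the full subcategory of all $X \in \mTT$ for which $\eta_X$ is an isomorphism, and then verify the three closure properties required of a localizing triangulated subcategory: non-emptiness (which forces containment of a zero object), closure under (de)suspension, closure under triangles, and closure under arbitrary coproducts. Throughout, I will exploit the functoriality of $\eta$ (naturality squares) together with the additional structure that $F$ and $G$ are triangulated and coproduct-preserving.

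Non-emptiness and suspension-closure are immediate. Since $F$ and $G$ are triangulated, $F(0) \iso 0 \iso G(0)$, so $\eta_0$ is an isomorphism and $0 \in \mUU$. For (de)suspension, by the definition of a triangulated functor there are natural isomorphisms $\alpha: F \circ \Sigma \stackrel{\iso}{\to} \Sigma' \circ F$ and $\beta: G \circ \Sigma \stackrel{\iso}{\to} \Sigma' \circ G$, and the axiom that $\eta$ is a natural transformation \emph{of triangulated functors} says exactly that these are compatible with $\eta$, i.e.\ $\beta_X \circ \eta_{\Sigma X} = \Sigma'(\eta_X) \circ \alpha_X$. Hence $\eta_{\Sigma X}$ is an isomorphism iff $\Sigma'(\eta_X)$ is, iff $\eta_X$ is; the same argument applied to $\Sigma^{-1}$ handles desuspension.

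For closure under triangles, given an exact triangle $X \to Y \to Z \to \Sigma X$ in $\mTT$, applying $F$ and $G$ yields exact triangles in $\mTT'$ and the components $\eta_X, \eta_Y, \eta_Z$ together with the coherence from the previous step assemble into a morphism of triangles from $F$ to $G$. The triangulated five-lemma (see e.g.\ \cite[Proposition~1.1.20]{Neeman}) then forces the third vertical arrow to be an isomorphism whenever the other two are, proving that $\mUU$ satisfies the two-out-of-three property for triangles.

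Finally, for a family $(X_i)_{i \in I}$ in $\mUU$, let $\iota_j : X_j \to \coprod_i X_i$ be the coproduct injections. Naturality of $\eta$ gives $G(\iota_j) \circ \eta_{X_j} = \eta_{\coprod X_i} \circ F(\iota_j)$ for each $j$. Since $F$ and $G$ preserve coproducts, the canonical maps $\coprod_i F(X_i) \to F(\coprod_i X_i)$ and $\coprod_i G(X_i) \to G(\coprod_i X_i)$ are isomorphisms, and under these identifications $\eta_{\coprod X_i}$ corresponds to $\coprod_i \eta_{X_i}$, which is an isomorphism because each $\eta_{X_i}$ is. Thus $\coprod_i X_i \in \mUU$, completing the verification. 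There is no genuine obstacle here; the only step that is anything more than bookkeeping is the invocation of the triangulated five-lemma.
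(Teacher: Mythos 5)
Your proof is correct. The paper states this lemma with no proof at all (the $\square$ appears immediately after the statement), treating it as a routine verification; your argument is exactly the standard check that the paper implicitly delegates to the reader. Each step is sound: the zero object and (de)suspension cases follow from the structural isomorphisms and the compatibility axiom for a natural transformation of triangulated functors; closure under triangles is the triangulated two-out-of-three/five-lemma (Neeman's Proposition~1.1.20 is the right reference); and for coproducts, the diagram comparing $\coprod_i \eta_{X_i}$ with $\eta_{\coprod_i X_i}$ under the canonical comparison isomorphisms commutes because one can test against the coproduct injections and invoke naturality. Nothing is missing.
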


As a definition for re\-colle\-ments we take J\o rgensen's 
\cite[Definition~3.1]{Jorgensen}.
\begin{defn}\label{recollement}
A \emph{recollement} of triangulated categories is a diagram of
triangulated categories
\[
\xymatrix@M=.7pc{\mTT'\ar[rr]^{i_{\ast}}
  &&\mTT \ar@/_7mm/[ll]_{i^{\ast}}
  \ar@/^7mm/[ll]^{i^!}\ar[rr]^{j^{\ast}}
  && \mTT'' \ar@/_7mm/[ll]_{j_!}
  \ar@/^7mm/[ll]^{j_{\ast}}
}
\]
such that
\begin{enumerate}[(i)]
\item both $(i^\ast,i_\ast,i^!)$ and 
$(j_!, j^\ast, j_\ast)$ are adjoint triples, 
that is, $(i^{\ast},i_{\ast})$, $(i_{\ast},i^!)$, $(j_!,j^{\ast})$, 
and $(j^{\ast},j_{\ast})$ are adjoint pairs of triangulated functors,
\item $j^\ast i_\ast =0$,
\item\label{fullyfaithful}
the functors $i_\ast$, $j_!$, and $j_\ast$ are 
fully faithful,
\item for each object $X$ in $\mTT$ there are exact triangles
\begin{enumerate}[(a)]
\item $j_!j^\ast X \to X \to i_\ast i^\ast X \to \Sigma j_!j^\ast X$,
\item $i_\ast i^!X \to X \to j_\ast j^\ast X \to \Sigma i_\ast i^!X$,
\end{enumerate}
where the maps to $X$ are counit maps, the maps out of $X$ are unit maps,
and $\Sigma$ denotes the suspension.
\end{enumerate}
\end{defn}

Sometimes we will drop the structure functors $i^\ast$, $i_\ast$, $i^!$, 
$j_!$, $j^\ast$, and $j_\ast$ from the notation and simply write
$(\mTT' ,\mTT , \mTT'')$ for a re\-colle\-ment.

\begin{rem}\label{rem}
Here are some formal properties.
\begin{enumerate}[(1)]
\item Being a left (resp.~right) adjoint of $j^\ast i_\ast =0$, the composition
of the upper (resp.~lower) functors in a re\-colle\-ment is zero:
\[i^\ast j_!=0 \quad\textnormal{and}\quad i^!j_\ast =0.\]
\item Provided condition (i) in Definition~\ref{recollement} holds, 
condition~(\ref{fullyfaithful}) is equivalent to
the following. For $X'$ in $\mTT'$ and $X''$ in $\mTT''$ the counit and unit
maps 
\[i^\ast i_\ast X'\to X',\quad j^\ast j_\ast X'' \to X'', \quad
   X'\to i^! i_\ast X', \quad X''\to j^\ast j_!X''
\]
are natural isomorphisms.
\item Composing the natural isomorphism $i^\ast i_\ast X'\to X'$ in (2)
with $i_\ast$
we get that the restriction of $i_\ast i^\ast$ to the essential image 
of 
$i_\ast$ is naturally isomorphic to the identity functor.
\item The third arrow in the exact triangles (a) and (b) of Definition 
\ref{recollement}(iv) is natural in $X$ and uniquely determined. 
To see the naturality consider a diagram
\begin{equation*}
\xymatrix@M=.7pc{j_! j^\ast X \ar[r] \ar[d]_{j_! j^\ast (f)}
& X \ar[r]^-{\eta_X} \ar[d]_f & i_\ast i^\ast X \ar[r]^-{\psi_X}
 \ar@{.>}[d]_{\bar f} 
& j_! j^\ast \Sigma X\ar[d]^{j_! j^\ast \Sigma (f)}\\
j_! j^\ast Y \ar[r]
& Y \ar[r]^-{\eta_Y} & i_\ast i^\ast Y \ar[r]^-{\psi_Y}&j_! j^\ast \Sigma Y
}
\end{equation*}
where the rows are exact triangles as in Definition~\ref{recollement}(iv)(a)
and solid arrows are given such that the left square commutes. The
axioms of a triangulated category guarantee the existence of a dotted
arrow $\bar f$ such that the whole diagram commutes. 

We claim that
there is only one arrow $\bar f$ such that the square in the middle commutes,
that is, $\bar f \eta_X = \eta_Y f$. It is enough to consider the case $f=0$
and to show that $\bar f$ is necessarily zero, too. 
But $f=0$ implies $\bar f \eta_X =0$ and since the representing
functor
$\mTT (-,i_\ast i^\ast Y)$ is cohomological there exists an arrow 
$g:j_! j^\ast \Sigma X\to i_\ast i^\ast Y$ such that $g \psi_X = \bar f$.
Now the adjoint map of $g$ with respect to the adjoint pair $(j_!,j^\ast)$
is a map into $j^\ast i_\ast i^\ast Y$ which is zero by Definition 
\ref{recollement}(ii). Hence $g$ itself is zero and so is $\bar f$, proving
our claim.

As the unit $\eta$ is a natural transformation, the map $i_\ast i^\ast (f)$
satisfies $i_\ast i^\ast (f) \eta_X = \eta_Y f$ and consequently
$\bar f=i_\ast i^\ast (f)$. Since the right square in the diagram
is commutative, this shows
the naturality of $\psi$. Taking $f$ to be the identity arrow on $X$ shows
the uniqueness of the third arrow $\psi_X$.
\item Replacing any of $\mTT$, $\mTT'$ or $\mTT''$ in a re\-colle\-ment
by an equivalent triangulated 
category still gives a re\-colle\-ment.
\item A re\-colle\-ment with $\mTT' =0$ is the same as an equivalence 
$\mTT\simeq\mTT''$ of triangulated categories. Namely $i_\ast =0$ implies by
Definition~\ref{recollement}(iv)(b) that $X\iso j_\ast  j^\ast X$, 
so $j_\ast$ is essentially 
surjective on objects. Since $j_\ast$ is also fully faithful
by Definition~\ref{recollement}(iii) it is an equivalence of categories 
with inverses $j_\ast$ and $j_!$ (which are hence isomorphic).
Similarly, a re\-colle\-ment with 
$\mTT''=0$ is the same as a triangulated equivalence
$\mTT'\simeq\mTT$.
\item A map of re\-colle\-ments from $(\mTT' ,\mTT , \mTT'')$
to $(\mUU' , \mUU , \mUU'')$ consists of three triangulated functors
$F':\mTT' \to \mUU'$, $F:\mTT \to \mUU$, $F'':\mTT'' \to \mUU''$ 
which commute (up to natural
isomorphism)
with the structure functors. It is a theorem of Parshall and Scott 
\cite[Theorem~2.5]{Parshall} that a
map of re\-colle\-ments is determined (up to natural isomorphism) by $F'$ and
$F$ (resp.~$F$ and $F''$).  Furthermore, if two of $F'$, $F$ and $F''$ are
equivalences then so is the third. This is not true for re\-colle\-ments
of abelian categories, see \cite[Section 2.2]{Pirashvili}.
\item For every re\-colle\-ment one has  
\[\essim i_\ast = \ker j^\ast,\quad\essim j_!=\ker i^\ast,\quad
\essim j_\ast = \ker i^!.
\] 
Consider, for example, the first equality. 
The inclusion $\essim i_\ast\subset\ker j^\ast$
follows immediately from $j^\ast i_\ast =0$. If, on the other hand,
$j^\ast X=0$, then the third term in the exact triangle
\[
i_\ast i^!X \to X \to j_\ast j^\ast X \to \Sigma i_\ast i^!X
\]
of Definition~\ref{recollement}(iv)(b) vanishes so that the first map
is an isomorphism and thus $X\in\essim i_\ast$.

Since $i_\ast$ is fully faithful we have an equivalence of
triangulated categories $\mTT'\simeq\essim i_\ast$ and hence
$\mTT' \simeq\ker j^\ast$. Hence, due to Remark~\ref{rem}(5), 
every re\-colle\-ment is `equivalent'
to the re\-colle\-ment
\[
\xymatrix@M=.7pc{\ker j^\ast\ar[rr]^{\iota_\ast}
  &&\mTT \ar@/_7mm/[ll]_{\iota^{\ast}}
  \ar@/^7mm/[ll]^{\iota^!}\ar[rr]^{j^{\ast}}
  && \mTT''\, , \ar@/_7mm/[ll]_{j_!}
  \ar@/^7mm/[ll]^{j_{\ast}}
}
\]
where $\iota_\ast$ is the inclusion with left (resp.~right) adjoint
$\iota^\ast$ (resp.~$\iota^!$).
\end{enumerate}
\end{rem}

\begin{ex} 
The following is the classical example of a re\-colle\-ment arising in 
algebraic geometry \cite[Section~1.4.1]{MR751966}. 
Let $X$ be a topological space, 
$U$ an open subspace and $F$ the complement of $U$ in $X$. Given a
sheaf $\mOO _X$ of commutative rings on $X$, we denote the restricted
sheaves of rings on $U$, resp.~$F$, by $\mOO _U$, 
resp.~$\mOO_F$, 
and the three categories of sheaves of left modules by $\mOO _X \MMOD$,
$\mOO _U \MMOD$, and $\mOO _F \MMOD$. We have six functors
\[
\xymatrix@M=.7pc{\mOO _F \MMOD\ar[rr]^{i_{\ast}}
  &&\mOO _X \MMOD\ar@/_7mm/[ll]_{i^{\ast}}
  \ar@/^7mm/[ll]^{i^!}\ar[rr]^{j^{\ast}}
  && \mOO _U \MMOD \ar@/_7mm/[ll]_{j_!}
  \ar@/^7mm/[ll]^{j_{\ast}}
}
\]
where $i^\ast$ and $j^\ast$ are restriction functors, $i_\ast$ and $j_\ast$ are
direct image functors, and $j_!$ is the functor which extends a sheaf on
$U$ by $0$ outside $U$ to the whole of $X$, i.e., for every
$\mOO _U$-module $\mFF$ and every open subset $V$ of $X$ we have
$j_!\mFF (V)=\mFF(V)$ 
if $V\subset U$ and $j_!\mFF (V)=0$ else. Finally, $i^!$ is defined by
\[(i^!\mGG)(V\cap F) = \{ s\in \mGG(V)\, |\, \supp (s) \subset F \}\] for every 
$\mOO _X$-module $\mGG$ and every open subset $V$ of $X$.

Let $\mD^+ (\mOO_F)$, $\mD^+ (\mOO_X)$, and $\mD^+(\mOO_U)$ 
be the corresponding derived
categories of left bounded complexes. The derived functors of  
$i^\ast$, $i_\ast$, $i^!$, 
$j_!$, $j^\ast$, and $j_\ast$ exist and yield a re\-colle\-ment
\[
\xymatrix@M=.7pc{\mD^+(\mOO _F) \ar[rr]^{i_{\ast}}
  &&\mD^+(\mOO _X) \ar@/_7mm/[ll]_{i^{\ast}}
  \ar@/^7mm/[ll]^{i^!}\ar[rr]^{j^{\ast}}
  &&\mD^+(\mOO _U)\, . \ar@/_7mm/[ll]_{j_!}
  \ar@/^7mm/[ll]^{j_{\ast}}
}
\] 
\end{ex}

\subsection{Localization and colocalization}

It turns out that the data of a re\-colle\-ment is essentially 
the same as a triangulated functor $j^\ast$ which admits 
both a \emph{localization functor} $j_!$
and a \emph{colocalization functor} $j_\ast$. 
These two notions are defined as follows.

\begin{defn}\label{loc}
If a triangulated functor $F:\mTT\to\mUU$ admits a fully faithful right 
adjoint $G:\mUU\to\mTT$ we call $G$ a \emph{localization functor} and $\mUU$
a \emph{localization} of $\mTT$. The objects in the kernel of $F$ are 
called \emph{($F$-)acyclic} and those objects $X\in\mTT$ for which the
unit of the adjunction $X\to GF(X)$ is an isomorphism (or, equivalently,
which are in the essential image of $G$) are called 
\emph{($F$-)local}.

Dually, 
if $F:\mTT\to\mUU$ admits a fully faithful left 
adjoint $H:\mUU\to\mTT$ we call $H$ a \emph{colocalization functor} and $\mUU$
a \emph{colocalization} of $\mTT$. The objects in the kernel of $F$ are called
\emph{($F$-)acyclic} and those objects $X\in\mTT$ for which the
counit of the adjunction $HF(X)\to X$ is an isomorphism (or, equivalently,
which are in the essential image of $H$) are called 
\emph{($F$-)colocal}.
\end{defn}

Since by \cite[Appendix~2, Proposition~11]{Margolis} the adjoint 
of a triangulated functor is itself triangulated, localization and 
colocalization functors are always triangulated. 

\begin{rem}\label{Minerva}
If $F:\mTT\to\mUU$ 
admits a localization functor $G:\mUU\to\mTT$, then $\mUU$ is triangulated 
equivalent to $\essim G$. The composition $GF:\mTT\to \essim G$ has the
inclusion $\essim G\incl \mTT$ as a right adjoint. In other words, 
the localization $\mUU$ of $\mTT$ is equivalent to the triangulated
subcategory of local objects, which can then be regarded as a localization
of $\mTT$ with exactly the same acyclics as the original localization
of $\mTT$.
\end{rem}

\begin{rem}\label{Keller/Neeman}
Let us compare our definition of localization with others occurring
in the literature.
\begin{enumerate}[(1)]
\item Keller's definition is slightly different from
ours, see \cite[Section~3.7]{DGCats}: in addition to our definition,
the kernel of 
$F:\mTT \to\mUU$ is supposed to be generated by a \emph{set} of objects.
(The reason for this is that under this additional technical assumption
a localization of a \emph{well generated} triangulated category is
again well generated, cf.~Proposition~\ref{Neeman's}.)

\item The definition given in Neeman's book \cite[Definition~9.1.1]{Neeman} 
is the following.
Given a thick triangulated subcategory $\mSS$ of $\mTT$,
there always exists a \emph{Verdier quotient} $\mTT / \mSS$ together with
a universal functor $\mTT \to \mTT / \mSS$ with kernel $\mSS$  
\cite[Theorem~2.1.8 and Remark~2.1.10]{Neeman}. 
In Neeman's terminology, the Hom-`sets' of this triangulated
category $\mTT / \mSS$ are not necessarily small, 
that is, they do not form sets but only classes, and 
hence $\mTT / \mSS$ is not an honest category in general.
If the Verdier quotient functor
$F:\mTT \to \mTT / \mSS$ admits a fully faithful right adjoint $G:\mTT/\mSS
\to \mTT$ then $\mTT /\mSS$ is called a \emph{Bousfield localization} and 
the functor $G$ is called
a \emph{Bousfield localization functor}.
It is a consequence of \cite[Theorem~9.1.16]{Neeman} that, if $\mTT /\mSS$
is a Bousfield localization, $\mTT /\mSS$ is an honest category (i.e.,
has small Hom-sets).

A Bousfield localization in Neeman's sense is in particular a localization
as in Definition~\ref{loc}. Namely the right adjoint $G$, if it exists,
is automatically fully faithful. (To see this, it is enough to show
that the counit $\varepsilon$ 
of the adjunction $(F,G)$ is an isomorphism. Since $F$
is the identity on objects one has only to check that $\varepsilon F$ is
an isomorphism. But this follows from \cite[Lemma~9.1.7]{Neeman}.)
On the other hand, by part (iii) of Lemma~\ref{mylemma}(b) below,
a localization in our sense is always a Bousfield localization
up to triangulated equivalence.

Hence Neeman's notion of Bousfield localization is essentially
equivalent to our notion of localization
as in Definition~\ref{loc}.

\item In \cite{HPS} the authors consider 
stable
homotopy categories, i.e., triangulated categories endowed with
a closed symmetric monoidal product $\sm$
 and with a certain set of generators --
for the complete definition see \cite[Definition~1.1.4]{HPS}. 
They define a localization functor \cite[Definiton~3.1.1]{HPS} 
on a stable homotopy category $\mCC$
to be a pair $(L,i)$, where $L:\mCC\to\mCC$ is a 
triangulated functor and $i:\id _\mCC \to L$ is a natural transformation
such that
\begin{enumerate}[(i)]
\item the natural transformation $Li:L\to L^2$ is an isomorphism,
\item for all objects $X$, $Y$ in $\mCC$ the map 
$i^\ast_X : \mCC (LX, LY)\to\mCC (X,LY)$
given by precomposition with $i_X$ is an isomorphism,
\item if $LX=0$ then $L(X\sm Y)=0$ for all $Y$.
\end{enumerate}
The $L$-local objects in $\mCC$
are (by definition in \cite{HPS})
the objects $Y$ for which $i_Y$ is an isomorphism or, equivalently,
which are isomorphic to some $LX$.
If $\mCC_L$ is the full subcategory of $L$-local objects then
$L:\mCC\to \mCC_L$ is left adjoint to the inclusion $\mCC_L \incl \mCC$. In
other words: we have a localization of triangulated categories as
in Definition~\ref{loc}, and the two notions of $L$-local objects
(ours and that
of \cite{HPS}) coincide.
Note that we did not use condition (iii), which involves the monoidal 
structure.

On the other hand, if we are given a functor $F:\mTT\to\mUU$ admitting 
a localization functor $G:\mUU\to\mTT$, the composite $GF$ together with
the unit of the adjunction $\id_\mTT\to GF$ satisfies the first and the second
of the above conditions.
In so far, if we ignore the monoidal structure, our definition and the one
in \cite{HPS} are equivalent.

Dualizing this definition of localization leads 
to the notion of colocalization of stable homotopy categories, see 
\cite[Definition~3.1.1]{HPS}. Each localization $L$ on $\mCC$ 
determines a colocalization
$C$ on $\mCC$ and vice versa \cite[Lemma~3.1.6]{HPS}.
Two such correspond if and only if
there is an exact triangle
\[
CX \to X \to LX \to \Sigma (CX)
\]
where first map comes from the natural transformation of the colocalization
$C$
and the second from natural transformation of the localization $L$.
For each such localization-colocalization pair $(L,C)$ we have
$\essim L = \ker C$ and $\essim C = \ker L$. Hence the $L$-local objects
are exactly the $C$-acyclics and the $C$-colocals are exactly
the $L$-acyclics.

\end{enumerate}
\end{rem}

\begin{defn}\label{perp}
For a class $\mAA$ of objects in a triangulated category $\mTT$, the category
$\mAA ^\perp$ is defined as the full subcategory 
of $\mTT$ containing those objects which do not receive non-zero graded maps 
from $\mAA$,
that is,
\[
\mAA^\perp =\{ X\in\mTT\, |\,\mTT (\Sigma^nA,X)\iso 0 \textnormal{ for each } 
n\in\mZ \textnormal{ and each } A\in \mAA\}.
\]
In the case where $\mAA$ consists
only of one object $A$, we simply write $A^\perp$ for $\mAA^\perp$.
Dually, we define 
\[
\isotope{\perp}{}{\mAA}=\{ X\in\mTT\, |\,\mTT (X,\Sigma^nA)
\iso 0 \textnormal{ for each } 
n\in\mZ \textnormal{ and each } A\in \mAA\}.
\] 
\end{defn}

Note that 
$\mAA^\perp$ is a thick triangulated 
subcategory of $\mTT$, which is colocalizing 
(i.e., closed under products) if $\mTT$ has products. 
It is localizing if $\mTT$ has coproducts
and all objects in $\mAA$ are compact, whereas
$\isotope{\perp}{}{\mAA}$ is always
a localizing triangulated subcategory if $\mTT$ has coproducts.

The reader should be warned that there is not a standardized use of
$\mAA^\perp$ and $\isotope{\perp}{}{\mAA}$ in the literature.
Neeman \cite[Definitions~9.1.10 and 9.1.11]{Neeman}
writes $\mAA^\perp$ where J{\o}rgensen \cite[Section~3]{Jorgensen}, 
for example, uses 
$\isotope{\perp}{}{\mAA}$
and vice versa.
Our definition is the same as J{\o}rgensen's.

In the next lemma, some facts on colocalizations
and localizations are summarized. 
I expect them to be well-known but
I do not know a reference for the lemma in the form that will be needed.
Hence a complete proof will be given.
\pagebreak
\begin{lemma}\label{mylemma}
Let $j^\ast :\mTT\to \mTT''$ be a triangulated functor and $\mTT'= \ker j^\ast$.
\begin{enumerate}
\item[\textnormal{(a)}] Suppose $j^\ast$ admits a colocalization functor, i.e.,
a fully faithful 
left adjoint $j_!$,
\[
\xymatrix@M=.7pc{\mTT \ar@<-.5ex>[r]_-{j^{\ast}}
  &\mTT'' \ar@<-.5ex>[l]_-{j_!}.}
\]
Then the following statements hold.
\begin{enumerate}
\item[\textnormal{(i)}] The 
inclusion $i_{\ast} : \mTT' \to \mTT$ has also a left
adjoint $i^{\ast}$.
\item[\textnormal{(ii)}] For $X$ in $\mTT$ there are natural exact triangles
\[
j_!j^\ast X \stackrel{\varepsilon _X}{\to} X \stackrel{\eta ' _X}{\to} 
i_\ast i^\ast X \to \Sigma j_! j^{\ast} X
\]
where $\varepsilon$ is the counit of $(j_! , j^{\ast})$ and $\eta '$ is the
unit of $(i^{\ast} , i_{\ast})$.
\item[\textnormal{(iii)}] The Verdier quotient $\mTT / \mTT'$ is 
triangulated equivalent to $\mTT''$. In particular, $\mTT / \mTT'$
is an honest category (i.e., the Hom-`sets' form actual sets).
\item[\textnormal{(iv)}] For the subcategory of colocal objects, one has
\[
\essim j_!
=\ker i^\ast =\isotope{\perp}{}{}(\ker j^\ast).
\]
\end{enumerate}
\item[\textnormal{(b)}] Dually, 
suppose $j^\ast$ admits a localization functor, i.e.,
a fully faithful 
right adjoint~$j_\ast$,
\[
\xymatrix@M=.7pc{\mTT \ar@<.5ex>[r]^-{j^{\ast}}
  &\mTT'' \ar@<.5ex>[l]^-{j_{\ast}}.}
\]
Then the following statements hold.
\begin{enumerate}
\item[\textnormal{(i)}] The inclusion $i_{\ast} : \mTT' \to \mTT$ has also a right
adjoint $i^!$.
\item[\textnormal{(ii)}] For $X$ in $\mTT$ there are natural exact triangles
\[
i_{\ast}i^! X \stackrel{\varepsilon ' _X}{\to} X \stackrel{\eta _X}{\to} 
j_{\ast} j^\ast X \to \Sigma i_{\ast}i^! X
\]
where $\varepsilon '$ is the counit of $(i_\ast , i^!)$ and $\eta $ is the
unit of $(j^{\ast} , j_{\ast})$.
\item[\textnormal{(iii)}] The Verdier quotient $\mTT / \mTT'$ is 
triangulated equivalent to $\mTT''$. In particular, $\mTT / \mTT'$
is an honest category (i.e., the Hom-`sets' form actual sets).
\item[\textnormal{(iv)}] 
For the subcategory of local objects, one has
\[
\essim j_\ast =\ker i^!=(\ker j^\ast)^\perp .
\]
\end{enumerate}

\end{enumerate}
\end{lemma}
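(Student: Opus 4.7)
The plan is to prove part~(a) in detail; part~(b) follows by formal duality (reverse all arrows, i.e. pass to opposite categories). Throughout we only use that $j_!$ is fully faithful, equivalently that the unit $\eta:\id_{\mTT''}\to j^{\ast}j_!$ is a natural isomorphism.

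\textbf{Construction giving (i) and (ii).} For each $X\in\mTT$, complete the counit $\varepsilon_X:j_!j^{\ast}X\to X$ to an exact triangle
\[
j_!j^{\ast}X \stackrel{\varepsilon_X}{\to} X \stackrel{\eta'_X}{\to} i^{\ast}X \to \Sigma j_!j^{\ast}X.
\]
Applying $j^{\ast}$ and using the triangle identity $(j^{\ast}\varepsilon)\circ(\eta j^{\ast})=\id$, together with the fact that $\eta j^{\ast}$ is an isomorphism, shows $j^{\ast}\varepsilon_X$ is an isomorphism; hence $j^{\ast}(i^{\ast}X)=0$ and $i^{\ast}X\in\mTT'$. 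For $Y\in\mTT'$, applying $\mTT(-,i_{\ast}Y)$ to the triangle produces a long exact sequence whose outer terms $\mTT(\Sigma^{\pm 1}j_!j^{\ast}X,Y)\cong\mTT''(\Sigma^{\pm 1}j^{\ast}X,j^{\ast}Y)$ vanish, and yields a natural isomorphism $\mTT(X,i_{\ast}Y)\cong\mTT'(i^{\ast}X,Y)$. This gives simultaneously the adjunction $(i^{\ast},i_{\ast})$, shows $\eta'_X$ is the unit, and (by the standard uniqueness-of-triangle-morphisms argument already used in Remark~\ref{rem}(4)) makes $X\mapsto i^{\ast}X$ into a triangulated functor.

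\textbf{Verdier quotient identification (iii).} Since $j^{\ast}$ vanishes on $\mTT'=\ker j^{\ast}$, the universal property of the Verdier quotient gives a factorization $j^{\ast}=\bar j^{\ast}\circ Q$ with $\bar j^{\ast}:\mTT/\mTT'\to\mTT''$ triangulated. Essential surjectivity of $\bar j^{\ast}$ follows from $\bar j^{\ast}Q(j_!Y'')=j^{\ast}j_!Y''\cong Y''$. For full faithfulness, observe that the triangle in (ii) has third term $i^{\ast}X\in\mTT'$, so $\varepsilon_X$ becomes an isomorphism in $\mTT/\mTT'$. Thus every object of $\mTT/\mTT'$ is isomorphic to one in $\essim j_!$, and the full faithfulness of $\bar j^{\ast}$ reduces to the computation
\[
\mTT/\mTT'(Q j_!Y'',Q j_!Z'')\;\cong\;\mTT(j_!Y'',j_!Z'')\;\cong\;\mTT''(Y'',j^{\ast}j_!Z'')\;\cong\;\mTT''(Y'',Z''),
\]
the first isomorphism coming from the fact that $j_!$ admits the right adjoint $j^{\ast}$ so maps between objects of $\essim j_!$ are computed in $\mTT$. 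Honesty of the Hom-sets is then automatic from honesty of $\mTT''$.

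\textbf{Identification of the colocal objects (iv).} The equality $\essim j_!=\ker i^{\ast}$ is immediate from (ii): if $X=j_!Y''$, the counit $j_!j^{\ast}j_!Y''\to j_!Y''$ is an isomorphism (apply $j_!$ to the inverse of $\eta$), so the cone $i^{\ast}X$ vanishes; conversely, $i^{\ast}X=0$ forces $\varepsilon_X$ to be an isomorphism. For $\essim j_!=\isotope{\perp}{}{}\mTT'$, one inclusion is the adjunction $\mTT(j_!Y'',\Sigma^n Z)\cong\mTT''(Y'',j^{\ast}\Sigma^n Z)=0$ for $Z\in\mTT'$. Conversely, if $X\in\isotope{\perp}{}{}\mTT'$, then $\eta'_X\in\mTT(X,i^{\ast}X)=0$, so the triangle splits and $X$ is a direct summand of $j_!j^{\ast}X$; since $j_!$ is a left adjoint its essential image is localizing, hence thick, hence closed under summands, so $X\in\essim j_!$.

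\textbf{Main obstacle.} The delicate step is (iii): rather than working with left fractions in $\mTT/\mTT'$, the strategy above leverages the triangle of (ii) to reduce every object to one in $\essim j_!$, where the adjunction $(j_!,j^{\ast})$ directly computes the Hom-sets. The rest of the lemma consists of essentially formal triangle-chases once (i) and (ii) are in hand.
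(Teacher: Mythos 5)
Your proof is correct in substance, and it diverges from the paper's in a few places. For (i)--(ii), you establish the adjunction by applying $\mTT(-,i_\ast Y)$ to the defining triangle and reading off the bijection $\mTT(X,i_\ast Y)\cong\mTT'(i^\ast X,Y)$ from the long exact sequence; the paper instead checks the universal property of $\eta'_X$ by hand (constructing the unique $h:i^\ast X\to X'$ through a morphism of triangles). Your route is a bit cleaner; both rely on the same uniqueness argument of Remark~\ref{rem}(4) for functoriality, and both quote the fact that an adjoint of a triangulated functor is automatically triangulated. For (iv), your last step (show the triangle splits, so $X$ is a retract of $j_!j^\ast X$, then use that $\essim j_!$ is localizing hence thick) is a minor variation of the paper's (which instead shows directly that $i^\ast X\cong 0$ via a unit--counit diagram); both are fine.

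The genuine difference is (iii), and this is where your argument is underjustified. The paper constructs explicit mutually inverse functors $\varphi=Fj_!$ and $\psi$ (from the universal property of the Verdier quotient) and verifies $\psi\varphi\cong\id$ and $\varphi\psi\cong\id$ directly, which avoids computing any Hom-set in $\mTT/\mTT'$. You instead reduce every object to one in $\essim j_!$ and then assert
\[
\mTT/\mTT'\bigl(Qj_!Y'',Qj_!Z''\bigr)\;\cong\;\mTT\bigl(j_!Y'',j_!Z''\bigr),
\]
attributing this to ``$j_!$ admits the right adjoint $j^{\ast}$.'' That is not the reason. The fact you actually need is the standard (but nontrivial) statement about Verdier quotients: if $A$ lies in $\isotope{\perp}{}{\mSS}$ for a thick subcategory $\mSS\subset\mTT$, then the quotient functor induces isomorphisms $\mTT(A,B)\to(\mTT/\mSS)(QA,QB)$ for every $B$ --- because every left roof $A\leftarrow W\to B$ with cone in $\mSS$ is dominated by one with $W=A$, since $A\to\cone(s)$ must vanish and so $s$ splits. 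You then apply this with $\mSS=\mTT'$ and $A=j_!Y''$, using only the inclusion $\essim j_!\subset\isotope{\perp}{}{}(\ker j^\ast)$, which is the adjunction computation you state in (iv) and does not depend on (iii). You should either cite this fact (it is essentially \cite[Lemma~9.1.5]{Neeman} up to duality) or prove it; as written, the justification is a non sequitur even though the conclusion and the surrounding strategy are sound.
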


\begin{proof}
Part (b) follows from (a) by considering opposite categories (then left 
adjoints become right adjoints and vice versa). 

Let us
consider part (a) and first prove the statements (i) and (ii) together.
Since $i_\ast$ is triangulated its left adjoint $i^\ast$ will automatically
be triangulated. 
Let us first define
$i^\ast$ on objects. Statement (ii) tells us what to do. For $X$ in $\mTT$ 
take the counit of the adjunction $(j_!,j^\ast)$ and complete this to an
exact triangle
\begin{equation}\label{trian}
j_! j^\ast X\stackrel{\varepsilon _X}{\to} X\stackrel{\eta'_X}{\to}
 \varphi X\to 
   j_! j^\ast\Sigma X
\end{equation}
in $\mTT$. 
By assumption, $j_!$ is fully faithful, hence the unit $\eta$ of the
adjunction $(j_!,j^\ast)$ is an isomorphism. As for all adjoint pairs,
the diagram 
\[
\xymatrix@M=.7pc{j^\ast X\ar[r]^-{\eta_{j^\ast X}}\ar[dr]_= 
& j^\ast j_! j^\ast X\ar[d]_\iso^{j^\ast(\varepsilon _X)}\\
& j^\ast X  }
\]
commutes \cite[Chapter~IV.1, Theorem~1]{MacLane}, 
so that $j^\ast (\varepsilon _X)$ is an isomorphism. Applying
$j^\ast$ to the triangle (\ref{trian}) shows that 
$\varphi X\in\ker j^\ast$.
Therefore we can define $i^\ast X$ by
$i_\ast i^\ast X =\varphi X$.

Given a map $f:X\to Y$ in $\mTT$, the axioms of a triangulated category 
guarantee the existence of a map $\bar f$
such that we get a map of exact triangles
\begin{equation}\label{map of exact triangles}
\xymatrix@M=.7pc{j_! j^\ast X \ar[r]^{\varepsilon_X} \ar[d]_{j_! j^\ast (f)}
& X \ar[r]^-{\eta'_X} \ar[d]_f & i_\ast i^\ast X \ar[r] \ar@{.>}[d]_{\bar f} 
& j_! j^\ast \Sigma X\ar[d]^{j_! j^\ast \Sigma f}\\
j_! j^\ast Y \ar[r]^{\varepsilon_Y} 
& Y \ar[r]^-{\eta'_Y} & i_\ast i^\ast Y \ar[r]  & j_! j^\ast \Sigma Y.
}
\end{equation}
Using exactly the same arguments as in Remark~\ref{rem}(4) one can
show that there is only one  map $\bar f$ such that the square in the middle
commutes, i.e.,
$\bar f \eta'_X = \eta'_Y f$.
Consequently, the assignment $f\mapsto \bar f$ is additive
and compatible 
with identities and
composition. Since $i_\ast$ is fully faithful we get a functor 
$i^\ast : \mTT\to\ker j_\ast$. To see that $(i^\ast , i_\ast )$ is an adjunction
it suffices to have a natural transformation (which is then the unit
of the adjunction) $X\to i_\ast i^\ast X$ for 
$X$ in $\mTT$ which is universal from $X$ to the functor
$i_\ast :\ker j_\ast \to \mTT$.
Our candidate is the map $\eta'_X$ defined by the triangle~(\ref{trian}).
It is natural by (\ref{map of exact triangles}). To check that $\eta'_X$ 
is universal
from $X$ to $i_\ast :\ker j_\ast \to \mTT$ 
let $X'\in\mTT$ and a map $X\to i_\ast X'$
be given. The composition
$j_! j^\ast X\to X\to i_\ast X'$ has zero as an adjoint map with respect
to the adjoint pair $(j_!,j^\ast)$ so it is itself zero. This gives us
a commutative diagram (of solid arrows)
\[
\xymatrix@M=.7pc{j_! j^\ast X \ar[r] \ar[d]
& X \ar[r]^-{\eta'_X} \ar[d] & i_\ast i^\ast X \ar[r] \ar@{.>}[d]_{i_\ast (h)} 
& j_! j^\ast \Sigma X\ar[d]\\
0 \ar[r] 
& i_\ast X' \ar[r]^-{=} & i_\ast X' \ar[r]  & 0
}
\]
which can be completed into a map of exact triangles via a map $i_\ast (h)$ 
for some map $h:i^\ast X \to X'$. As above it follows that $h$ is unique.
This shows that $\eta'$ is in fact the unit of an adjoint pair 
$(i^\ast ,i_\ast)$. The exactness of the triangle~(\ref{trian}) ensures
that the statement in (ii) is satisfied.

For part (iii) let $F:\mTT\to\mTT / \mTT'$ be the the canonical functor into
the Verdier  quotient and $\varphi = F j_!$. By the universal 
property of $F$ there exists a functor $\psi$ such that $\psi F = j^\ast$.
\[
\xymatrix@=15mm@M=.7pc{\mTT'\ar@<-.5ex>[r]_{i_\ast}&\mTT \ar@<-.5ex>[r]_{j^\ast}
\ar@<-.5ex>[l]_{i^\ast}\ar[rd]_F & \mTT'' \ar@<-.5ex>[l]_{j_!}
\ar@{.>}@<-.5ex>[d]_\varphi \\  && \mTT / \mTT'\ar@{.>}@<-.5ex>[u]_\psi }
\]
As $j_!$ is fully faithful the unit of the adjoint pair $(j_!,j^\ast)$ 
is an isomorphism and we can conclude that $\psi$ is a left inverse of 
$\varphi$:
\[
\psi\varphi = \psi F j_! = j^\ast j_! \iso \id _{\mTT''}
\]
Let us now apply $F$ to the exact triangle in statement (ii) of 
part (a) of the lemma so that we get an exact triangle
\[
F j_!j^\ast X \to F X \to 
F i_\ast i^\ast X \to F \Sigma j_! j^{\ast} X.
\]
Since $Fi_\ast i^\ast X \iso 0$ we have isomorphisms
\[
F\iso Fj_! j^\ast = \varphi j^\ast = \varphi \psi F
\]
and thus by the universal property of $F$ an isomorphism $\id _{\mTT / \mTT'}\iso
\varphi \psi$. 
This shows that $\varphi$ and $\psi$ are inverse triangulated equivalences.

For part (iv) note that $\ker i^\ast=\essim j_!$ can be proved
in exactly the same way as in Remark~\ref{rem}(8). To see 
$\essim j_! \subset \isotope{\perp}{}{}(\ker j^\ast)$ note that a map
$j_!X\to Y$ with $Y\in\ker j^\ast$ corresponds via the adjunction
$(j_!,j^\ast)$ to a map $X\to j^\ast Y =0$, which has to be the zero map.
Hence the map $j_!X\to Y$ is itself zero.

It now suffices to prove $\isotope{\perp}{}{}(\ker j^\ast)\subset\ker i^\ast$.
For $X\in\isotope{\perp}{}{}(\ker j^\ast)$ the unit $\eta_X:X\to i_\ast i^\ast X$
is zero because $i_\ast i^\ast X$ is in the essential image of $i_\ast$,
which is the same as the kernel of $j^\ast$ (this is again
proved as in Remark~\ref{rem}(8)).
Consider the commutative diagram
\[
\xymatrix@l@M=.7pc{i^\ast X 
& i^\ast i_\ast i^\ast X\ar[l]_-{\varepsilon_{i^\ast X}}\\
& i^\ast X \ar[u]_{i^\ast(\eta _X)}\ar[ul]^= }
\]
involving the unit and counit of the adjunction $(i^\ast, i_\ast)$.
As we have just seen, $i^\ast(\eta_X)$ is zero. Since the right adjoint
$i_\ast$ is fully faithful, the counit $\varepsilon_{i^\ast X}$ is an 
isomorphism. This implies $i^\ast X\iso 0$.
\end{proof}

The next proposition helps us to construct re\-colle\-ments when `the right
part' of a re\-colle\-ment is already given. Together with Remark~\ref{rem}(8)
it implies that, up to equivalence of triangulated categories,
the data of a re\-colle\-ment as in Definition~\ref{recollement} is equivalent
to the data of Proposition~\ref{myprop}.
\begin{prop}\label{myprop}
Let there be given a diagram 
\[
\xymatrix@M=.7pc{\mTT \ar[rr]^{j^{\ast}}
  && \mTT'' \ar@/_7mm/[ll]_{j_!}
  \ar@/^7mm/[ll]^{j_{\ast}}
}
\]
of triangulated categories such that
\begin{enumerate}
\item[\textnormal{(i)}] 
$(j_!,j^\ast,j_\ast)$ is an adjoint triple of 
triangulated functors,
\item[\textnormal{(ii)}] at least one of the 
functors $j_!$ and $j_\ast$ is fully faithful,
\end{enumerate}
and let $i_\ast:\ker j^\ast \to \mTT$ denote the full inclusion.
Then the diagram can be completed into a re\-colle\-ment 
\[
\xymatrix@M=.7pc{ \ker j^\ast\ar[rr]^{i_{\ast}}
  &&\mTT \ar@{.>}@/_7mm/[ll]_{i^{\ast}}
  \ar@{.>}@/^7mm/[ll]^{i^!}\ar[rr]^{j^{\ast}}
  && \mTT'' \ar@/_7mm/[ll]_{j_!}
  \ar@/^7mm/[ll]^{j_{\ast}}
}
\]
by functors $i^\ast$ and
$i^!$ which are unique up to isomorphism.
\end{prop}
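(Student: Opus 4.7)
The plan is to reduce the proposition to Lemma~\ref{mylemma}, applied in both its variants (a) and (b). Parts (a) and (b) of that lemma apply to the adjoint pairs $(j_!, j^\ast)$ and $(j^\ast, j_\ast)$ respectively, under the hypothesis that the outer functor in the pair is fully faithful. The key preliminary step is therefore to show that in an adjoint triple $(j_!, j^\ast, j_\ast)$ of triangulated functors, $j_!$ is fully faithful if and only if $j_\ast$ is.

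Suppose $j_!$ is fully faithful, i.e.~the unit $\id_{\mTT''}\to j^\ast j_!$ is a natural isomorphism. To show that the counit $j^\ast j_\ast \to \id_{\mTT''}$ is a natural isomorphism, I would use a Yoneda argument: for any $Y, Z\in\mTT''$ there are natural isomorphisms
\[
\Hom_{\mTT''}(Z, j^\ast j_\ast Y) \iso \Hom_\mTT(j_! Z, j_\ast Y) \iso \Hom_{\mTT''}(j^\ast j_! Z, Y)\iso \Hom_{\mTT''}(Z, Y),
\]
where the first uses $(j_!,j^\ast)$, the second uses $(j^\ast, j_\ast)$, and the third uses fully faithfulness of $j_!$. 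A direct unwinding of adjunctions shows that this composite is exactly postcomposition with the counit $j^\ast j_\ast Y \to Y$, so by Yoneda this counit is an isomorphism. The converse implication is dual.

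Given that both $j_!$ and $j_\ast$ are fully faithful, Lemma~\ref{mylemma}(a) applied to $(j_!, j^\ast)$ produces a left adjoint $i^\ast$ of $i_\ast$ together with the natural exact triangle
\[
j_! j^\ast X \to X \to i_\ast i^\ast X \to \Sigma j_! j^\ast X,
\]
and Lemma~\ref{mylemma}(b) applied to $(j^\ast, j_\ast)$ produces a right adjoint $i^!$ of $i_\ast$ together with the natural exact triangle
\[
i_\ast i^! X \to X \to j_\ast j^\ast X \to \Sigma i_\ast i^! X.
\]
The remaining axioms of Definition~\ref{recollement} are then immediate: condition (ii) holds since $i_\ast$ is by definition the inclusion of $\ker j^\ast$; in condition (iii), $i_\ast$ is a full inclusion, while $j_!$ and $j_\ast$ are fully faithful by the preliminary step; and the triangles in (iv) have just been produced. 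Finally, $i^\ast$ and $i^!$ are determined up to canonical natural isomorphism by the usual uniqueness of adjoint functors. The only real content of the argument is the bi-implication between fully faithfulness of $j_!$ and of $j_\ast$; beyond that the proposition is simply the combination of the two halves of Lemma~\ref{mylemma}.
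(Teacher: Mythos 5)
Your proof is correct, but it reaches the key intermediate fact by a genuinely different route than the paper. Both arguments agree that the real work is to pass from ``one of $j_!, j_\ast$ is fully faithful'' to ``both are,'' after which one simply applies Lemma~\ref{mylemma}(a) to $(j_!, j^\ast)$ and Lemma~\ref{mylemma}(b) to $(j^\ast, j_\ast)$. For this step the paper assumes $j_!$ is fully faithful, invokes Lemma~\ref{mylemma}(a)(iii) to identify $\mTT''$ with the Verdier quotient $\mTT/\ker j^\ast$, and then appeals to Remark~\ref{Keller/Neeman}(2) (that the right adjoint of a Verdier quotient functor is automatically fully faithful) to conclude $j_\ast$ is fully faithful. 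You instead give a direct, elementary Yoneda argument internal to the adjoint triple: chaining the two adjunction isomorphisms with fully faithfulness of $j_!$ gives $\mTT''(Z, j^\ast j_\ast Y)\iso\mTT''(Z,Y)$ naturally, a triangle-identity computation identifies this with postcomposition by the counit $j^\ast j_\ast Y\to Y$, and Yoneda then forces the counit to be an isomorphism. Your route is more self-contained and avoids both part (iii) of Lemma~\ref{mylemma} and the Verdier-quotient machinery, at the cost of redoing a standard fact about adjoint triples by hand; the paper's route is shorter given what has already been established. You also explicitly run through the remaining axioms of Definition~\ref{recollement}, which the paper leaves implicit; that is a harmless and arguably welcome addition.
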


\begin{proof}
As left resp.~right adjoints of $i_\ast$, the functors $i^\ast$ and
$i^!$ have to be unique, and we clearly have $j^\ast i_\ast =0$. Let us assume
$j_!$ is fully faithful (in case $j_\ast$ is fully faithful we could consider
opposite categories). By Lemma~\ref{mylemma}(a), parts (i) and (ii), we 
get the upper half of the re\-colle\-ment,
\[
\xymatrix@M=.7pc{ \ker j^\ast\ar[rr]^{i_{\ast}}
  &&\mTT \ar@{.>}@/_7mm/[ll]_{i^{\ast}}
  \ar[rr]^{j^{\ast}}
  && \mTT''.\ar@/_7mm/[ll]_{j_!}
  }
\]

Then, by part (iii) of the same Lemma,
$\mTT''$ is triangulated equivalent to $\mTT / \ker j^\ast$ and hence, by
Remark~\ref{Keller/Neeman}(2), $j_\ast$ is automatically fully faithful.
Now Lemma~\ref{mylemma}(b), parts (i) and (ii), 
applies and gives us also the lower part of the re\-colle\-ment,
\[
\xymatrix@M=.7pc{ \ker j^\ast\ar[rr]^{i_{\ast}}
  &&\mTT 
  \ar@{.>}@/^7mm/[ll]^{i^!}\ar[rr]^{j^{\ast}}
  && \mTT''.\ar@/^7mm/[ll]^{j_{\ast}}
}
\]
\end{proof}

\subsection{An example}
We will now give an example of a re\-colle\-ment arising from
finite localization in stable homotopy theory:

\begin{ex}
Throughout this example we will use the notions of stable homotopy category
and localization as in \cite{HPS}, see also Remark~\ref{Keller/Neeman}(3).
Let $\mCC$ be a stable homotopy category with smash product
$\sm$, internal Hom-functor $\Hom$, and unit $\mS$. 
Recall that a generating set $\mGG$
is part of the data of $\mCC$. Suppose that $\mAA$ is an essentially small
$\mGG$-ideal in $\mCC$, that is, $\mAA$ is a thick subcategory such that
$G\sm A\in \mAA$ whenever $G\in\mGG$ and $A\in\mAA$. Let $\mDD$ denote the
localizing ideal (i.e., localizing subcategory with $C\sm D\in \mDD$ whenever
$C\in\mCC$ and $D\in\mDD$) generated by $\mAA$. If all objects of $\mAA$ are
compact, then there exists a localization functor $L^f_\mAA$ on $\mCC$ whose
acyclics are precisely the objects of $\mDD$ \cite[Theorem~3.3.3]{HPS}.
This functor $L^f_\mAA$ 
is referred to as \emph{finite localization away from}~$\mAA$. 
Theorem~3.3.3 in \cite{HPS} also tells us that finite localization
is always smashing, that is, 
the natural transformation $L^f_\mAA \mS \sm - \to L^f_\mAA$ (which exists
for every localization, cf. \cite[Lemma~3.3.1]{HPS}) 
is an isomorphism. For the complementary
colocalization $C^f_\mAA$ one then has an isomorphism
$C^f_\mAA \iso C^f_\mAA\mS \sm -$.
In particular, $L^f_\mAA$, resp. $C^f_\mAA$, has a right adjoint
$C_\mAA=\Hom (C^f_\mAA \mS,-)$, resp. $L_A=\Hom (C^f_\mAA \mS,-)$.

Now suppose, in addition, all objects of $\mAA$ are strongly dualizable.
This means, the natural map $\HOM(A,\mS)\sm C \to \HOM (A,C)$ is an isomorphism
for all $A\in\mAA$ and all $C\in\mCC$. Roughly speaking,
an object $A$ is strongly dualizable if mapping out of $A$ is the
same as smashing with the (Spanier-Whitehead) dual of $A$.
Under these assumptions, by \cite[Theorem~3.3.5]{HPS},
the right adjoint functors $L_\mAA$ and $C_\mAA$ form also a 
localization-colocalization pair such that
\[
\ker L_\mAA = \essim C_\mAA = \ker C^f_\mAA = \essim L^f_\mAA=
 \mAA ^\perp ,
\]
(Note that the notation, which
we have adopted from \cite{HPS}, might be misleading:
the acyclics of $L_\mAA$ are not the objects of $\mAA$ 
but those of $\mAA^\perp$.)
We hence get a diagram
\[
\xymatrix@M=.7pc{\ker L_\mAA\ar@{^(->}[rr]
  &&\mCC \ar@/_7mm/[ll]_{L^f_\mAA}
  \ar@/^7mm/[ll]^{C_\mAA}\ar[rr]^{L_\mAA}
  && \essim L_\mAA \ar@/_7mm/[ll]_{C^f_\mAA}
  \ar@/^7mm/@{_(->}[ll]
}
\]
consisting of two adjoint triples because 
a localization functor can be regarded as a left adjoint for the inclusion of
the locals whereas a colocalization can be regarded as a right adjoint
for the inclusion of the colocals. Using Proposition~\ref{myprop} we can
conclude that this diagram is in fact a re\-colle\-ment.
\end{ex}

\section{Recollements of stable model categories}

In this section, we will use some facts on model categories of modules. These
are summarized in Section~A.1 of the Appendix.

\subsection{Reasonable stable model categories}

Every pointed model category $\mCC$ supports a suspension functor 
$\Sigma :\Ho\mCC\to\Ho\mCC$. 
This can, for example, be defined on objects by choosing a cofibrant
replacement $X^{\cofr}$ for $X$ in $\mCC$ and a cone 
of $X^{\cofr}$, that is, a factorization
\[
\xymatrix@M=.7pc{X^{\cofr}\ar[rr]\ar@{>->}[dr]&&\ast\\&C_{X^{\cofr}}
\ar[ru]_-\sim}.
\]
The suspension $\Sigma X$ is then defined as the cofiber of the cofibration
$X^{\cofr}\cof C_{X^{\cofr}}$,
that is, the pushout of the following diagram of solid arrows
\[
\xymatrix@M=.7pc{X^{\cofr}\ar[d]\ar@{>->}[r]&C_{X^{\cofr}}\ar@{.>}[d]\\
\ast\ar@{.>}[r] &\Sigma X .
}
\]
On the level of homotopy categories, this
construction 
becomes a well-defined functor. Also note that $\Sigma X$ is cofibrant.
This is because cofibrations are preserved by pushouts.
The model category is called \emph{stable} if
$\Sigma$ is an equivalence. In this case, 
$\Ho\mCC$ is a triangulated category
with coproducts where the suspension functor is just $\Sigma$. 
Instead of $\Ho\mCC(X,Y)$, we will usually
write $[X,Y]^{\Ho \mCC}$ or simply $[X,Y]$ for the abelian group
of all morphisms from $X$ to $Y$.
A Quillen functor between stable model categories induces a triangulated
functor \cite[Proposition~6.4.1]{Hovey} on the level of homotopy
categories, which is a triangulated equivalence if the Quillen functor is
a Quillen equivalence. 

Recall the following definition, see \cite[Definition~1.2]{Jorgensen}.

\begin{defn}
An object
$X$ of a triangulated category $\mTT$ is \emph{compact} if
\[
\mTT (X,-):\mTT\to\Ab
\]
preserves coproducts and \emph{self-compact}
if the restricted functor $\mTT (X,-)\!\mid _{\langle X\rangle }$ preserves 
coproducts.
\end{defn}

\begin{exs}\label{compactness}
\hspace{0mm}
\begin{enumerate}[(1)]
\item Using a result of Neeman \cite[Lemma~2.2]{Neeman-TheConnection}, 
one can show
that for a ring $R$, the compact objects in $\mD (R)$ are 
the perfect complexes, that is, the chain complexes which are 
quasi-isomorphic to a bounded complex of finitely generated projective
$R$-modules \cite[Theorem~3.8]{glasgow}.
\item Let $F:\mTT\to \mTT'$ be a  
functor between triangulated categories with coproducts.
Suppose $F$ preserves coproducts and is fully faithful.
If $C$ is a compact object in $\mTT$
then $F(C)$ is self-compact in $\mTT'$. To see this note that by Lemma 
\ref{essim}(ii) any family $(X_i)_{i\in I}$ in $\langle F(C)\rangle$ is
up to isomorphism of the form $(F(Y_i))_{i\in I}$ for 
$Y_i \in \langle C\rangle$.
This helps to construct self-compact objects which are not necessarily 
compact. For example $\mZ[\frac{1}{2}]$, the integers with $2$ inverted,
viewed as an object in $\mD(\mZ)$ is self-compact but not compact 
\cite[Example~1.8]{Jorgensen}.
\end{enumerate}
\end{exs}

In the following, we will consider `reasonable' 
stable model categories. 
\begin{defn}\label{reasonable}
By a \emph{reasonable} stable model category we mean a stable 
closed symmetric monoidal
model category $(\mCC, \sm, \mS)$ which satisfies the following
conditions:
\begin{enumerate}[(i)]
\item As a model category, 
$\mCC$ is cofibrantly generated \cite[Definition~2.1.17]{Hovey}.
\item All objects of $\mCC$ are small in the sense of 
\cite{SS00}, that is, every object is $\kappa$-small 
with respect to some cardinal $\kappa$.
\item The monoid axiom holds 
for $(\mCC, \sm, \mS)$ \cite[Definition~3.3]{SS00}.
\item The unit $\mS$ 
is cofibrant in $\mCC$ and a compact generator for $\Ho\mCC$.
\item The smashing condition holds \cite[Section~4]{SS00}, 
that is, for every monoid $R$ 
in $\mCC$ and every cofibrant $R$-module $X$ the
functor $-\sm_R X:\MODD R \to \mCC$ preserves weak equivalences.
\end{enumerate}
\end{defn}

In particular, all statements from 
Section~A.1 (in the Appendix) hold for the case
of reasonable stable model categories.

\begin{exs}\label{Verleihnix}
We are mainly interested in symmetric
spectra and chain complexes. Both form reasonable
stable model categories:
\begin{enumerate}[(1)]
\item Hovey, Shipley and Smith \cite{HSS} have shown that the category 
$\Sp$ of symmetric spectra of simplicial sets with the stable model structure
has a smash product $\sm$ with unit the sphere
spectrum $\mS$ such that $(\Sp , \sm ,\mS)$ is a closed symmetric monoidal
model category which is cofibrantly generated, has only small objects and
satisfies the monoid axiom and the smashing condition. 
The sphere spectrum $\mS$ is cofibrant and a compact generator.
Hence symmetric spectra form a reasonable stable model category.
Monoids
in $(\Sp , \sm ,\mS)$ are called \emph{symmetric ring spectra}. 
\item The category $\Ch (k)$ of unbounded chain complexes over some
commutative ground ring $k$ form a model category
with weak equivalences the quasi-isomorphisms and fibrations
the level-wise surjections \cite[Section~2.3]{Hovey}.
Together with the tensor product and the chain
complex $k[0]$ which is $k$ concentrated in dimension $0$ 
this is
a reasonable stable model category $(\Ch (k),\tensor, k[0])$.
\end{enumerate}
\end{exs}

\subsection{Model categories enriched over a reasonable stable model category}

Let from now on $(\mCC, \sm, \mS)$ be a fixed reasonable stable model
category.
The goal of this section is to prove Theorem~\ref{mainthm},
which gives a necessary and sufficient criterion
for the existence of a recollement with middle term $\mD(R)$, where
$R$ is a given monoid in $\mCC$.
A $\mCC$\emph{-model category} in 
the sense of \cite[Definition~4.2.18]{Hovey} is a model category
$\mMM$ together with a Quillen bifunctor 
$\tensor :\mCC \times \mMM \to\mMM$ which is associative and unital up to
natural and coherent isomorphism (to be precise, the natural 
coherent isomorphisms are part of the data of the
$\mCC$-model category).
In other words, $\mMM$ is enriched, tensored, and cotensored over $\mCC$
such that the tensor functor  $\tensor$ satisfies the pushout product axiom
\cite[Definition~4.2.1]{Hovey}. We will denote the enriched Hom-functor by
$\Hom_\mMM$. Since $\mCC$ is stable, the tensor functor is usually
denoted by $\sm$. But to distinguish it from the monoidal
product $\sm$ in $\mCC$, we will here use $\tensor$.
A $\Sp$-model category is usually called \emph{spectral} model category.
\begin{lemma}\label{Gutemine}
Every $\mCC$-model category is stable.
\end{lemma}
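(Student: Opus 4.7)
The strategy is to identify the suspension functor on $\Ho\mMM$ with the derived-tensor functor $(\Sigma \mS)\tensor^{\mathrm L}(-)$, where $\Sigma\mS$ denotes the suspension of the unit in $\Ho\mCC$, and then to exploit that $\Sigma\mS$ is $\sm$-invertible because $\mCC$ itself is stable. Both steps are essentially formal given the axioms of a reasonable stable model category and of a $\mCC$-model category; the only real content is unpacking the pushout-product and unitality coherence.

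First I would show that for every cofibrant $X\in\mMM$ there is a natural isomorphism $\Sigma_\mMM X \iso (\Sigma_\mCC\mS)\tensor X$ in $\Ho\mMM$. Pick a cone $\mS\cof C_\mS\weq\ast$ in $\mCC$ (possible since $\mS$ is cofibrant). Because $X$ is cofibrant, $-\tensor X:\mCC\to\mMM$ is a left Quillen functor; Ken Brown's lemma applied to the weak equivalence $C_\mS\weq\ast$ between cofibrant objects (the zero object is cofibrant in the pointed category $\mCC$) gives $C_\mS\tensor X\weq\ast$, while the pushout-product axiom ensures that $X=\mS\tensor X\cof C_\mS\tensor X$ is a cofibration. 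Hence $X\cof C_\mS\tensor X\weq\ast$ is a valid cone on $X$ in $\mMM$, and since $-\tensor X$ preserves colimits, its cofiber equals $(\Sigma_\mCC\mS)\tensor X$; this therefore computes $\Sigma_\mMM X$, naturally in $X$ after cofibrant replacement.

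Applying the same argument with $\mMM=\mCC$ yields a natural isomorphism $\Sigma_\mCC(-)\iso(\Sigma_\mCC\mS)\sm^{\mathrm L}(-)$ on $\Ho\mCC$. Since $\mCC$ is stable, $\Sigma_\mCC$ is a self-equivalence of $\Ho\mCC$, and by the standard characterization of invertible objects in a closed symmetric monoidal category this forces $\Sigma_\mCC\mS$ to be $\sm^{\mathrm L}$-invertible; let $T$ be a $\sm^{\mathrm L}$-inverse. The associativity and unitality of $\tensor^{\mathrm L}$, inherited from the coherence isomorphisms that are part of the $\mCC$-model structure, give natural isomorphisms
\[
T\tensor^{\mathrm L}\bigl((\Sigma_\mCC\mS)\tensor^{\mathrm L} X\bigr)\iso (T\sm^{\mathrm L}\Sigma_\mCC\mS)\tensor^{\mathrm L} X\iso \mS\tensor^{\mathrm L} X\iso X
\]
and similarly in the other order, so $\Sigma_\mMM\iso(\Sigma_\mCC\mS)\tensor^{\mathrm L}(-)$ is an equivalence of $\Ho\mMM$. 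The only step requiring real care is the first, namely verifying that the point-set identification truly descends to a natural isomorphism on homotopy categories; once this is in place, invertibility of $\Sigma_\mCC\mS$ gives stability of $\mMM$ for free.
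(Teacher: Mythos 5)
Your proposal is correct and follows essentially the same route as the paper: identify $\Sigma_\mMM$ with $(\Sigma_\mCC\mS)\tensor^{\mathrm L}(-)$ via the cone construction, then invoke stability of $\mCC$ to produce a $\sm^{\mathrm L}$-inverse of $\Sigma_\mCC\mS$, whence a quasi-inverse of $\Sigma_\mMM$. The one step you leave implicit that the paper handles explicitly is that $\mMM$ is \emph{pointed}, which is needed before $\Sigma_\mMM$ is even defined and before you can call the pushout of $X^{\cofr}\cof C_\mS\tensor X^{\cofr}$ over the initial object a ``cofiber''; the paper gets this by applying the left Quillen functor $-\tensor 1_\mMM$ to the map $\mS\to\ast$ in $\mCC$ to produce a map $1_\mMM\to 0_\mMM$, which is necessarily an isomorphism.
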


\begin{proof}
Let $\mMM$ be a $\mCC$-model category. Note first that $\mMM$ is pointed
since $\mCC$ is pointed. Namely if $0$ denotes 
the initial and $1$ the terminal 
object of $\mMM$ apply the left adjoint $-\tensor 1 :\mCC\to\mMM$ to
the map $\mS\to\ast$ in $\mCC$ and get a map $1\to 0$ in $\mMM$ which has
to be an isomorphism. 

We define the 1-sphere in $\mCC$ by $S^1=\Sigma\, \mS$ and 
claim that the suspension functor \linebreak
$\Sigma :\Ho\mMM\to \Ho\mMM$ is isomorphic to $S^1\tensor^L -:\Ho\mMM
\to\Ho\mMM$. (This left derived functor exists since $S^1=\Sigma\, \mS$
is cofibrant.)
Consider the diagram
\[
\xymatrix@M=.7pc{\mS\ar[rr]\ar@{>->}[dr]&&\ast \\&C_\mS \ar[ur]_-\sim}
\]
in $\mCC$ and apply $-\tensor X^{\cofr}$, where $X\in\mMM$. 
This is a left Quillen functor, so
it preserves the cofibration $\mS\cof C_\mS$ and the weak equivalence between
the cofibrant objects $C_\mS$ and $\ast$. Hence we get a diagram
\[
\xymatrix@M=.7pc{X^{\cofr}\iso\mS\tensor X^{\cofr}\ar[rr]\ar@{>->}[dr]&&\ast \\&C_\mS\tensor X^{\cofr} \ar[ur]_-\sim}
\]
from which we deduce that $\Sigma X\iso \cofiber \,(\mS\tensor X^{\cofr}\cof
C_\mS\tensor X^{\cofr})$ in $\Ho\mMM$. 
Now $-\tensor X^{\cofr}$ preserves cofibers. Thus
we have natural isomorphisms 
\[\Sigma X  \iso  \cofiber\, (\mS\cof C_\mS)\tensor X^{\cofr} 
  \iso  \Sigma\,\mS\tensor X^{\cofr}\\
  \iso S^1 \tensor^LX
\]
in $\Ho\mMM$ proving our claim.
Since $\mCC$ is stable we can choose a cofibrant object $S^{-1}$ in $\mCC$
such that $S^1\sm S^{-1}\iso\mS$ in $\Ho\mCC$. Then $S^{-1}\tensor^L -:
\Ho\mMM \to \Ho\mMM$ is a quasi-inverse for $\Sigma\iso S^1\tensor^L -:
\Ho\mMM \to \Ho\mMM$.
\end{proof}

As in Section~A.1 of this paper we consider for a monoid $R$ in a 
reasonable stable 
model category the model structure on $R \MMOD$ 
where the fibrations, resp.~weak equivalences, are exactly the fibrations,
resp.~weak equivalences, of the underlying objects in $\mCC$, 
\cite[Section~4]{SS00}. We denote the homotopy category of $R \MMOD$ 
by $\mD(R)$ and call it the \emph{derived} category of $R$.
Just as in the category $\mCC$ itself, one has the notion of
modules in a $\mCC$-category $\mMM$ over a monoid $R$ in $\mCC$.

\begin{ex}
If $T$ is a monoid in $\mCC$ then the category of right $T$-modules (in $\mCC$)
is a $\mCC$-model category
(and hence stable).
The Quillen bifunctor is given by the three functors in (\ref{bifunctors})
with $R=S=\mS$ (the first functor is tensor, the second cotensor, and
the third enrichment). Replacing $T$ by $T^{\op}$ shows that left $T$-modules
also form a $\mCC$-model category. A left module in the  
$\mCC$-model category $\MODD T$ over another monoid $R$ is the same
as an $R$-$T$-bimodule.
\end{ex}

\begin{lemma}\label{cpgen}
Let $R$ be a monoid in a reasonable stable model category $(\mCC, \sm, \mS)$.
Then $R$ is a compact generator for $\mD(R)$ and for $\mD(R^{\op})$.
\end{lemma}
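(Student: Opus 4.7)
The plan is to transport compactness and generator-ness of the unit $\mS$ in $\Ho\mCC$ (both of which hold since $\mCC$ is reasonable, by Definition~\ref{reasonable}(iv)) to $R$ in $\mD(R)$, via the derived free--forget adjunction. The argument for $\mD(R^\op)$ is formally identical, applied to right $R$-modules.

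First, I would observe that $R$ is cofibrant as a (left) $R$-module: $R\iso R\sm \mS$, and $R\sm -:\mCC\to R\MMOD$ is a left Quillen functor sending the cofibrant unit $\mS$. Its right adjoint, the forgetful functor $U:R\MMOD\to\mCC$, preserves all weak equivalences (they are created in $\mCC$ by the model structure recalled in Section~A.1) and preserves categorical coproducts (coproducts of modules are computed in $\mCC$ with diagonal $R$-action). Deriving this Quillen adjunction between stable model categories yields a triangulated adjunction with left adjoint $R\sm -:\Ho\mCC\to\mD(R)$ and right adjoint $U:\mD(R)\to\Ho\mCC$, and the adjunction bijection together with stability gives a natural isomorphism
\[
[\Sigma^n R, X]^{\mD(R)} \iso [\Sigma^n \mS, UX]^{\Ho\mCC}\qquad (n\in\mZ).
\]

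For the generator property, suppose $[\Sigma^n R, X]^{\mD(R)}=0$ for all $n$. Then $[\Sigma^n \mS, UX]^{\Ho\mCC}=0$ for all $n$, whence $UX\iso 0$ in $\Ho\mCC$ by generator-ness of $\mS$, forcing $X\iso 0$ in $\mD(R)$ as weak equivalences in $R\MMOD$ are created in $\mCC$.

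For compactness, I would combine the identification above with compactness of $\mS$ in $\Ho\mCC$. Using that $U$ preserves categorical coproducts and all weak equivalences, one obtains $U(\coprod^L_i X_i)\iso \coprod^L_i UX_i$ in $\Ho\mCC$, and therefore
\[
[R,\coprod^L_i X_i]^{\mD(R)}\iso [\mS,\coprod^L_i UX_i]^{\Ho\mCC}\iso \bigoplus_i [\mS, UX_i]^{\Ho\mCC}\iso \bigoplus_i [R, X_i]^{\mD(R)}.
\]
The one genuinely non-formal step is the assertion that $U$ preserves derived coproducts: concretely, if $X_i^c$ is a cofibrant replacement of $X_i$ in $R\MMOD$, then $\coprod_i UX_i^c$ computes $U(\coprod^L_i X_i)$, and for it to also represent $\coprod^L_i UX_i$ in $\Ho\mCC$ one needs arbitrary coproducts of weak equivalences in $\mCC$ to be weak equivalences. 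This is standard in the motivating examples of symmetric spectra and chain complexes, and in the general reasonable setting should follow from the combination of compact generation of $\Ho\mCC$ by $\mS$ with the smallness and cofibrant-generation assumptions.
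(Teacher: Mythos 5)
Your proof is essentially the same as the paper's: both rest on the natural isomorphism $[R,X]^{\mD(R)}\iso [\mS, UX]^{\Ho\mCC}$ coming from the derived free-forget (extension/restriction of scalars) adjunction, use this to transport compactness and the generator property of $\mS$ in $\Ho\mCC$ to $R$ in $\mD(R)$, and invoke the criterion that a compact object is a generator if and only if it detects zero objects. The coproduct-preservation step you single out as the one non-formal ingredient (that the forgetful functor sends derived coproducts to derived coproducts) is also present, tacitly, in the paper's chain of isomorphisms, so your caution is well placed but the two arguments coincide.
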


\begin{proof}
It suffices to consider the case of $\mD(R)$
since $R$ and $R^{\op}$ are the same as modules.
As $\mS$ is cofibrant in $\mCC$ we have an isomorphism $R\sm^L\mS\iso R$
in $\mD(R)$. The Quillen pair induced by extension 
and restriction of scalars gives us
then  an isomorphism 
\begin{equation}\label{trick}
[R,X]^{\mD(R)}\iso [\mS , X]^{\Ho \mCC}\, ,
\end{equation}
which is natural in $X\in\mD(R)$.
Using this we get
\[
\bigoplus_{i\in I} \left[R,X_i\right]^{\mD(R)}
\iso \bigoplus_{i \in I} [\mS,X_i]^{\Ho \mCC}
\iso \Bigl[\mS, \coprod_{i \in I}X_i\Bigr]^{\Ho \mCC}
\iso \Bigl[R,\coprod_{i\in I}X_i\Bigr]^{\mD(R)}
\]
for any family $(X_i)_{i \in I}$ of objects in $\mD(R)$, which shows the 
compactness of $R$.
 
For compact objects, one has the following characterization of being a 
generator 
(see \cite[Lemma~2.2.1]{SS03}). A compact object 
$P$ is a generator for a triangulated
category $\mTT$ with coproducts 
if and only if $P$ detects if objects are trivial, 
that is, $X\iso 0$ in $\mTT$ if and only
if $\mTT (P, \Sigma^n X) = 0$ for all $n\in \mZ$.
Let $[R, \Sigma ^n X]^{\mD(R)} = 0$ for all $n\in \mZ$. 
Using again the isomorphism (\ref{trick}),
\[
0=[R,\Sigma ^n X]^{\mD(R)} \iso [\mS,\Sigma ^n X]^{\Ho \mCC}
\]
which implies $X\iso 0$ because $\mS$ is a generator for $\mCC$. Thus
$R$ is a generator. 

\end{proof}

%


\subsection{A Quillen pair}

Let $\mMM$ be a $\mCC$-model category and $B$ a cofibrant and fibrant
object in $\mMM$. Then $E:=\Hom _\mMM (B,B)$ is a monoid in $\mCC$
and there
is an action $E\tensor B\to B$ of $E$ on $B$ given by the adjoint map
of the identity $E\stackrel{=}{\to} \Hom _\mMM(B,B)$ giving $B$ a left
$E$-module structure.


\begin{thm}\label{thm1}
Suppose that $\mMM$ is a 
$\mCC$-model category and $B$ a cofibrant and fibrant 
object in $\mMM$.
\begin{enumerate}
\item[\textnormal{(i)}] There is a Quillen pair 
\[
\xymatrix@M=.7pc{\MODD E\ar@<.5ex>[rr]^-{-\tensor_EB}
  &&\mMM.\ar@<.5ex>[ll]^-{\Hom_\mMM(B,-)}}
\]
\item[\textnormal{(ii)}] If $B$ is self-compact in $\Ho\mMM$
the restriction $i^!\!\mid _{\langle B\rangle}$
of the triangulated functor 
\[i^! =\RHom _\mMM (B,-) : \Ho\mMM\to\mD(E^{\op})\]
preserves coproducts. 
\item[\textnormal{(iii)}] If $B$ is self-compact in $\Ho\mMM$ 
the triangulated functor 
\[
i_\ast =-\tensor^L_EB :\mD(E^{\op})\to\Ho\mMM
\]
is fully faithful and has essential image
\[\essim i_\ast = \langle B\rangle.\]
\end{enumerate}
\end{thm}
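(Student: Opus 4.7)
The plan is to take the three parts in order, leveraging the derived adjunction from part (i) throughout.

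For (i), the $\mCC$-model category structure on $\mMM$ makes $\tensor\colon\mCC\times\mMM\to\mMM$ a Quillen bifunctor, so since $B$ is cofibrant, $-\tensor B\colon\mCC\to\mMM$ is a left Quillen functor. The monoid axiom and the results recalled in Section~A.1 provide a model structure on $\MODD E$ whose generating (trivial) cofibrations have the form $f\tensor E$ for $f$ a generating (trivial) cofibration of $\mCC$. The functor $-\tensor_E B$ sends $f\tensor E$ to $f\tensor B$, still a (trivial) cofibration in $\mMM$ since $B$ is cofibrant; hence $-\tensor_E B$ is left Quillen with right adjoint $\Hom_\mMM(B,-)$.

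For (ii), the derived adjunction gives, for every $Y\in\Ho\mMM$, a natural isomorphism
\[
[E,\RHom_\mMM(B,Y)]^{\mD(E^{\op})}\,\iso\,[E\tensor^L_E B, Y]^{\Ho\mMM}\,\iso\,[B,Y]^{\Ho\mMM},
\]
the last step because $E$ is cofibrant in $\MODD E$ and $B$ is cofibrant-fibrant, so $E\tensor^L_E B\iso B$. For a family $(X_i)_{i\in I}$ in $\langle B\rangle$, one has $\coprod_i X_i\in\langle B\rangle$, and combining compactness of $E$ in $\mD(E^{\op})$ (Lemma~\ref{cpgen}) with self-compactness of $B$ yields the chain of natural isomorphisms
\begin{align*}
\Bigl[E,\Sigma^n\coprod_i\RHom_\mMM(B,X_i)\Bigr]^{\mD(E^{\op})}
&\iso \coprod_i[B,\Sigma^n X_i]^{\Ho\mMM}\\
&\iso \Bigl[B,\Sigma^n\coprod_i X_i\Bigr]^{\Ho\mMM}\\
&\iso \Bigl[E,\Sigma^n\RHom_\mMM\Bigl(B,\coprod_i X_i\Bigr)\Bigr]^{\mD(E^{\op})}.
\end{align*}
Since $E$ is a generator of $\mD(E^{\op})$, the canonical comparison map $\coprod_i\RHom_\mMM(B,X_i)\to\RHom_\mMM(B,\coprod_i X_i)$ must be an isomorphism in $\mD(E^{\op})$.

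For (iii), first note that $\essim i_\ast\subseteq\langle B\rangle$: the class of $M\in\mD(E^{\op})$ with $i_\ast M\in\langle B\rangle$ is a localizing triangulated subcategory (since $i_\ast$ is a triangulated left adjoint) containing $E$, hence equals $\mD(E^{\op})$. For full faithfulness, by adjointness it suffices to show the unit $\eta_M\colon M\to\RHom_\mMM(B,i_\ast M)$ is an isomorphism for all $M$. By Lemma~\ref{useful} those $M$ form a localizing triangulated subcategory of $\mD(E^{\op})$; here part (ii), together with $\essim i_\ast\subseteq\langle B\rangle$, is precisely what ensures that $\RHom_\mMM(B,i_\ast(-))$ preserves coproducts. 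At $M=E$ the unit is the canonical map $E\to\Hom_\mMM(B,B)=E$, an isomorphism, so $\eta_M$ is an isomorphism throughout $\langle E\rangle=\mD(E^{\op})$. Applying Lemma~\ref{essim}(ii) to the now fully faithful functor $i_\ast$ on $\langle E\rangle=\mD(E^{\op})$ gives $\essim i_\ast=\langle i_\ast E\rangle=\langle B\rangle$. The main obstacle is (ii): without self-compactness one has no direct handle on $\RHom_\mMM(B,\coprod_i X_i)$, so the strategy is forced to test on the compact generator $E$ of $\mD(E^{\op})$ in order to convert the hom-level self-compactness of $B$ into a statement internal to $\mD(E^{\op})$.
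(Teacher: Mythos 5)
Your proof is correct and follows essentially the same strategy as the paper's: in (ii) and (iii) you reduce to the compact generator $E$, use $E\tensor^L_E B\iso B$ and the derived adjunction to translate between $[E,-]$ in $\mD(E^{\op})$ and $[B,-]$ in $\Ho\mMM$, invoke self-compactness of $B$ on $\langle B\rangle$, and then propagate via Lemma~\ref{useful} and Lemma~\ref{essim}. The only cosmetic difference is in (i), where you spell out the generating-(trivial-)cofibration verification that the paper delegates to \cite[Theorem~3.9.3(i)]{SS03}, and in (ii), where you test the comparison map directly against $\Sigma^n E$ rather than phrasing it as membership in a localizing subcategory — both variants are equivalent.
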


\begin{proof}
This theorem is a variant of \cite[Theorem~3.9.3]{SS03}, in which spectral
categories, i.e., $\Sp$-categories are considered, but in the proof 
only those properties of $\Sp$ are required which every reasonable 
stable model category possesses. Moreover, self-compact objects have not
been considered in \cite{SS03}. That is why we have to modify the proof,
especially for part~(iii).

Part~(i) is
simply a `one object version' of \cite[Theorem~3.9.3(i)]{SS03}. 
For $A\in\mMM$, the object $\Hom_\mMM(B,A)$ of $\mCC$ has a canonical
right action of $E=\Hom_\mMM(B,B)$ so that the functor $\Hom_\mMM(-,B)$
takes values in $\MODD E$.
If
$X$ is a right $E$-module then $X\tensor_EB$ is defined as
the coequalizer in $\mMM$ of
\[
\xymatrix@M=.7pc{(X\sm E)\tensor B \ar@<.5ex>[r]\ar@<-.5ex>[r] & X\tensor B,}
\]
where one map is induced by the right action of $E$ on $X$ and the other
by the associativity isomorphism $(X\sm E)\tensor B\iso X\tensor (E\tensor B)$
and the left action of $E$ on $B$.

For part~(ii) one has to check that for any family $(A_j)_{j\in J}$ of objects 
in $\langle B\rangle$ the canonical map
$\coprod_j\RHom _\mMM (B, A_j)\to\RHom _\mMM (B,\coprod_j A_j)$
is an isomorphism, or equivalently,
the induced map
\begin{equation}\label{Asterix}
\Bigl[X,\coprod _{j\in J}\RHom _\mMM (B, A_{j})\Bigr]^{\mD(\mEE^{\op})}\to
\Bigl[X,\RHom _\mMM (B,\coprod _{j\in J} A_{j})\Bigr]^{\mD(\mEE^{\op})}
\end{equation}
is a natural isomorphism for every 
$X$ in $\mD(E^{\op})$. But those $X$ for which
the map~(\ref{Asterix}) is an isomorphism for all~(!) 
families $(A_j)_{j\in J}$ of objects 
in $\langle B\rangle$ form a localizing 
triangulated subcategory of $\mD(E^{\op})$.
The right $E$-module $E$ is contained in this subcategory -- to see this, 
use 
the compactness of $E$ in $\mD(E^{\op})$ (Lemma~\ref{cpgen}), 
the derived adjunction
of the Quillen pair from part (i), and the 
self-compactness of $B$ in $\Ho\mMM$
Since $E$ is  a generator for $\mD(E^{\op})$ 
it now follows that the map (\ref{Asterix}) is always
an isomorphism.

For part~(iii) the proof of \cite[Theorem~3.9.3(ii)]{SS03} must be rearranged.
The point is that our $B$ is only self-compact, not necessarily compact.
We will give the details of the proof, the order is as follows.
\begin{enumerate}[(a)]
\item $\essim i_\ast \subset \langle B\rangle$
\item $i_\ast$ is fully faithful.
\item $\essim i_\ast \supset \langle B\rangle$
\end{enumerate}
Note that $i_\ast$, as a left adjoint, preserves coproducts.
Part (a) follows from Lemma~\ref{essim}(i) since $E$ is a generator
for $\mD(E^{\op})$ and $E\tensor^L_EB\iso B$ in $\Ho\mMM$. Part (a) 
implies that $i_\ast$
maps coproducts in $\mD(E^{\op})$ to coproducts in $\langle B\rangle$.
By part (ii) of this theorem, 
$i^!\!\mid _{\langle B\rangle}$ preserves coproducts as well
and so does the composition $i^! i_\ast$. A left adjoint is fully faithful
if and only if the unit of the adjunction is an isomorphism. Hence consider
for $X$ in $\MODD E$ the unit
\begin{equation}\label{unit}
X\to i^! i_\ast X,
\end{equation}
which is a natural transformation between coproduct preserving triangulated 
functors. By Lemma~\ref{useful} the unit will be an isomorphism if it is so
for $X=E$. Since $B$ is fibrant by assumption the unit is in this case the
isomorphism
\[E=\Hom _\mMM (B,B)\iso \RHom _\mMM (B,B)\iso\RHom _\mMM (B, E\tensor^L_E B)
=i^! i_\ast E.
\]
This shows (b). Now part (c) follows 
from Lemma~\ref{essim} (ii) because $i_\ast$ is full
by part (b). 
\end{proof}

Let $R$ be a monoid in $\mCC$ and $C$ a cofibrant left $R$-module
which is compact in $\mD(R)$. Set $F=\Hom _R(C,C)$ so that $C$ is
a left $(F\sm R)$-module. By Lemma~\ref{lemma2} we get a 
Quillen pair
\[
\xymatrix@M=.7pc{R \MMOD\ar@<-.5ex>[rr]_-{\Hom_R(C,-)}
  &&\MODD F ,\ar@<-.5ex>[ll]_-{-\sm_F C}}
\]
whose derived adjunction we will denote by
\[
\xymatrix@M=.7pc{\mD(R) \ar@<-.5ex>[rr]_-{j^\ast}
  &&\mD(F^{\op}). \ar@<-.5ex>[ll]_-{j_!}}
\]
Our goal is now to show that the right adjoint $j^\ast$ has itself
a right adjoint $j_\ast$. The idea is to imitate \cite[Setup 2.1]{Jorgensen}
where a `dual' module $C^\ast$ of $C$ is defined such that $j^\ast$,
the right derived of mapping out of $C$, is the same as the left derived
of tensoring with the dual $C^\ast$. To carry this out we must 
make the technical
assumption that $R$ is cofibrant in~$\mCC$. 
Note that $C$ being a cofibrant $R$-module also implies that
\[
\xymatrix@M=.7pc{R\MMODD R\ar@<-.5ex>[rr]_-{\Hom_R(C,-)}
  &&\MODD (F\sm R) ,\ar@<-.5ex>[ll]_-{-\sm_F C}}
\]
is a Quillen pair (Lemma~\ref{lemma2}). Hence we can define
\[
C^\ast =\Hom _R (C, R^{\fibr} )^{\cofr}\quad \textnormal{in}\quad \MODD 
(F\sm R).
\]
Here $R^{\fibr}$ denotes the fibrant replacement of $R$ in $R \MMODD R$
which comes from the functorial factorization, in particular,
the weak equivalence $R\weq R^{\fibr}$ is also a cofibration (we will need 
this in Proposition~\ref{prop1}). 
Similarly, $\Hom _R (C, R^{\fibr} )^{\cofr}$ is the cofibrant replacement 
of $\Hom _R (C, R^{\fibr} )$ in 
$\MODD (F\sm R)$.

\begin{prop}\label{prop1}
Suppose that $R$ is cofibrant and a monoid in $\mCC$, 
$C$ is a cofibrant and fibrant left $R$-module
which is compact in $\mD(R)$, and let $F$ and $C^\ast$ be defined
as above. Then there are triangulated functors
\[
\xymatrix@M=.7pc{
  \mD(R)   \ar[rr]^{j^{\ast}}
  && \mD(F^{\op}) \ar@/_7mm/[ll]_{j_!}
  \ar@/^7mm/[ll]^{j_{\ast}}
}
\]
given by $j_! =-\sm ^L_F C$, $j^\ast =\RHom _R(C,-)$, $j_\ast 
=\RHom_F(C^\ast ,-)$ such that
\begin{enumerate}
\item[\textnormal{(i)}] $j^\ast \iso C^\ast \sm ^L_R -$,
\item[\textnormal{(ii)}] $(j_!,j^\ast,j_\ast)$ is an adjoint triple,
\item[\textnormal{(iii)}] $j_!$ and $j_\ast$ are fully faithful.
\end{enumerate}
\end{prop}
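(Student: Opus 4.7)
The plan is to establish part~(i) first and then deduce parts~(ii) and~(iii) from it by formal manipulations with adjoint triples, using throughout that $R$ is a compact generator of $\mD(R)$ and $F$ is a compact generator of $\mD(F^{\op})$ (Lemma~\ref{cpgen}).

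For part~(i), I will construct a natural transformation
\[
\eta_X \colon C^{\ast}\sm^L_R X \to \RHom_R(C,X)
\]
of triangulated functors $\mD(R)\to\mD(F^{\op})$ induced by the evaluation pairing $\Hom_R(C,R^{\fibr})\sm_R(-)\to\Hom_R(C,-)$, and check its $F$-equivariance. Both functors preserve arbitrary coproducts: the source since it is a left derived tensor product, the target because $C$ is compact in $\mD(R)$. Hence by Lemma~\ref{useful} and $\mD(R)=\langle R\rangle$, it suffices to show $\eta_R$ is an isomorphism. Since $R$ is cofibrant in $\mCC$, the left side reduces to $C^{\ast}\sm^L_R R\iso C^{\ast}$; the right side equals $\RHom_R(C,R)\iso\Hom_R(C,R^{\fibr})$, which is weakly equivalent to its cofibrant replacement $C^{\ast}$, and the evaluation map realises the identification.

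For part~(ii), the pair $(j_!,j^{\ast})$ is the derived adjunction of the Quillen pair from Lemma~\ref{lemma2}. Using part~(i), rewrite $j^{\ast}\iso C^{\ast}\sm^L_R(-)$. The tensor-Hom adjunction $(C^{\ast}\sm_R-,\,\Hom_F(C^{\ast},-))$ between $R\MMOD$ and $\MODD F$ is a Quillen pair (again by Lemma~\ref{lemma2}, the hypothesis that $R$ is cofibrant in $\mCC$ being what transfers the cofibrancy of $C^{\ast}$ from $\MODD(F\sm R)$ to the properties needed for the $R$-module tensor); its derived pair is exactly $(j^{\ast},j_{\ast})$. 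For part~(iii), I will invoke the standard fact that in any adjoint triple $(L,M,R)$ the left adjoint $L$ is fully faithful if and only if the right adjoint $R$ is, since both statements amount respectively to $MLY\iso Y$ and $MRX\iso X$, and these are equivalent via
\[
\Hom(Y,MRX)\iso\Hom(LY,RX)\iso\Hom(MLY,X)
\]
combined with Yoneda. Thus it is enough to show that $j_!$ is fully faithful, i.e.\ that the unit $\id\to j^{\ast}j_!$ is an isomorphism. Both endofunctors of $\mD(F^{\op})$ are triangulated and coproduct preserving ($j_!$ as a left adjoint, $j^{\ast}$ by compactness of $C$), so by Lemma~\ref{useful} and $\mD(F^{\op})=\langle F\rangle$ the check reduces to $X=F$. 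There $j_!F\iso F\sm^L_F C\iso C$ and $j^{\ast}C=\RHom_R(C,C)\iso\Hom_R(C,C)=F$ because $C$ is cofibrant and fibrant, and a direct unwinding identifies the unit with this canonical isomorphism.

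The main obstacle is part~(i): producing the comparison $\eta$ as a genuinely natural, $F$-equivariant transformation at the point-set level and then deriving it correctly in the presence of the cofibrant and fibrant replacements entering the definitions of $C^{\ast}$ and $\RHom_R(C,-)$. Once that natural isomorphism is in hand, parts~(ii) and~(iii) follow formally from the generator properties of $R$ and $F$ together with Lemma~\ref{useful}.
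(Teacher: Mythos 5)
Your plan has the same overall shape as the paper's: construct a comparison $\eta\colon C^{\ast}\sm^L_R(-)\to\RHom_R(C,-)$, observe that both sides preserve coproducts (compactness of $C$ on the target side), reduce to $X=R$ via Lemma~\ref{useful}; then identify the two Quillen pairs to obtain the adjoint triple; then transfer fully-faithfulness across the triple. But the construction of $\eta$, which you rightly identify as the crux, does not work as you describe it. You say $\eta$ is ``induced by the evaluation pairing $\Hom_R(C,R^{\fibr})\sm_R(-)\to\Hom_R(C,-)$,'' but no such point-set map exists: the weak equivalence runs $R\to R^{\fibr}$, not the other way, so there is no way to collapse $\Hom_R(C,R^{\fibr})\sm_R X$ to $\Hom_R(C,X)$. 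The paper instead builds the \emph{adjoint} map $(C^{\ast}\sm^L_RX)\sm^L_FC\to X$: rewrite it as $(C^{\ast}\sm_F C)\sm_R X^{\cofr}$, apply the genuine evaluation $\Hom_R(C,R^{\fibr})\sm_F C\to R^{\fibr}$ (the smash being over $F$, not over $R$), and then invert the trivial cofibration $R\sm_R X^{\cofr}\to R^{\fibr}\sm_R X^{\cofr}$; that map is a weak equivalence because $-\sm_R X^{\cofr}\colon R\MMODD R\to R\MMOD$ is left Quillen, which needs $X^{\cofr}$ cofibrant in $\mCC$, which is precisely where the hypothesis that $R$ is cofibrant in $\mCC$ enters (Corollary~\ref{cor2}). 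That hypothesis appears in the statement of the proposition but plays no visible role in your sketch, which is a sign the argument as written would not detect that it is needed.

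For part~(iii) you take a slightly different route that is legitimate. You cite the general categorical fact that in an adjoint triple $(L,M,R)$ the outer adjoints are simultaneously fully faithful, and then verify $j_!$ is fully faithful directly on the compact generator $F$. The paper instead cites Theorem~\ref{thm1}(iii) for $j_!$ (which also covers the merely self-compact case, more than you need here) and then either cites Proposition~\ref{myprop} or does a direct representability check for $j_\ast$; Proposition~\ref{myprop} is exactly the triangulated incarnation of your adjoint-triple lemma, proved via Verdier quotients. One caveat on your parenthetical justification: the Hom-chain $\Hom(Y,MRX)\iso\Hom(MLY,X)$ plus Yoneda only yields \emph{some} natural isomorphism $MR\iso\id$; identifying the counit itself as an isomorphism requires one more step (a triangle-identity computation), so it is cleaner to just invoke Proposition~\ref{myprop} as the paper does.
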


\begin{proof}
Note first that 
we have a Quillen pair
\[
\xymatrix@M=.7pc{R \MMOD \ar@<.5ex>[rr]^-{C^\ast \sm_R -}
&&\MODD F \ar@<.5ex>[ll]^-{\Hom_F(C^\ast ,-)}}
\]
since $C^\ast$ is cofibrant as a right $(F\sm R)$-module.

Ad (i). 
The compactness of $C$ implies that $j^\ast=\RHom_R(C,-)$ preserves
coproducts. (This can be proved in the same manner as Theorem~\ref{thm1}(ii).)
Hence it 
suffices to give a natural transformation between the 
functors $j^\ast$ and $C^\ast\sm^L_R-$
which is an isomorphism in $\mD(F^{\op})$ for the generator $R$
(Lemma~\ref{useful}). For $X$ in $R \MMOD$ a map
\begin{equation}\label{map1}
C^\ast \sm _R^L X \to \RHom _R(C,X)
\end{equation}
in $\mD(F^{\op})$ corresponds via adjunction to a map
\begin{equation}\label{map2}
(C^\ast\sm _R^L X)\sm _F^LC\to X
\end{equation}
in $\mD(R)$. For $X^{\cofr}$ the cofibrant replacement
of $X$ in $R\MMOD$ we have isomorphisms
\[
(C^\ast\sm _R^L X)\sm _F^LC\iso (C^\ast\sm _R X^{\cofr})\sm _FC\iso
(C^\ast\sm _FC)\sm _R X^{\cofr}
\]
 in $\mD(R)$. Using the bimodule map $C^\ast=\Hom _R (C, R^{\fibr} )^{\cofr}
\to \Hom _R (C,R^{\fibr})$ (which is a weak equivalence,
but we do not need that here) and the evaluation 
\[
\Hom _R (C,R^{\fibr})\sm_F 
C\to R^{\fibr}
\]
we get a map
\[
(C^\ast\sm _FC)\sm _R X^{\cofr}\to R^{\fibr}\sm _R X^{\cofr}.
\]
The functor $-\sm _R X^{\cofr}: R \MMODD R \to R \MMOD $ 
is left Quillen
because $R$ being cofibrant in $\mCC$ implies that $X^{\cofr}$ is cofibrant
in $\mCC$ by Corollary
\ref{cor2}. Hence smashing with $X^{\cofr}$ preserves the trivial cofibration
$R\to R^{\fibr}$ and we get
\[
R^{\fibr}\sm _R X^{\cofr}\iso R\sm _R X^{\cofr} \iso X^{\cofr}\iso X
\]
in $\mD(R)$.  
Altogether this defines a natural map (\ref{map2}) and via
adjunction the desired map (\ref{map1}). Setting $X=R$ gives us the isomorphism
$C^\ast\sm _R^L R\iso C^\ast\iso \RHom _R(C,R)$ in $\mD(F^{\op})$.

Ad (ii). 
We know already that $(j_!,j^\ast)$ and $(C^\ast\sm^L_R-,j_\ast)$ are 
adjoint pairs. Hence 
it follows from part
(i) of this proposition that $(j_!,j^\ast,j_\ast)$ is an adjoint triple.
 

Ad (iii). In the homotopy category, 
$C$ is compact and in particular self-compact. Furthermore,
$C$ is cofibrant and fibrant by assumption. Hence $j_!$ is
fully faithful by Theorem~\ref{thm1}. The fact that the right 
adjoint $j_\ast$ of $j^\ast$ is also fully faithful can be  
deduced formally from Proposition~\ref{myprop}. Alternatively,
we check that
the counit of the adjoint pair $(j^\ast , j_\ast)$ is an isomorphism.
Let $X$ be  a right $F$-module. We show that $X$
and $j^\ast j_\ast X =\RHom _R(C, \RHom_F(C^\ast , X))$ 
represent the same contravariant functor on
$\mD(F^{\op})$. For $Z\in\MODD F$ we have natural isomorphisms
\begin{eqnarray*}
\left[Z,\RHom _R\left(C, \RHom_F(C^\ast , X)\right)\right]^{\Ho F^{\op}} 
&\iso 
& \left[Z\sm _F^LC, \RHom_F(C^\ast ,X)\right]^{\mD(R)} \\ 
& \iso & \left[(C^\ast \sm _R^L(Z\sm _F^L C),X\right]^{\mD(F^{\op})} \\ 
& \iso & \left[\RHom _R(C, Z\sm _F^L C),X\right]^{\mD(F^{\op})} \\ 
& \iso & \left[Z,X\right]^{\mD(F^{\op})}.
\end{eqnarray*}
The first and second are adjunction isomorphisms, the third uses part (i) of
this proposition
and the last is induced by the unit $Z\to\RHom _R(C, Z\sm _F^L C)$
of the adjoint pair $(j_!, j^\ast)$, which is an isomorphism because $j_!$
is faithful.
This gives an isomorphism $j^\ast j_\ast X\stackrel{\iso}{\to} X$ 
which is indeed the counit.
\end{proof}

\begin{lemma}\label{lemma4}
Suppose we are in the situation of Proposition~\ref{prop1}. Then
\[
C^\perp = \ker j^\ast .
\]
\end{lemma}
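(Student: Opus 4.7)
The plan is to unwind both sides against the compact generator $F$ of $\mD(F^{\op})$ via the derived adjunction $(j_!, j^\ast)$.

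First, I would observe that by Lemma~\ref{cpgen}, $F$ is a compact generator of $\mD(F^{\op})$, and by the characterization of generators recalled in the proof of Lemma~\ref{cpgen}, an object $Y$ of $\mD(F^{\op})$ is zero if and only if $[\Sigma^n F, Y]^{\mD(F^{\op})} = 0$ for all $n \in \mZ$. Applying this to $Y = j^\ast X = \RHom_R(C, X)$, we get that $X \in \ker j^\ast$ if and only if $[\Sigma^n F, j^\ast X]^{\mD(F^{\op})} = 0$ for all $n \in \mZ$.

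Next, I would use the derived adjunction $(j_!, j^\ast)$ to rewrite this condition. Since $j_!$ is a triangulated functor, it commutes with the suspension, so
\[
[\Sigma^n F, j^\ast X]^{\mD(F^{\op})} \iso [j_!(\Sigma^n F), X]^{\mD(R)} \iso [\Sigma^n j_!(F), X]^{\mD(R)}.
\]
Here $j_!(F) = F \sm^L_F C$, and since $F$ is cofibrant as a right $F$-module, this derived smash product is represented by $F \sm_F C \iso C$ in $\mD(R)$. Therefore $[\Sigma^n F, j^\ast X]^{\mD(F^{\op})} \iso [\Sigma^n C, X]^{\mD(R)}$ naturally in $X$.

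Combining the two steps, $X \in \ker j^\ast$ if and only if $[\Sigma^n C, X]^{\mD(R)} = 0$ for all $n \in \mZ$, which by Definition~\ref{perp} is exactly the condition $X \in C^\perp$. There is no real obstacle here; the only point requiring a moment of care is identifying $j_!(F)$ with $C$ in the derived category, which is immediate from the cofibrancy of $F$ over itself.
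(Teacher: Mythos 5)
Your proposal is correct and follows essentially the same approach as the paper: both use the compact generator $F$ of $\mD(F^{\op})$, the adjunction $(j_!,j^\ast)$, and the identification $j_!(F)\iso C$ to translate the vanishing condition. The paper merely presents the two inclusions separately, while you organize the same adjunction calculation as a single chain of equivalences.
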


\begin{proof}
Let $X\in C^\perp$. We want to show $X\in\ker j^\ast$, that is,
$\RHom _R(C,X)\iso 0$ in $\mD(F^{\op})$. 
Let us again use the characterization for generators of triangulated categories
with coproducts given in \cite[Lemma~2.2.1]{SS03}. 
Since $F$ is a generator for $\mD(F^{\op})$ it suffices to show
that $\RHom _R(C,X) \in F^\perp$. 
One has isomorphisms

\begin{eqnarray*}
\bigl[\Sigma ^n F, \RHom_ R (C,X)\bigr]^{\mD(F^{\op})}
& \iso & 
[\Sigma ^nF\sm ^L_FC,X]^{\mD(R)}\\
& \iso & \bigl[\Sigma^n(F\sm ^L_FC),X\bigr]^{\mD(R)}\\
& \iso & [\Sigma^nC,X]^{\mD(R)}\\
& \iso & 0,
\end{eqnarray*}
where the last one is because of our assumption, $X\in C^\perp$.
This shows $C^\perp \subset \ker j^\ast$.

If, on the other hand, $X\in\ker j^\ast$ we have
\begin{eqnarray*}
[\Sigma^nC,X]^{\mD(R)} 
& \iso & [C, \Sigma^{-n}X]^{\mD(R)}\\
& \iso & [F\sm ^L_FC, \Sigma^{-n}X]^{\mD(R)}\\
& \iso & \bigl[F,\RHom _R(C,\Sigma^{-n}X)\bigr]^{\mD(F^{\op})}\\
& \iso & \bigl[F,\Sigma^{-n}\RHom _R(C,X)\bigr]^{\mD(F^{\op})}\\
& \iso & 0,
\end{eqnarray*}
where the last isomorphism is induced 
by $\RHom _R(C,X)\iso 0$ in $\mD(F^{\op})$. Hence $X\in C^\perp$ and the
proof is complete.

\end{proof}

\subsection{The main theorem}
We are now able to prove the main theorem which gives a necessary
and sufficient criterion for the existence of a certain recollement.

\begin{thm}\label{mainthm}
Let $R$ be a monoid in the reasonable stable model category $\mCC$ 
and let $B$ and $C$ 
be left $R$-modules. Then the following are equivalent.
\begin{enumerate}
\item[\textnormal{(i)}] There is a recollement
\[
\xymatrix@M=.7pc{\mD(S)\ar[rr]^{i_{\ast}}
  &&\mD(R) \ar@/_7mm/[ll]_{i^{\ast}}
  \ar@/^7mm/[ll]^{i^!}\ar[rr]^{j^{\ast}}
  && \mD(T) \ar@/_7mm/[ll]_{j_!}
  \ar@/^7mm/[ll]^{j_{\ast}}
}
\]
where $S$ and $T$ are monoids in $\mCC$ such that $i_\ast (S)\iso B$
and $j_! (T)\iso C$.
\item[\textnormal{(ii)}] In the derived 
category $\mD(R)$, the module $B$ is self-compact,
$C$ is compact, 
$B^\perp\nolinebreak\cap\nolinebreak C^\perp =0$, and $B\in C^\perp$.
\end{enumerate}
\end{thm}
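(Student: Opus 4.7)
For the direction (i) $\Rightarrow$ (ii), the plan is to read off each of the four conditions from the formal structure of the recollement. Compactness of $C = j_!T$ in $\mD(R)$ will follow from Lemma~\ref{cpgen} (which says $T$ is compact in $\mD(T)$) combined with $j_!$ preserving compact objects, since its right adjoint $j^{\ast}$ preserves coproducts, having the further right adjoint $j_{\ast}$. Self-compactness of $B = i_{\ast}S$ will follow from Examples~\ref{compactness}(2): $i_{\ast}$ is fully faithful and preserves coproducts (it has a right adjoint $i^!$), and $S$ is compact in $\mD(S)$ by Lemma~\ref{cpgen}. The orthogonality $B \in C^\perp$ reduces to $j^\ast i_\ast = 0$ via the chain $[\Sigma^n C, B] = [\Sigma^n j_! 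T, i_\ast S] \iso [\Sigma^n T, j^\ast i_\ast S] = 0$. For $B^\perp \cap C^\perp = 0$, I would first identify $C^\perp = \ker j^\ast = \essim i_\ast$ (the first equality via the adjunction $[\Sigma^n j_! T, X] \iso [\Sigma^n T, j^\ast X]$ and $T$ being a generator of $\mD(T)$, the second from Remark~\ref{rem}(8)); then, writing $X \iso i_\ast Y$ and using $i^! i_\ast \iso \id$, the condition $X \in B^\perp$ becomes $[\Sigma^n S, Y] = 0$, which forces $Y \iso 0$ since $S$ generates $\mD(S)$.

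For the reverse direction (ii) $\Rightarrow$ (i), the plan is to build the right half of the recollement from $C$ via Proposition~\ref{prop1}, build a candidate left inclusion from $B$ via Theorem~\ref{thm1}, glue them together using Proposition~\ref{myprop}, and finally identify essential images. After replacing $R$ by a weakly equivalent cofibrant monoid (a Quillen equivalence of module categories, so $\mD(R)$ is unchanged) and taking cofibrant-fibrant replacements of $B$ and $C$ in $R\MMOD$, I would set $E = \Hom_R(B,B)$, $F = \Hom_R(C,C)$, and $S = E^{\op}$, $T = F^{\op}$. Proposition~\ref{prop1} then supplies an adjoint triple $(j_!, j^\ast, j_\ast)$ between $\mD(R)$ and $\mD(F^{\op})$ with $j_!, j_\ast$ fully faithful and $j_!(F) \iso C$; Lemma~\ref{lemma4} gives $\ker j^\ast = C^\perp$; and Proposition~\ref{myprop} assembles these data into a recollement with left term $\ker j^\ast$. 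In parallel, Theorem~\ref{thm1} applied to $\mMM = R\MMOD$ produces a fully faithful triangulated functor $i_\ast = -\tensor^L_E B : \mD(E^{\op}) \to \mD(R)$ with essential image $\langle B\rangle$, sending $E$ to $B$, together with its coproduct-preserving right adjoint $i^!$.

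The main obstacle is then to prove $\langle B\rangle = \ker j^\ast = C^\perp$; once this is in hand, Remark~\ref{rem}(5) permits substitution of $\mD(E^{\op}) = \mD(S)$ for $\ker j^\ast$ in the recollement via the equivalence induced by $i_\ast$, giving exactly the diagram required, with $i_\ast(S) \iso B$ and $j_!(T) \iso C$. The easy inclusion $\langle B\rangle \subset C^\perp$ uses $B \in C^\perp$ together with $C^\perp$ being localizing because $C$ is compact. For the reverse inclusion, given $X \in C^\perp$, I would form the triangle
\[
i_\ast i^! X \xrightarrow{\varepsilon_X} X \to Z \to \Sigma\, i_\ast i^! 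X
\]
coming from the counit of $(i_\ast, i^!)$, and show $Z \iso 0$. Applying $i^!$ and using that fully faithfulness of $i_\ast$ makes the unit $\eta : \id \weq i^! i_\ast$ an isomorphism, the triangle identity $i^!(\varepsilon) \circ \eta\, i^! = \id_{i^!}$ forces $i^!(\varepsilon_X)$ to be an isomorphism as well, whence $i^! Z \iso 0$; by the adjunction $[\Sigma^n B, Z] \iso [\Sigma^n S, i^! Z] = 0$, so $Z \in B^\perp$. Since $C^\perp$ is a triangulated subcategory containing $X$ (by assumption) and $i_\ast i^! X \in \langle B\rangle \subset C^\perp$, it also contains $Z$. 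Thus $Z \in B^\perp \cap C^\perp = 0$, so $X \iso i_\ast i^! X \in \langle B\rangle$, completing the identification and with it the construction.
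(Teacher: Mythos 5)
Your proof is correct and follows essentially the same route as the paper: for (i) $\Rightarrow$ (ii) you read off compactness, self-compactness and the orthogonality from the formal structure of the recollement exactly as the paper does (with a minor variation for $B^\perp \cap C^\perp = 0$, where you first identify $C^\perp = \essim i_\ast$ and write $X = i_\ast Y$, while the paper uses the recollement triangle of Definition~\ref{recollement}(iv)(b) directly — but the two arguments are interchangeable); for (ii) $\Rightarrow$ (i) you follow the same steps (cofibrant monoid replacement, Proposition~\ref{prop1} and Lemma~\ref{lemma4} for the right half, Proposition~\ref{myprop} to complete, Theorem~\ref{thm1} for $i_\ast$, then the counit-triangle argument identifying $\langle B\rangle = C^\perp$). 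No gaps.
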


\begin{proof}
(i) $\Rightarrow$ (ii)
For this implication, 
the proof of \cite[Theorem~3.4, (i)~$\Rightarrow$~(ii)]{Jorgensen} can
be translated literally. The details are as follows.
By definition, the triangulated functor $i_\ast$ is fully faithful and,
as a left adjoint, preserves coproducts. Since $S$ is compact
in $\mD(S)$ by Lemma~\ref{cpgen}, we can conclude that $B\iso i_\ast (S)$ 
is self-compact in $\mD(R)$ (cf.~Example~\ref{compactness}(2)).
To see the compactness of $C\iso j_!(T)$ use the adjunction isomorphism
\[
[j_!(T),-]^{\mD(R)}\iso [T,j^\ast (-)]^{\mD(T)},
\]
the fact that $j^\ast$ preserves coproducts (as a left adjoint), and
the compactness of $T$ in $\mD(T)$.

For $X\in B^\perp \cap C^\perp$ we have
\[
0  \iso  [\Sigma ^nB ,X]^{\mD(R)} \iso [\Sigma ^ni_\ast (S) ,X]^{\mD(R)}
\iso [\Sigma ^nS,i^!X]^{\mD(S)}
\]
and
\[
0 \iso [\Sigma ^nC ,X]^{\mD(R)} \iso [\Sigma ^nj_! (T) ,X]^{\mD(R)}
\iso [\Sigma ^nT,j^\ast X]^{\mD(T)}
\]
for each $n\in\mZ$, and this implies by \cite[Lemma~2.2.1]{SS03}
$i^! X=0$ and $j^\ast X=0$.
Using the exact triangle in Definition~\ref{recollement}(iv)(b)
we can conclude that $X=0$ and thus we get $B^\perp \cap C^\perp =0$.

It remains to show that $B\in C^\perp$. For each $n\in\mZ$ we have
\[
[\Sigma ^n C,B]^{\mD(R)} \iso [\Sigma ^n j_!(T),i_\ast (S)]^{\mD(R)}
\iso [\Sigma ^n T,j^\ast i_\ast (S)]^{\mD(T)} = 0\, ,
\]
where the last equation holds because of $j^\ast i_\ast =0$.

(ii) $\Rightarrow$ (i)
We want to use Proposition~\ref{prop1} and therefore need a monoid in $\mCC$
which is cofibrant in $\mCC$.
We can 
take a cofibrant replacement $R^{\cofr}$ of our monoid
$R$ in the model category of monoids in $\mCC$ using the
model structure of \cite[Theorem~4.1(3)]{SS00}. The same theorem 
tells us that $R^{\cofr}$ is cofibrant in $\mCC$. Schwede and Shipley have 
shown that whenever the smashing condition is satisfied
(see Definition~\ref{reasonable}; we need the smashing condition only for
this application)
a weak equivalence between monoids (in our case the weak equivalence 
$R^{\cofr}\stackrel{\sim}{\to}R$ ) induces a Quillen equivalence
between the module categories and in particular a triangulated equivalence
between the homotopy categories \cite[Theorem~4.3]{SS00}. 
The properties `compactness' and `self-compactness' as well as the 
`generating condition' in part (ii) of our theorem are of course preserved
by triangulated equivalences.
Hence we
can without loss of generality assume that $R$ is cofibrant in $\mCC$
(Remark~\ref{rem}(5)).
We can also assume that $C$ is cofibrant and fibrant in $R \MMOD$ (otherwise
we take a cofibrant and fibrant replacement). 

Now the proof of \cite[Theorem~3.4, (ii) $\Rightarrow$ (i)]{Jorgensen}
can be imitated.
Let $E=\Hom _R(B,B)$ and $F=\Hom _R(C,C)$. 
We apply Proposition~\ref{prop1} and get the right part 
\begin{equation}\label{rightpart}
\xymatrix@M=.7pc{
  \mD(R)   \ar[rr]^{j^{\ast}}
  && \mD(F^{\op}) \ar@/_7mm/[ll]_{j_!}
  \ar@/^7mm/[ll]^{j_{\ast}}
}
\end{equation}
of
a recollement. Let $\iota$ denote the inclusion of $C^\perp$ in
$\mD(R)$. By Lemma~\ref{lemma4}, $C^\perp$ is just
the kernel of $j^\ast$. Hence we can 
use Proposition~\ref{myprop} to complete
the diagram (\ref{rightpart}) into a recollement
\begin{equation}\label{Obelix}
\xymatrix@M=.7pc{C^\perp \ar[rr]^{\iota}
  &&\mD(R) \ar@{.>}@/_7mm/[ll]
  \ar@{.>}@/^7mm/[ll]\ar[rr]^{j^{\ast}}
  && \mD(F^{\op}) \ar@/_7mm/[ll]_{j_!}
  \ar@/^7mm/[ll]^{j_{\ast}}
}.
\end{equation}
Our next claim is
\begin{equation}\label{<B>=Csenkrecht}
C^\perp = \langle B\rangle .
\end{equation}
Since $B\in C^\perp$ by assumption, $\langle B\rangle$ is contained
in the localizing subcategory $C^\perp$.
Now let $X$ be in $C^\perp$. We apply Theorem~\ref{thm1} 
to the case $\mMM = R\MMOD$ and consider the counit $\varepsilon_X:
i_\ast i^! X\to X$ of the adjunction 
\[
\xymatrix@M=.7pc{\mD(E^{\op})\ar@<.5ex>[rr]^-{i_\ast}
  &&\mD(R)\ar@<.5ex>[ll]^-{i^!}}
.
\]
 As for any adjunction, we have a commutative diagram
\[
\xymatrix@M=0.7pc{i^!X \ar[r]^-{\eta_{i^!X}}_-\iso \ar[dr]_=
& i^! i_\ast i^! X \ar[d]^-{i^!(\varepsilon_X)} 
\\ & i^!X\,.}
\]
Here the unit $\eta_{i^!X}$ is an isomorphism since the left
adjoint $i_\ast$ is fully faithful by Theorem~\ref{thm1}(iii). 
Hence $i^!(\varepsilon_X)$ is also
an isomorphism. If we extend $\varepsilon_X$ to an exact triangle
\[
i_\ast i^! X\stackrel {\varepsilon_X}{\to} X\to Y\to \Sigma i_\ast i^! X
\]
and apply the triangulated functor $i^!$ we can conclude
that $\RHom _R(B,Y)=i^! Y=0$. This implies $Y\in B^\perp$ because
\[
[\Sigma^nB,Y]^{\mD(R)}\iso[E\tensor^L_EB,\Sigma^{-n}Y]^{\mD(R)}
\iso[E,\Sigma^{-n}\RHom_R(B,Y)]^{\mD(E^{\op})}=0.
\]
Moreover, 
the first term
$i_\ast i^! X$ of the exact triangle is in the essential
image of $i_\ast$, which is by Theorem~\ref{thm1}(iii) equal to 
$\langle B\rangle$. But we have
already shown that $\langle B\rangle\subset C^\perp$, so $i_\ast i^! X$
is in $C^\perp$, as the middle term $X$ of the exact triangle is by
assumption. It follows that $Y$ is also in $C^\perp$ and hence in
$C^\perp\cap B^\perp=0$. But $Y=0$ implies that $\varepsilon_X:
i_\ast i^! X\to X$ is an isomorphism. Thus $X$ is in $\essim i_\ast$, which
is the same as $\langle B \rangle$ by Theorem~\ref{thm1}(iii).
This proves equation (\ref{<B>=Csenkrecht}).

Since $i_\ast :\mD(E^{\op})\to \mD(R)$ is fully faithful it restricts 
to an equivalence of triangulated categories
\[
i_\ast :\mD(E^{\op}) \stackrel{\simeq}{\to} \essim i_\ast = C^\perp .
\]
Composing this equivalence with the recollement (\ref{Obelix}) yields a
recollement
\[
\xymatrix@M=.7pc{\mD(E^{\op})\ar[rr]^{i_{\ast}}
  &&\mD(R) \ar@/_7mm/[ll]_{i^{\ast}}
  \ar@/^7mm/[ll]^{i^!}\ar[rr]^{j^{\ast}}
  && \mD(F^{\op}) \ar@/_7mm/[ll]_{j_!}
  \ar@/^7mm/[ll]^{j_{\ast}}
}
\]
with $i_\ast (E)=E\tensor^L_EB\iso B$ and $j^!(F)=F\tensor_F^LC\iso C$.
This completes the proof.
\end{proof}

\begin{ex}
Let $\mCC$ be the model category of chain complexes over $\mZ$, 
$R=\mZ$, $B=\mZ[\frac{1}{2}]$ (i.e., the integers with $2$ 
inverted, cf.~Examples~\ref{compactness}(2)), and $C=\mZ/2$. 
Then it is verified
in \cite[Example~3.5]{Jorgensen} that
$B$ is self-compact, $C$ is compact, $B^\perp\cap C^\perp =0$, and 
$B\in C^\perp$, so that Theorem~\ref{mainthm} applies and yields a recollement
for $\mD(\mZ)$.
\end{ex}

\begin{rem}
Let us consider two special cases.
\begin{enumerate}[(1)]
\item If $\mCC$ is the model category of 
chain complexes (over some commutative ground
ring) Theorem~\ref{mainthm} is just the same as 
J{\o}rgensen's \cite[Theorem~3.4]{Jorgensen}.
\item Assume $B=0$ in the theorem. Then statement (i) reads as follows:
There is a triangulated equivalence
\[
\xymatrix@M=.7pc{\mD(R)\ar@<-.5ex>[r]_-{j^\ast}&\mD(T)\ar@<-.5ex>[l]_-{j_!}}
\]
where $T$ is a monoid in $\mCC$ such that $j_!(T)\iso C$.
Namely, $i_\ast(S)\iso 0$ means we are in the degenerate case where $S$ is 
zero, which implies that $\mD(S)$ is zero. Then, by Remark~\ref{rem}(6),
the recollement in (i) is the same as a triangulated equivalence $j^\ast$ with 
inverse $j_!$.

Consider the second statement of Theorem~\ref{mainthm}.
 Of course, the trivial module is self-compact and
$0^\perp$ is the whole category $\mD(R)$. Since a compact $C$ in $\mD(R)$
is a generator
if and only if $C^\perp =0$, statement (ii) reads as: $C$ is a
compact generator for the derived category $\mD(R)$.
\end{enumerate}
\end{rem}

\newpage
\part{Topological well generated categories}
We will characterize the topological well generated triangulated
categories.
Our main result, Theorem~\ref{characterization}, states that 
a topological triangulated category is well generated
if and only if it is a localization of the derived category of a spectral
category (alias ring 
spectrum with several objects)
such that the acyclics are generated
by a set. 
This is a topological version of Porta's characterization of 
the algebraic well generated
categories \cite[Theorem~5.2]{Porta}, 
as appropriate localizations of
the derived categories of DG categories, that is,
DG algebras with several objects.

In Section~3 we will recall some definitions from triangulated
category theory (such as
$\alpha$-small, $\alpha$-perfect, $\alpha$-compact, well generated),
which can all be found in \cite{Neeman}, although sometimes stated in a
different way. We make frequent use of the results in Neeman's book and 
we have decided to use Neeman's original definition of
well generated categories although it is not as easily stated as Krause's
characterization \cite{Krause}. 
We will write down a proof for the fact that the class
of well generated triangulated categories is stable under forming
subcategories that are generated by a set of objects 
and localizations whose acyclics
are generated by a set of objects, cf.~Proposition~\ref{Neeman's}.

Section~4 starts with giving the definitions of (symmetric) ring
spectra with several objects (`spectral categories') and module spectra
over such. The derived category of a spectral category is the
homotopy category of its modules.
We describe a Quillen pair defined by Schwede 
and Shipley in \cite{SS03} between a spectral model category $\mKK$
and the category of modules over some `endomorphism' spectral category 
$\mEE$ (depending
on the choice of a set $\mGG$ 
of certain objects in $\mKK$). The induced triangulated adjoint
functors (Lemma~\ref{summary}) form the basis for the proof of 
the characterization theorem.
We will define an appropriate homology functor from the derived
category of $\mEE$ into the abelian category of $\mGG$-modules which
reflects isomorphisms, i.e., the isomorphisms in the derived category of $\mEE$
are exactly the quasi-isomorphisms with respect to this homology functor.
In Section~4.2, we give a brief sketch of the proof for the characterization
theorem. The details are presented in Section~4.3.

The last section gives a lift of one implication of 
the main theorem from Section~4
to the level of model categories.
Using Hirschhorn's existence
theorem for Bousfield localizations \cite[Theorem~4.1.1]{Hirschhorn}, 
we show that a spectral model
category which has a well generated homotopy category is Quillen equivalent
(via a single Quillen functor) to a Bousfield localization of a model
category of modules over some spectral category (Theorem~\ref{lift}). 



\section{Well generated categories}

\subsection{Terminology}
Throughout this section we let $\mTT$ be a triangulated category
with (arbitrary set-indexed) coproducts. Let $\alpha$ be an infinite
cardinal. An $\alpha$-\emph{localizing} subcategory
of $\mTT$ is a triangulated subcategory which is closed under 
$\alpha$-coproducts, i.e., coproducts
of strictly less than $\alpha$ objects. A triangulated subcategory
is \emph{localizing} if it is closed under all coproducts.
For $\alpha\geq\aleph_1$, $\alpha$-localizing 
subcategories have countably infinite coproducts and thus are 
\emph{thick}, i.e., 
closed under direct summands \cite[Remark~3.2.7]{Neeman}.
We call a set $\mSS$ of objects in $\mTT$ a
\emph{weak generating} set for $\mTT$ if it is, 
up to isomorphism, closed under 
(de-)sus\-pen\-sions and any object $T\in\mTT$ is zero if and only if 
$\mTT (S,T)=0$ for all $S\in\mSS$. 
If $\mSS$ is a weak
generating set for $\mTT$, then a map $X\to Y$ in $\mTT$ is an isomorphism
if and only if the induced map $\mTT(S,X)\to\mTT(S,Y)$ is an isomorphism
for all $S\in\mSS$.
To see this, consider the cofiber $Z$ of $X\to Y$, which is zero if and only
if $X\to Y$ is an isomorphism, and use that $\mTT(S,-):\mTT\to\Ab$ is 
homological, i.e., it maps the triangle $X\to Y\to Z\to \Sigma X$ to
a long exact sequence of abelian groups.
By $\langle\mSS\rangle$, resp. $\alpha\loc\langle\mSS\rangle$, 
we denote the smallest localizing, resp. $\alpha$-localizing,
subcategory of $\mTT$
which contains a given set of objects $\mSS$. If $\mTT=\langle\mSS\rangle$
then $\mSS$ is called a \emph{generating} set for $\mTT$.
A generating set closed under (de-)sus\-pen\-sions 
is also a weak generating set for $\mTT$. 
(The converse holds
if $\mSS$ is $\aleph_1$-perfect \cite[Proposition~8.4.1]{Neeman}. For
example, any
set of compact generators is $\aleph_1$-perfect. We will give the definition
of $\alpha$-perfect below.)

Let $\alpha$ be an infinite cardinal. An object $T\in\mTT$ is
$\alpha$-\emph{small} if any map $T\to\coprod_{i\in I}X_i$
into an arbitrary coproduct in $\mTT$ factors through some sub-coproduct
\[
T\to\coprod_{i\in I'}X_i\incl\coprod_{i\in I}
X_i
\]
with  $|I'|<\alpha$, i.e., the cardinality of $I'$ is
strictly smaller than $\alpha$.
 
A class $\mSS$ of objects in $\mTT$ is called $\alpha$-\emph{perfect}
(for an infinite cardinal $\alpha$) if it satisfies the following.
\begin{enumerate}[(i)]
\item $0\in \mSS$
\item Any map $S\to\coprod_{i\in I}T_i$ in $\mTT$
with $S\in\mSS$ and $|I|<\alpha$ factors as
\begin{align}\label{composite}
S\to\coprod_{i\in I}S_i\overset{\coprod f_i}{\to}
\coprod_{i\in I}T_i \tag{$\ast$}
\end{align}
with $S_i\in\mSS$ and  maps $f_i : S_i
\to T_i$ in $\mTT$.
\item If a composite such as (\ref{composite}) vanishes, every map
$f_i$ can be factored as
$
S_i\overset{g_i}{\to}
S'_i
\overset{h_i}{\to}T_i
$
with $S'_i\in\mSS$ such that the composite $S\to\coprod_{i\in I}
S_i\to\coprod_{i\in I}S'_i$ already vanishes.
\end{enumerate}

We will now give the definitions of `$\alpha$-compactly 
generated' and `well generated'. The original 
definition is from Neeman's book 
\cite[Definition~8.1.6 and Remark~8.1.7]{Neeman}. 

\begin{defn}\label{alphacompact}
Let $\alpha$ be
an infinite cardinal which is regular (i.e., $\alpha$ cannot 
be written as the sum of less
than $\alpha$ cardinals, all strictly smaller than $\alpha$). 
A set of objects in a
triangulated category with coproducts is called
an $\alpha$-\emph{compact generating set} if it is a 
weak generating set which is $\alpha$-perfect and contains only
$\alpha$-small objects. 

An $\alpha$-\emph{compactly generated category}
is a triangulated category with coproducts which admits an $\alpha$-compact
generating set.
A triangulated category which is $\beta$-compactly generated for some
infinite regular cardinal $\beta$ is called \emph{well generated}.
\end{defn}

Let us recall some basic statements concerning smallness and compactness.
The proofs can be found in \cite[Chapters~3 and~4]{Neeman}.
By $\mTT^{(\alpha)}$
we denote the triangulated subcategory of $\alpha$-small objects in $\mTT$.
It is a thick triangulated subcategory, which is $\alpha$-localizing if 
$\alpha$ is regular.
For $\alpha\leq\beta$, we have $\mTT^{(\alpha)}\subset\mTT^{(\beta)}$.
An object $T$ is $\aleph_0$-small if and only if it is compact, that is, if
and only if
the covariant Hom-functor $\mTT(T,-):\mTT\to\Ab$ commutes with coproducts.

Every triangulated subcategory 
$\mSS$ of $\mTT$ contains a unique maximal $\alpha$-perfect class, denoted 
by $\mSS_\alpha$, which is a thick triangulated subcategory. 
We set $\mTT^\alpha=(\mTT^{(\alpha)})_\alpha$ and call the objects
of this thick triangulated subcategory $\alpha$-\emph{compact}. Hence
$\alpha$-compact objects are in particular $\alpha$-small. If $\alpha$ is
regular, the $\alpha$-compact objects form an $\alpha$-localizing 
subcategory. 
For $\alpha\leq\beta$, 
we have $\mTT^\alpha\subset\mTT^\beta$. 
Note that any class of objects in a triangulated category with coproducts
is $\aleph_0$-perfect (use that 
finite coproducts are also products). Hence $\aleph_0$-compact is the same
as $\aleph_0$-small, which
is the same as compact.
 
\begin{rem}\label{Wildschwein}
All objects of an $\alpha$-compact generating set $\mSS$ are $\alpha$-compact. 
Such an $\mSS$ is not only a weak generating set but
also a generating set in the sense that $\mTT=\langle\mSS\rangle$. 
This is because 
$\mSS$ is in particular $\aleph_1$-perfect by \cite[Lemma~4.2.1]{Neeman}.
Note also that $\mTT$ is $\alpha$-compactly generated if and only
if the subcategory $\mTT^\alpha$ 
of $\alpha$-compact objects has a small skeleton which is 
a weak generating set \cite[Remark~8.4.3]{Neeman}. Such a skeleton is then an
$\alpha$-compact generating set for $\mTT$. If $\mTT$ is well generated
then $\mTT^\beta$ is essentially small for all infinite $\beta$.
\end{rem}

The following 
characterization of well generated triangulated categories, which is
due to Krause \cite{Krause},
is easier stated than Neeman's original definition. 
A triangulated
category with coproducts is well generated 
if and only if there is a weak generating set
$\mSS$ consisting of $\alpha$-small objects for some cardinal $\alpha$ 
such that
the following holds: given any set-indexed
family of maps $X_i\to Y_i$, $i\in I$, with the
induced maps
$\mTT(S,X_i)\to \mTT(S,Y_i)$ being surjective for all $S\in\mSS$, then the
induced map
$\mTT(S,\coprod_{i\in I}X_i)\to\mTT(S,\coprod_{i\in I}Y_i)$ is also surjective.
Note that $\alpha$ can be chosen to be regular by enlarging it if necessary.
One important property of well generated categories $\mTT$
is that they satisfy
Brown representability (cf.~\cite[Proposition~8.4.2]{Neeman}). This
means, every homological functor $\mTT^{\op}\to\Ab$ which maps coproducts
to products is
naturally isomorphic to $\mTT(-,X)$ for some $X$.

\subsection{Subcategories and localizations of well generated categories}
Of course, all compactly generated ($=\aleph_0$-compactly generated)
triangulated categories are well generated. 
An example of a well generated category which is not compactly generated
is the 
derived category of sheaves on a non-compact, connected manifold 
of dimension $\geq 1$. This is discussed in \cite{Neeman-manifold}.
A whole class of examples comes from the following proposition.
For our definition of localization and
the comparison to Neeman's definition see Definition~\ref{loc}
and Remark~\ref{Keller/Neeman}(2).

\begin{prop}\label{Neeman's}
Let $\mTT$ be a well generated triangulated category and $\mTT'$ a 
localizing subcategory
which is generated by a set of objects. 
Then $\mTT'$ is well
generated.

Furthermore, the quotient $\mTT / \mTT'$ is a localization 
of $\mTT$ (and has in particular honest Hom-sets), and $\mTT / \mTT'$ is
also well
generated.
\end{prop}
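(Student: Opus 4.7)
The plan is to reduce both assertions to machinery in Neeman's book \cite{Neeman}, with Brown representability playing the pivotal role. Throughout, let $\alpha$ be a regular cardinal with respect to which $\mTT$ is $\alpha$-compactly generated, and let $\mSS$ denote the given set of generators of $\mTT'$, which we may assume is closed under (de)suspensions.

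For well generation of $\mTT'$, I would first enlarge $\alpha$ to a regular cardinal $\beta$ such that $\mSS\subset\mTT^\beta$; this is possible because $\mTT=\bigcup_\gamma\mTT^\gamma$ and $\mSS$ is a set. I would then verify directly from Definition~\ref{alphacompact} that $\mSS$ is a $\beta$-compact generating set for $\mTT'$. Weak generation is automatic from $\mTT'=\langle\mSS\rangle$. The properties of $\beta$-smallness and $\beta$-perfectness are inherited from $\mTT$ to $\mTT'$ because the inclusion $\mTT'\incl\mTT$ preserves coproducts, so every factorization of a map $S\to\coprod T_i$ produced inside $\mTT$ lives already in $\mTT'$ as soon as the $T_i$ do.

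For the quotient, the strategy is to apply Brown representability to the now-well-generated $\mTT'$: the homological, product-preserving functor $\mTT(\iota_\ast(-),X):(\mTT')^{\op}\to\Ab$ is representable for each $X\in\mTT$, producing a right adjoint $\iota^!$ to the inclusion $\iota_\ast:\mTT'\incl\mTT$. Completing the counit $\iota_\ast\iota^!X\to X$ to an exact triangle
\[
\iota_\ast\iota^!X \to X \to LX \to \Sigma\,\iota_\ast\iota^!X
\]
places $LX$ in $(\mTT')^\perp$, and the assignment $X\mapsto LX$ defines a left adjoint to the inclusion $R:(\mTT')^\perp\incl\mTT$. Invoking Lemma~\ref{mylemma}(b), this identifies the Verdier quotient $\mTT/\mTT'$ with $(\mTT')^\perp$, which is an honest subcategory of $\mTT$, and exhibits it as a localization of $\mTT$ in the sense of Definition~\ref{loc}. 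For well generation of $\mTT/\mTT'$, I would pick a $\beta$-compact generating set $\mGG$ of $\mTT$ (enlarging $\beta$ once more so this exists) and show that $L(\mGG)$, closed under (de)suspensions, is a weak generating set for $(\mTT')^\perp\simeq\mTT/\mTT'$ satisfying Krause's characterization: weak generation passes through the coproduct-preserving $L$, and the Krause surjectivity condition is transferred from $\mGG$ to $L(\mGG)$ via the adjunction $\mTT/\mTT'(L(G),-)\iso\mTT(G,R(-))$.

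The main obstacle is this last step, verifying the Krause surjectivity condition for $L(\mGG)$ in the quotient. The quotient functor $L$ preserves coproducts, but its right adjoint $R$ does not, so coproducts in $\mTT/\mTT'$ are not simply the restrictions of those in $\mTT$ but involve an additional application of $L$ to the coproducts of the $R$-images. Pinning down this discrepancy and the accompanying cardinal bookkeeping is the heart of the argument; it is precisely the hypothesis that $\mTT'$ is generated by a \emph{set} (so that the acyclics are cardinally controlled) which makes this step go through, and is the essence of Neeman's analysis of Bousfield localizations of well generated categories in Chapters~7 and~9 of \cite{Neeman}.
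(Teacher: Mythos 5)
There are two genuine gaps in your proposal, one in each half, and they share a common cause: you never invoke the key input, Neeman's Theorem~4.4.9, which is what the paper's proof is built around.

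For the well generation of $\mTT'$: you claim that once $\mSS\subset\mTT^\beta$, the set $\mSS$ itself is a $\beta$-compact generating set for $\mTT'$, with $\beta$-perfectness being ``inherited.'' This is false. Being a subset of the $\beta$-perfect class $\mTT^\beta$ does not make $\mSS$ itself $\beta$-perfect: condition (ii) of $\beta$-perfectness requires a map $S\to\coprod_{i\in I}T_i$ with $S\in\mSS$ to factor through a coproduct $\coprod_{i\in I}S_i$ with $S_i\in\mSS$ — not merely $S_i\in\mTT^\beta$. A generic set of $\beta$-compact objects does not have this closure property. (Your argument that a factorization witnessed in $\mTT$ already lives in $\mTT'$ is correct, but it addresses the wrong question: the issue is whether a factorization through $\mSS$ exists at all, not in which ambient category it lives.) The paper instead works with $(\mTT')^\beta$ — the full subcategory of $\beta$-compact objects of $\mTT'$, which is $\beta$-perfect by construction — and uses Theorem~4.4.9 to identify $(\mTT')^\beta$ with $\beta\loc\langle\mSS'\rangle$ and Proposition~3.2.5 to conclude it is essentially small, so that a skeleton can serve as the $\beta$-compact generating set.

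For the quotient, the localization step via Brown representability is essentially a hand re-derivation of Neeman's Proposition~9.1.19, which the paper cites directly; that part is fine in spirit, though you should also check that the cone construction $X\mapsto LX$ is genuinely functorial and triangulated (this needs a uniqueness argument as in Remark~\ref{rem}(4) or Lemma~\ref{mylemma}). The real problem is the last step, which you flag yourself: establishing $\alpha$-smallness of $L(\mGG)$ in $\mTT/\mTT'$ and the Krause surjectivity condition. Because the right adjoint $R$ of $L$ does not preserve coproducts, the coproduct in $\mTT/\mTT'$ is $L\bigl(\coprod RX_i\bigr)$, so the adjunction $[\,L(G),\coprod X_i\,]\iso[\,G,RL(\coprod RX_i)\,]$ has an extra $RL$ in the way, and the $\beta$-smallness of $G$ in $\mTT$ gives you no direct control. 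Acknowledging that ``this is the essence of Neeman's analysis'' is not a proof. The paper sidesteps this entirely by quoting the third conclusion of Theorem~4.4.9: the canonical functor induces an equivalence $\mTT^\beta/(\mTT')^\beta\simeq(\mTT/\mTT')^\beta$. This single statement does all the cardinal bookkeeping you defer, giving both the essential smallness of $(\mTT/\mTT')^\beta$ and the identification needed to show its skeleton generates (via Lemma~\ref{essim}). Without Theorem~4.4.9 or an equivalent result, your outline does not close.
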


This proposition does not appear in Neeman's book in this general form, 
so we will give a proof for it. Using the 
tools given in \cite{Neeman}, this will not be
hard although somewhat technical. Recall from Section~1
that the essential image 
$\essim F$ of a triangulated
functor $F:\mTT\to\mTT'$ is the full subcategory consisting of all
objects which are, up to isomorphism, in the image of $F$. It is a
triangulated subcategory if $F$ is full, and a localizing subcategory if,
in addition, $F$ is a coproduct preserving functor between triangulated
categories with coproducts.

\begin{proof}[Proof of Proposition~\ref{Neeman's}]
Let $\mSS'$ be a set with $\mTT'=\langle\mSS'\rangle$. Since
$\mTT$ is well generated, we have $\mTT=\bigcup_\alpha\mTT^\alpha$,
where $\alpha$ runs through all (infinite) regular cardinals 
\cite[Proposition~8.4.2]{Neeman}.
Hence 
\[
\coprod_{S'\in\mSS'}S'\in\mTT^{\alpha_1}
\]
for some regular $\alpha_1$. But $\mTT^{\alpha_1}$ is thick, so we have
$\mSS'\subset\mTT^{\alpha_1}$. Since $\mTT$ is well generated, there exists
an $\alpha_2$-compact generating
set $\mSS$ for some  regular $\alpha_2$.  
If we put $\alpha=\max (\aleph_1 , \alpha_1 , \alpha_2 )$, we have
\[
\mSS\subset\mTT^\alpha,\quad \mSS'\subset\mTT'\cap\mTT^\alpha,\quad\mTT'=
\langle \mSS'\rangle,\quad \textrm{and}\quad\mTT=\langle \mSS\rangle .
\]
That means 
we are in the situation assumed in \cite[Theorem~4.4.9]{Neeman}. This
theorem tells us that for all regular $\beta\geq\alpha$,
\[
\beta \textrm{-loc}\langle\mSS'\rangle =(\mTT')^\beta ,\quad 
\beta \textrm{-loc}\langle\mSS\rangle =\mTT^\beta ,
\]
and the canonical functor 
$\mTT^\beta /(\mTT')^\beta\to\mTT /\mTT'$ factors over an equivalence
\[
\mTT^\beta / (\mTT')^\beta \overset{\simeq}{\to} (\mTT / \mTT')^\beta .
\]
(We need $\alpha\geq\aleph_1$ only for this last equivalence.)

By \cite[Proposition~3.2.5]{Neeman}, 
$\beta\loc\langle\mSS'\rangle$ is essentially small
for all infinite $\beta$. Consequently, $(\mTT')^\beta$ 
is also essentially small for all infinite $\beta$: namely,
for all regular $\beta \geq\alpha$ we have 
$(\mTT')^\beta=\beta\loc\langle\mSS'\rangle$ and for an
arbitrary infinite $\beta$ there exists a regular $\beta'\geq\alpha$
with $\beta\leq\beta'$ and hence $(\mTT')^\beta\subset(\mTT')^{\beta'}$.
Let $\Sk(\mTT')^\beta$ denote a skeleton of $(\mTT')^\beta$.
From $\mSS' \subset \beta \textrm{-loc}\langle\mSS'\rangle =(\mTT')^\beta$
for $\beta\geq\alpha$ we can deduce 
$\mTT' = \langle \mSS'\rangle \subset \langle\Sk(\mTT')^\beta
\rangle$,
hence $\mTT'=\langle\Sk(\mTT')^\beta\rangle$.
This
shows $\mTT'$ is $\beta$-compactly generated for 
$\beta\geq\alpha$ (cf.~Remark~\ref{Wildschwein}) and 
thus well generated.

In particular,  $\mTT'$ satisfies Brown representability. 
By \cite[Proposition~9.1.19]{Neeman}, this implies that $\mTT /\mTT'$
is a localization of $\mTT$ (and hence has honest Hom-sets 
\cite[Remark~9.1.17]{Neeman}).

By Remark~\ref{Wildschwein}, since $\mTT$ is well generated, 
$\mTT^\beta$ is essentially small -- and so is the quotient
$(\mTT / \mTT')^\beta \simeq \mTT^\beta / (\mTT')^\beta$ for any regular
$\beta\geq\alpha$ and
hence for all infinite $\beta$. We now want to show that 
$\Sk(\mTT / \mTT')^\beta$
generates $\mTT / \mTT'$ for $\beta\geq\alpha$. Let $q:\mTT\to\mTT /\mTT'$
denote the canonical triangulated functor into the quotient. Since the
functor $\mTT^\beta / (\mTT')^\beta \to (\mTT / \mTT')^\beta$
is an equivalence, every object of $\Sk(\mTT /\mTT')^\beta$ 
is, up to isomorphism,
of the form $q(X)$ for some $X\in\Sk\mTT^\beta$, so that we have $\langle 
\Sk(\mTT /\mTT')^\beta\rangle = \langle q(\Sk\mTT^\beta)\rangle$. 
Moreover, since
$\mTT /\mTT'$ is a localization, the functor $q$ preserves coproducts
and we can apply Lemma~\ref{essim}. Hence we have
$\langle q(\Sk\mTT^\beta)\rangle\supset\essim 
q\!\mid _{\langle \Sk\mTT^\beta\rangle}$. 
Furthermore, $\langle\Sk\mTT^\beta\rangle=\mTT$ 
because $\mTT$ is in particular $\beta$-compactly generated, and hence
$\essim q\!\mid _{\langle\Sk\mTT^\beta\rangle}=\essim q\!\mid _\mTT=\mTT /\mTT'$.
Altogether, what we get is $\langle
\Sk(\mTT /\mTT')^\beta\rangle \supset
\mTT /\mTT'$ and thus $\langle 
\Sk(\mTT /\mTT')^\beta\rangle =
\mTT /\mTT'$. 
This shows that $\mTT /\mTT'$ is well generated.
\end{proof}

As an immediate consequence of Proposition~\ref{Neeman's}, appropriate
localizations 
of the homotopy category of (symmetric) spectra, which is compactly generated,
are well generated and hence satisfy Brown representability. An example of
a triangulated category which is not well generated is the opposite category
of a non-trivial compactly generated category. This (and other `non-examples')
can be found in \cite[Appendix~E]{Neeman}. The older result of Boardman
\cite{MR0268887}
which says that the stable homotopy category is not self-dual 
can be
regarded as a consequence of this fact.

The following lemma will be useful in Section~4.

\begin{lemma}\label{Methusalix}
Let $\mTT$ be a well generated triangulated category and $\mWW$
a set of maps in $\mTT$ which is, up to isomorphism, closed under
(de-)sus\-pen\-sions, let $\mSS$ be a set consisting of one cofiber
for each map in $\mWW$, and let $\mWW\loc$ consist of
all $X\in\mTT$ for which the induced map
$f^\ast:\mTT(B,X)\to\mTT(A,X)$ is an 
isomorphism for all $f:A\to B$ in $\mWW$.

Then there exists a localization of $\mTT$ with acyclics $\langle\mSS\rangle$
and locals $\mWW\loc$.

\end{lemma}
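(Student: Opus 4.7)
The approach is to form the Verdier quotient $\mTT / \langle\mSS\rangle$, use Proposition~\ref{Neeman's} to recognize it as a genuine localization of $\mTT$ (which is possible because $\mTT$ is well generated and $\langle\mSS\rangle$ is a localizing subcategory generated by the set $\mSS$), and then identify the subcategory of local objects with $\mWW\loc$ via Lemma~\ref{mylemma}(b)(iv).

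The first step is to establish the equality $\mWW\loc=\langle\mSS\rangle^{\perp}$. For the inclusion $\langle\mSS\rangle^{\perp}\subset\mWW\loc$, fix $f:A\to B$ in $\mWW$ with cofiber $S\in\mSS$; applying $\mTT(-,X)$ to the triangle $A\to B\to S\to\Sigma A$ yields a long exact sequence in which $f^{*}$ is flanked by $\mTT(S,X)$ and $\mTT(\Sigma^{-1}S,X)$, so vanishing of both groups for $X\in\langle\mSS\rangle^{\perp}$ forces $f^{*}$ to be an isomorphism. For the reverse inclusion let $X\in\mWW\loc$. Since $\mWW$ is closed under integer (de-)suspensions, so is $\mSS$ up to isomorphism, and therefore $(\Sigma^{n}f)^{*}$ is an isomorphism for every $n\in\mZ$. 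A brief diagram chase in the long exact sequence shows that $f^{*}$ being iso kills the map $\mTT(S,X)\to\mTT(B,X)$, while $(\Sigma^{-1}f)^{*}$ being iso makes the connecting map $\mTT(\Sigma^{-1}S,X)\to\mTT(\Sigma^{-1}B,X)$ injective; combining these two observations at all shifts gives $\mTT(\Sigma^{n}S,X)=0$ for all $n$ and $S\in\mSS$, i.e.\ $X\in\mSS^{\perp}$. Finally, the class $\{Y\in\mTT\mid\mTT(\Sigma^{n}Y,X)=0\text{ for all }n\}$ is localizing and contains $\mSS$, hence $\mSS^{\perp}=\langle\mSS\rangle^{\perp}$.

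With this identification in hand, Proposition~\ref{Neeman's} applied to $\mTT'=\langle\mSS\rangle$ provides a localization functor $j^{*}:\mTT\to\mTT/\langle\mSS\rangle$ with fully faithful right adjoint $j_{\ast}$ and $\ker j^{*}=\langle\mSS\rangle$. Lemma~\ref{mylemma}(b)(iv) then identifies the essential image of $j_{\ast}$, that is, the subcategory of local objects, with $(\ker j^{*})^{\perp}=\langle\mSS\rangle^{\perp}=\mWW\loc$, while the acyclics are $\ker j^{*}=\langle\mSS\rangle$ by construction.

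The only genuinely non-formal point is the two-step long exact sequence argument in the second paragraph (the ``zero-and-injective forces zero'' chase), which crucially uses that $\mWW$ is closed under both suspensions and desuspensions; everything else is bookkeeping with results already established in the paper.
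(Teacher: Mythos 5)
Your proof is correct and takes essentially the same approach as the paper: identify $\mWW\loc$ with $\langle\mSS\rangle^\perp$ by chasing the long exact sequence obtained by applying $\mTT(-,X)$ to the cofiber triangles, and then invoke Proposition~\ref{Neeman's} together with Lemma~\ref{mylemma}(b)(iv). The only small slip is in the attribution inside the diagram chase: it is $f^{*}$ being surjective that forces $\mTT(\Sigma^{-1}S,X)\to\mTT(\Sigma^{-1}B,X)$ to be injective, while $(\Sigma^{-1}f)^{*}$ being injective forces that same map to be zero -- but since every integer suspension of $f$ induces an isomorphism, this misattribution is harmless and the conclusion $\mTT(\Sigma^{n}S,X)=0$ stands.
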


\begin{proof}
By Proposition~\ref{Neeman's}, the quotient $\mTT / \langle\mSS\rangle$
is a localization and it has $\langle\mSS\rangle$ as subcategory of acyclics.
It remains to show that $\mWW\loc$ is the subcategory of local objects.
By Lemma~\ref{mylemma}(b)(iv) it suffices to check that 
$\langle\mSS\rangle^\perp = \mWW\loc$. Actually, we will show that
\begin{equation}\label{Cleopatra}
\mSS^\perp=\langle\mSS\rangle^\perp=\mWW\loc .
\end{equation}
The inclusion $\mSS^\perp\supset\langle\mSS\rangle^\perp$ follows
immediately from $\mSS\subset\langle\mSS\rangle$. Let us show
$\langle\mSS\rangle^\perp\supset\mWW\loc$: Given $X\in\mWW\loc$ and $C\in\mSS$,
there
exists an exact triangle
\[
A\overset{f}{\to} B\to C\to \Sigma A
\]
with $f\in\mWW$. We apply $\mTT(-,X)$ to the triangle
and get a long exact sequence
\[
\cdots\ot\mTT(A,X)\overset{\,f^\ast}{\ot}\mTT(B,X)\ot\mTT(C,X)
\ot\mTT(\Sigma A,X)\overset{(\Sigma f)^\ast}{\ot}\mTT(\Sigma B,X)\ot\cdots .
\]
Since $X$ is in $\mWW\loc$, the maps $(\Sigma^n f)^\ast$ are isomorphisms
for all integers $n$. Hence $\mTT(\Sigma^n C,X)=0$ and thus $\mSS
\subset\isotope{\perp}{}{(\mWW\loc)}$. Since $\isotope{\perp}{}{(\mWW\loc)}$
is a localizing subcategory, it follows that $\langle\mSS\rangle
\subset\isotope{\perp}{}{(\mWW\loc)}$, which is equivalent to
$\langle\mSS\rangle^\perp\supset\mWW\loc$.

It remains to check $\mWW\loc\supset\mSS^\perp$. Given $X\in
\mSS^\perp$ and $f\in \mWW$, we let $C$ be the cofiber of $f:A\to B$ so
that we obtain a long exact sequence
\[
\cdots\ot\mTT(\Sigma^{-1}C,X)\ot\mTT(A,X)\overset{\,f^\ast}{\ot}
\mTT(B,X)\ot\mTT(C,X)
\ot\cdots .
\]
Now $C$ and $\Sigma^{-1}C$ are in $\mSS$ and $X$ is in $\mSS^\perp$, 
hence we have $\mTT(C,X)=\mTT(\Sigma^{-1}C,X)=0$. This implies
$f^\ast$ is an isomorphism, which shows that $X$ is in $\mWW\loc$.
\end{proof}


\section{Classification of topological well generated categories}
\label{classification}

\subsection{Spectral model categories versus model categories of modules}

In this section, we will consider
`spectral categories' (or `symmetric ring spectra with several
objects') and modules over such.
From now on, 
when we say `(ring) spectrum' we will
always mean `symmetric (ring) spectrum'.
Spectral categories are $\mCC$-categories as discussed in Section~A.2,
where $\mCC$ is now the closed symmetric monoidal model category 
$(\Sp,\sm,\mS)$ of
symmetric spectra. 
This means, a spectral category $\mRR$ consists of a set of objects
and  for any two objects $R$ and $R'$ in $\mRR$ there is
a Hom-spectrum $\mRR (R,R')$ together with an identity `element'
$\mS\to\mRR (R,R)$ for each $R$ in $\mRR$ and composition morphisms
\[
\mRR (R',R'') \sm \mRR (R,R') \to \mRR(R,R'')
\]
which are associative and unital with respect to the identity elements
\cite[Section~3.3]{SS03}.

A \emph{right module} over a spectral category is a spectral functor
\[
X:\mRR^{\op}\to\Sp .
\]
This means, $X$ is a family of spectra $X(R)$, $R\in\mRR$, 
together with maps of spectra
\[
\mRR(R,R')\to\Hom_{\Sp}\left(X(R'),X(R)\right)
\]
which are compatible with composition and identities.
By adjunction,
these maps correspond to a right action of $\mRR$ on X, i.e., maps of spectra
\[
X(R')\sm\mRR(R,R')\to X(R)
\]
which are associative and unital.
The category $\MODD\mRR$ over a spectral category $\mRR$ has as objects 
right $\mRR$-modules, and a morphism $X\to Y$ of $\mRR$-modules is family
of maps of  spectra $X(R)\to Y(R)$ which are compatible with the action
of $\mRR$.
Note that a spectral category which consists only of one object is the
same as a ring spectrum and the modules are just ordinary modules
as considered in Part 1 of this paper.

As in the `one object version', the category $\MODD \mRR$ 
is a spectral (and then by Lemma~\ref{Gutemine} 
also a stable) model category where
maps are weak equivalences 
(resp.~fibrations) if and only if they are objectwise weak equivalences 
(resp.~fibrations) of symmetric spectra in the stable model structure
\cite[Theorem~A.1.1]{SS03}.
The homotopy category of $\MODD \mRR$ will be denoted by $\mD (\mRR^{\op})$
and we call it the \emph{derived category} of $\mRR$ because
it is the topological analog of the derived category of a DG category. 
The \emph{free} modules $F_R=\mRR(-, R):\mRR^{\op}\to\Sp $, for $R\in\mRR$, 
form a set
of compact generators for $\mD(\mRR^{\op})$ \cite[Theorem~A.1.1]{SS03}.

Let $\mKK$ be a spectral model category, that means the model category
$\mKK$ is enriched, tensored and cotensored over $\Sp$ and the
tensor functor satisfies the pushout product axiom 
\cite[Definition 4.2.1]{Hovey}. Recall that we considered $\mCC$-model 
categories for a more general $\mCC$ in Section~2.2. 
Let $\mGG$ be a set of cofibrant and fibrant
objects in $\mKK$ and let $\mEE$ be
the full spectral subcategory of $\mKK$ with objects $\mGG$. In
\cite[Section~3.9]{SS03} Schwede and Shipley define a spectral Quillen
adjunction
\[
\xymatrix@M=.7pc{\mKK\ar@<-.5ex>[rr]_-{\Hom(\mGG,-)}
&&\MODD \mEE ,\ar@<-.5ex>[ll]_-{-\sm_{\mEE}\mGG}}
\]
where for $A\in\mKK$, the value of the right adjoint
is given by $\Hom(\mGG,A):\mEE^{\op}\to\Sp$, $G\mapsto\Hom_\mKK(G,A)$.
The left adjoint is defined by an appropriate coequalizer 
\cite[Theorem~3.9.3]{SS03}. (We considered a one object version of this
Quillen pair in Section~2.3 over a more general category $\mCC$.) 
We have $\Hom(\mGG,G)=F_G$ for $G\in\mGG$, which follows
immediately from the definition of $\Hom(\mGG,-)$.
The counit $\epsilon_G:\Hom(\mGG,G)\sm_\mEE\mGG\to G$ is an isomorphism for
each $G\in\mGG$. The reason is, that for $A\in\mKK$ the induced map
of spectra
\[
\epsilon_G^\ast:\Hom_\mKK(G,A)\to\Hom_\mKK\left(\Hom(\mGG,G)\sm_\mEE\mGG,A\right)
\]
is the composition of the map
\begin{align*}
\Hom_\mKK(G,A)\to & \Hom_{\MODD\mEE}\left(\Hom(\mGG,G),\Hom(\mGG,A)\right)\\
= & \Hom_{\MODD\mEE}\left(\Hom_\mKK(-,G),\Hom_\mKK(-,A)\right),
\end{align*}
which is an isomorphism by the enriched Yoneda lemma, with the adjunction
isomorphism
\[
 \Hom_{\MODD\mEE}\left(\Hom(\mGG,G),\Hom(\mGG,A)\right)
\iso \Hom_\mKK\left(\Hom(\mGG,G)\sm_\mEE\mGG,A\right).
\]
This shows that $\epsilon_G$ is an isomorphism. In particular, we obtain
an isomorphism 
\[
F_G\sm_\mEE\mGG\iso G.
\]

Let us denote the derived adjunction by
\[
\xymatrix@M=.7pc{\Ho\mKK\ar@<-.5ex>[r]_-{F}
&\DE.\ar@<-.5ex>[l]_-{J}}
\]
Since both $\mKK$ and $\MODD \mEE$ are stable, $\Ho\mKK$ and
$\DE$ are triangulated categories with coproducts and
$J$ and $F$ are triangulated functors.
Since $G\in\mGG$ is cofibrant we have $F(G)\iso F_G$ on the homotopy level.

We want to see that the counit $\varepsilon:JF(G)\to G$ is also
an isomorphism for objects $G$ of $\mGG$. The following is in general true
for Quillen pairs
\[
\xymatrix@M=.7pc{\mCC\ar@<-.5ex>[r]_-{U}
&\mDD .\ar@<-.5ex>[l]_-{V}}
\]
If $C$ is a fibrant object of $\mCC$ such that $U(C)$ is cofibrant
in $\mDD$ and the counit 
\[
\epsilon_C:VU(C)\to C
\]
is an isomorphism in $\mCC$, 
then the counit $\varepsilon_C:(LV)(RU)(C)\to C$ of the derived adjunction
is an isomorphism in $\Ho\mCC$.
In our case, $G$ is fibrant in $\mKK$ by assumption. Thus, in order
show that the counit $\varepsilon:JF(G)\to G$ is an isomorphism, it
suffices to prove that
$F(G)\iso F_G$ is cofibrant in $\MODD \mEE$. This can be seen
by analyzing the proof of  
\cite[Theorem~A.1.1]{SS03}. 
Alternatively, if we accept 
that $\MODD\mEE$
has a model structure with weak equivalences and fibrations objectwise
in $\Sp$, we can consider the Quillen pair
\[
\xymatrix@M=.7pc{\MODD\mEE\ar@<-.5ex>[rr]_-{\ev_G}
&&\MODD \mS = \Sp \ar@<-.5ex>[ll]_-{-\sm F_G}}
\]
where $\ev_G(X)=X(G)$ and $Y\sm F_G$ is given by
$(Y\sm F_G)(G')=X\sm F_G(G')=X\sm \mEE(G',G)$
with the obvious right action of $\mEE$. Applying the left Quillen
functor $-\sm F_G$ to the (cofibrant) sphere spectrum $\mS$
shows that $F_G$ is a cofibrant module.

The unit $\eta_{F_G}:F_G\to FJ(F_G)$ 
is also an isomorphism for every free module
$F_G$. This follows from $F_G\iso F(G)$ and $F(\varepsilon_G)\eta_{F(G)}
=\id_{F(G)}$, which holds in general for adjunctions.

Let us summarize these facts in the following

\begin{lemma}\label{summary}
Let  $\mKK$ be a spectral model category, $\mGG$ a set of cofibrant
and fibrant objects, and $\mEE$ the full spectral subcategory of $\mKK$
with objects $\mGG$.

Then the derived category $\DE$ of $\mEE$ has the free
modules $F_G$, $G\in\mGG$, as a compact generating set. There is
an adjoint pair of triangulated functors 
\[
\xymatrix@M=.7pc{\Ho\mKK\ar@<-.5ex>[r]_-{F}
&\DE,\ar@<-.5ex>[l]_-{J} }
\]
under which the objects in $\mGG$ correspond to the free modules, that is, 
$F(G)\iso
F_G$ and $J(F_G)\iso G$, such that the counit $\varepsilon_G :JF(G)\to G$
and  the unit $\eta_{F_G} : F_G\to FJ(F_G)$ are isomorphisms for $G\in\mGG$.

\hfill $\square$
\end{lemma}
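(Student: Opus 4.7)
The plan is to observe that Lemma~\ref{summary} is essentially a bookkeeping of facts already developed in the preceding paragraphs, together with a few formal consequences, and then to verify each bullet point in order.

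First, that the free modules $\{F_G\}_{G\in\mGG}$ form a compact generating set for $\DE$ is precisely \cite[Theorem~A.1.1]{SS03} (noted just after the definition of $\MODD\mRR$). The existence of the derived adjoint pair $(J,F)$ follows from the Quillen adjunction $(-\sm_{\mEE}\mGG,\Hom(\mGG,-))$ between stable model categories together with \cite[Proposition~6.4.1]{Hovey}, which also provides the triangulated structure on both derived functors. So it only remains to verify the last two assertions and the identifications $F(G)\iso F_G$ and $J(F_G)\iso G$ for $G\in\mGG$.

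Next I would identify $F(G)$ and $J(F_G)$. Since $G\in\mGG$ is by assumption fibrant in $\mKK$, the total right derived functor $F=R\Hom(\mGG,-)$ is computed without any fibrant replacement, giving $F(G)=\Hom(\mGG,G)=F_G$. To compute $J(F_G)$ I need $F_G$ to be cofibrant in $\MODD\mEE$; this is obtained from the auxiliary Quillen pair
\[
\xymatrix@M=.7pc{\MODD\mEE\ar@<-.5ex>[rr]_-{\ev_G}
&&\Sp\ar@<-.5ex>[ll]_-{-\sm F_G}}
\]
by applying the left Quillen functor $-\sm F_G$ to the cofibrant sphere spectrum $\mS$, yielding $\mS\sm F_G\iso F_G$ as a cofibrant $\mEE$-module. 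Therefore $J(F_G)=F_G\sm_{\mEE}\mGG$ on the nose, and the underived counit $\epsilon_G:\Hom(\mGG,G)\sm_{\mEE}\mGG\to G$ established in the paragraphs preceding the lemma (via the enriched Yoneda lemma) gives $J(F_G)\iso G$.

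For the counit $\varepsilon_G:JF(G)\to G$: using that $G$ is fibrant in $\mKK$ and that $F(G)\iso F_G$ is cofibrant in $\MODD\mEE$, the general principle recorded in the preceding discussion (that for a Quillen pair $(V,U)$ with $C$ fibrant, $U(C)$ cofibrant and $\epsilon_C:VU(C)\to C$ an isomorphism, the derived counit $\varepsilon_C$ is also an isomorphism) applies directly, since $\epsilon_G$ was already shown to be an isomorphism. For the unit $\eta_{F_G}:F_G\to FJ(F_G)$ I would invoke the triangle identity $F(\varepsilon_G)\circ \eta_{F(G)}=\id_{F(G)}$, which after identifying $F(G)\iso F_G$ reads $F(\varepsilon_G)\circ\eta_{F_G}=\id_{F_G}$; since $\varepsilon_G$ has just been shown to be an isomorphism, so is $F(\varepsilon_G)$, and thus $\eta_{F_G}$ is an isomorphism as well. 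The only mildly nontrivial step is the cofibrancy of $F_G$ in $\MODD\mEE$; everything else is formal or has already been proved in the paragraphs above the lemma.
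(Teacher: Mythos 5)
Your proposal is correct and follows essentially the same route as the paper: all the ingredients you cite (the compact generation of $\DE$ by free modules via \cite[Theorem~A.1.1]{SS03}, the identification $F(G)\iso F_G$, the cofibrancy of $F_G$ via the auxiliary Quillen pair $(\ev_G, -\sm F_G)$ applied to $\mS$, the general Quillen-pair criterion for the derived counit, and the triangle identity for the unit) are exactly the facts assembled in the paragraphs preceding the lemma, which the lemma merely summarizes. The only cosmetic difference is that you invoke fibrancy of $G$ to compute $F(G)\iso F_G$ where the paper mentions cofibrancy; fibrancy is indeed what is needed for the right derived functor, and both properties hold for $G\in\mGG$.
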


\begin{rem}\label{Vercingetorix}
As the counit $\varepsilon_G:JF(G)\to G$ is an isomorphism for $G\in\mGG$
the functor $F$ is fully faithful when the source is in $\mGG$, that
is, for $G\in\mGG$ and arbitrary $A\in\Ho\mKK$, the map
\begin{eqnarray*}
[G,A]^{\Ho\mKK}&\to&[F(G),F(A)]^\DE ,\\
 g&\mapsto &F(g) ,
\end{eqnarray*}
is an isomorphism.
Namely, if we compose this map with the adjunction isomorphism
$[F(G), F(A)]^\DE\to[JF(G),A]^{\Ho\mKK}$, we obtain the map
\[
\varepsilon_G^\ast:[G,A]^{\Ho\mKK}\to[JF(G),A]^{\Ho\mKK}
\]
induced
by the colimit $\varepsilon_G$, which is
an isomorphism by Lemma~\ref{summary}.
\end{rem}

Schwede and Shipley have shown that the Quillen pair
\[
\xymatrix@M=.7pc{\mKK\ar@<-.5ex>[rr]_-{\Hom(\mGG,-)}
&&\MODD \mEE ,\ar@<-.5ex>[ll]_-{-\sm_{\mEE}\mGG}}
\]
is in fact a Quillen equivalence if
$\mGG$ is a set of compact generators for $\Ho\mKK$. 
In particular, $F$ and $J$ are then inverse
equivalences. We will study the case where $\mGG$
is not necessarily a compact generating set for $\Ho\mKK$ but
only an appropriate $\alpha$-compact generating set. The goal is to
show that under this assumption $F$ is fully faithful and hence
$\Ho\mKK$ is a localization of $\DE$.

As a triangulated category, $\Ho \mKK$ is in particular an $\Ab$-category
(or a ring with several objects),
i.e., enriched over the closed symmetric monoidal category $\Ab$ of 
abelian groups. 
Regarding the set $\mGG$ of objects in $\mKK$ as a full subcategory
of 
$\Ho \mKK$, we get an $\Ab$-category which we denote for simplicity
again by $\mGG$. 

\begin{defn}
The category $\MODD \mGG$ of right $\mGG$-modules has
as objects the $\Ab$-functors $\mGG^{\op}\to\Ab$ and as morphisms natural 
transformations. We define a functor
\[
H_0:\DE\to\MODD\mGG
\]
by $H_0(X)=[F|_\mGG (-),X]:\mGG^{\op}\to\Ab$. 
\end{defn}

The category $\MODD \mGG$ is abelian and has objectwise defined
coproducts and products. Recall 
that a functor from a triangulated category to an abelian category is called
\emph{homological} if it maps exact triangles to (long) exact sequences
\cite[Definition~1.1.7]{Neeman}.

\begin{lemma}
The functor $H_0:\DE\to\MODD \mGG$ is homological and preserves
coproducts and products. If $\mGG$ is up to isomorphism closed under 
(de-)sus\-pen\-sions,
then $H_0$ reflects isomorphisms, i.e., $f:X\to Y$ is an isomorphism
in $\DE$ if and only if $H_0(f):H_0(X)\to H_0(Y)$ is an isomorphism
in $\MODD\mGG$.
\end{lemma}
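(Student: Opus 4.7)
The plan is to handle the three assertions in turn, relying on Lemma~\ref{summary} together with the fact that $F(G)\iso F_G$ for $G\in\mGG$.

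First I would check that $H_0$ is homological. The category $\MODD\mGG$ is abelian with kernels, cokernels and (co)products computed objectwise, so a sequence in $\MODD\mGG$ is exact if and only if it is exact after evaluating at every $G\in\mGG$. For each fixed $G$, the functor $[F(G),-]^{\DE}:\DE\to\Ab$ is cohomological, being representable on a triangulated category. Assembling these evaluations gives the long exact sequence in $\MODD\mGG$ from any exact triangle in $\DE$.

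For preservation of coproducts and products, the same objectwise principle applies: coproducts and products in $\MODD\mGG$ are computed on each $G$. Products are automatic, since the representable $[F(G),-]^{\DE}$ carries products in $\DE$ to products of abelian groups. For coproducts, I would invoke Lemma~\ref{summary}: $F(G)\iso F_G$ and each $F_G$ lies in the compact generating set of $\DE$, so $[F(G),-]^{\DE}$ commutes with arbitrary coproducts.

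For the reflection-of-isomorphisms claim, suppose $H_0(f)$ is an isomorphism. Completing $f:X\to Y$ to an exact triangle $X\to Y\to Z\to\Sigma X$ in $\DE$ and applying the (now established) homological functor $H_0$ yields $H_0(Z)=0$, i.e.\ $[F(G),Z]^{\DE}=0$ for every $G\in\mGG$. Because $\mGG$ is closed under (de-)suspensions up to isomorphism and $F$ is triangulated (so $F(\Sigma^n G)\iso\Sigma^n F(G)\iso\Sigma^n F_G$), this gives $[F_{G},\Sigma^n Z]^{\DE}\iso[F_{\Sigma^{-n}G},Z]^{\DE}=0$ for all $n\in\mZ$ and all $G\in\mGG$. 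Since the set $\{F_G\mid G\in\mGG\}$ is a compact generating set for $\DE$ by Lemma~\ref{summary}, the criterion for a compact generating set (as recalled in the proof of Lemma~\ref{cpgen} via \cite[Lemma~2.2.1]{SS03}) forces $Z\iso 0$, and hence $f$ is an isomorphism. The converse direction is immediate since $H_0$ is a functor.

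The only subtle point is the suspension-compatibility used in the last step, and it is the reason the hypothesis that $\mGG$ be closed under (de-)suspensions up to isomorphism is essential; without it one only obtains vanishing of $[F_G,Z]^{\DE}$ in degree zero and cannot conclude $Z\iso 0$.
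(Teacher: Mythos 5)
Your proposal is correct and follows essentially the same route as the paper: exactness, products, and coproducts are all reduced to objectwise statements in $\MODD\mGG$, with compactness of $F(G)\iso F_G$ giving the coproduct case, and the reflection of isomorphisms is obtained by passing to the cofiber and using that $F(\mGG)$ (compact, and closed under (de-)suspensions precisely because $\mGG$ is) detects zero objects. One trivial slip: the covariant representable $[F(G),-]$ is \emph{homological}, not cohomological.
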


\begin{proof}
Consider an exact
triangle $X\to Y \to Z \to \Sigma X$ in $\DE$. The induced
sequence
\[
\cdots\to H_0(X)\to H_0(Y)\to H_0(Z)\to H_0(\Sigma X)\to \cdots
\]
is exact if and only if it is objectwise exact, i.e., 
\[
\cdots\to [F(G),X]\to [F(G),Y]
\to [F(G),Z]\to [F(G),\Sigma X]\to \cdots
\]
is a long exact sequence for each $G\in\mGG$. But this is true
since $[F(G),-]:\DE\to \Ab$ is homological.

Since products in $\MODD\mGG$ are objectwise, it is clear that $H_0$ 
preserves them. Now consider a 
family $(X_i)_{i\in I}$ of objects in $\DE$. Since
$F(G)$ is compact, the canonical map
\[
\bigoplus_{i\in I} [F(G),X_i] \to \Bigl[F(G),\coprod_{i\in I}X_i\Bigr]
\]
is an isomorphism for every $G\in\mGG$. Hence 
$\coprod_{i\in I}H_0(X_i)\to H_0\left(\coprod_{i\in I}X_i\right)$ 
is an isomorphism in $\MODD\mGG$. 

Now let $\mGG$ be closed under (de-)sus\-pen\-sions (up to isomorphism).
Then the set 
\[F(\mGG)=\{F(G)\,|\,G\in\mGG\}
\]
is in particular
a weak generating set for $\DE$ (as a consequence
of Lemma~\ref{summary}). Given a map $f:X\to Y$ in $\DE$ with
cofiber $Z$. We then have the following logical equivalences:
\begin{eqnarray*}
f \textrm{ is an isomorphism }
&\Longleftrightarrow& Z\iso 0 \\
&\Longleftrightarrow&[F(G),Z]\iso 0 \textrm{ for all }G\in\mGG \\
&\Longleftrightarrow&[F(G),X]\to [F(G),Y] \textrm{ is an isomorphism for all }
G\in\mGG \\
&\Longleftrightarrow&H_0(X)\to H_0(Y) \textrm{ is an isomorphism in } \MODD\mGG,
\end{eqnarray*}
where the third equivalence uses that 
$[F(G),-]$ is homological.

\end{proof}


\subsection{The characterization theorem and the strategy of proof}
Let us state the main result, Theorem~\ref{characterization}, 
fix some notation, 
and sketch the proof before giving 
the details in Section~4.3. Porta considers those well generated
triangulated categories which are algebraic, that is, triangulated 
equivalent to the stable category of a Frobenius category 
\cite[Section~3.6]{DGCats}. One feature of algebraic categories 
is that they allow certain `derived Hom-functors' into derived categories
of DG categories. In the topological case, we would like to have
such derived Hom-functors into derived categories of spectral categories.
Lemma~\ref{summary} provides such a functor $F$ if the triangulated category
in question is the homotopy category of a spectral model category. This leads
us to the following

\begin{defn}
A triangulated category $\mTT$ is called \emph{topological} if it is 
triangulated equivalent to the homotopy category of a spectral model 
category.
\end{defn}

\begin{exs}\label{spectral}
In particular, any stable model category Quillen equivalent
to a spectral model category has a homotopy category which is topological.
Here are three classes of such model categories.

\begin{enumerate}[(1)]
\item Schwede and Shipley have proved that every simplicial, cofibrantly
generated, proper stable model category is Quillen equivalent to a spectral
model category \cite[Theorem~3.8.2]{SS03}. 
\item By \cite[Theorems~9.1 and~8.11]{Hovey-Spectra}, a simplicial, cellular,
left proper stable model category for which the domains of the generating
cofibrations are cofibrant is Quillen equivalent to a spectral model category.
\item Another class of examples arises from \cite{Dugger}. Using 
\cite[Theorem~8.11]{Hovey-Spectra} one can deduce from
\cite[Propositions~5.5(a) and~5.6(a)]{Dugger} that presentable stable
model categories are Quillen equivalent to spectral model categories.
\end{enumerate}
\end{exs}

By \cite[Theorem~A.1.1]{SS03}, the derived category of a spectral 
category is compactly
generated. Hence
Proposition~\ref{Neeman's} yields one implication 
of the following characterization theorem.

\begin{thm}\label{characterization}
Let $\mTT$ be a topological triangulated category. Then the following are
equivalent.
\begin{enumerate}
\item[\textnormal{(i)}]$\mTT$ is well generated.
\item[\textnormal{(ii)}]$\mTT$ is triangulated equivalent to a localization
of the derived category of a spectral category where the acyclics are
generated by a set.

\end{enumerate}

\end{thm}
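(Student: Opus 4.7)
The direction (ii) $\Rightarrow$ (i) is already observed in the introduction to Part~2 and follows from \cite[Theorem~A.1.1]{SS03} (the derived category of a spectral category is compactly generated) combined with Proposition~\ref{Neeman's}.

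For the converse, I would first realize $\mTT\simeq\Ho\mKK$ for a spectral model category $\mKK$, fix a regular cardinal $\alpha$ for which $\mTT$ is $\alpha$-compactly generated (so that $\mTT^\alpha$ is essentially small by Remark~\ref{Wildschwein}), and pick a set $\mGG$ of cofibrant--fibrant objects of $\mKK$ representing a skeleton of $\mTT^\alpha$ and closed up to isomorphism under (de-)suspension. Let $\mEE$ be the full spectral subcategory of $\mKK$ on $\mGG$. Lemma~\ref{summary} then supplies the adjoint pair
\[
\xymatrix@M=.7pc{\DE\ar@<-.5ex>[r]_{J}&\Ho\mKK\ar@<-.5ex>[l]_{F}}
\]
with $J(F_G)\iso G$, $F(G)\iso F_G$, and both $\varepsilon_G$ and $\eta_{F_G}$ isomorphisms for $G\in\mGG$. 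The plan is to identify $\Ho\mKK$ with the Bousfield localization of $\DE$ at $\ker J$ and to show that $\ker J$ is generated by a set.

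The heart of the argument is to prove that $F$ is fully faithful, equivalently that $\varepsilon\colon JF\to\id_{\Ho\mKK}$ is a natural isomorphism. Since $\mGG$ is a weak generating set, the functor $F$ is conservative: $F(A)\iso 0$ forces $[\Sigma^n G,A]=0$ for every $G\in\mGG$ and every $n$, hence $A\iso 0$. Thus $\varepsilon_A$ is iso iff $F(\varepsilon_A)$ is iso, and by the triangle identity $F(\varepsilon_A)\circ\eta_{F(A)}=\id_{F(A)}$ this is in turn equivalent to $\eta_{F(A)}\colon F(A)\to FJF(A)$ being an isomorphism. I already know this for $F(A)=F_G$ with $G\in\mGG$ (Lemma~\ref{summary}), and the class $\mAA=\{A\in\Ho\mKK:\eta_{F(A)}\text{ iso}\}$ is a thick triangulated subcategory containing $\mGG$. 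The main obstacle I anticipate is closing $\mAA$ under suitable coproducts: the functor $F$ does not preserve arbitrary coproducts because $\mGG$ is only $\alpha$-compact (not compact), so the naive argument fails. To overcome this, one has to exploit the $\alpha$-perfect structure of $\mGG$ together with Neeman's identification $\mTT^\beta=\beta\loc\langle\mGG\rangle$ for all regular $\beta\geq\alpha$ (used in the proof of Proposition~\ref{Neeman's}) to show first that $\mAA\supset\mTT^\alpha$, and then to propagate through the filtration $\Ho\mKK=\bigcup_{\beta\geq\alpha}\mTT^\beta$ cardinal by cardinal by induction; this is where the hypothesis of well-generatedness is really used.

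Once $F$ is fully faithful, Remark~\ref{Minerva} together with Lemma~\ref{mylemma}(b) identifies $\Ho\mKK\simeq\essim F=(\ker J)^\perp\subset\DE$ as a localization of $\DE$ with acyclics $\ker J$. To see that $\ker J$ is generated by a set, I would take $\mSS$ to be a small skeleton of $\ker J\cap\DE^\alpha$, which exists because $\DE$ is compactly (hence well) generated so that $\DE^\alpha$ is essentially small (Remark~\ref{Wildschwein}). Using that in the $\alpha$-compactly generated category $\DE$ every object can be built from $\DE^\alpha$ via a suitably filtered diagram, and that $\ker J$ is itself a localizing subcategory, one verifies $\ker J=\langle\mSS\rangle$; applying Lemma~\ref{Methusalix} then furnishes the desired localization, completing the proof.
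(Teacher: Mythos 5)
You have the right global strategy: reduce to $\Ho\mKK$, form the spectral endomorphism category $\mEE$ of a set $\mGG$ of $\alpha$-compact generators, use the adjunction $(J,F)$ of Lemma~\ref{summary}, and try to show $F$ is fully faithful so that $\Ho\mKK$ is a localization of $\DE$. You also correctly spot the central obstruction: $F$ does not preserve coproducts because $\mGG$ is only $\alpha$-compact. However, the two places where the real work has to happen are left as unsubstantiated sketches, and I do not think they can be filled in along the lines you indicate.

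First, the ``propagate cardinal by cardinal'' induction for full faithfulness. The class $\mAA=\{A:\eta_{F(A)}\textnormal{ iso}\}$ is thick, so the base case $\mTT^\alpha\subset\mAA$ is immediate from the choice of $\mGG$. But for the inductive step you need $\mAA$ to be closed under $\gamma$-coproducts of objects of $\mTT^\alpha$ for $\gamma>\alpha$, and this is precisely the statement that $F$ preserves such coproducts -- the thing you do not yet know. There is no induction hypothesis that gives it to you; the difficulty does not decrease as $\gamma$ grows. The paper instead carves out an auxiliary subcategory $\DAE\subset\DE$ (objects whose restricted homology lands in $\MODDalpha\mGG$), shows $\DAE$ is a localization $\mWW\loc$ of $\DE$ at an explicit set of maps $\mWW$ (Proposition~\ref{locprop}), proves that the corestriction $\tF$ preserves $\alpha$-coproducts of $\mGG$-objects and that $\tF(\mGG)$ is an $\alpha$-compact generating set for $\DAE$ (Lemmas~\ref{Alaaf}, \ref{genset}), and then bootstraps to general $\alpha$-coproducts via $\alpha$-perfectness (Lemma~\ref{Hirte}). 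The passage from $\alpha$-coproducts to arbitrary coproducts is the really non-formal step and is accomplished by comparing $\tH\tF$ with Krause's restricted Yoneda functor $\Phi:\Ho\mKK\to\MODDalpha\mGG$, which is known to preserve all coproducts by \cite[Theorem~C]{Krause} (Lemma~\ref{Hund}, Proposition~\ref{Falbala}, Corollary~\ref{Sagnix}). Nothing in your sketch replaces this input, and I don't see an elementary transfinite induction that avoids it.

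Second, the claim that $\ker J=\langle\mSS\rangle$ for $\mSS$ a skeleton of $\ker J\cap\DE^\alpha$ is circular as stated. Neeman's identification $(\ker J)^\beta=\beta\loc\langle\mSS'\rangle$ and the generation statement \cite[Theorem~4.4.9]{Neeman} that you appeal to all have as a \emph{hypothesis} that the localizing subcategory $\ker J$ is generated by a set of $\alpha$-compact objects; they cannot be used to establish that fact. For a general localizing subcategory of a compactly generated category it is simply unknown whether it is generated by any set. In the paper this comes for free because the acyclics are \emph{constructed} as $\langle\mSS\rangle$ where $\mSS$ is the explicit set of cofibers of the canonical maps $\coprod_{i}F(G_i)\to F(\coprod_i G_i)$ (Proposition~\ref{locprop}); one then checks these cofibers are $\alpha$-compact (Lemma~\ref{Heidewitzka}), but the generation-by-a-set is built in from the start, not deduced abstractly. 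Your version only identifies the localization after the fact via $\essim F=(\ker J)^\perp$, at which point you have lost the explicit generating set and cannot recover it without the machinery you omitted.

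So the proposal is a reasonable high-level roadmap, but both load-bearing steps -- coproduct preservation of $F$ and generation of the acyclics by a set -- remain genuine gaps, and the mechanisms you gesture at do not fill them.
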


The implication (i) $\Rightarrow$ (ii)
is more involved and the proof is given in Section~4.3.
Let us from now on and for the rest of this paper use the following

\begin{notation}\label{notation}
We assume that $\mKK$ is a spectral model category having
a well generated homotopy category $\Ho\mKK$. Then there exists a
regular infinite cardinal $\alpha$ such that the full
subcategory $(\Ho\mKK)^\alpha$ of $\alpha$-compact objects in $\Ho\mKK$
has a small skeleton $\mGG$ which is an $\alpha$-compact generating
set for $\Ho\mKK$ (see Remark~\ref{Wildschwein}).
We fix such a cardinal $\alpha$ and 
such a generating set $\mGG$, which is then, up to isomorphism,
closed under (de-)sus\-pen\-sions, triangles, and $\alpha$-coproducts (that
is, coproducts of strictly less than $\alpha$ objects).
Moreover, by choosing cofibrant and fibrant 
replacements, we can assume all objects in $\mGG$ are cofibrant and fibrant
in $\mKK$. 
We let $\mEE$ be the full spectral subcategory of $\mKK$ with objects
$\mGG$.
As above, by slight abuse of notation, we regard $\mGG$ not
only as a set but also as an $\Ab$-category with objects $\mGG$. Since
$\mGG$ is in particular closed under finite coproducts 
and contains a zero object, it is also an additive category.
\end{notation}


Consider the following diagram.
\begin{equation}\label{diagram}
\xymatrix@M=.7pc{\Ho\mKK\ar@{.>}@<-.5ex>[dr]_{\tF}
\ar@<0ex>@/^1pc/[drr]_{F}\ar@<0ex>@/_1pc/[ddr]_\Phi & & \\
  &\DAE\ar@<-1ex>@{^{(}->}[r]_R
\ar[d]^{\tH} \ar@{.>}@<-.5ex>[ul]_{\tJ}
& \DE\ar@{.>}@<-1ex>[l]_L
\ar[d]^{H_0} \ar@<-1ex>@/_1pc/[ull] _J \\
  & \MODDalpha\mGG \ar@{^{(}->}[r]_r&\MODD\mGG }
\end{equation}

Here $\MODDalpha\mGG$ is a suitable subcategory of $\MODD\mGG$
(see Definition~\ref{Yellowsubmarine})
and $\DAE$ is the corresponding subcategory of
$\DE$ of those objects whose homology lies in $\MODDalpha\mGG$
(see Definition~\ref{Appendix}).
The pair $(J,F)$ is the adjoint pair from Lemma~\ref{summary}. 
It restricts
to an adjoint pair $(\tJ,\tF)$. 

If we can show that
\begin{enumerate} 
\item $\DAE$ is a localization of
$\DE$, i.e., there exists a left adjoint $L$ for the inclusion
$R$, and
\item $\tJ$ and $\tF$ are inverse equivalences
of triangulated categories,
\end{enumerate}
then it follows that $\Ho\mKK$ is a localization of $\DE$.

For the proof of (2) we will consider the unit
\[
X\to\tF\tJ(X)\quad\textrm{and the counit}\quad\tJ\tF(A)\to A
\]
 of
the adjunction and show that they are isomorphisms for all objects
$X\in\DAE$ and $A\in\Ho\mKK$. This is easy to see for the
free modules $F_G$ (which lie in fact in the subcategory $\DAE$ and
form a set of generators for it) and for the generators $G\in\mGG$
of $\Ho\mKK$.
Then it suffices to prove that both $\tJ$ and $\tF$ preserve coproducts.
Of course, in the case of the left adjoint $\tJ$ this is true. 
The problem is to show
that $\tF$ also preserves coproducts. In his paper
\cite{Krause}, Krause defines a functor $\Phi:\Ho\mKK\to\MODDalpha\mGG$
which preserves coproducts and is isomorphic to $\tH\tF$ (where
$\tH$ is the restriction of $H_0$). This helps us to reduce
the question whether $\tF$ preserves coproducts to the following:
Does $\tH$ preserve coproducts of objects which are in the essential image
of $\tF$?


\subsection{Proof of the characterization theorem}
Recall that $\mKK$, $\alpha$, $\mGG$, and $\mEE$ are
as in Notation~\ref{notation}.
Let us first define the categories $\MODDalpha\mGG$ and $\DAE$ 
occurring in the diagram (\ref{diagram}). We will then prove a series of
lemmas and finally the remaining part
of Theorem~\ref{characterization}.

\begin{defn}\label{Yellowsubmarine}
The category $\MODDalpha\mGG$ is defined
as the full subcategory of $\MODD\mGG$ with
objects those functors $\mGG^{\op}\to\Ab$ which send $\alpha$-coproducts 
in $\mGG$ to products in $\Ab$. The functor 
\[
\Phi:\Ho\mKK\to\MODDalpha\mGG
\]
is defined by $A\mapsto [-,A]|_\mGG$.

\end{defn}

Note that $\Phi$ indeed takes values in $\MODDalpha\mGG$: 
the functor $[-,A]$ maps even arbitrary coproducts
in $\Ho\mKK$ to products in $\MODDalpha\mGG$.
Neeman denotes the category $\MODDalpha\mGG$
by $\mathcal{E}x(\mGG^{\op},\mathcal{A}b)$ 
\cite[Defintion~6.1.3]{Neeman}, Krause uses 
$\mathrm{Prod}_\alpha (\mGG^{\op},\mathrm{Ab})$ \cite{Krause}.
It is an abelian subcategory of $\MODD\mGG$, which is closed under
products \cite[Lemma~6.1.4 and Lemma~6.1.5]{Neeman}, but
it is not closed under coproducts in general. Nevertheless, $\MODDalpha\mGG$
does have coproducts -- which cannot be objectwise in general, 
since they have
to be different from those in $\MODD\mGG$. An explicit description of the
coproducts in $\MODDalpha\mGG$
can be found in Neeman's book \cite[Section~6.1]{Neeman} (the 
definition together with the proof of the universal property takes twelve
and a half pages). Krause shows that $\MODDalpha\mGG$ is equivalent
to the category of coherent functors $(\mathrm{Add}\,\mGG)^{\op}\to\Ab$,
where $\mathrm{Add}\,\mGG$ denotes the closure of $\mGG$ in $\Ho\mKK$
under all coproducts
and direct summands 
\cite[Lemma~2]{Krause}. In the category of these coherent functors, 
coproducts have a nicer description \cite[Lemma~1]{Krause-coherent}.
However, we do not need to know what the coproducts in $\MODDalpha\mGG$
look like -- the only thing we will need is the fact that 
the functor $\Phi:\Ho\mKK\to\MODDalpha\mGG$
preserves coproducts \cite[Theorem~C]{Krause}.

\begin{defn}\label{Appendix}
The category $\DAE$ is the 
full subcategory of $\DE$ 
having as objects those $X$ for which $H_0(X)$ is in 
$\MODDalpha \mGG$. The functor 
\[
\tH:\DAE\to\MODDalpha\mGG
\]
is the restriction of $H_0$.
\end{defn}

\begin{rem}\label{Amnesix}
Note that $\tH$ reflects isomorphisms since $H_0$ does. Moreover, 
$\tH$ is homological, that is, it sends exact triangles
to long exact sequences. The reason is the following:
$H_0R$ is homological because
$R$ is a triangulated functor and $H_0$ is homological, so $r\tH=H_0R$ is
homological. 
Since $r$ is the inclusion of an abelian subcategory, 
$\tH$ has to be homological.
But it is not so easy to see that $\tH$ preserves coproducts -- this will be
a consequence of Proposition~\ref{eq}.
\end{rem}

\begin{lemma}\label{aleph_0}
The category $\DAE$ is a triangulated subcategory of $\DE$. It is
colocalizing, i.e., closed under products. If $\alpha=\aleph_0$ then
$\DAE=\DE$.
\end{lemma}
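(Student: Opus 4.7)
The proof will rest on three facts from Section~4.1 together with general properties of $\MODDalpha\mGG$: the functor $H_0$ is homological and preserves coproducts as well as products; the subcategory $\MODDalpha\mGG\subset\MODD\mGG$ is abelian and closed under products (\cite[Lemmas~6.1.4 and~6.1.5]{Neeman}); and by Notation~\ref{notation}, $\mGG$ is closed (up to isomorphism) under (de-)sus\-pen\-sions and $\alpha$-coproducts.

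To show $\DAE$ is a triangulated subcategory, I would first establish an auxiliary fact: $\MODDalpha\mGG$ is closed under extensions in $\MODD\mGG$. Given $0\to M'\to M\to M''\to 0$ with $M',M''\in\MODDalpha\mGG$, I evaluate at any $\alpha$-coproduct $\coprod_{i\in I}G_i$ of objects of $\mGG$ and compare with the product sequence $0\to\prod M'(G_i)\to\prod M(G_i)\to\prod M''(G_i)\to 0$, which is exact since products are exact in $\Ab$; the five lemma then forces $M(\coprod_i G_i)\iso\prod M(G_i)$. Closure of $\DAE$ under (de-)sus\-pen\-sions is then immediate from the identity
\[
H_0(\Sigma X)(G)=[F(G),\Sigma X]\iso[F(\Sigma^{-1}G),X]=H_0(X)(\Sigma^{-1}G),
\]
together with the fact that $\Sigma^{-1}$ is a self-equivalence of $\mGG$ preserving $\alpha$-coproducts. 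For closure under triangles, I apply $H_0$ to a triangle $X\to Y\to Z\to\Sigma X$ with $X,Y\in\DAE$; the resulting long exact sequence presents $H_0(Z)$ as an extension
\[
0\to\coker\bigl(H_0(X)\to H_0(Y)\bigr)\to H_0(Z)\to\ker\bigl(H_0(\Sigma X)\to H_0(\Sigma Y)\bigr)\to 0,
\]
whose outer terms lie in the abelian subcategory $\MODDalpha\mGG$, so $H_0(Z)\in\MODDalpha\mGG$ by the extension-closure just proved.

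The product statement is then immediate: for any family $(X_i)$ in $\DAE$ one has $H_0(\prod X_i)\iso\prod H_0(X_i)\in\MODDalpha\mGG$, since $H_0$ preserves products and $\MODDalpha\mGG$ is closed under them. Finally, when $\alpha=\aleph_0$ the defining condition reduces to sending finite coproducts in $\mGG$ to products in $\Ab$; finite coproducts in the additive category $\mGG$ are biproducts, and every $\Ab$-functor automatically preserves biproducts. Hence $\MODDalpha\mGG=\MODD\mGG$ and consequently $\DAE=\DE$. I do not anticipate a genuine obstacle: the only mildly non-formal ingredient is the extension-closure of $\MODDalpha\mGG$, which is a quick five-lemma check.
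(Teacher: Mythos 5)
Your argument is correct, and every claim checks out, but the route you take for closure under triangles differs from the paper's. The paper applies $H_0$ to the triangle, evaluates the resulting long exact sequence at a fixed $\alpha$-coproduct $\coprod_i G_i$, compares this row against the product of the long exact sequences evaluated at the individual $G_i$ (exact because products are exact in $\Ab$), and invokes the 5-lemma directly to conclude that the middle comparison map $(H_0Z)(\coprod_i G_i)\to\prod_i (H_0Z)(G_i)$ is an isomorphism. You instead isolate a structural property of the target: $\MODDalpha\mGG$ is closed under extensions in $\MODD\mGG$ (your short 5-lemma check), and, being an abelian subcategory with exact inclusion, is also closed under objectwise kernels and cokernels. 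You then present $H_0(Z)$ as an extension of $\ker\bigl(H_0(\Sigma X)\to H_0(\Sigma Y)\bigr)$ by $\coker\bigl(H_0(X)\to H_0(Y)\bigr)$ and conclude. The two arguments share the same engine — exactness of products in $\Ab$ and a 5-lemma — but yours is more modular, factoring through the observation that $\MODDalpha\mGG$ is a ``thick'' abelian subcategory, while the paper's is more direct and avoids invoking the abelian-subcategory structure beyond closure under products. One small point worth making explicit in your version: for the outer term $\ker\bigl(H_0(\Sigma X)\to H_0(\Sigma Y)\bigr)$ to lie in $\MODDalpha\mGG$ you need $\Sigma X,\Sigma Y\in\DAE$, which comes from the closure under suspensions you establish first, so the order of your argument matters. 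The suspension step, the product step, and the $\alpha=\aleph_0$ step all coincide with the paper's.
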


\begin{proof}
Let $(G_i)_{i\in I}$ be a family of objects in $\mGG$ with
$|I|<\alpha$. Assume $X$ is in $\DAE$. Then the canonical map
\[
\Bigl[F\Bigl(\coprod_{i\in I}G_i\Bigr),X\Bigr]\to\prod_{i\in I}[F(G_i),X]
\]
is an isomorphism. We want to show that $\Sigma X$ is also in $\DAE$.
Since both $F$ and coproducts commute with the desuspension $\Sigma^{-1}$,
the canonical map
\[
\Bigl[F\Bigl(\coprod_{i\in I}G_i\Bigr),\Sigma X\Bigr]
\to\prod_{i\in I}[F(G_i),\Sigma X]
\]
is isomorphic to
\[
\Bigl[F\Bigl(\coprod_{i\in I}\Sigma^{-1}(G_i)\Bigr),X\Bigr]
\to\prod_{i\in I}[F\Sigma^{-1}(G_i),X].
\]
But this map is an isomorphism because $\mGG$ is closed under
desuspensions and $X$ is in $\DAE$. Similarly, $\DAE$
is closed under desuspensions.

Consider a triangle $X\to Y\to Z\to \Sigma X$ in $\DE$ such that
$X$ and $Y$ are in $\DAE$. Since $H_0$ is homological and products
of exact sequences in $\Ab$ are exact again, we get a commutative
diagram of abelian groups with long exact columns:
\[\xymatrix@u@M=.7pc{\vdots
&(H_0 \Sigma X)\Bigl(\coprod_{i\in I}G_i\Bigr)\ar[d]^\iso\ar[l]
&(H_0Z)\Bigl(\coprod_{i\in I}G_i\Bigr)\ar[d]\ar[l]
&(H_0Y)\Bigl(\coprod_{i\in I}G_i\Bigr)\ar[d]^\iso\ar[l]
&(H_0X)\Bigl(\coprod_{i\in I}G_i\Bigr)\ar[d]^\iso\ar[l]
&\vdots\ar[l]\\
\vdots
&\prod_{i\in I}(H_0\Sigma X)(G_i)\ar[l]
&\prod_{i\in I}(H_0Z)(G_i)\ar[l]
&\prod_{i\in I}(H_0Y)(G_i)\ar[l]
&\prod_{i\in I}(H_0X)(G_i)\ar[l]
&\vdots\ar[l]
}
\]
We can apply the 5-lemma and get and isomorphism
\[
(H_0Z)\Bigl(\coprod_{i\in I}G_i\Bigr)\overset{\iso}{\to}
\prod_{i\in I}(H_0Z)(G_i).
\]
This shows $Z$ is in $\DAE$ and thus $\DAE$ is closed under triangles.
Since $\MODDalpha\mGG$ is closed under products in $\MODD\mGG$ and 
$H_0$ preserves products, $\DAE$ is also closed under products. 

Now let $\alpha=\aleph_0$. Then $\MODDalpha\mGG$ contains all additive
functors
$\mGG^{\op}\to\Ab$ which map finite coproducts to products. But this
is true for all additive functors and thus $\MODDalpha\mGG=\MODD\mGG$ and
$\DAE=\DE$.
\end{proof}

\begin{lemma}\label{Hund}
The functor $F:\Ho\mKK\to\DE$ factors through $\DAE$. Consequently
we get an adjoint pair of triangulated functors
\[
\xymatrix@M=.7pc{\Ho\mKK\ar@<-.5ex>[r]_-{\tF}
&\DAE.\ar@<-.5ex>[l]_-{\tJ} }
\]
Moreover, the composition $\tH\tF$ is isomorphic to $\Phi$.
\end{lemma}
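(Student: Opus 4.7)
The plan has three steps: first verify $F$ lands in $\DAE$, then deduce the adjunction formally, then identify $\tH\tF$ with $\Phi$.

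For the first step, I want to compute $H_0(F(A))$ for an arbitrary $A\in\Ho\mKK$. By definition $H_0(F(A))(G)=[F(G),F(A)]^\DE$ for $G\in\mGG$. But Remark~\ref{Vercingetorix} gives a natural isomorphism $[G,A]^{\Ho\mKK}\iso[F(G),F(A)]^\DE$, so $H_0\circ F$ restricted to $\mGG$ is naturally isomorphic to $\Phi(A)=[-,A]|_\mGG$ as a functor of $G$. Since $\mGG$ is closed under $\alpha$-coproducts in $\Ho\mKK$ (Notation~\ref{notation}), any $\alpha$-coproduct in $\mGG$ is a genuine coproduct in $\Ho\mKK$, and the representable functor $[-,A]^{\Ho\mKK}$ sends arbitrary coproducts to products. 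Hence $\Phi(A)\in\MODDalpha\mGG$, and therefore $H_0(F(A))\in\MODDalpha\mGG$, which is exactly the condition $F(A)\in\DAE$. This produces the factorization $\tF:\Ho\mKK\to\DAE$ with $R\tF=F$.

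For the second step, set $\tJ:=J\circ R$. Because $R$ is the inclusion of a full subcategory, it is fully faithful, and we compute
\[
\Hom_{\DAE}(X,\tF(A))=\Hom_{\DE}(R(X),R\tF(A))=\Hom_{\DE}(R(X),F(A))\iso\Hom_{\Ho\mKK}(J R(X),A)=\Hom_{\Ho\mKK}(\tJ(X),A),
\]
using the adjunction $(J,F)$ from Lemma~\ref{summary} in the middle. This gives $(\tJ,\tF)$ as an adjoint pair. Both functors are triangulated: $R$, $J$, and $F$ are triangulated, and $\DAE$ is a triangulated subcategory of $\DE$ by Lemma~\ref{aleph_0}, so $\tF$ and $\tJ$ inherit the exact structure.

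For the final claim, the natural isomorphism $[G,A]^{\Ho\mKK}\iso[F(G),F(A)]^\DE$ of Remark~\ref{Vercingetorix} is natural in $G\in\mGG$ (since $F$ is a functor) and in $A\in\Ho\mKK$ (by functoriality in the second variable), hence it defines a natural isomorphism of functors $\Phi\iso H_0\circ F=\tH\tF$.

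There is essentially no obstacle here; everything reduces to the fully faithful property of $F$ on sources in $\mGG$ recorded in Remark~\ref{Vercingetorix}, combined with the elementary observation that a representable contravariant functor turns coproducts into products.
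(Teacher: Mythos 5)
Your proof is correct and uses the same key ingredients as the paper's: the isomorphism $[G,A]^{\Ho\mKK}\iso[F(G),F(A)]^\DE$ from Remark~\ref{Vercingetorix} (which the paper rederives in place via the counit of $(J,F)$), the universal property of coproducts, and the formal construction $\tJ=JR$. The only difference is organizational — you establish $H_0\circ F\iso\Phi$ first and deduce the factorization through $\DAE$ from the earlier observation that $\Phi$ lands in $\MODDalpha\mGG$, whereas the paper verifies the coproduct-to-product condition directly for $H_0F(A)$ and records the isomorphism with $\Phi$ afterward; the content is the same.
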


\begin{proof}
We have to check that for $A\in\Ho\mKK$ the functor $H_0F(A)$ 
sends $\alpha$-coproducts in $\mGG$
to products in $\Ab$. Let $(G_i)_{i\in I}$ be a family in $\mGG$ with
$|I|<\alpha$. Using the adjunction $(J,F)$ and the fact that its counit
$JF(G)\to G$ is an isomorphism for $G\in\mGG$ (see Lemma~\ref{summary})
we can conclude that the canonical map
\[
\Bigr[F\Bigl(\coprod_{i\in I}G_i\Bigr), F(A)\Bigr]^\DE
\to\prod_{i\in I}[F(G_i),F(A)]^\DE
\]
is isomorphic to the map $\bigl[\coprod_{i\in I}G_i,A]^{\Ho\mKK}\to 
\prod_{i\in I}[G_i,A]^{\Ho\mKK}$, which is an isomorphism by the universal
property of the coproduct. Hence $H_0F(A)$ 
maps $\alpha$-coproducts to products.
This yields a functor $\tF:\Ho\mKK\to\DAE$ with $R\tF=F$, which is left
adjoint to $\tJ=JR$.

Using Remark~\ref{Vercingetorix}, we get an isomorphism 
\[
\tH\tF(A)(G)= H_0F(A)(G)=[F(G),F(A)]^\DE\iso[G,A]^{\Ho\mKK}=\Phi(A)(G)
\]
which is natural in $G\in\mGG$ and $A\in\Ho\mKK$.
\end{proof}

\begin{prop}\label{locprop}
The category $\DAE$ is a localization of
$\DE$, i.e., there exists a left adjoint $L$ for the inclusion
$R$. 
Let $\mWW$ denote the set of the canonical maps
\[
\coprod_{i\in I}F(G_i) \to F\left(\coprod_{i\in I}G_i \right)
\]
where $(G_i)_{i\in I}$ runs through all families in $\mGG$ with
$|I|<\alpha$. (Strictly speaking, we allow one and only one set $I$
for each cardinality smaller than $\alpha$ to ensure all the considered
maps really form a \emph{set}.)
Then the acyclics are generated by the set $\mSS$ containing one
cofiber for each map in $\mWW$.
Moreover, the subcategory $\mWW\loc$ (see Lemma~\textnormal{\ref{Methusalix}})
is equal to $\DAE$.
\end{prop}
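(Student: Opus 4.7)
My plan is to apply Lemma~\ref{Methusalix} to the set $\mWW$ in $\DE$. Since $\DE$ is compactly generated by the free modules $F_G$, $G\in\mGG$, it is well generated, and $\mWW$ is (up to isomorphism) closed under (de-)sus\-pen\-sions because $F$ commutes with suspension and $\mGG$ is closed under (de-)sus\-pen\-sions (Notation~\ref{notation}). Hence Lemma~\ref{Methusalix} produces a localization of $\DE$ whose acyclics are $\langle\mSS\rangle$ and whose local objects are $\mWW\loc$. Combining this with Remark~\ref{Minerva}, the composite of the quotient functor with the equivalence from the quotient to the subcategory $\mWW\loc$ of local objects will supply the desired left adjoint $L$ to the inclusion $R$, \emph{provided} we show $\mWW\loc=\DAE$.

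The identification $\mWW\loc=\DAE$ is then a direct unfolding of definitions, which I expect to be straightforward. For $X\in\DE$ and a typical element $f:\coprod_{i\in I}F(G_i)\to F(\coprod_{i\in I}G_i)$ of $\mWW$ (with $|I|<\alpha$), applying $\DE(-,X)$ to $f$ gives, under the identifications $\DE(F(H),X)=H_0(X)(H)$ for $H\in\mGG$ and the universal property of coproducts, the canonical map
\[
H_0(X)\Bigl(\coprod_{i\in I}G_i\Bigr)\to\prod_{i\in I}H_0(X)(G_i).
\]
Hence $f^\ast$ is an isomorphism for every $f\in\mWW$ if and only if $H_0(X)$ sends every $\alpha$-coproduct of objects of $\mGG$ to a product in $\Ab$, i.e., if and only if $H_0(X)\in\MODDalpha\mGG$, which is the defining condition for $X\in\DAE$.

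I do not anticipate a serious obstacle. The only mild care needed is the set-theoretic issue that $\mWW$ must actually be a set for Lemma~\ref{Methusalix} to apply; this is already built into the statement by allowing one indexing set per cardinality smaller than $\alpha$. Once $\mWW\loc=\DAE$ is established, the existence of $L$, together with the description of the acyclics as $\langle\mSS\rangle$, follows formally from Lemma~\ref{Methusalix} and Remark~\ref{Minerva}, completing the proof.
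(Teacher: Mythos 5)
Your proposal is correct and follows essentially the same route as the paper: apply Lemma~\ref{Methusalix} to $\mWW$ (using that $\DE$ is compactly, hence well, generated and that $\mWW$ is closed under (de-)suspensions), identify $\mWW\loc$ with $\DAE$ by unwinding $H_0(X)(H)=[F(H),X]$ and the universal property of coproducts, and then obtain $L$ via Remark~\ref{Minerva}. No gaps.
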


\begin{proof}
Since $\mGG$ is closed under (de-)sus\-pen\-sions so is $\mWW$. 
We know that $\DE$ is well generated (even
compactly generated by the free modules), so we can apply 
Lemma~\ref{Methusalix} and get a localization of $\DE$ with 
$\mWW\loc$ as the class of local objects and $\langle\mSS\rangle$
as the class of acyclics.
An object $X$ of $\DE$ is in $\DAE$ by definition if and only if
$H_0(X):\mGG^{\op}\to \Ab$ sends $\alpha$-coproducts to products, i.e., 
if and only if the canonical map
\begin{equation}\label{Teutates}
H_0(X)\Bigl(\coprod_{i\in I}G_i\Bigr)\to\prod_{i\in I}H_0(X)(G_i)
\end{equation}
is an isomorphism. Using the definition of $H_0$ and the fact
that $[-,X]:\DE\to\Ab$ maps coproducts to
products we see that the map (\ref{Teutates})
is isomorphic to the map
\[
\Bigl[F\Bigl(\coprod_{i\in I}G_i\Bigr),X\Bigr]\to\Bigl[\coprod_{i\in I} 
F(G_i),X\Bigr],
\]
which is an isomorphism if and only if $X$ is in $\mWW\loc$.
This shows $\DAE=\mWW\loc$. By Remark~\ref{Minerva}, $\DAE=\mWW\loc$
is equivalent to $\DE/\langle\mSS\rangle$ and thus
a localization of $\DE$ with acyclics $\langle\mSS\rangle$.
Let $L$ be the composition
\[
\DE\to\DE/\langle S\rangle\overset{\simeq}{\to}\DAE.
\]
The inclusion $R:\DAE\to\DE$ is then a fully faithful 
right adjoint for $L$.
\end{proof}

As an immediate consequence of Proposition~\ref{locprop} 
we get the
following
\begin{cor}
The category $\DAE$ has coproducts, namely 
$
\coprod_{i \in  I} X_i = L\left(\coprod_{i 
\in  I} RX_i \right)
$
for any indexing set $I$. 
The canonical map $X_j\to\coprod_i X_i$ is the composition
\[
X_j \overset{\iso}{\to}LR(X_j)\to L \left(\coprod_{i\in I}RX_i\right),
\]
where the first map is the inverse of the counit (which is an isomorphism
since the right adjoint $R$ is fully faithful) and the
second map is $L$ applied to the canonical map of the coproduct in $\DE$.
\hfill$\square$
\end{cor}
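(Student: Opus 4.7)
The plan is to observe that this is a general fact about reflective subcategories: whenever $R : \mDD \hookrightarrow \mCC$ is a fully faithful right adjoint with left adjoint $L$, the subcategory $\mDD$ inherits from $\mCC$ whatever colimits $\mCC$ possesses, computed by applying $L$ to the colimit in $\mCC$. We just need to spell this out for coproducts and identify the canonical inclusions. The entire input from Proposition~\ref{locprop} is the existence of the adjunction $(L,R)$ together with the fact that $R$ is fully faithful.

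Concretely, given a family $(X_i)_{i \in I}$ in $\DAE$, I would set $Y := L\left(\coprod_{i \in I} R X_i\right)$, where the inner coproduct is taken in $\DE$ (which has all coproducts). For any test object $Z$ in $\DAE$, I would then chain natural isomorphisms
\begin{align*}
\DAE(Y, Z)
&\iso \DE\Bigl(\coprod_{i \in I} R X_i,\, R Z\Bigr)
 \iso \prod_{i \in I} \DE(R X_i,\, R Z)
 \iso \prod_{i \in I} \DAE(X_i, Z),
\end{align*}
using in turn the adjunction $(L,R)$, the universal property of the coproduct in $\DE$, and the full faithfulness of $R$. This verifies the universal property and identifies $Y$ as $\coprod_{i \in I} X_i$.

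To identify the canonical inclusion $X_j \to Y$ one tracks the isomorphism above applied to $Z = Y$ and the identity: unwinding the adjunction, the $j$-th coordinate of $\id_Y$ corresponds to $R X_j \to \coprod_i R X_i \to R L (\coprod_i R X_i) = R Y$, whose transpose under $(L,R)$ is $L R X_j \to Y$. Precomposing with the inverse of the counit isomorphism $X_j \weq L R X_j$ (available because $R$ is fully faithful) yields exactly the composition $X_j \overset{\iso}{\to} L R X_j \to L(\coprod_i R X_i) = Y$ claimed in the statement.

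There is essentially no obstacle: the argument is a routine diagram chase once Proposition~\ref{locprop} is in hand. The only small point to mention explicitly is that the counit $L R \to \id_{\DAE}$ is a natural isomorphism, which follows formally from $R$ being fully faithful (cf.~the discussion in Remark~\ref{Keller/Neeman}(2)).
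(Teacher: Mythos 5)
Your proof is correct, and it is exactly the standard reflective-subcategory argument that the paper leaves implicit (the corollary is stated with a $\square$ and no proof, as an immediate consequence of Proposition~\ref{locprop}). The chain of natural isomorphisms verifying the universal property and the unwinding of the canonical inclusion via the triangle identity and naturality of the unit are both carried out correctly.
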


\begin{rem}
The inclusion of abelian categories $r:\MODDalpha\mGG\to\mGG$
has a left adjoint, too. But since this left adjoint (which is 
discussed in \cite[Section~7.5]{Neeman})
is not exact in general, it
will not be useful for us.
\end{rem}

\begin{lemma}\label{Alaaf}
The functor $\tF:\Ho\mKK\to\DAE$ is isomorphic to the composition
$LF$. Moreover, $\tF$ preserves $\alpha$-coproducts of objects which
lie in $\mGG$.
\end{lemma}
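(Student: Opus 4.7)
The plan is to first establish the isomorphism $\tF\iso LF$ by purely formal adjunction calculus. Since $R$ is a fully faithful right adjoint by Proposition \ref{locprop}, the counit $LR\to\id_{\DAE}$ is a natural isomorphism; combined with the defining relation $R\tF=F$ from Lemma \ref{Hund}, this gives $LF = LR\tF \iso \tF$.

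For the second claim, I would establish the key observation that, for every $G\in\mGG$, the free module $F(G)\iso F_G$ already lies in $\DAE$. Indeed, by Remark \ref{Vercingetorix} one has
\[
H_0F(G)(G')\iso[G',G]^{\Ho\mKK}\quad\text{for } G'\in\mGG,
\]
and this sends $\alpha$-coproducts in $\mGG$ to products in $\Ab$ by the universal property of the coproduct in $\Ho\mKK$. Since $\mGG$ is closed under $\alpha$-coproducts, $F\bigl(\coprod_{i\in I}G_i\bigr)$ likewise lies in $\DAE$ for any family $(G_i)_{i\in I}$ in $\mGG$ with $|I|<\alpha$, and the (inverse of the) counit identifies $LF(G_i)$ and $LF(\coprod_i G_i)$ with $F(G_i)$ and $F(\coprod_i G_i)$ inside $\DAE$.

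Finally, I would match up the two coproducts. The canonical comparison map
\[
\coprod_{i\in I}^{\DE} F(G_i)\to F\Bigl(\coprod_{i\in I} G_i\Bigr)
\]
(which exists because $F$ preserves coproducts as a left adjoint) is, up to isomorphism, one of the maps in the set $\mWW$ of Proposition \ref{locprop}. Since the acyclics of the localization $L$ are generated by $\mSS=\{\cofiber(w):w\in\mWW\}$ and $L$ is a triangulated functor annihilating those cofibers, $L$ inverts every map in $\mWW$. Applying $L$ therefore yields an isomorphism $L\bigl(\coprod^\DE F(G_i)\bigr)\overset{\iso}{\to}LF\bigl(\coprod G_i\bigr)$. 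Using $R\tF(G_i)\iso F(G_i)$ together with the explicit formula $\coprod^\DAE X_i = L\bigl(\coprod^\DE RX_i\bigr)$ from the corollary to Proposition \ref{locprop}, this rewrites as $\coprod_{i\in I}^\DAE \tF(G_i)\iso\tF(\coprod_{i\in I} G_i)$, which is the canonical comparison map, so $\tF$ preserves the $\alpha$-coproduct. The only step requiring care is the verification that $F(G)\in\DAE$ for $G\in\mGG$; without it the coproduct formulas in $\DAE$ would not align with those computed in $\DE$, and the argument would break down.
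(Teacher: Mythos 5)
Your proof is correct and follows essentially the same route as the paper. The first half ($\tF\iso LF$ via the counit isomorphism $LR\iso\id$ and the factorization $F=R\tF$) is identical. The second half is also the paper's argument: the cofibers of the maps in $\mWW$ lie in the subcategory of acyclics $\langle\mSS\rangle=\ker L$, so the triangulated functor $L$ inverts every map in $\mWW$, and since $L$ is a left adjoint it preserves coproducts, giving the desired identification $\coprod^\DAE\tF(G_i)\iso\tF(\coprod G_i)$. The only point I would note is that the verification that $F(G)\in\DAE$ (and hence $F(\coprod G_i)\in\DAE$) need not be redone here: it is exactly the content of Lemma~\ref{Hund}, which shows $F$ factors through $\DAE$ for arbitrary objects of $\Ho\mKK$, not just those in $\mGG$. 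Citing that lemma would shorten your argument without losing rigor.
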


\begin{proof}
Since $R$ is fully faithful we have an isomorphism $LR\iso\id_\DE$,
which is given by the counit of the adjoint pair $(L,R)$. This yields
an isomorphism $LF=LR\tF\iso\tF$.

As a consequence of Proposition~\ref{locprop}, for any family
$(G_i)_{i\in I}$ with $G_i\in\mGG$ and $|I|<\alpha$, the map
\[
L\left(\coprod_{i\in I}F(G_i)\right) \to LF\left(\coprod_{i\in I}G_i \right)
\]
is an isomorphism. As a left adjoint, $L$ commutes with coproducts.
Using the isomorphism $LF\iso\tF$, we see that $\tF$ preserves
$\alpha$-coproducts of objects in $\mGG$.

\end{proof}

\begin{lemma}\label{Heidewitzka}
The functor $L:\DE\to\DAE$ preserves $\alpha$-compact objects.
\end{lemma}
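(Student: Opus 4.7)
The plan is to invoke Neeman's Theorem~4.4.9 (the same result used in the proof of Proposition~\ref{Neeman's}), which, under suitable hypotheses on a well generated $\mTT$ and a localizing subcategory $\mTT'=\langle\mSS'\rangle\subset\mTT$, gives for every regular $\beta\geq\alpha$ an equivalence $\mTT^\beta/(\mTT')^\beta\stackrel{\simeq}{\to}(\mTT/\mTT')^\beta$ induced by the Verdier quotient. In particular the quotient carries $\beta$-compact objects to $\beta$-compact objects.

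First I would dispose of the case $\alpha=\aleph_0$: then $\DAE=\DE$ by Lemma~\ref{aleph_0} and $L$ is (up to canonical equivalence) the identity, so the claim is trivial. From now on assume $\alpha\geq\aleph_1$, which is the cardinal condition Neeman's theorem requires. I would then apply the theorem to $\mTT=\DE$, using the compact generating set $\{F_G:G\in\mGG\}\subset(\DE)^\alpha$ from Lemma~\ref{summary}, and to $\mTT'=\langle\mathcal{S}\rangle$ for the set $\mathcal{S}$ of cofibers from Proposition~\ref{locprop}. Because $L$ is by construction the Verdier quotient $\DE\to\DE/\langle\mathcal{S}\rangle$ composed with the equivalence to $\DAE$ (Proposition~\ref{locprop}), specialising $\beta=\alpha$ then yields $L((\DE)^\alpha)\subset(\DAE)^\alpha$, which is the lemma.

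The one hypothesis left to verify is $\mathcal{S}\subset(\DE)^\alpha$. Each $C\in\mathcal{S}$ is the cofiber of a canonical map $\coprod_{i\in I}^{\DE}F(G_i)\to F\bigl(\coprod_{i\in I}^{\Ho\mKK}G_i\bigr)$ with $|I|<\alpha$ and $G_i\in\mGG$. Since $\mGG$ is closed under $\alpha$-coproducts up to isomorphism (Notation~\ref{notation}), the coproduct $\coprod_i^{\Ho\mKK}G_i$ lies in $\mGG$, so the target is a free module and hence compact in $\DE$. The source is an $\alpha$-coproduct of compact modules and therefore lies in the $\alpha$-localizing subcategory $(\DE)^\alpha$. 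As $(\DE)^\alpha$ is a thick triangulated subcategory, it is closed under cofibers, so $C\in(\DE)^\alpha$. The only real subtlety in the argument is the cardinal case split at $\aleph_1$, which is forced by Neeman's theorem; everything else is bookkeeping of hypotheses.
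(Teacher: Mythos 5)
Your proof is correct and arrives at the same essential verification as the paper's (that the set $\mathcal{S}$ of cofibers generating the acyclics consists of $\alpha$-compact objects, proved the same way), but you route the final step through a different citation. The paper invokes Neeman's Lemma~4.4.4, which directly says: if the acyclics of a Bousfield localization of a well generated $\mTT$ admit a generating set inside $\mTT^\alpha$, then the localization functor preserves $\alpha$-compactness -- and this needs no cardinal split. You instead invoke the heavier Theorem~4.4.9 (the same one used in Proposition~\ref{Neeman's}), which produces the equivalence $\mTT^\beta/(\mTT')^\beta\simeq(\mTT/\mTT')^\beta$ but only under the standing assumption $\alpha\geq\aleph_1$, which is why you must dispose of $\alpha=\aleph_0$ separately via Lemma~\ref{aleph_0}. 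Both routes are sound; the price of the bigger theorem is the extra case split (harmless here, since $\alpha=\aleph_0$ is trivially handled), and the benefit is that you reuse a tool already in play rather than introducing a new citation. One small remark on phrasing: you say ``$L$ preserves $\alpha$-compact objects'' follows because $L$ is ``the Verdier quotient composed with an equivalence''; it's worth being explicit that what Theorem~4.4.9 gives is that the quotient functor $q:\DE\to\DE/\langle\mathcal{S}\rangle$ carries $(\DE)^\alpha$ into $(\DE/\langle\mathcal{S}\rangle)^\alpha$ (via the factorization through $(\DE)^\alpha/\langle\mathcal{S}\rangle^\alpha$), and the equivalence $\DE/\langle\mathcal{S}\rangle\simeq\DAE$ then transports this, which is exactly what you need.
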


\begin{proof}
Since $\DE$ is well generated (even compactly generated), every object
is $\beta$-compact for some $\beta$ \cite[Proposition~8.4.2]{Neeman}.
By \cite[Lemma~4.4.4]{Neeman} it suffices then to show
that the acyclics of the localization 
\[
\xymatrix@M=.7pc{ \DAE  \ar@<-1ex>@{^{(}->}[r]_R
& \DE \ar@<-1ex>[l]_L}
\]
have a generating set containing only $\alpha$-compact objects.
Let $C$ be a cofiber of a map
\[
\coprod_{i\in I}F(G_i) \to F\left(\coprod_{i\in I}G_i \right)
\]
where $G_i\in\mGG$ and $|I|<\alpha$. The acyclics are generated
by such cofibers $C$ (Proposition~\ref{locprop}).
Recall that all $\alpha$-compact objects form an $\alpha$-localizing 
triangulated subcategory. As a free module,
each $F(G_i)$  is compact (Lemma~\ref{summary})
and hence $\alpha$-compact -- and so is
the $\alpha$-coproduct $\coprod_{i\in I}F(G_i)$.
Since $\coprod_{i\in I}G_i$
is up to isomorphism in $\mGG$, the object
$F\left(\coprod_{i\in I}G_i \right)$ is also $\alpha$-compact and so
is the cofiber $C$.

\end{proof}

\begin{lemma}\label{Schaf}
The functor $\tF:\Ho\mKK\to\DAE$ is fully faithful when the
source is in $\mGG$, that
is, for $G\in\mGG$ and arbitrary $A\in\Ho\mKK$, the map
\begin{eqnarray*}
[G,A]^{\Ho\mKK}&\to&[\tF(G),\tF(A)]^\DAE ,\\
 g&\mapsto &\tF(g) ,
\end{eqnarray*}
is an isomorphism.
\end{lemma}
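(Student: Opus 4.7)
The plan is to deduce this from Remark~\ref{Vercingetorix}, which says exactly the analogous statement for $F$ instead of $\tF$, namely that $[G,A]^{\Ho\mKK}\to [F(G),F(A)]^\DE$, $g\mapsto F(g)$, is an isomorphism for $G\in\mGG$. The key observation is that $F$ and $\tF$ are related by $F=R\tF$, where $R:\DAE\hookrightarrow\DE$ is the inclusion.

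First I would factor the map in question as the composition
\[
[G,A]^{\Ho\mKK}\,\stackrel{\tF}{\longrightarrow}\,[\tF(G),\tF(A)]^\DAE
\,\stackrel{R}{\longrightarrow}\,[R\tF(G),R\tF(A)]^\DE=[F(G),F(A)]^\DE,
\]
where the second arrow is the map induced by the functor $R$ on morphism sets. By Proposition~\ref{locprop}, $R$ is a fully faithful right adjoint, so the second arrow is a bijection. The composite of the two arrows is the map $g\mapsto F(g)$, which is an isomorphism by Remark~\ref{Vercingetorix} (since $G\in\mGG$ is cofibrant and fibrant and the counit $\varepsilon_G:JF(G)\to G$ is an isomorphism).

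Hence the first map, namely the one we are interested in, is an isomorphism as well. I do not expect any obstacle here; the lemma is essentially a formal consequence of two previously established facts (fullness and faithfulness of $F$ restricted to sources in $\mGG$, and full faithfulness of the inclusion $R$ of the localization).
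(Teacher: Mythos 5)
Your proof is correct and is essentially the same as the paper's, up to a cosmetic choice. The paper writes $\tF = LF$ (from Lemma~\ref{Alaaf}) and then observes that $L$ is fully faithful on $\essim R \supset \essim F$; you instead use $F = R\tF$ directly and leverage the full faithfulness of $R$. Both arguments rest on the same two facts — Remark~\ref{Vercingetorix} and the full faithfulness of the right adjoint $R$ — and your route is marginally more direct since it avoids the intermediate step through $L$.
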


\begin{proof}
We will again use $\tF=LF$ (Lemma~\ref{Alaaf}). 
By Remark~\ref{Vercingetorix}, $F$ is fully faithful when the source is
in $\mGG$.
Since $R$ is fully faithful,
the left adjoint $L$ is fully faithful on $\essim R$ and in particular
on $\essim F \subset \essim R$. 


\end{proof}

\begin{lemma}\label{genset}
The set $\tF(\mGG)=\{\tF(G)\,|\,G\in\mGG\}$ is an $\alpha$-compact
generating set (in the sense of Definition~\textnormal{\ref{alphacompact}}) for
$\DAE$.
\end{lemma}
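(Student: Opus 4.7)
I will check the three conditions in Definition~\ref{alphacompact}. Closure of $\tF(\mGG)$ under (de-)sus\-pen\-sions up to isomorphism is immediate from $\tF$ being triangulated and the corresponding closure of $\mGG$. Using the identification $\tF(G)\iso L(F_G)$ from Lemma~\ref{Alaaf}, the compactness of $F_G$ in $\DE$ (Lemma~\ref{summary}), and the fact that $L$ preserves $\alpha$-compact objects (Lemma~\ref{Heidewitzka}), each $\tF(G)$ is $\alpha$-compact in $\DAE$ and hence $\alpha$-small. The weak generating property follows by translating the vanishing $[\tF G,X]^{\DAE}=0$ via the adjunction $L\dashv R$ and the full faithfulness of $R$ into $[F_G,RX]^{\DE}=0$ for all $G\in\mGG$; since $\{F_G\mid G\in\mGG\}$ is a compact generating set for $\DE$ and $\mGG$ is closed under (de-)sus\-pen\-sions, this forces $RX\iso 0$ and hence $X\iso 0$.

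The substantive point is $\alpha$-perfectness. My plan is to identify $\tF(\mGG)$, up to isomorphism, with a skeleton of $\DAE^\alpha$; since $\DAE^\alpha$ is $\alpha$-perfect by definition, the $\alpha$-perfectness of its skeleton $\tF(\mGG)$ then follows. The inclusion $\tF(\mGG)\subset\DAE^\alpha$ is already established above. For the reverse inclusion up to iso, I would first verify that $\tF(\mGG)$ is itself closed (up to iso) under $\alpha$-coproducts (Lemma~\ref{Alaaf} combined with closure of $\mGG$), under triangles (any map $\tF G\to\tF G'$ lifts uniquely to a map $G\to G'$ in $\Ho\mKK$ by Lemma~\ref{Schaf}, and $\tF$ applied to the cofiber triangle in $\Ho\mKK$ produces an exact triangle in $\DAE$ whose third vertex has the form $\tF G''$ with $G''\in\mGG$), and under direct summands (an idempotent on $\tF G$ corresponds via the ring isomorphism $\End_{\DAE}(\tF G)\iso\End_{\Ho\mKK}(G)$ to one on $G$, which splits in the idempotent-complete category $\Ho\mKK$ with summands in $\mGG$). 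Thus $\alpha\loc\langle\tF(\mGG)\rangle=\tF(\mGG)$ up to iso. I would then apply \cite[Theorem~4.4.9]{Neeman}, exactly as in the proof of Proposition~\ref{Neeman's}, to the localization $L:\DE\to\DAE$: by Proposition~\ref{locprop} its acyclics are generated by a set lying in $\DE^\alpha$, so every $X\in\DAE^\alpha$ is isomorphic to $L(Y)$ for some $Y\in\DE^\alpha=\alpha\loc\langle F_G\mid G\in\mGG\rangle$, and preservation of coproducts by $L$ places $L(Y)$ in $\alpha\loc\langle\tF(\mGG)\rangle=\tF(\mGG)$.

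The main technical hurdle is proving the closure of $\tF(\mGG)$ under triangles and direct summands; both rely critically on Lemma~\ref{Schaf} to lift the relevant morphism or idempotent data back to $\Ho\mKK$, where the corresponding closure properties of $\mGG$ are already available by the setup of Notation~\ref{notation}.
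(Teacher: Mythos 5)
Your argument follows the paper's proof closely. Both proofs verify weak generation and $\alpha$-smallness via the identification $\tF(G)\iso L(F_G)$, Lemma~\ref{Heidewitzka}, and the compactness of the free modules, and both establish $\alpha$-perfectness by exhibiting $\tF(\mGG)$, up to isomorphism, as the $\alpha$-localizing subcategory $\alpha\loc\langle\tF(\mGG)\rangle$ of $\DAE$, relying on Lemma~\ref{Schaf} (full faithfulness on $\mGG$) and Lemma~\ref{Alaaf} (preservation of $\alpha$-coproducts). The one place you diverge is the final identification $\alpha\loc\langle\tF(\mGG)\rangle=\DAE^\alpha$: the paper, having first shown $\langle\tF(\mGG)\rangle=\DAE$ and $\tF(\mGG)\subset\DAE^\alpha$, invokes \cite[Lemma~4.4.5]{Neeman} directly, whereas you re-run the quotient argument from Proposition~\ref{Neeman's} via \cite[Theorem~4.4.9]{Neeman}, obtaining $\DE^\alpha/(\ker L)^\alpha\simeq\DAE^\alpha$ and transporting $\DE^\alpha=\alpha\loc\langle F_G\rangle$ across $L$. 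Two small remarks. First, your explicit verification that $\tF(\mGG)$ is closed under direct summands (via idempotents and Lemma~\ref{Schaf}) is not needed: once one knows $\alpha\loc\langle\tF(\mGG)\rangle=\DAE^\alpha$, the closure $\overline{\tF(\mGG)}$ of $\tF(\mGG)$ under isomorphism is automatically thick because $\DAE^\alpha$ is. Second, the equivalence $\DE^\alpha/(\ker L)^\alpha\simeq(\DAE)^\alpha$ from \cite[Theorem~4.4.9]{Neeman} requires $\alpha\geq\aleph_1$, as the paper itself points out inside the proof of Proposition~\ref{Neeman's}; since Notation~\ref{notation} allows $\alpha=\aleph_0$, your route needs to dispose of that case separately (which is trivial, as then $\DAE=\DE$, $L=\id$, and $\tF=F$ by Lemma~\ref{aleph_0}). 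The paper's appeal to \cite[Lemma~4.4.5]{Neeman} avoids this cardinality restriction.
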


\begin{proof}
First of all, $\tF(\mGG)$ is a weak generating set for
$\DAE$. Recall what this means: it is closed
under (de-)sus\-pen\-sions up to isomorphism,
and $[\tF(G),X]^\DAE=0$ for all $G\in\mGG$ implies $X=0$. 
This holds because
$F(\mGG)$ is a weak generating set for $\DE$. 
More is true, we have $\langle\tF(\mGG)\rangle =\DAE$.
Namely, using the Lemmas~\ref{Alaaf} and \ref{essim}, we get
\[
\langle\tF(\mGG)\rangle =
\langle LF(\mGG)\rangle \supset \essim L|_{\langle F(\mGG)\rangle} =
\essim L = \DAE 
\]
and thus $\langle\tF(\mGG)\rangle =\DAE$.

By Lemma~\ref{Heidewitzka}, each
$\tF(G)=LF(G)$ is $\alpha$-compact in $\DAE$.
Note that this does not yet imply that $\tF(\mGG)$ is an $\alpha$-compact
generating set for $\DAE$ -- we still have to show it is $\alpha$-perfect.
In fact, we will show that $\tF(\mGG)$, regarded as a subcategory of $\DAE$,
is equivalent to the category $\DAE^\alpha$ of all $\alpha$-compact objects in 
$\DAE$, which is by definition $\alpha$-perfect.

The set $\mGG$ is by assumption (cf.~Notation~\ref{notation}), 
up to isomorphism,
closed under \linebreak 
(de-)sus\-pen\-sions, triangles, and $\alpha$-coproducts.
The same holds for the set $\tF(\mGG)$ in $\DAE$ because
$\tF$ is triangulated, fully faithful on $\mGG$ (Lemma~\ref{Schaf}),
and preserves $\alpha$-coproducts of objects
in $\mGG$ (Lemma~\ref{Alaaf}).
As a consequence, the full subcategory $\overline{\tF(\mGG)}$ of $\DAE$
consisting of all objects
isomorphic to some object in $\tF(\mGG)$ is an $\alpha$-localizing triangulated
subcategory of $\DAE$. This implies that the inclusion of $\tF(\mGG)$ in 
$\overline{\tF(\mGG)}$ gives an equivalence of categories
\begin{equation}\label{Bieraufhawai}
\tF(\mGG)\simeq\overline{\tF(\mGG)}
=\alpha \textrm{-loc}\langle\tF(\mGG)\rangle ,
\end{equation}
where $\alpha \textrm{-loc}\langle\tF(\mGG)\rangle$ is the smallest 
$\alpha$-localizing subcategory containing $\tF(\mGG)$. We can now
apply \cite[Lemma~4.4.5]{Neeman} to get an equality
$\alpha \textrm{-loc}\langle\tF(\mGG)\rangle =\DAE^\alpha$. Together
with (\ref{Bieraufhawai}) this implies that the inclusion 
$\tF(\mGG)\incl\DAE^\alpha$ 
is an equivalence of categories and in particular $\tF(\mGG)$ is an
$\alpha$-compact generating set for $\DAE$.
\end{proof}

\begin{lemma}\label{Hirte}
The functor 
$
\tF:\Ho\mKK\to\DAE
$
preserves $\alpha$-coproducts.
\end{lemma}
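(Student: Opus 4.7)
Let $(A_i)_{i\in I}$ be a family in $\Ho\mKK$ with $|I|<\alpha$. The plan is to test the canonical map
\[
c : \coprod_{i\in I} \tF(A_i) \to \tF\Bigl(\coprod_{i\in I} A_i\Bigr)
\]
in $\DAE$ against the $\alpha$-compact generating set $\tF(\mGG)$ from Lemma \ref{genset}: it suffices to show that for each $G\in\mGG$ (and each shift, handled identically since $\mGG$ is closed under (de-)sus\-pen\-sions) the induced map
\[
c_\ast : \Bigl[\tF(G),\coprod_{i\in I}\tF(A_i)\Bigr]^{\DAE} \to \Bigl[\tF(G),\tF\bigl(\coprod_{i\in I} A_i\bigr)\Bigr]^{\DAE}
\]
is an isomorphism, where the coproduct on the left is taken in $\DAE$.

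For the target, the adjunction $(\tJ,\tF)$ from Lemma \ref{Hund}, together with the isomorphism $\tJ\tF(G)\iso G$ for $G\in\mGG$ (which holds since $F(G)\in\DAE$ gives $\tJ\tF(G)=JRLF(G)\iso JF(G)\iso G$ by Lemma \ref{summary}), yields a natural identification with $[G,\coprod_{i\in I} A_i]^{\Ho\mKK}$. For the source, $\tF\iso LF$ (Lemma \ref{Alaaf}), the observation that $F(A)\in\DAE$ for every $A\in\Ho\mKK$, and the adjunction $(L,R)$ from Proposition \ref{locprop} give
\[
\Bigl[\tF(G),\coprod_{i\in I}\tF(A_i)\Bigr]^{\DAE} \iso \Bigl[F(G),\, RL\Bigl(\coprod_{i\in I} F(A_i)\Bigr)\Bigr]^{\DE},
\]
with the inner coproduct now taken in $\DE$. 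I would next show that the localization unit $\eta:\coprod_{i\in I} F(A_i)\to RL\bigl(\coprod_{i\in I} F(A_i)\bigr)$ becomes an isomorphism under $[F(G),-]^{\DE}$: its fiber lies in the localizing subcategory $\langle\mSS\rangle$ of $L$-acyclics, so by compactness of $F(G)$ in $\DE$ (Lemma \ref{summary}) together with the long exact sequence, it suffices to verify $[F(G),C]^{\DE}=0$ (for all shifts) for each generator $C\in\mSS$ from Proposition \ref{locprop}. Writing such a $C$ as the cofiber of $\coprod_j F(G_j)\to F(\coprod_j G_j)$ with $(G_j)$ in $\mGG$ and $|J|<\alpha$, and applying Remark \ref{Vercingetorix} together with compactness of $F(G)$, this reduces further to the canonical map
\[
\bigoplus_j [G,G_j]^{\Ho\mKK}\to \Bigl[G,\coprod_j G_j\Bigr]^{\Ho\mKK}
\]
being an isomorphism.

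The main obstacle is this last Hom--coproduct isomorphism for the $\alpha$-compact generator $G\in\mGG$. I expect to derive it from the fact that $[G,-]$ commutes with $\alpha$-filtered (homotopy) colimits -- a standard property of $\alpha$-compact objects -- applied to the presentation of the $\alpha$-coproduct $\coprod_j G_j$ as the $\alpha$-filtered colimit of its finite subcoproducts (for which Hom commutation is automatic by additivity; the poset of finite subsets of $J$ is $\alpha$-filtered since $\alpha$ is regular and $|J|<\alpha$). With these identifications, the same reasoning identifies $c_\ast$ with the canonical map $\bigoplus_{i\in I}[G,A_i]^{\Ho\mKK}\to[G,\coprod_{i\in I} A_i]^{\Ho\mKK}$, which is again an isomorphism by the same argument; since this holds for every $G\in\mGG$ and every shift, $c$ is an isomorphism in $\DAE$.
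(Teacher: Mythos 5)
Your overall framework (testing the canonical map against the weak generating set $\tF(\mGG)$) is the right starting point, and matches the paper's opening move. However, the proof breaks down at the final step, and the gap is not a missing lemma that can be filled in — the step you flag as the ``main obstacle'' is in fact false in general.

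You want to show that for $G, G_j\in\mGG$ with $|J|<\alpha$, the canonical map $\bigoplus_j [G,G_j]^{\Ho\mKK}\to [G,\coprod_j G_j]^{\Ho\mKK}$ is an isomorphism, and you propose to deduce this from ``$[G,-]$ commutes with $\alpha$-filtered (homotopy) colimits, a standard property of $\alpha$-compact objects.'' This is not a property $\alpha$-compact objects have. In Neeman's framework, $\alpha$-compactness means $\alpha$-smallness plus $\alpha$-perfectness; $\alpha$-smallness gives a factorization of any map $G\to\coprod_{i\in I}X_i$ through a sub-coproduct of size $<\alpha$ (surjectivity of a certain comparison map), while $\alpha$-perfectness gives the two refined factorization properties in the definition — neither gives that $[G,-]$ preserves $\alpha$-coproducts. (Triangulated categories do not even have general filtered colimits, so the appeal to ``commuting with $\alpha$-filtered homotopy colimits'' is not available in this setting.) To see concretely that your claim cannot hold in general: by Remark~\ref{Vercingetorix} and compactness of $F(G)$ in $\DE$, your claim is equivalent to $[F(G),-]^{\DE}$ sending every map $\coprod_j F(G_j)\to F(\coprod_j G_j)$ in $\mWW$ to an isomorphism, i.e. $[F(G),C]^{\DE}=0$ for every generator $C\in\mSS$ of the acyclics. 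Since $F(\mGG)$ is a weak generating set for $\DE$ (Lemma~\ref{summary}), this would force every $C\in\mSS$ to be zero, hence $\langle\mSS\rangle=0$ and $\DAE=\DE$ — i.e. your argument only goes through in the degenerate case where the localization is trivial (effectively $\alpha=\aleph_0$), which is precisely the case the lemma need not address.

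The reason the paper's proof does not run into this is that it never computes with the coproduct of free modules in $\DE$: everything is carried out in $\DAE$, where $\coprod_j\tF(G_j)\iso\tF(\coprod_j G_j)$ holds \emph{by construction of the localization} (Lemma~\ref{Alaaf}). The paper then proves surjectivity and injectivity of $\gamma_\ast:[\tF(G),\coprod_i\tF(A_i)]^{\DAE}\to[\tF(G),\tF(\coprod_i A_i)]^{\DAE}$ by using the $\alpha$-perfectness of $\mGG$ in $\Ho\mKK$ and of $\tF(\mGG)$ in $\DAE$ (Lemma~\ref{genset}), together with the full-faithfulness of $\tF$ on $\mGG$ (Lemma~\ref{Schaf}): an arbitrary map $G\to\coprod_i A_i$ (resp.\ a map vanishing after composition with $\gamma$) is first \emph{factored through} an $\alpha$-coproduct of objects of $\mGG$, and only that intermediate coproduct is transported along $\tF$. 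That factorization step is what $\alpha$-perfectness provides, and it replaces the false ``Hom preserves $\alpha$-coproducts'' claim that your argument relies on.
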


\begin{proof}
For a family $(A_i)_{i\in I}$ of
objects in $\Ho\mKK$ with $|I|<\alpha$ let
\[
\gamma : \coprod_{i\in I} \tF(A_i)\to\tF\left(\coprod_{i\in I}A_i\right)
\]
denote the canonical map. Since $\tF(\mGG)$ is a weak generating set
for $\DAE$ (see Lemma~\ref{genset}), 
it suffices to show that for each $G\in\mGG$
the induced map
\[
\gamma_\ast :\left[\tF(G),\coprod_{i\in I} \tF(A_i)\right]
\to\left[\tF(G),\tF\left(\coprod_{i\in I}A_i\right)\right]
\]
is bijective.

The surjectivity can be seen as follows. Any given 
morphism $\tF(G)\to\tF\left(\coprod_{i\in I}A_i\right)$
is by Lemma~\ref{Schaf} of the form $\tF(g)$ for some 
$g:G\to\coprod_{i\in I}A_i$. Since $\mGG$ is an $\alpha$-compact 
generating set for $\Ho\mKK$, this map $g$ factors as
\[
G\overset{h}{\to}\coprod_{i\in I}G_i\overset{\coprod f_i}{\to}
\coprod_{i\in I}A_i .
\]
Consider the following commutative diagram (of solid arrows).
\[
\xymatrix@M=.7pc{
&\tF(G)\ar[rr]^-=\ar@{.>}[dl]_-k\ar@{.>}[dd]|\hole ^(.3){f}
&&\tF(G)\ar[dl]_{\tF(h)}\ar[dd]^{\tF(g)}\\
\coprod_{i\in I} \tF(G_i)\ar[rr]^(.65){\iso}\ar[dr]_{\coprod \tF(f_i)} 
&&\tF\left(\coprod_{i\in I}G_i\right)\ar[dr]_{\tF\left(\coprod f_i\right)} \\
&\coprod_{i\in I} \tF(A_i)\ar[rr]_-\gamma
&&\tF\left(\coprod_{i\in I}A_i\right) 
}
\]

The horizontal arrow in the middle is an isomorphism since we
know already by Lemma~\ref{Alaaf} that $\tF$ preserves 
$\alpha$-coproducts of objects of $\mGG$.
Hence there exists a dotted arrow $k$ such that the whole diagram
commutes. If we define $f$ to be the composition 
$\bigl(\coprod \tF(f_i)\bigr) k$ then $\gamma_\ast (f)=\gamma f =\tF(g)$
and hence $\gamma_\ast$ is surjective.

To prove the injectivity of $\gamma_\ast$ consider a morphism
$f:\tF(G)\to\coprod_{i\in I}\tF(A_i)$ such that $\gamma f=0$. 
By Lemma~\ref{genset}, $\tF(\mGG)$ is in particular an $\alpha$-perfect
generating set for $\DAE$. This implies that $f$ can be factored as
\[
\tF(G)\overset{k}{\to}\coprod_{i\in I}\tF(G_i)
\xymatrix@M=.2pc{\ar[rr]^-{\coprod\limits \tF(f_i)}&&}
\coprod_{i\in I}\tF(A_i) ,
\]
where we used that $\tF$ is full for arrows with source in
$\mGG$ (Lemma~\ref{Schaf}). Composing $k$ with the isomorphism
$\coprod_{i\in I} \tF(G_i)\overset{\iso}{\to}\tF\left(\coprod_{i\in I}G_i\right)$
yields a map 
$\tF(G)\to\tF\left(\coprod_{i\in I}G_i\right)$ which is of the form $\tF(h)$
for some $h:G\to\coprod_{i\in I}G_i$.
We can conclude that $\tF\left(\coprod f_i\right)\tF(h)=\gamma f=0$ and, since
$\tF$ is faithful for morphisms with source in $\mGG$ (Lemma~\ref{Schaf}),
this implies $\left(\coprod f_i\right)h=0$. But the definition of an 
$\alpha$-perfect class allows us then to factor each $f_i:G_i\to A_i$ as
\[
G_i\overset{g_i}{\to}G'_i\overset{h_i}{\to}A_i
\]
with $G'_i\in\mGG$ such that the composition 
$G\overset{h}{\to}\coprod G_i\overset{\coprod g_i}{\to}\coprod G'_i$ 
already vanishes. We finally have a commutative diagram
\[
\xymatrix@M=.7pc{
&\tF(G)\ar[rr]^-=\ar[dl]_-k\ar'[d]'[dd]_f[ddd]
&&\tF(G)\ar[dl]_{\tF(h)}\ar[ddd]^0\\
\coprod_{i\in I} \tF(G_i)\ar[rr]^(.65){\iso}\ar[d]_{\coprod \tF(g_i)}
&&\tF\left(\coprod_{i\in I}G_i\right)\ar[d]_{\tF\left(\coprod g_i\right)}\\
\coprod_{i\in I} \tF(G'_i)\ar[rr]^(.65){\iso}\ar[dr]_{\coprod \tF(h_i)} 
&&\tF\left(\coprod_{i\in I}G'_i\right)\ar[dr]_{\tF\left(\coprod h_i\right)} \\
&\coprod_{i\in I} \tF(A_i)\ar[rr]_-\gamma
&&\tF\left(\coprod_{i\in I}A_i\right) 
}
\]
where the two horizontal arrows in the middle are isomorphisms
by Lemma~\ref{Alaaf}. We have just seen that the 
composition $\tF(\coprod g_i)\tF(h)$
vanishes. As a consequence, the composition 
$\bigl(\coprod \tF(g_i)\bigr)k$
also vanishes and so does $f$. This shows $\gamma_\ast$ is injective.
\end{proof}

\begin{prop}\label{Falbala}
The homological functor
$
\tH:\DAE\to\MODDalpha\mGG
$
preserves coproducts of objects which are in the essential image of 
$\tF:\Ho\mKK\to\DAE$.
\end{prop}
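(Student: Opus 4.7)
The plan is to evaluate the canonical comparison map
\[
\mu\colon\coprod_{i\in I}^{\MODDalpha\mGG}\tH\tF(A_i)\to \tH\Bigl(\coprod_{i\in I}^{\DAE}\tF(A_i)\Bigr)
\]
at each $G\in\mGG$ and exhibit both sides as the same filtered colimit of abelian groups indexed by subsets $I'\subset I$ with $|I'|<\alpha$. First I would use Lemma~\ref{Hund} together with Krause's theorem on preservation of coproducts by $\Phi$ (already cited in the paper) to identify the source at $G$ with $[G,\coprod_{i\in I}^{\Ho\mKK}A_i]^{\Ho\mKK}$; by definition of $\tH$, the target at $G$ is $[\tF(G),\coprod_{i\in I}^{\DAE}\tF(A_i)]^{\DAE}$.

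The heart of the argument is a filtered colimit formula valid in any additive category with arbitrary coproducts: because $\coprod_{i\in I}X_i$ decomposes as a biproduct $\coprod_{i\in I'}X_i\oplus\coprod_{i\in I\setminus I'}X_i$, the inclusion of any sub-coproduct is a split monomorphism, so the transition maps in the directed system $\bigl([T,\coprod_{i\in I'}X_i]\bigr)_{I'\subset I}$ are injective. Combined with the $\alpha$-smallness of $G$ in $\Ho\mKK$ and of $\tF(G)$ in $\DAE$ (the latter supplied by Lemma~\ref{genset}, which even makes $\tF(G)$ $\alpha$-compact), this yields
\[
[G,\coprod_{i\in I}^{\Ho\mKK}A_i]^{\Ho\mKK}=\colim_{|I'|<\alpha}[G,\coprod_{i\in I'}^{\Ho\mKK}A_i]^{\Ho\mKK}
\]
and, by the same reasoning, the analogous filtered colimit description of the target.

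For every $I'$ with $|I'|<\alpha$, Lemma~\ref{Hirte} provides an isomorphism $\tF(\coprod_{i\in I'}^{\Ho\mKK}A_i)\iso\coprod_{i\in I'}^{\DAE}\tF(A_i)$ and Lemma~\ref{Schaf} identifies $[G,\coprod_{i\in I'}^{\Ho\mKK}A_i]^{\Ho\mKK}$ with $[\tF(G),\tF(\coprod_{i\in I'}^{\Ho\mKK}A_i)]^{\DAE}$. Composing these gives a compatible system of stage isomorphisms whose filtered colimit is a bijection that one verifies coincides with $\mu_G$. Since $\mu_G$ is a bijection for every $G\in\mGG$ and $\MODDalpha\mGG$ is a full abelian subcategory of $\MODD\mGG$, the map $\mu$ is an isomorphism in $\MODDalpha\mGG$.

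The main bookkeeping obstacle is verifying that the colimit of the stage isomorphisms is indeed $\mu_G$ rather than some other natural transformation. By the universal property of the coproduct in $\MODDalpha\mGG$, together with Krause's theorem applied to $\alpha$-coproducts (which makes $\mu$ literally the identity when restricted to an $\alpha$-sub-family, after the Lemma~\ref{Hirte} identifications), this reduces to a direct check on the component maps $\tH\tF(A_j)\to\tH(\coprod_{i\in I}^{\DAE}\tF(A_i))$: unwinding Lemmas~\ref{Schaf} and~\ref{Hirte} shows that this component sends $g_j\in[G,A_j]^{\Ho\mKK}$ to the composite of $\tF(g_j)$ with the canonical map into the full coproduct in $\DAE$, which is precisely the $j$-th stage of the colimit presentation.
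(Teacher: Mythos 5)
Your proposal is correct, but it takes a genuinely different route from the paper's. The paper works globally in $\MODD\mGG$: it writes the $\MODDalpha\mGG$-coproduct as an $\alpha$-filtered colimit of $\alpha$-sub-coproducts, appeals to the fact that the inclusion $r:\MODDalpha\mGG\to\MODD\mGG$ preserves $\alpha$-filtered colimits (Neeman's Lemma~A.1.3), verifies explicitly that the indexing poset is $\alpha$-filtered, and only then evaluates objectwise. You instead evaluate at each $G\in\mGG$ from the outset, replace $\coprod^{\MODDalpha\mGG}\tH\tF(A_i)$ with $\Phi\bigl(\coprod^{\Ho\mKK}A_i\bigr)$ directly (via $\Phi\iso\tH\tF$ and Krause's theorem), and supply your own filtered-colimit description of $[T,\coprod X_i]$ using the split-monomorphism observation together with $\alpha$-smallness. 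This sidesteps Neeman's Lemma~A.1.3 entirely and only requires ordinary (rather than $\alpha$-) filteredness of the indexing poset, which is a real simplification. The remaining ingredients — Krause's theorem, Lemma~\ref{Hund}, Lemma~\ref{Hirte}, Lemma~\ref{Schaf}, $\alpha$-compactness of $G$ and $\tF(G)$, and the identifications via $(L,R)$ and $LF\iso\tF$ — match the paper's. The one place you should tighten is the final bookkeeping paragraph: besides the single-component check at $I'=\{j\}$ that you carry out, one also needs to record that the stage isomorphisms from Lemmas~\ref{Hirte} and~\ref{Schaf} are compatible with the transition maps of the directed system (so that they do assemble into a map on colimits); this is just naturality of those lemmas, but it deserves a sentence.
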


\begin{proof}
Since $r:\MODDalpha\mGG\to\MODD\mGG$ is fully faithful, it suffices to show
that for any set-indexed family $(X_i)_{i\in I}$ of objects in $\essim \tF$
the map
\[
r\left(\coprod_{i\in I}\tH (X_i)\right)\to r\tH\left(\coprod_{i\in I}X_i\right)
\]
is an isomorphism.
This map is the composition of the following isomorphisms, each of which
will be explained below.
\begin{eqnarray}
\label{1}r\left(\coprod_{i \in I}\tilde H_0(X_i)\right) 
& \iso & r \underset{ I '\subset  I, | I '| < \alpha }{\colim}
 \left(
\coprod_{i \in I '}\tilde H_0 (X_i)\right) \\ \label{2}
& \iso & \underset{ I '\subset  I, | I '| < \alpha }{\colim}
 r\left(
\coprod_{i \in I '}\tilde H_0 (X_i)\right) \\ \label{3}
& \iso & \underset{ I '\subset  I, | I '| < \alpha }{\colim}
 r \tilde H_0
\left(\coprod_{i \in I '} X_i\right) \\\label{4}
& = & \underset{ I '\subset  I, | I '| < \alpha }{\colim}
 H_0 R
\left(\coprod_{i \in I '} X_i\right) \\\label{5}
& \iso & H_0  R \left(\coprod_{i \in I } X_i\right) \\\label{6}
& = & r \tilde H_0 \left(\coprod_{i \in I } X_i\right)
\end{eqnarray}

Ad (\ref{1}). It is a general fact from category theory that
a coproduct can be expressed as such a colimit.

Ad (\ref{2}). The inclusion $r:\MODDalpha\mGG\to\MODD\mGG$ preserves
$\alpha$-filtered colimits \cite[Lemma~A.1.3]{Neeman} and the colimit
in question
is indeed $\alpha$-filtered in the sense of \cite[Definition~A.1.2]{Neeman}:
it is a colimit over the category $\mII$ with objects the subsets
$I'$ of $I$ which have cardinality (strictly) less than $\alpha$. Morphisms
are the inclusions between two such subsets. Let $\mJJ$ be a subcategory
of $\mII$
with less than $\alpha$ morphisms. In particular, $\mJJ$ has
less than $\alpha$ objects and we can conclude
\[
\left|\,\bigcup_{J\in\mJJ}J\,\right|\leq\left|\,\coprod_{J\in\mJJ}J\,\right|
=\sum_{J\in\mJJ}|J| < \alpha
\]
where, for the last inequality,  we used that $\alpha$ is regular
and hence any sum of less than $\alpha$ cardinals, all smaller than $\alpha$,
is itself smaller than $\alpha$.
Then $\bigcup_{J\in\mJJ}J$ is an object of $\mII$ admitting
an arrow 
\[
J'\to \bigcup_{J\in\mJJ}J\, , 
\]
for every $J'$ in $\mJJ$. This shows that $\mII$ is
an $\alpha$-filtered category.

Ad (\ref{3}). By Lemma~\ref{Hirte}, $\tF$ preserves $\alpha$-coproducts. Since
$\Phi\iso\tH\tF$ (Lemma~\ref{Hund}) and since 
$\Phi$ preserves arbitrary coproducts
\cite[Theorem~C]{Krause}, the functor $\tH$  
has to preserve $\alpha$-coproducts which are in the essential
image of $\tF$.

Ad (\ref{4}). Here only the the equality $H_0R=r\tH$ is used.

Ad (\ref{5}). Recall that colimits in $\MODD\mGG$ are formed objectwise.
Using the adjunction isomorphism of the pair $(L,R)$
and the isomorphism $LF\iso\tF$ (Lemma~\ref{Alaaf}), we obtain 
for $G\in\mGG$ a natural isomorphism
\begin{eqnarray*}
\underset{ I '\subset  I, | I '| < \alpha }{\colim}
 H_0 R
\left(\coprod_{i \in I '} X_i\right)(G)
& = & \underset{ I '\subset  I, | I '| < \alpha }{\colim}
\left[F(G),R\left(\coprod_{i\in I'}X_i\right)\right]^\DE \\
&\iso &\underset{ I '\subset  I, | I '| < \alpha }{\colim}
\left[\tF(G),\coprod_{i\in I'}X_i\right]^\DAE .
\end{eqnarray*}
By Lemma~\ref{genset}, $\tF(G)$ is in particular $\alpha$-small, that is, 
any map from $\tF(G)$ into a coproduct factors through some sub-coproduct
of less than $\alpha$ objects. Hence the canonical monomorphism
\[
\underset{ I '\subset  I, | I '| < \alpha }{\colim}
\left[\tF(G),\coprod_{i\in I'}X_i\right]^\DAE \to \left[\tF(G),\coprod_{i\in I}
X_i\right]^\DAE
\]
is an isomorphism.
Using again the adjunction isomorphism and $LF\iso\tF$ we get an isomorphism
$\left[\tF(G),\coprod_{i\in I} X_i\right]
\iso H_0R\left(\coprod_{i\in I} X_i\right)(G)$ and hence the isomorphism
(\ref{5}).

Ad (\ref{6}). This is again nothing but $H_0R=r\tH$.

\end{proof}

\begin{cor}\label{Sagnix}
The functor $\tF:\Ho\mKK\to\DAE$ preserves coproducts.
\end{cor}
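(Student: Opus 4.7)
The plan is to reduce the question of preservation of coproducts by $\tF$ to the already established fact that $\Phi$ preserves arbitrary coproducts (Krause's theorem cited in the text) via the isomorphism $\tH\tF\iso\Phi$ of Lemma~\ref{Hund}, using that $\tH$ reflects isomorphisms.

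More precisely, let $(A_i)_{i\in I}$ be an arbitrary set-indexed family in $\Ho\mKK$ and consider the canonical comparison map
\[
\gamma:\coprod_{i\in I}\tF(A_i)\to\tF\Bigl(\coprod_{i\in I}A_i\Bigr)
\]
in $\DAE$. Since $\tH:\DAE\to\MODDalpha\mGG$ reflects isomorphisms (Remark~\ref{Amnesix}), it suffices to show that $\tH(\gamma)$ is an isomorphism. First I would apply Proposition~\ref{Falbala} to the family $(\tF(A_i))_{i\in I}$, each term of which lies in $\essim\tF$, to identify the natural map
\[
\coprod_{i\in I}\tH\tF(A_i)\to\tH\Bigl(\coprod_{i\in I}\tF(A_i)\Bigr)
\]
as an isomorphism. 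Composing with $\tH(\gamma)$ yields the canonical map
\[
\coprod_{i\in I}\tH\tF(A_i)\to\tH\tF\Bigl(\coprod_{i\in I}A_i\Bigr).
\]

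Next I would invoke the natural isomorphism $\tH\tF\iso\Phi$ from Lemma~\ref{Hund} to rewrite this as the canonical map
\[
\coprod_{i\in I}\Phi(A_i)\to\Phi\Bigl(\coprod_{i\in I}A_i\Bigr),
\]
which is an isomorphism because $\Phi$ preserves arbitrary coproducts by Krause's \cite[Theorem~C]{Krause}. Tracing back, $\tH(\gamma)$ is thus an isomorphism, and since $\tH$ reflects isomorphisms, so is $\gamma$.

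There is no real obstacle here: the corollary is a genuinely formal consequence of Proposition~\ref{Falbala}, Lemma~\ref{Hund}, Remark~\ref{Amnesix}, and the coproduct preservation of $\Phi$. The only point that deserves a moment of care is that the coproduct on the left of $\gamma$ is computed in $\DAE$ (via the left adjoint $L$ applied to the coproduct in $\DE$), not in $\DE$; but Proposition~\ref{Falbala} is stated precisely for coproducts in $\DAE$ of objects in $\essim\tF$, so this fits without adjustment.
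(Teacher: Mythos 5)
Your proof is correct and takes essentially the same approach as the paper: reduce to showing $\tH(\gamma)$ is an isomorphism via Remark~\ref{Amnesix}, use Proposition~\ref{Falbala} to identify $\coprod_i\tH\tF(A_i)\to\tH(\coprod_i\tF(A_i))$ as an isomorphism, and then conclude from $\tH\tF\iso\Phi$ (Lemma~\ref{Hund}) together with Krause's coproduct-preservation result. You have merely spelled out the two-out-of-three factorization that the paper leaves implicit.
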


\begin{proof}
Since $\tH$ reflects isomorphisms (Remark~\ref{Amnesix}),
it suffices to check that for any family $(A_i)_{i\in I}$ of objects in
$\Ho\mKK$ the map
\[
\tH\left(\coprod_{i\in I}\tF(A_i)\right)\to\tH\tF\left(\coprod_{i\in I}A_i\right)
\]
is an isomorphism. But this follows from Proposition~\ref{Falbala} because 
$\tH\tF$ is isomorphic to~$\Phi$ (Lemma~\ref{Hund}) 
and $\Phi:\Ho\mKK\to\MODDalpha\mGG$
preserves coproducts by \cite[Theorem~C]{Krause}.
\end{proof}

\begin{prop}\label{eq}
The adjoint functors 
\[
\xymatrix@M=.7pc{\Ho\mKK\ar@<-.5ex>[r]_-{\tF}
&\DAE.\ar@<-.5ex>[l]_-{\tJ} }
\]
are inverse equivalences of triangulated categories.
\end{prop}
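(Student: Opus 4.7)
The plan is to verify that the unit $\eta$ and the counit $\varepsilon$ of the adjoint pair $(\tJ,\tF)$ are both natural isomorphisms; since an adjunction is an equivalence precisely when both its unit and its counit are invertible, this will finish the proof. The crucial ingredient is that both $\tF$ and $\tJ$ preserve arbitrary coproducts: $\tJ$ does so as a left adjoint, and $\tF$ by Corollary~\ref{Sagnix}. Consequently $\tF\tJ$ and $\tJ\tF$ are coproduct preserving triangulated functors, and Lemma~\ref{useful} allows the isomorphism checks to be reduced to any localizing generating set.

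For the counit, I would first exploit the factorizations $R\tF=F$ and $\tJ=JR$, which together give $\tJ\tF=JF$; an easy diagram chase through the composite adjunction (using that $R$ is fully faithful) identifies $\varepsilon_A$ with the counit $JF(A)\to A$ of the original pair $(J,F)$ from Lemma~\ref{summary}. That lemma supplies $\varepsilon_G\iso\id_G$ for every $G\in\mGG$. Since $\mGG$ is a generating set for $\Ho\mKK$ (Remark~\ref{Wildschwein}), Lemma~\ref{useful} promotes this to $\varepsilon$ being an isomorphism on all of $\Ho\mKK$.

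For the unit, Lemma~\ref{genset} tells us that $\tF(\mGG)$ is a generating set for $\DAE$, so Lemma~\ref{useful} once more reduces matters to showing $\eta_{\tF G}$ is an isomorphism for every $G\in\mGG$. The triangle identity $\tF(\varepsilon_A)\circ\eta_{\tF A}=\id_{\tF A}$ forces $\eta_{\tF A}$ to be invertible as soon as $\varepsilon_A$ is; applied to $A=G\in\mGG$ this completes the verification via the preceding paragraph.

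The conceptual obstacle has already been overcome in the preceding lemmas: the genuinely hard point was establishing that the right adjoint $\tF$ commutes with arbitrary coproducts (Corollary~\ref{Sagnix}), a property that fails for the original $F:\Ho\mKK\to\DE$ and is precisely what forces the passage from $\DE$ to the localization $\DAE$. Once this is available, the proposition follows by a formal manipulation of units, counits, and triangle identities.
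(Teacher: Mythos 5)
Your proof is correct and follows the same overall strategy as the paper: show that the unit and counit of $(\tJ,\tF)$ are isomorphisms, using that both functors preserve coproducts (Corollary~\ref{Sagnix} for $\tF$) together with Lemma~\ref{useful} and the generating sets $\mGG$ and $\tF(\mGG)$ (Lemma~\ref{genset}) to reduce the check to generators. The only variation is in the unit step: the paper applies the fully faithful $R$ and directly invokes the statement $\eta_{F_G}\cong\id$ from Lemma~\ref{summary}, whereas you derive invertibility of $\eta_{\tF G}$ from invertibility of $\varepsilon_G$ via the triangle identity $\tF(\varepsilon_A)\circ\eta_{\tF A}=\id_{\tF A}$ — but this is exactly how the paper itself justified $\eta_{F_G}$ being an isomorphism in the discussion preceding Lemma~\ref{summary}, so it is the same idea unpacked rather than a different route.
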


\begin{proof}
Consider for $X\in\DAE$ and $A\in\Ho\mKK$ the unit 
$\eta_X:X \to\tF\tJ X$ and the counit $\varepsilon_A:\tJ\tF(A)\to A$
of the adjunction. It suffices to show that both are isomorphisms
for all objects $A\in\Ho\mKK$ and $X\in\DAE$.
The counit of the restricted adjunction $(\tJ,\tF)$ is the same
as the counit of the adjunction $(J,F)$, which is an isomorphism
for $A\in\mGG$ by Lemma~\ref{summary}. Since both $\tJ$ and $\tF$
preserve coproducts ($\tJ$ as a left adjoint and $\tF$ by 
Corollary~\ref{Sagnix}) and since $\mGG$ generates $\Ho\mKK$,
we can apply Lemma~\ref{useful} and conclude that the counit is an
isomorphisms for all objects.

If we apply the inclusion functor $R$ to the unit of an object
in $\tF(\mGG)$ we get a map
\[
F(G)=R\tF(G)\to R\tF\tJ\tF(G)=FJF(G),
\]
which is (up to isomorphism) the unit of the free module $F_G$ with
respect to the adjoint pair $(J,F)$ and hence an isomorphism by
Lemma~\ref{summary}.
Again, since both $\tJ$ and $\tF$ preserve coproducts and 
$\tF(\mGG)$ is a generating set for $\DAE$ by Lemma~\ref{genset}
we can conclude that the unit of the adjoint pair $(\tJ,\tF)$ is
an isomorphism for all objects.
\end{proof}

Note that, in particular, $F=R\tF$ is fully faithful.
Now we have all what we need to finish the proof of the characterization
theorem.


\begin{proof}[Proof of Theorem~\ref{characterization}, \rm{(i)} 
$\Rightarrow$ \rm{(ii)}]
Let $\mTT$ be a topological triangulated 
category, i.e., $\mTT$ is triangulated
equivalent to the homotopy category of some spectral model category $\mKK$.
If $\mTT$ is well generated then so is $\Ho\mKK$ and we can choose
a regular cardinal $\alpha$ and a generating set $\mGG$ as in 
Notation~\ref{notation} and let $\mEE$ (as before)
be the full spectral subcategory of $\mKK$ with objects $\mGG$.
By Proposition~\ref{eq}, $\Ho\mKK$ is equivalent to $\DAE$, which is
by Proposition~\ref{locprop} 
a localization of $\DE$ with acyclics being generated
by a set. 

\end{proof}

\begin{rem}
The proof of Theorem~\ref{characterization} can also be read as a proof for 
the characterization of the topological \emph{compactly} generated categories
as the derived categories of spectral categories. Namely, compactly generated
means we can choose $\alpha=\aleph_0$ and in this case $\DAE=\DE$, see
Lemma~\ref{aleph_0}.
\end{rem}

\section{A lift to the model category level}

\subsection{Bousfield localizations, properness, and cellularity}

Let us recall some notions from Hirschhorn's book \cite{Hirschhorn}.
If $\mMM$ is a model category and $\mCC$ is
a class of morphisms in $\mMM$, then an object $W$ of $\mMM$ is called 
$\mCC$-\emph{local} if it is fibrant and for every element $f:A\to B$
of $\mCC$ the induced map $f^\ast:\map(B,W)\to\map(A,W)$
is a weak equivalence of simplicial sets \cite[Definition~3.1.4]{Hirschhorn}. 
Here $\map(X,Y)$ denotes a homotopy
function complex between $X$ and $Y$, which is a simplicial set that can in
general be 
obtained by (co-)simplicial resolutions \cite[Section~17.4]{Hirschhorn}.
Given a cofibrant object $X$ and a fibrant object $Y$ in a 
simplicial model category, $\map(X,Y)$ can be chosen
to be the (fibrant) simplicial set given by the enrichment 
\cite[Example~17.1.4]{Hirschhorn}.
Note that model categories of modules over a spectral category are spectral
and thus simplicial \cite[Theorem~A.1.1 and Lemma~3.5.2]{SS03}).
A $\mCC$-\emph{local equivalence} is a map $g:X\to Y$ such that
for every $\mCC$-local object $W$ of $\mMM$
the induced map $g^\ast:\map(Y,W)\to\map(X,W)$ is a weak equivalence
of simplicial sets \cite[Definition~3.1.4]{Hirschhorn}.
Every weak equivalence in $\mMM$ is in particular a $\mCC$-local equivalence
\cite[Proposition~3.1.5]{Hirschhorn}.
For us, a Bousfield localization is what Hirschhorn calls a \emph{left}
Bousfield localization \cite[Definition~3.3.1]{Hirschhorn}:

\begin{defn}\label{Hirloc}
The \emph{Bousfield localization} of a model category $\mMM$ with respect
to a class of maps $\mCC$ is a model category $\mL_\mCC\mMM$ which has the
same underlying category as $\mMM$ such that 
\begin{itemize}
\item the weak equivalences in $\mL_\mCC\mMM$ are the $\mCC$-local equivalences,
\item the cofibrations in $\mL_\mCC\mMM$ are the cofibrations in $\mMM$,
\item and the fibrations in $\mL_\mCC\mMM$ 
are the maps which have the right lifting
property with respect to those maps which are both cofibrations and
weak equivalences in $\mL_\mCC\mMM$.
\end{itemize}
\end{defn}

\begin{rem}\label{locs-are-locs}
Here are some basic properties for a Bousfield localization $\mL_\mCC\mMM$ 
of $\mMM$. 
\begin{enumerate}
\item
Since every weak equivalence in $\mMM$ is a $\mCC$-local equivalence
and the cofibrations in $\mMM$ and $\mL_\mCC\mMM$ are the same,
every fibration in $\mL_\mCC\mMM$ is in particular a fibration in~$\mMM$.
\item
There is a Quillen pair
\[
\xymatrix@M=.7pc{\mL_\mCC\mMM\ar@<-.5ex>[r]_-{Q}&\mMM
\ar@<-.5ex>[l]_-{P}}
\]
where $P$ and $Q$ are the identity functors on underlying categories.
The functor $P$ has the following universal property.
The left derived $P^L:\Ho\mMM\to\Ho\mL_\mCC\mMM$ of $P$ maps
the images in $\Ho\mMM$ of the elements in $\mCC$ to isomorphisms in
$\Ho \mL_\mCC\mMM$ and $P$ is universal with this property, i.e.,
if $F:\mMM\to\mNN$ is an arbitrary left Quillen functor
whose left derived functor takes the images in $\Ho\mMM$ of the 
elements in $\mCC$ to isomorphisms in
$\Ho \mL_\mCC\mNN$, then there exists a unique left Quillen
functor $\bar{F}:\mL_\mCC\mMM\to\mNN$ such that $\bar{F} P=F$ 
\cite[Theorem~3.3.19 and Definition~3.1.1]{Hirschhorn}.
\item The right derived $Q^R:\Ho\mL_\mCC\mMM\to\Ho\mMM$ is fully faithful.
In particular, if $\mMM$ and $\mL_\mCC\mMM$ are stable, $\Ho\mL_\mCC\mMM$
is a localization of $\Ho\mMM$ in the sense of Definition~\ref{loc}.
This can be seen as follows. Let $X$ and $Y$ be $\mL_\mCC$-fibrant objects,
so that $Q^R(X)\iso X$ and $Q^R(Y)\iso Y$. We can further assume that $X$
is cofibrant. Then the right derived $Q^R$ is on morphisms the
map induced by the identity
\[
\mL_\mCC\mMM(X,Y)/\sim\to\mMM(X,Y)/\sim
\]
where $\sim$ denotes the equivalence relation given by (left) homotopy.
This map is clearly surjective. Assume we have $f\sim g$ in $\mMM$
via a cylinder object
\[
\xymatrix@!0@=15mm@M=.7pc{X\amalg X\ar[rr]\ar@{>->}[dr]&&X\\&X'\ar[ur]_-{\sim}&}
\]
in $\mMM$. But this is also a cylinder object in $\mL_\mCC\mMM$ and
thus $f\sim g$ in $\mL_\mCC\mMM$.

\end{enumerate}
\end{rem}

%

We want to apply Hirschhorn's existence theorem for
Bousfield localizations \cite[Theorem~4.1.1]{Hirschhorn}. It states
that for any left proper cellular model category $\mMM$ and any
\emph{set} $\mCC$ of morphisms, the Bousfield localization
$\mL_\mCC\mMM$ exists.
Recall that a model category is called \emph{left proper} if pushouts along
cofibrations preserve weak equivalences \cite[Definition~13.1.1]{Hirschhorn}.

\begin{defn}\label{pc}
A spectral category $\mRR$ is called \emph{pointwise cofibrant} if
the symmetric spectrum $\mRR(R,R')$ is cofibrant for all $R$, $R'$ in $\mRR$.
\end{defn}

\begin{lemma}\label{leftproper}
If $\mRR$ is a spectral category which is pointwise cofibrant  
(Definition~\textnormal{\ref{pc}}), then $\MODD\mRR$
is left proper.
\end{lemma}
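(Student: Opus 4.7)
The plan is to reduce the left properness of $\MODD\mRR$ to the left properness of the stable model structure on $\Sp$, using the fact that the model structure on $\MODD\mRR$ is defined objectwise.

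First I would recall the structure of $\MODD\mRR$. By \cite[Theorem~A.1.1]{SS03}, the model structure on $\MODD\mRR$ is cofibrantly generated, with weak equivalences and fibrations taken objectwise in $\Sp$. The generating (trivial) cofibrations are of the form $I\sm F_R$ (respectively $J\sm F_R$), where $I$ (respectively $J$) is a set of generating (trivial) cofibrations of $\Sp$, $R$ runs through the objects of $\mRR$, and $F_R=\mRR(-,R)$ is the free $\mRR$-module. Evaluated at another object $R'$, such a generating cofibration takes the form $X\sm\mRR(R',R)\to Y\sm\mRR(R',R)$ for some cofibration $X\cof Y$ in $\Sp$.

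Next I would show that every cofibration in $\MODD\mRR$ is objectwise a cofibration of symmetric spectra. Since $\mRR$ is pointwise cofibrant, $\mRR(R',R)$ is a cofibrant symmetric spectrum, so by the pushout product axiom in $\Sp$ the map $X\sm\mRR(R',R)\to Y\sm\mRR(R',R)$ is a cofibration in $\Sp$. Thus each generating cofibration of $\MODD\mRR$ evaluates to a cofibration in $\Sp$. Colimits in $\MODD\mRR$ are formed objectwise, so evaluation at $R'$ preserves pushouts, transfinite compositions, and retracts. Since the class of cofibrations in $\Sp$ is closed under these operations, every cofibration in $\MODD\mRR$ is objectwise a cofibration in $\Sp$.

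Finally I would deduce left properness. Suppose given a pushout square
\[
\xymatrix@M=.7pc{A\ar[r]^f\ar@{>->}[d]_i & A'\ar[d]\\ B\ar[r] & B'}
\]
in $\MODD\mRR$ with $i$ a cofibration and $f$ a weak equivalence. Evaluating at any $R\in\mRR$ gives a pushout square in $\Sp$ whose left-hand map $i(R)$ is a cofibration (by the previous step) and whose top map $f(R)$ is a stable equivalence. Since $\Sp$ with the stable model structure is left proper (this is part of what makes symmetric spectra a reasonable stable model category, cf.~Examples~\ref{Verleihnix}(1)), the bottom map $B(R)\to B'(R)$ is a stable equivalence. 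As this holds for every $R$, the map $B\to B'$ is a weak equivalence in $\MODD\mRR$.

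The main obstacle is the first step: one really needs the pointwise cofibrance hypothesis on $\mRR$ in order to guarantee that smashing with the Hom-spectra $\mRR(R',R)$ preserves cofibrations of symmetric spectra. Without this hypothesis, cofibrations of $\mRR$-modules need not be objectwise cofibrations, and the reduction to left properness of $\Sp$ breaks down.
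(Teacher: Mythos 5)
Your proof is correct and follows essentially the same approach as the paper: reduce to the left properness of $\Sp$ by evaluating objectwise, using the pointwise cofibrancy of $\mRR$ to ensure cofibrations in $\MODD\mRR$ evaluate to cofibrations of symmetric spectra. The only cosmetic difference is that you verify this last fact directly from the generating cofibrations and closure properties of the cofibration class, whereas the paper cites its Corollary~\ref{objectwise} (which in turn proves the same fact via the Quillen adjunction coming from the inclusion of the discrete spectral category $\mS_\mRR\to\mRR$).
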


\begin{proof}
Since weak equivalences in $\MODD\mRR$ are defined objectwise
and the stable model category $\Sp$ of symmetric spectra is left proper
\cite[Theorem~5.5.2]{HSS}, this is
an immediate consequence of Corollary~\ref{objectwise}.
\end{proof}

A \emph{cellular} model category is a certain kind
of cofibrantly generated model category
that allows sets $I$, resp.~$J$, 
of generating cofibrations, resp.~trivial cofibrations, such that
$I$ and $J$ satisfy some finiteness conditions on the domains and codomains
of their elements.
The precise definition of cellular is technical and we will not give it here;
the only place we will use the very definition is the proof of 
Proposition~\ref{cellular}.
The reader is referred to Hirschhorn's 
book \cite[Definition~12.1.1]{Hirschhorn}.
(To be precise, we need a slightly stronger version of the definition
from the book, namely the one Hirschhorn has given in earlier drafts; see
the proof of Proposition~\ref{cellular}.)
Examples of cellular model categories are (pointed or unpointed) 
simplicial sets, topological spaces,
and spectra, as the following lemma states.

\begin{lemma}\label{Honigkuchen}
The stable model category $\Sp$ of symmetric spectra is cellular.
\end{lemma}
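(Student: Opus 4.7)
The plan is to reduce cellularity of $\Sp$ to the cellularity of pointed simplicial sets via the explicit description of generating (trivial) cofibrations given in \cite{HSS}.

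First I would recall the explicit generating sets. By \cite{HSS}, the stable model structure on $\Sp$ is cofibrantly generated with a set $I$ of generating cofibrations of the form $F_n(\partial\Delta^m_+) \to F_n(\Delta^m_+)$ for $n, m \geq 0$, where $F_n$ denotes the left adjoint to the $n$-th evaluation functor $\Sp \to \mathrm{sSet}_*$, and with a set $J$ of generating trivial cofibrations built similarly from the horn inclusions together with the stabilization maps. Since $F_n$ is a left adjoint, it commutes with all colimits and hence with the transfinite compositions, pushouts and retracts that compute $I$-cells and $J$-cells; moreover, colimits in $\Sp$ are levelwise. This levelwise-colimit principle is the key structural fact behind every remaining verification.

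Next I would verify the three conditions of Hirschhorn's definition of a cellular model category \cite[Definition~12.1.1]{Hirschhorn}. Condition (1): the domains and codomains of elements of $I$ must be compact relative to $I$. Since the corresponding claim holds in $\mathrm{sSet}_*$ (pointed simplicial sets form a cellular model category by \cite[Proposition~12.1.4]{Hirschhorn}), one transports it across $F_n$: a map from $F_n(K)$ into a transfinite composition of pushouts of maps in $I$ corresponds by adjunction at level $n$ to a map from $K$ into the $n$-th level of that composition, which again is a transfinite composition of pushouts of cofibrations in $\mathrm{sSet}_*$. Compactness of $K$ in $\mathrm{sSet}_*$ then yields the required factorization. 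Condition (2): domains of elements of $J$ must be small relative to $I$. Essentially the same adjunction/levelwise-colimit argument reduces this to smallness of domains of generating trivial cofibrations of $\mathrm{sSet}_*$, together with smallness of the maps built from the stabilization maps (whose domains and codomains are themselves $F_n$-images of finite simplicial sets, hence small).

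Condition (3) requires cofibrations in $\Sp$ to be effective monomorphisms. Cofibrations in $\Sp$ are in particular levelwise cofibrations of pointed simplicial sets (this is standard and follows by adjunction from the definition of $I$), so each such cofibration $X \to Y$ is a levelwise monomorphism. Since pointed simplicial sets are a topos (after forgetting the basepoint, which is handled by the usual trick), monomorphisms are effective there, and equalizers in $\Sp$ are computed levelwise, so $X \to Y$ is the equalizer of its cokernel pair in $\Sp$.

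The main obstacle I anticipate is getting the smallness bounds and the "compact relative to $I$" estimates cleanly, because Hirschhorn's definition actually demands cardinal-precise statements (not just "small for some cardinal"). These are handled by taking the cardinal $\kappa$ that witnesses compactness for the domains/codomains of the generating cofibrations of $\mathrm{sSet}_*$ and observing that, because $F_n$ commutes with the relevant colimits and these colimits are levelwise, the same $\kappa$ works in $\Sp$; this is a routine but slightly fiddly bookkeeping argument that I would isolate as a lemma rather than unfold inline.
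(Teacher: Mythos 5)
Your proposal takes a genuinely different route from the paper's. The paper's proof is a short reduction: Hovey's \cite[Theorem~A.9]{Hovey-Spectra} shows that the \emph{projective} (level) model structure on $\Sp$ is cellular with the generating set $I$ you describe, the stable structure is a Bousfield localization of the projective one \cite[Section~8]{Hovey-Spectra}, and Hirschhorn's existence theorem \cite[Theorem~4.1.1]{Hirschhorn} has as part of its conclusion that the localized model category is again cellular, with the same set of generating cofibrations. You instead verify the three conditions of cellularity directly via the adjunctions $\mF_{\!n}\dashv\ev_n$ and the levelwise nature of colimits; this is essentially a re-proof of Hovey's theorem with the stable modifications bundled in, whereas the paper outsources the work entirely.

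The concrete gap in your sketch is the compactness transport. The $n$-th level of a relative $I$-cell complex in $\Sp$ is a relative cell complex built from the maps $\left(\mF_{\!m}(\partial\Delta[r]_+)\right)_n\to\left(\mF_{\!m}(\Delta[r]_+)\right)_n$; these are cofibrations of pointed simplicial sets but \emph{not} generating cofibrations, so compactness of a finite simplicial set $K$ as witnessed by the cellular structure on $\mathrm{sSet}_*$ (which is a priori compactness only relative to the generating set) does not directly yield the required factorization. One must invoke something like \cite[Proposition~11.4.9]{Hirschhorn} to upgrade compactness to the larger set of levelwise maps, using that their domains are themselves compact --- which is exactly what the paper's proof of the analogous statement for module categories (Proposition~\ref{cellular}) does, and which you wave away as fiddly bookkeeping. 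A second, smaller issue: for the stronger version of cellularity the paper needs later, the domains of $J$ must be cofibrant; for the stable generating trivial cofibrations built from the stabilization maps $\lambda_n$ this requires the mapping-cylinder replacements of HSS, whose domains are not simply $\mF_{\!n}$-images of finite simplicial sets as you assert. The paper's citation-based proof avoids both problems.
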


\begin{proof}
Hovey has shown that symmetric spectra with the projective model structure
form a cellular model category \cite[Theorem~A.9]{Hovey-Spectra} where
the set of generating cofibrations can be chosen as 
\[
I=\{\mF_{\!n}(\partial\Delta[r]_+)\to \mF_{\!n}(\Delta[r]_+)\,\,|\,\,r,n\geq 0 \}.
\]
Here, $\partial\Delta[r]_+\to \Delta[r]_+$ denotes the inclusion
of the boundary of the simplicial $r$-simplex into the simplicial $r$-simplex
(plus additional basepoints, respectively); these simplicial maps
from a set of generating cofibrations for the model category of 
pointed simplicial sets. The functor $\mF_{\!n}$ from pointed
simplicial sets to symmetric spectra is left adjoint to the $n$-th
evaluation functor.
The category of symmetric spectra with the stable model structure can be 
obtained as a Bousfield localization of the projective model structure
\cite[Section~8]{Hovey-Spectra}. Hence, by \cite[Theorem~4.1.1]{Hirschhorn},
the stable model category of symmetric spectra is also cellular and
the set of generating cofibrations can still be chosen to be $I$.
\end{proof}

\begin{prop}\label{cellular}
If $\mRR$ is a spectral category which is pointwise cofibrant
(Definition~\textnormal{\ref{pc}}), then $\MODD\mRR$
is cellular.
\end{prop}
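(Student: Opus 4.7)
The plan is to produce explicit sets of generating cofibrations and generating trivial cofibrations for $\MODD\mRR$ by extending Hovey's generators for $\Sp$ (Lemma~\ref{Honigkuchen}) along the free-module functors, and then to verify Hirschhorn's three cellularity axioms one at a time. Concretely, let $I$ and $J$ be Hovey's generating cofibrations and generating trivial cofibrations for $\Sp$, with $I$ as displayed in the proof of Lemma~\ref{Honigkuchen}. For each object $R$ of $\mRR$, the functor $-\sm F_R : \Sp\to\MODD\mRR$ is left adjoint to the evaluation $\ev_R$, and by the standard transfer argument used in the construction of the model structure on $\MODD\mRR$ (as in \cite[Theorem~A.1.1]{SS03}) the sets
\[
I_\mRR=\{\, i\sm F_R \mid i\in I,\; R\in\mRR\,\},\qquad
J_\mRR=\{\, j\sm F_R \mid j\in J,\; R\in\mRR\,\}
\]
are generating cofibrations and generating trivial cofibrations for $\MODD\mRR$.

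First I would check the compactness condition: every domain and codomain appearing in $I_\mRR$ is compact relative to $I_\mRR$ in Hirschhorn's sense. Because $F_R=\mRR(-,R)$ is cofibrant in $\MODD\mRR$ (this is where pointwise cofibrancy enters: $F_R$ evaluated at any $R'$ is $\mRR(R',R)$, which is cofibrant in $\Sp$ by assumption, hence $F_R$ is a cofibrant $\mRR$-module), the functor $-\sm F_R$ preserves cofibrations and relative cell complexes. Consequently a relative $I_\mRR$-cell complex in $\MODD\mRR$ is, objectwise at every $R'\in\mRR$, built as a (possibly larger) relative $I$-cell complex in $\Sp$, so compactness of $\mF_n(\partial\Delta[r]_+)$ and $\mF_n(\Delta[r]_+)$ with respect to $I$ transports directly to compactness of their smash products with $F_R$ with respect to $I_\mRR$.

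Next I would verify the smallness of the domains of $J_\mRR$ relative to $I_\mRR$. Again using that $-\sm F_R$ is left adjoint and that $F_R$ is cofibrant, the adjunction $(-\sm F_R,\ev_R)$ reduces this to smallness of the domains of $J$ relative to $I$ in $\Sp$, which holds since $\Sp$ is cellular. The third axiom, that cofibrations in $\MODD\mRR$ are effective monomorphisms, follows because cofibrations are in particular objectwise monomorphisms (this uses pointwise cofibrancy via Corollary~\ref{objectwise}, as in the proof of Lemma~\ref{leftproper}), coequalizers and pushouts in $\MODD\mRR$ are computed objectwise, and effective monomorphisms in $\Sp$ are detected on the underlying simplicial level.

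The main technical obstacle is the first bullet: matching the precise form of Hirschhorn's compactness axiom, which requires exhibiting a single cardinal $\kappa$ bounding all the domains and codomains of $I_\mRR$ simultaneously. I would handle this by taking $\kappa$ to be (a regular cardinal above) the cardinal witnessing compactness for Hovey's $I$ in $\Sp$, multiplied by the cardinality of the set of objects of $\mRR$; pointwise cofibrancy is what allows the smash product with $F_R$ to be analyzed, cell by cell, inside $\Sp$, so that the bound transports without loss.
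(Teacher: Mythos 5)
Your choice of generating sets and the overall strategy (reduce to $\Sp$ via the adjunction $(-\sm F_R,\ev_R)$ and exploit pointwise cofibrancy) agree with the paper's, but the step where you verify compactness contains a real gap. You claim that a relative $I_\mRR$-cell complex, after applying $\ev_{R}$, becomes ``a (possibly larger) relative $I$-cell complex in $\Sp$.'' This is false: a generating cell $A\sm F_{R'}\to B\sm F_{R'}$ evaluates at $R$ to $A\sm\mRR(R,R')\to B\sm\mRR(R,R')$, which is an $I$-cell \emph{smashed with the cofibrant spectrum} $\mRR(R,R')$, not itself an $I$-cell (and not even an $I$-cell complex in general, only a cofibration). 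So what $\ev_R$ actually produces is a relative cell complex with respect to a different set $I'=\bigcup_{R'\in\mRR}I\sm\mRR(R,R')$.

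This matters because compactness and smallness in Hirschhorn's sense are stated relative to a fixed set of maps, and knowing that $A$ is compact (or small) relative to $I$ does not automatically give the same relative to $I'$. The paper's proof closes exactly this gap: it identifies $I'$ explicitly, observes that pointwise cofibrancy makes every element of $I'$ a cofibration with cofibrant domain, invokes \cite[Corollary~12.3.4]{Hirschhorn} to get compactness of those domains relative to $I$, and then uses \cite[Proposition~11.4.9]{Hirschhorn} to transfer compactness of $A$ from $I$ to $I'$. For smallness it similarly uses \cite[Proposition~11.2.3]{Hirschhorn} to pass from smallness relative to $I$ to smallness relative to all cofibrations, and then notes that $I'$-cell complexes are cofibrations. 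Your sketch skips these comparison steps, and the place where you flag a difficulty (finding a uniform cardinal bound, weighted by the cardinality of the object set) is not actually where the work is; the cardinal arises automatically once the right Hirschhorn propositions are applied. The effective-monomorphism part of your argument is fine and matches the paper.
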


A one object version of this proposition occurs in a paper by Hovey 
\cite[Proposition~2.9]{Hovey-Monoidal} -- but he does not give the
proof there because the definition of cellular is somewhat technical.
We will give the proof using ideas of an unpublished 
pre-version of Hirschhorn's
book, where enriched diagram categories have been studied.

\begin{proof}[Proof of Proposition~\ref{cellular}]
Let $I$ be a set of generating cofibrations and $J$ a set
of generating trivial cofibrations which make the category of symmetric
spectra into a cellular model category. Recall that $I$ can be chosen
as
\[
I=\{\mF_{\!n}(\partial\Delta[r]_+)\to \mF_{\!n}(\Delta[r]_+)\,\,|\,\,r,n\geq 0 \}.
\]
By \cite[Theorem~A.1.1]{SS03}, $\MODD\mRR$ is
a cofibrantly generated model category. A set of generating cofibrations
is given by 
\[
\tI=\bigcup_{R\in\mRR} I\sm F_R
\]
where $F_R=\mRR(-,R)$ is the free module with
respect to $R$ and $I\sm F_R$ consists of all maps of the form
$f\sm F_R:A\sm F_R\to B\sm F_R$ for some $f:A \to B$ in $I$. 
Similarly, a set of generating trivial cofibrations is
given by
\[
\tJ=\bigcup_{R\in\mRR} J\sm F_R .
\]
We have to show that $\tI$ and $\tJ$ satisfy the following three conditions
\cite[Definition~12.1.1]{Hirschhorn}.

\begin{enumerate}[(i)]
\item For every element $A\to B$ of $I$ and every object $R$ of $\mRR$,
the modules $A\sm F_R$ and $B\sm F_R$ are compact relative to $\tI$ in the
sense of \cite[Definition~10.8.1]{Hirschhorn}.
\item For every element $A\to B$ of $J$ and every object $R$ of $\mRR$,
the module $A\sm F_R$ is cofibrant and small relative to $\tI$ in the sense
of \cite[Definition~10.5.12]{Hirschhorn}.
\item The cofibrations in $\MODD\mRR$ are effective monomorphisms 
\cite[Definition~10.9.1]{Hirschhorn}.
\end{enumerate}
Note that condition (ii) is stronger than in 
\cite[Definition~12.1.1]{Hirschhorn}
(where the domains of the generating trivial
cofibrations are not required to be cofibrant). It occurs in this
stronger form in earlier drafts of Hirschhorn's book,
and as Hirschhorn pointed out to me, the earlier definition of cellular
is the right one for the existence theorem of
Bousfield localizations.

Ad (i). Fix an element $A\to B$ of $I$.
Let $X\to Y$ be a relative $\tI$-cell complex 
\cite[Definition~10.5.8]{Hirschhorn}. This means,
$X\to Y$ is the composition of a $\lambda$-sequence 
\[
X=X_0\to X_1\to X_2\to\cdots\to X_\beta\to\cdots\quad (\beta<\lambda)
\]
(for some ordinal $\lambda$) such that $X_{\beta +1}$ is obtained
from $X_\beta$ by attaching a set of $\tI$-cells, i.e., there are
pushout diagrams
\[
\xymatrix@M=.7pc{\coprod_i A_i\sm F_{R_i}\ar[r]\ar[d]
&\coprod_i B_i\sm F_{R_i}\ar[d]\\
X_\beta\ar[r]&X_{\beta +1}
}
\]
for elements $A_i\to B_i$ of $I$ and objects $R_i$ of $\mRR$. Such a relative 
$\tI$-cell complex is
\emph{presented} if
a particular $\lambda$-sequence and certain gluing maps 
for the pushout diagrams are specified so that one can consider 
subcomplexes of $X\to Y$, see \cite[Section~10.6]{Hirschhorn} for the
details.

We have to show that there exists a cardinal $\gamma$ such that for every
presented $\tI$-cell complex $X\to Y$, every
map $A\sm F_R\to Y$ factors through a subcomplex $X\to Y'$ of size 
at most $\gamma$ (i.e., the set of cells has cardinality at most $\gamma$).
Via the (Quillen) adjunction
\begin{equation}\label{adj}
\xymatrix@M=.7pc{\MODD\mRR\ar@<-.5ex>[rr]_-{\ev_R}
&&\MODD \mS = \Sp \ar@<-.5ex>[ll]_-{-\sm F_R}},
\end{equation}
a map $A\sm F_R\to Y$ corresponds to a map $A\to Y(R)$. Since
colimits are preserved by the evaluation functor (Corollary~\ref{objectwise}),
$X(R)\to Y(R)$
is the composition of the presented $\lambda$-sequence
\[
X(R)=X_0(R)\to X_1(R)\to X_2(R)\to\cdots\to X_\beta(R)\to\cdots
\]
and $X_{\beta +1}(R)$ is obtained
from $X_\beta(R)$ by attaching $I'$-cells, where
\[
I'=\bigcup_{R'\in\mRR}I\sm F_{R'}(R).
\]
But this means that $X(R)\to Y(R)$ is a presented $I'$-cell complex.
If we can show that $A$ is compact relative to $I'$, we can conclude
that there exists a cardinal $\gamma$ (which does not
depend on $X\to Y$ and $A\sm F_R\to Y$) such that $A\to Y(R)$ factors 
through a subcomplex of size at most $\gamma$. 
Via adjunction, this corresponds to a factorization
of $A\sm F_R\to Y$ through a subcomplex of the same size.

It remains to prove that $A$ is compact relative to $I'$ in $\Sp$. 
For that purpose we want to apply \cite[Proposition~11.4.9]{Hirschhorn}.
A stable cofibration of symmetric spectra is in particular a level cofibration
\cite[Corollary~5.1.5]{HSS} and hence a monomorphism.
Since $F_{R'}(R)=\mRR(R,R')$ was assumed to be cofibrant, all
elements in $I'$ are cofibrations in $\Sp$. The domains of the elements
of $I'$, which are of the form $\mF_{\!n}(\partial\Delta[r]_+)\sm F_{R'}(R)$
\cite[Definition~3.3.2]{HSS}, are cofibrant since $F_{R'}(R)$ is cofibrant
by assumption and every spectrum of the form $\mF_{\!n}(K)$ for a pointed
simplicial set
$K$ is cofibrant \cite[Proposition~3.4.2]{HSS}. By 
\cite[Corollary~12.3.4]{Hirschhorn}, 
it follows that the domains of the elements
of $I'$ are compact (relative to $I$).
Since $\Sp$ is cellular (Lemma~\ref{Honigkuchen}) and $A$ is
the domain of an element of $I$, the object $A$ is compact (relative
to $I$). Now we have all we need to apply 
\cite[Proposition~11.4.9]{Hirschhorn}, which tells us that $A$ is also compact
relative to $I'$.
Hence we have shown that $A\sm F_R$ is compact relative to $\tI$. The same
proof shows that $B\sm F_R$ is compact relative to $\tI$.

Ad (ii). Let $A\to B$ be an element of $J$ and $R$ an object of $\mRR$.
Since $\Sp$ together with $J$ as a set of 
generating trivial cofibrations is cellular, 
$A$ is cofibrant. Applying the left
Quillen functor $-\sm F_R:\Sp\to\MODD\mRR$ shows that $A\sm F_R$ is cofibrant.
For the smallness, we 
need to show that there exists a cardinal $\kappa$ such that for all
$\lambda\geq\kappa$ and all $\lambda$-sequences
\[
X_0\to X_1\to X_2\to\cdots\to X_\beta\to\cdots
\]
in $\MODD\mRR$ where the maps $X_\beta\to X_{\beta +1}$ are relative
$\tI$-cell complexes, the map $\colim_{\beta<\lambda}
\MODD\mRR(A\sm F_R, X_\beta)\to\MODD\mRR(A\sm F_R,\colim_{\beta<\lambda}X_\beta)$
is an isomorphism of sets \cite[Definition~10.4.1]{Hirschhorn}.
Using again the adjunction (\ref{adj}) and the fact that $\ev_R$ preserves
colimits (Corollary~\ref{objectwise}), we can conclude
that it suffices to show $A$ is small relative to relative $I'$-cell
complexes in $\Sp$. Since $\Sp$ is cellular, $A$ (which is
the domain of an element of $J$) is small relative to $I$ and thus, by
\cite[Proposition~11.2.3]{Hirschhorn}, relative
to the class of all cofibrations. But the elements of $I'$ are cofibrations
because $F_{R'}(R)=\mRR(R,R')$ is cofibrant by assumption. Hence all
relative $I'$-cell complexes are cofibrations. This shows that $A$ is
small relative to relative $I'$-cell complexes.

Ad (iii). We have to show that every cofibration $f:A\to B$
in $\MODD\mRR$ is the equalizer of the canonical pair of maps
$B\rightrightarrows B\amalg_AB$.
We can choose an equalizer $E$ of this pair and get an induced map $g$.
\[
\xymatrix@M=.7pc{E\ar[r]&B\ar@<.5ex>[r]\ar@<-.5ex>[r] & B\amalg_AB\\
A\ar[ur]_f\ar@{.>}[u]^g}.
\]
By Corollary~\ref{objectwise}, $\ev_R(f)$ is a cofibration in $\Sp$
and thus an effective monomorphism by Lemma~\ref{Honigkuchen}.
Since for every $R$ in $\mRR$ the map $\ev_R$ 
preserves limits, $\ev_R(g)$ is
an isomorphism for each $R$. Hence $g$ is an isomorphism.
\end{proof}
 
\subsection{Well generated stable model categories}
By a well generated stable model category we mean a stable model category
whose homotopy category is well generated as a triangulated category 
(Definition~\ref{alphacompact}). One implication 
of Theorem~\ref{characterization} says that a topological
well generated triangulated category is equivalent to a localization of the
derived category of a spectral category. Theorem~\ref{lift} lifts
this result to the level of model categories. Roughly speaking,
well generated spectral model categories are localizations of categories
of modules.
Recall from Examples~\ref{spectral} that there are, up to Quillen equivalence,
rather large classes of spectral model categories.

\begin{lemma}\label{local-objects}
Let $\mKK$ be a left proper spectral model category and 
$\mCC$ a set of maps in $\mKK$
such that the localization
$\mL_\mCC\mKK$ exists. Assume the domains and codomains of the maps
in $\mCC$ are cofibrant and the image $\mWW$ of $\mCC$ 
in $\Ho\mKK$ is, up to isomorphism,
 closed under (de-)sus\-pen\-sions.

Then the following are equivalent for an object $X$ in $\mKK$.
\begin{enumerate}
\item[\textnormal{(i)}] $X$ is $\mL_\mCC$-fibrant (i.e., fibrant 
in $\mL_\mCC\mKK$).
\item[\textnormal{(ii)}] $X$ is $\mCC$-local.
\item[\textnormal{(iii)}] $X$ is fibrant (in the original model structure)
and, considered as an object in $\Ho\mKK$, 
it lies in $\mWW\loc$ (cf.~Lemma~\textnormal{\ref{Methusalix}}).
\end{enumerate}
\end{lemma}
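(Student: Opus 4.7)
The plan is to separate the equivalence into two parts: the formal equivalence (i)~$\Leftrightarrow$~(ii), which is part of the general theory of Bousfield localization, and the equivalence (ii)~$\Leftrightarrow$~(iii), which translates between the simplicial mapping space information used by Hirschhorn and the graded Hom-sets in the triangulated homotopy category $\Ho\mKK$.

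For (i)~$\Leftrightarrow$~(ii) I would simply cite \cite[Proposition~3.4.1]{Hirschhorn}, which states that the fibrant objects in the Bousfield localization $\mL_\mCC\mKK$ are precisely the $\mCC$-local objects of $\mKK$. For the equivalence (ii)~$\Leftrightarrow$~(iii), the key ingredient is that in a spectral model category (in particular a pointed simplicial stable model category) one has, for $A$ cofibrant and $X$ fibrant, a natural isomorphism
\[
\pi_n\,\map(A,X)\;\cong\;[\Sigma^n A,X]^{\Ho\mKK}\qquad (n\geq 0),
\]
because the simplicial suspension $S^1\sm -$ represents the triangulated suspension $\Sigma$ on $\Ho\mKK$. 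Since the domains and codomains of the maps in $\mCC$ are assumed cofibrant, and in either direction we only test against fibrant $X$, this identification is available.

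For (ii)~$\Rightarrow$~(iii): if $X$ is $\mCC$-local, then $X$ is fibrant by definition; for each $f\in\mCC$ and each $n\in\mZ$, closure of $\mWW$ under (de-)suspensions up to isomorphism provides some $g\in\mCC$ whose image in $\Ho\mKK$ is isomorphic to $\Sigma^n[f]$, so $g^\ast$ is an isomorphism on $[-,X]^{\Ho\mKK}$ by $\pi_0$ of $\map(g,X)$, hence $(\Sigma^n[f])^\ast$ is an isomorphism, and $X\in\mWW\loc$. For (iii)~$\Rightarrow$~(ii): if $X$ is fibrant and $X\in\mWW\loc$, then for any $f\in\mCC$ the induced map $\pi_n\map(f,X)$ is identified with $(\Sigma^n f)^\ast:[\Sigma^n B,X]\to[\Sigma^n A,X]$, which is an isomorphism for every $n\geq 0$ since $\Sigma^n[f]\in\mWW$; as $\map(A,X)$ and $\map(B,X)$ are Kan complexes, this makes $\map(f,X)$ a weak equivalence, so $X$ is $\mCC$-local. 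The main subtlety is making sure that the closure of $\mWW$ under (de-)suspensions is used correctly: in the direction (ii)~$\Rightarrow$~(iii) it supplies enough maps in $\mCC$ to cover every shift tested by $\mWW\loc$, while in the direction (iii)~$\Rightarrow$~(ii) it guarantees that the higher simplicial homotopy of $\map(f,X)$ — namely the $(\Sigma^n f)^\ast$ for $n>0$ — is actually controlled by $\mWW\loc$.
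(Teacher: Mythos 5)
Your proposal follows the same two-part strategy as the paper: cite Hirschhorn's Proposition~3.4.1 for (i)~$\Leftrightarrow$~(ii), then relate $\pi_n\map(A,X)$ to $[\Sigma^n A,X]^{\Ho\mKK}$ for (ii)~$\Leftrightarrow$~(iii). However, there is a genuine gap in your (iii)~$\Rightarrow$~(ii) argument at the final step. You conclude that $\map(f,X)$ is a weak equivalence because the $\pi_n$ at the distinguished basepoint are isomorphisms, justifying this solely by ``as $\map(A,X)$ and $\map(B,X)$ are Kan complexes.'' That is not enough: a map of Kan complexes is a weak equivalence if and only if it induces a bijection on $\pi_0$ and isomorphisms on $\pi_n$ \emph{at every basepoint}, and your computation only handles the basepoint given by the zero map. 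In general there is no reason for $\pi_n$-isomorphy at one basepoint to propagate to the others.

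The missing ingredient, which the paper supplies explicitly, is that $\map(A,X)$ is the level-zero space of the enriched Hom-spectrum $\Hom_\mKK(A,X)$, which is an $\Omega$-spectrum when $A$ is cofibrant and $X$ is fibrant. Consequently $\map(A,X)$ and $\map(B,X)$ are (infinite) loop spaces — in particular group-like H-spaces whose components are all homotopy equivalent — so checking $\pi_n$ at the distinguished basepoint does suffice. Your appeal to $S^1\sm-$ representing $\Sigma$ sets up the identification $\pi_n\map(A,X)\iso[\Sigma^n A,X]$ correctly, but it does not by itself give you the loop-space structure that closes the basepoint gap; you need to observe that these mapping spaces sit at level zero of a fibrant spectrum. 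Once that observation is in place, the rest of your argument, including the use of closure of $\mWW$ under (de-)suspensions to control the higher $\pi_n$, matches the paper's proof.
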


\begin{proof}
(i) $\Leftrightarrow$ (ii) This is  
\cite[Propostion~3.4.1]{Hirschhorn}.

(ii) $\Leftrightarrow$ (iii)
By definition, $X$ is $\mCC$-local if and only if
it is fibrant (in the original model structure) and for every
element $f:A\to B$ of $\mCC$ the induced map $f^\ast:\map(B,X)\to\map(A,X)$
is a weak equivalence of simplicial sets. Now the simplicial
enrichment of $\mKK$ is induced by the spectral enrichment, 
that is, $f^\ast:\map(B,X)\to\map(A,X)$ is the level zero map 
of a map of spectra, which are fibrant (i.e., $\Omega$-spectra)
since $A$ and $B$ are cofibrant
and $X$ is fibrant. 
Hence $f^\ast$ is a map of loop spaces and thus
a weak equivalence if and only if for the distinguished basepoint,
$\pi_n(f^\ast):\pi_n\map(B,X)\to\pi_n\map(A,X)$
is an isomorphism for all $n\geq 0$.
Since $A$ and $B$ are cofibrant and $X$ is fibrant, the map
$\pi_n(f^\ast)$ is naturally
isomorphic to the map
$f^\ast:[\Sigma^nB,X]\to [\Sigma^nA,X]$ 
of morphism groups in $\Ho\mKK$.
Since $\mWW$ is closed under (de-)sus\-pen\-sions, the $\mL_\mCC$-fibrant
objects in $\mKK$ are exactly the fibrant objects $X$
such that the induced map $f^\ast:[B,X]\to[A,X]$ 
is an isomorphism for all $f:A\to B$
in $\mWW$. In other words, the $\mL_\mCC$-fibrant objects are precisely
the fibrant objects which, considered
as objects in $\Ho\mKK$, lie in $\mWW\loc$.
\end{proof}

\begin{cor}\label{Oxford}
If in the situation of Lemma~\textnormal{\ref{local-objects}}
\begin{align}\label{Stern}
\xymatrix@M=.7pc{\Ho\mL_\mCC\mKK\ar@<-.5ex>[r]_-{Q^R}&\Ho\mKK
\ar@<-.5ex>[l]_-{P^L}}\tag{$\ast$}
\end{align}
denotes the derived adjunction, then $\essim Q^R=\mWW\loc$, and the 
model category $\mL_\mCC\mKK$
is stable, such that $\Ho\mL_\mCC\mKK$ 
is a localization of $\Ho\mKK$ in the sense of
Definition~\textnormal{\ref{loc}} via \textnormal{(\ref{Stern})}.
\end{cor}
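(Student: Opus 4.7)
The plan is to establish, in order: (i) $\essim Q^R = \mWW\loc$; (ii) $\mL_\mCC\mKK$ is stable, with the suspension on $\Ho\mL_\mCC\mKK$ corresponding via $Q^R$ to the restriction to $\mWW\loc$ of the suspension of $\Ho\mKK$; and (iii) the adjunction $(P^L,Q^R)$ exhibits $\Ho\mL_\mCC\mKK$ as a localization of $\Ho\mKK$ in the sense of Definition~\ref{loc}.

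For part (i), I would use that $Q^R$ is the right derived functor of the identity $Q\colon\mL_\mCC\mKK\to\mKK$, so $Q^R(X)$ is computed by taking an $\mL_\mCC$-fibrant replacement and regarding it as an object of $\Ho\mKK$. By Lemma~\ref{local-objects}, the $\mL_\mCC$-fibrant objects are exactly the $\mKK$-fibrant objects lying in $\mWW\loc$, which gives $\essim Q^R\subset\mWW\loc$. For the reverse inclusion, any $X\in\mWW\loc$ admits a fibrant replacement $X\to X^{\mathrm{fib}}$ in $\mKK$, which is an isomorphism in $\Ho\mKK$; since $\mWW\loc$ is a full subcategory of $\Ho\mKK$, the replacement $X^{\mathrm{fib}}$ still lies in $\mWW\loc$, hence is $\mL_\mCC$-fibrant, hence represents an object of $\essim Q^R$ isomorphic to $X$.

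For part (ii), the crucial observation is that cofibrations, the terminal object, and the functorial cylinder and cone factorizations in $\mL_\mCC\mKK$ coincide with those in $\mKK$. Thus a model of $\Sigma X$ built as the pushout of the diagram $\ast\leftarrow X^{\cofr}\to C_{X^{\cofr}}$ represents the suspension both in $\Ho\mKK$ and in $\Ho\mL_\mCC\mKK$, and consequently $Q^R$ commutes with $\Sigma$ up to a canonical natural isomorphism. By part~(i) together with the fully faithfulness of $Q^R$ (Remark~\ref{locs-are-locs}(3)), $Q^R$ restricts to an equivalence of categories $\Ho\mL_\mCC\mKK\simeq\mWW\loc$. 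Since $\mWW$ is closed (up to isomorphism) under (de-)sus\-pen\-sions, so is $\mWW\loc$ (this is directly visible on the defining condition, using that $[B,\Sigma X]\iso[\Sigma^{-1}B,X]$); hence the suspension of $\Ho\mKK$ restricts to a self-equivalence of $\mWW\loc$, which via $Q^R$ transports back to an equivalence on $\Ho\mL_\mCC\mKK$. This proves $\mL_\mCC\mKK$ is stable.

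For part (iii), stability of $\mL_\mCC\mKK$ upgrades $Q^R$ to a triangulated functor by \cite[Proposition~6.4.1]{Hovey}. Combined with its being fully faithful, its left adjoint $P^L$, and the identification $\essim Q^R=\mWW\loc$ from part~(i), this meets Definition~\ref{loc} verbatim, so $\Ho\mL_\mCC\mKK$ is a localization of $\Ho\mKK$ via the displayed derived adjunction.

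The main obstacle will be making part~(ii) airtight, namely the natural identification of the two suspension functors. I expect this to be a careful but essentially formal check: the pushout that defines $\Sigma X$ from a cofibrant replacement uses only cofibrations and the zero object, and hence produces literally the same object in both model structures; moreover every weak equivalence in $\mKK$ is in particular a $\mCC$-local equivalence (Proposition~3.1.5 of \cite{Hirschhorn}), so that the homotopy class of this pushout is unambiguous in either category. Once this is settled, steps~(i) and~(iii) are short arguments.
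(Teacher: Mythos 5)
Your identification $\essim Q^R=\mWW\loc$ and your part~(iii) match the paper. But the step you yourself flag as the main obstacle, the commutation $Q^R\Sigma'\iso\Sigma Q^R$, does not follow from the observation that the suspension pushouts coincide in the two model structures, and Hirschhorn's Proposition~3.1.5 does not close the gap. The functor $Q^R$ is not the identity on homotopy categories: on an object $[X]$ it returns the $\mKK$-homotopy class of an $\mL_\mCC$-fibrant replacement. Taking $X$ bifibrant in $\mL_\mCC\mKK$ and $P$ the common pushout, you get $\Sigma Q^R([X])=[P]$ but $Q^R(\Sigma'[X])=[P^{\fibr}]$, where $P\to P^{\fibr}$ is a priori only an $\mL_\mCC$-local equivalence and not a $\mKK$-weak equivalence; Proposition~3.1.5 gives the reverse implication between the two notions of equivalence and is therefore of no help. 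The missing argument is: $[P]=\Sigma[X]$ lies in $\mWW\loc$ because $[X]$ does and $\mWW\loc$ is closed under $\Sigma$; a $\mKK$-fibrant replacement of the cofibrant $P$ then lies in $\mWW\loc$ by repleteness, hence is $\mL_\mCC$-fibrant by Lemma~\ref{local-objects}, so it may serve as $P^{\fibr}$, which makes $P\to P^{\fibr}$ a $\mKK$-weak equivalence after all.

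Once this is supplied your approach is a correct and genuinely different route from the paper's. The paper never examines the pushout at the model level; it works formally with the adjunction, noting that $\essim Q^R$ is a triangulated subcategory of $\Ho\mKK$ closed under $\Sigma$, so $\Sigma Q^R X\iso Q^R Y$ for some $Y$, then identifying $Y\iso\Sigma'X$ via the counit isomorphism $P^LQ^R\iso\id$ and the compatibility of the left derived $P^L$ with $\Sigma$, and finally chasing $\Sigma'\Omega'\iso\id\iso\Omega'\Sigma'$ through the derived adjunction using that $Q^R$ commutes with $\Omega$. Your route, once repaired, gives a more concrete model-categorical picture of the suspension; the paper's route avoids all model-category checks beyond the general derived-functor formalism, at the cost of being less geometric. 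Both use the same essential input, namely closure of $\mWW\loc=\essim Q^R$ under suspension, just deployed at different levels.
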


\begin{proof}
The essential image
of $Q^R$ contains all objects which are, up to isomorphism in $\Ho\mKK$,
$\mL_\mCC$-fibrant. By Lemma~\ref{local-objects}, the $\mL_\mCC$-fibrant
objects are the fibrant objects which are in $\mWW\loc$.
Since both $\essim Q^R$ and $\mWW\loc$ are replete subcategories of $\Ho\mKK$
(i.e., they contain
the whole isomorphism class of any of their objects), it follows that they
coincide.

In particular, $\essim Q^R$ is a (colocalizing) triangulated subcategory
of $\Ho\mKK$ and hence closed under (de-)suspensions. This implies
that $\Ho\mL_\mCC\mKK$ (which is isomorphic to $\essim Q^R$ since
$Q^R$ is fully faithful, see Remark~\ref{locs-are-locs}(3)) is also stable.
The details are as follows. Let $\Sigma$, resp.~$\Omega$, denote the 
suspension, resp.~desuspension, in $\Ho\mKK$, and $\Sigma'$, resp.~$\Omega'$,
the suspension, resp.~desuspension, in $\Ho\mL_\mCC\mMM$. We have to show
that $\Sigma'$ and $\Omega'$ are inverse equivalences. As a left derived,
$P^L$ commutes with suspension, while $Q^R$ commutes with desuspension
\cite[Proposition~6.4.1]{Hovey}. Let us first check that $Q^R$ also commutes
with suspension. Since $\essim Q^R$ is a triangulated subcategory
of $\Ho\mKK$, there is an isomorphism  $\Sigma Q^R X\iso Q^RY$ for some 
$Y$ in $\Ho\mL_\mCC\mKK$. Using that the counit of the adjunction
$(P^L, Q^R)$ is an isomorphism ($Q^R$ is fully faithful) we get
an induced isomorphism
\[
Y\iso P^LQ^RY\iso P^L\Sigma Q^R X\iso\Sigma'P^LQ^RX\iso \Sigma'X
\]
so that $\Sigma Q^RX$ is naturally isomorphic to $Q^R\Sigma'X$.
Now we have isomorphisms
\[
\Sigma'\Omega'\iso P^LQ^R\Sigma'\Omega'\iso P^L\Sigma Q^R \Omega'\iso
P^L\Sigma\Omega Q^R\iso P^L Q^R\iso\id
\]
and 
\[
\Omega' \Sigma'\iso P^LQ^R\Omega' \Sigma'\iso P^L\Omega Q^R\Sigma'\iso
P^L\Omega \Sigma Q^R\iso P^LQ^R\iso \id
\]
which show that $\Omega'$ and $\Sigma'$ are inverse equivalences.
\end{proof}

\begin{lemma}\label{letztes-Lemma}
Let $\mKK$ be a left proper spectral model category and $\mCC$ a set
of maps in $\mKK$, closed under (de-)sus\-pen\-sions in $\Ho\mKK$
(up to isomorphism).
If the localization $\mL_\mCC\mKK$ exists, it is a left proper spectral
model category.
\end{lemma}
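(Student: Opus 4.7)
The plan is to check left properness first and then the spectral structure, the latter making essential use of left properness.

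For left properness, I would verify that pushouts along cofibrations in $\mKK$ (which coincide with cofibrations in $\mL_\mCC\mKK$) preserve $\mCC$-local equivalences. By Lemma~\ref{local-objects}, every $\mL_\mCC$-fibrant $W$ is already fibrant in $\mKK$, so applying $\map_\mKK(-,W)$ turns such a pushout square into a pullback square of simplicial sets in which one edge is a Kan fibration (from the simplicial structure on $\mKK$, since $A\to B$ is a cofibration and $W$ is fibrant) and the opposite edge, coming from the $\mCC$-local equivalence, is a weak equivalence; right properness of simplicial sets then produces a weak equivalence on the fourth edge. Since $\mCC$-local equivalences are detected by $\map_\mKK(-, W)$ for $\mCC$-local $W$, this shows that the pushout of a $\mCC$-local equivalence along a cofibration is again a $\mCC$-local equivalence. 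This is essentially a standard Hirschhorn-type argument.

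For the spectral structure, the enrichment, tensoring, and cotensoring are inherited from $\mKK$ since the underlying category is unchanged; only the pushout-product axiom for $\mL_\mCC\mKK$ needs verification. Given a cofibration $i: A\to B$ in $\mL_\mCC\mKK$ and a cofibration $j: X\to Y$ in $\Sp$, the pushout-product $i\Box j$ is a cofibration by the spectral structure on $\mKK$. If $j$ is trivial in $\Sp$, then $i\Box j$ is trivial in $\mKK$ and hence in $\mL_\mCC\mKK$. The only case left is $i$ a trivial cofibration in $\mL_\mCC\mKK$ and $j$ an arbitrary cofibration in $\Sp$. I would reduce this to the assertion that $-\sm X$ and $-\sm Y$ send trivial cofibrations of $\mL_\mCC\mKK$ to $\mCC$-local equivalences: the pushout $P = B\sm X \cup_{A\sm X} A\sm Y$ receives $A\sm Y \to P$ as the pushout of $i\sm X$ along the cofibration $A\sm j$, which is a $\mCC$-local equivalence by the left properness established above; since $i\sm Y = (A\sm Y \to P \to B\sm Y)$ is also a $\mCC$-local equivalence, two-out-of-three gives that $i\Box j: P\to B\sm Y$ is a $\mCC$-local equivalence.

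It remains to show that for every cofibrant symmetric spectrum $X$, smashing with $X$ preserves trivial cofibrations of $\mL_\mCC\mKK$. The cofibration part follows from the spectral structure on $\mKK$. For the $\mCC$-local equivalence part, the adjunction $(-\sm X, (-)^X)$ reduces this to showing that the cotensor $W^X$ is $\mL_\mCC$-fibrant whenever $W$ is. Fibrancy of $W^X$ in $\mKK$ is automatic from the spectral structure on $\mKK$, so by Lemma~\ref{local-objects} it suffices to check $W^X\in\mWW\loc$, i.e., that $[Q\sm X, W] \iso [P\sm X, W]$ for every $f: P\to Q$ in $\mWW$. The spectral mapping spectrum $\mathrm{Map}_\mKK(-,W)$ from cofibrant inputs to the fibrant object $W$ is an $\Omega$-spectrum whose stable homotopy groups satisfy $\pi_n \mathrm{Map}_\mKK(-,W) \iso [\Sigma^n -, W]^{\Ho\mKK}$ for every $n\in\mZ$. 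Since $\mWW$ is closed under (de-)sus\-pen\-sions and $W\in\mWW\loc$, the induced map $\mathrm{Map}_\mKK(f,W)$ is a $\pi_\ast$-isomorphism, hence a stable equivalence of fibrant spectra; applying $\pi_0\map_\Sp(X,-)$, which preserves weak equivalences between fibrant spectra, yields the desired isomorphism via the adjunction identification $[-\sm X, W] \iso [-, W^X]$. The hard part of the whole argument is this cotensoring step: once Lemma~\ref{local-objects} and the identification of the stable homotopy groups of the spectral mapping spectrum are in hand, everything else is either formal or follows from the left properness proved in the first step.
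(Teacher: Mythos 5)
Your proof is correct, but for the key case of the pushout-product axiom it takes a genuinely different route. The paper cites \cite[Proposition~3.4.4]{Hirschhorn} for left properness and then exploits stability: by Corollary~\ref{Oxford} the localization $\mL_\mCC\mKK$ is stable, so a cofibration is trivial exactly when its cofiber vanishes in $\Ho\mL_\mCC\mKK$; the cofiber of $f\,\Box\, g$ is $C\sm Z$ with $C=\cofiber f$ and $Z=\cofiber g$ trivial, and since $-\sm Z:\Sp\to\mL_\mCC\mKK$ is left Quillen, the kernel of $-\sm^L Z$ is a localizing subcategory of $\Ho\Sp$ containing $\mS$ and hence all of $\Ho\Sp$, giving $C\sm Z\iso 0$. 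Your argument instead goes through the cotensor: you reduce ``$-\sm X$ preserves trivial cofibrations of $\mL_\mCC\mKK$'' to ``$(-)^X$ preserves $\mL_\mCC$-fibrant objects'', verify this by identifying $\pi_n$ of the mapping spectrum with $[\Sigma^n -,W]$ and using closure of $\mWW$ under (de-)suspensions, and then obtain the full pushout-product axiom via a pushout decomposition and two-out-of-three. Both are sound: the paper's stability shortcut lets the single compact generator $\mS$ of $\Ho\Sp$ do all the work, whereas your cotensor argument is the general Bousfield-localization criterion (a Quillen pair descends to the localization if and only if the right adjoint preserves local objects) and would equally apply in unstable settings. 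Your verification of left properness reproves the cited Hirschhorn result rather than taking a different route, which is fine. Two small remarks: as the paper does, one should first cofibrantly replace the maps in $\mCC$ so that Lemma~\ref{local-objects} applies; and in the pushout decomposition, once $i\sm X$ is known to be a trivial cofibration in $\mL_\mCC\mKK$ you do not actually need left properness or the cofibrancy of $A$ at that step, since pushouts of trivial cofibrations are always trivial cofibrations.
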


\begin{proof}
Bousfield localizations of left proper model categories are always left proper
\cite[Proposition~3.4.4]{Hirschhorn}.
For the proof of the spectral part, we 
can without loss of generality assume that the domains and codomains
of the maps in $\mCC$ are cofibrant (otherwise we could cofibrantly
replace them, this would, up to isomorphism of model categories, have no effect
on $\mL_\mCC\mKK$.)
Recall that the underlying categories of $\mL_\mCC\mKK$ and $\mKK$ are
the same.
Since $\mKK$ is spectral, it has a tensor, a cotensor, and an enriched
Hom-functor. We show that the same functors make $\mL_\mCC\mKK$ into
a spectral category. It suffices to verify the pushout product axiom
\cite[Definition~4.2.1]{Hovey}.
Let $f:A\cof B$ be a cofibration in $\Sp$ and $g:X\cof Y$ a
cofibration in $\mL_\mCC\mKK$.  Since $\mKK$ and $\mL_\mCC\mKK$ have
the same cofibrations and since the pushout product axiom holds
for $\mKK$, the map 
\[
f\,\Box\, g:(A\sm Y)\amalg_{A\sm X}(B\sm X)\to B\sm Y
\]
is a cofibration, which is trivial if $f$ is.
Now assume the cofibration $g$ is trivial in $\mL_\mCC\mKK$.
We have to show that $f\,\Box\, g$ is also trivial.
By Corollary~\ref{Oxford}, $\mL_\mCC\mKK$ is stable. So using
the general fact \cite[Theorem~1.2.10(iv)]{Hovey} that a map is a 
weak equivalence if and only if its image in the homotopy category is an
isomorphism, it suffices
to show that the cofiber of $f\,\Box\, g$ is trivial in $\Ho\mL_\mCC\mKK$.
Let $C$ be the cofiber of $f$ and $Z$ the cofiber of $g$. As cofibers
of cofibrations both $C$ and $Z$ are cofibrant; $Z$ is also trivial 
(in $\Ho\mL_\mCC\mKK$) because $g$ was assumed to be trivial. 
The cofiber
of $f\,\Box\, g$ is just $C\sm Z$. By what we have already shown, 
the left adjoint $-\sm Z:\Sp\to\mL_\mCC\mKK$
preserves (trivial) cofibrations, hence it is a left Quillen
functor and the kernel of its left derived $-\sm^LZ:\Ho\Sp\to\Ho\mL_\mCC\mKK$
is a localizing triangulated subcategory of $\Ho\Sp$.
It contains the sphere spectrum $\mS$ because $Z$ is trivial in
$\Ho\mL_\mCC\mKK$. Since the sphere spectrum is a generator for $\Ho\Sp$, 
the kernel of $-\sm^LZ$ is the whole stable homotopy category $\Ho\Sp$.
In particular, since $C$ is cofibrant, we have isomorphisms
$C\sm Z\iso C\sm^LZ\iso 0$ in $\Ho\mL_\mCC\mKK$. This shows that
the cofiber of $f\,\Box\, g$ is trivial and finishes the proof.
\end{proof}

\begin{thm}\label{lift}
Every well generated spectral model category admits a right Quillen
equivalence to a Bousfield localization of the model category
of modules over some spectral category.
\end{thm}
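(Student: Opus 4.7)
The plan is to take the Quillen pair of Lemma~\ref{summary} and Bousfield-localize the module-category side along exactly those maps that the derived functor $J:\DE\to\Ho\mKK$ is already forced to invert. Concretely, I would fix data as in Notation~\ref{notation}: a regular cardinal $\alpha$, a small $\alpha$-compact generating set $\mGG$ of cofibrant-fibrant objects of $\mKK$, and $\mEE$ the full spectral subcategory of $\mKK$ on $\mGG$. Before applying Hirschhorn's existence theorem I need $\MODD\mEE$ to be left proper and cellular, which by Lemma~\ref{leftproper} and Proposition~\ref{cellular} requires $\mEE$ to be pointwise cofibrant. I would therefore first replace $\mEE$, without changing its derived category up to Quillen equivalence, by a pointwise weakly equivalent pointwise cofibrant spectral category by a standard Morita-style argument.

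Next I would let $\mCC\subset\MODD\mEE$ consist of the canonical maps $\coprod_{i\in I}F_{G_i}\to F_{\coprod_{i\in I}G_i}$ indexed as in Proposition~\ref{locprop} (one index set $I$ for each cardinality $<\alpha$, with $\coprod_{i}G_i$ a chosen representative in $\mGG$). Each such map has cofibrant source and target, since the free modules $F_G$ are cofibrant (Lemma~\ref{summary}) and cofibrant objects are closed under coproducts, so the hypotheses of Lemma~\ref{local-objects} are satisfied. Hirschhorn's theorem \cite[Theorem~4.1.1]{Hirschhorn} then produces the Bousfield localization $\mL_\mCC\MODD\mEE$. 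The left derived $J$ of $-\sm_\mEE\mGG$ sends every such canonical map to an isomorphism in $\Ho\mKK$, using $J(F_G)\iso G$ from Lemma~\ref{summary} and that $J$ preserves coproducts as a left adjoint. Hence by the universal property of Bousfield localization (Remark~\ref{locs-are-locs}(2)), the left Quillen functor $-\sm_\mEE\mGG$ factors uniquely through a left Quillen functor $\mL_\mCC\MODD\mEE\to\mKK$, whose right adjoint is $\Hom(\mGG,-)$ regarded as a functor $\mKK\to\mL_\mCC\MODD\mEE$.

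It then remains to verify that this lifted Quillen pair is a Quillen equivalence. The key identification is that by Corollary~\ref{Oxford} and Proposition~\ref{locprop}, the derived right adjoint of the identity $\mL_\mCC\MODD\mEE\to\MODD\mEE$ is a fully faithful embedding of $\Ho\mL_\mCC\MODD\mEE$ onto the full triangulated subcategory $\mWW\loc=\DAE$ of $\DE$. Under this identification the derived left adjoint of the new Quillen pair coincides with the restriction $\tJ:\DAE\to\Ho\mKK$, which is a triangulated equivalence by Proposition~\ref{eq}. Consequently both derived functors of the new pair are equivalences of categories, so the Quillen pair is a Quillen equivalence and $\Hom(\mGG,-):\mKK\to\mL_\mCC\MODD\mEE$ is the asserted right Quillen equivalence.

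The step I expect to be the main obstacle is the first one: arranging pointwise cofibrancy of $\mEE$. The mapping spectra $\Hom_\mKK(G,G')$ between cofibrant-fibrant objects of a general spectral model category need not be cofibrant in $\Sp$, so one has either to appeal to a cofibrant replacement in a model category of spectral categories or to argue pointwise. Either route requires checking that the replacement does not disturb the $\alpha$-compact generating set $\mGG$, nor the compactness of the free modules, nor any of the identifications used in Proposition~\ref{eq}. Once this technical step is in place, the remaining ingredients --- Hirschhorn's theorem and the universal property of Bousfield localization --- slot together cleanly.
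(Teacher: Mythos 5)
Your strategy is the same as the paper's: fix $\mGG$ and $\mEE$ as in Notation~\ref{notation}, pass to a pointwise cofibrant replacement of $\mEE$ so that Lemma~\ref{leftproper} and Proposition~\ref{cellular} apply, feed the coproduct-comparison maps into Hirschhorn's existence theorem, and then identify derived functors to conclude that the lifted Quillen pair is an equivalence. The outline is correct, and you are right that the delicate point is the pointwise cofibrant replacement.

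However, the specific difficulty is not quite the one you flag. After replacing $\mEE$ by a pointwise cofibrant spectral category $\mEE^{\cofr}$ (via the model structure on spectral categories with fixed object set, \cite[Proposition~6.3]{SS03'}), the canonical maps $\coprod_{i\in I}F_{G_i}\to F_{\coprod_{i\in I}G_i}$ no longer literally exist in $\MODD\mEE^{\cofr}$. In $\MODD\mEE$ these maps are produced by the $\mKK$-morphisms $G_j\to\coprod_i G_i$, which give spectrum-level morphisms $\mS\to\mEE(G_j,\coprod_i G_i)$; after replacement there is only a pointwise trivial fibration $\mEE^{\cofr}\to\mEE$ and no preferred lifts of these units, and in particular no canonical cocone to produce a map out of the coproduct of free $\mEE^{\cofr}$-modules. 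Hirschhorn's theorem needs an honest \emph{set of maps} in the model category $\MODD\mEE^{\cofr}$, not just a set of morphisms in its homotopy category. The paper resolves this concretely by transporting $\mCC$ through the Quillen equivalence
\[
\xymatrix@M=.7pc{\MODD\mEE\ar@<-.5ex>[r]_-{U}
&\MODD\mEE^{\cofr} \ar@<-.5ex>[l]_-{V}}
\]
of \cite[Theorem~7.2]{SS03'}: take fibrant replacements of the source and target in $\MODD\mEE$, apply the restriction functor $U$, and then take cofibrant replacements in $\MODD\mEE^{\cofr}$ of the resulting $\mEE^{\cofr}$-module maps. The resulting zig-zag-free maps have cofibrant domains and codomains and represent, in $\mD((\mEE^{\cofr})^{\op})$, the image of the set $\mWW$ of Proposition~\ref{locprop} under the equivalence $U^R$, so Lemma~\ref{local-objects} and Corollary~\ref{Oxford} still apply and identify the homotopy category of the localization with $\mWW'\loc$, which corresponds to $\DAE$. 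Your worry about whether the replacement disturbs $\mGG$, compactness, or Proposition~\ref{eq} is not an issue --- the replacement happens entirely on the module side and the derived equivalence $U^R,V^L$ handles the translation --- whereas the choice of actual representatives in $\mCC$ is the step you need to make precise.
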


\begin{proof}
Let $\mKK$ be a well generated spectral model category, we can thus
assume we are in the situation of Notation~\ref{notation}, where we
fixed a sufficiently nice generating set $\mGG$ for $\Ho\mKK$ and
we defined $\mEE$ to be the full spectral subcategory of $\mKK$ having
$\mGG$ as set of objects. Recall
that there is a spectral Quillen adjunction
\cite[Section~3.9]{SS03} 
\[
\xymatrix@M=.7pc{\mKK\ar@<-.5ex>[rr]_-{\Hom(\mGG,-)}
&&\MODD \mEE ,\ar@<-.5ex>[ll]_-{-\sm_{\mEE}\mGG}}
\]
which we already considered in Section~4.1.
The spectral category $\mEE$ will not be pointwise cofibrant  
(Definition~\ref{pc}) in general. But by 
\cite[Proposition~6.3]{SS03'},
the spectral categories with a fixed set of objects form a model category
with pointwise weak equivalences and pointwise fibrations and 
where cofibrant
objects are in particular pointwise cofibrant. So we can chose a
cofibrant replacement $\mEE^{\cofr}$ of $\mEE$ with the same set of 
objects $\mGG$, such that $\mEE^{\cofr}$ is pointwise cofibrant.
By \cite[Theorem~7.2]{SS03'}, the corresponding module categories 
are related by a Quillen equivalence
(given by extension and restriction of scalars), which we denote by
\[
\xymatrix@M=.7pc{\MODD\mEE\ar@<-.5ex>[r]_-{U}
&\MODD\mEE^{\cofr} .\ar@<-.5ex>[l]_-{V}}
\]
We define a set $\mCC$ 
of maps in $\MODD\mEE^{\cofr}$ by modifying the set $\mWW$ of the maps
\[
\coprod_{i\in I}F(G_i) \to F\Bigl(\coprod_{i\in I}G_i \Bigr)
\]
in $\DE$, which we used in Proposition~\ref{locprop} to get the localization
$\DAE$ of $\DE$. We let $\mCC$ be the set of maps
\[
\Bigl(U\Bigl(\Bigl(\coprod_{i\in I}\Hom(\mGG,G_i)^{\cofr}\Bigr)^{\fibr}\Bigr)
\Bigr)^{\cofr}
\to \Bigl(U\Bigl( \Hom\Bigl(\mGG,\coprod_{i\in I}G_i \Bigr)^{\fibr}\Bigr)
\Bigr)^{\cofr}
\]
where $(G_i)_{i\in I}$ runs through all families in $\mGG$ with
$|I|<\alpha$ and we allow one and 
only one set $I$
for each cardinality smaller than $\alpha$. The decorations `$\cofr$'
and `$\fibr$' denote cofibrant and fibrant replacements. 
Let $\mWW'$ denote the image in $\mD((\mEE^{\cofr})^{\op})$
of the maps in $\mCC$. Then $\mWW\subset\DE$ and 
$\mWW'\subset\mD((\mEE^{\cofr})^{\op})$ correspond via the inverse
triangulated equivalences $V^L$ and $U^R$.

By Lemma~\ref{leftproper} and Proposition~\ref{cellular}, $\MODD\mEE^{\cofr}$
is left proper cellular and we can apply Hirschhorn's existence theorem
for Bousfield localizations \cite[Theorem~4.1.1]{Hirschhorn}, so that
we obtain a Bousfield localization 
$\mL_\mCC\MODD\mEE^{\cofr}$ (Definition~\ref{Hirloc}) together with a
Quillen functor pair
\[
\xymatrix@M=.7pc{\mL_\mCC\MODD\mEE^{\cofr}\ar@<-.5ex>[r]_-{Q}&\MODD\mEE^{\cofr}
\ar@<-.5ex>[l]_-{P}}
\]
where $P$ and $Q$ are the identity functors on underlying categories
and the right derived $Q^R$ is fully faithful (Remark~\ref{locs-are-locs}(3)).
Consider
the following two diagrams of solid arrows. 
\begin{equation*}
\xymatrix@M=.7pc{\mKK \ar@<0ex>@/^1pc/[dr]_{\Hom(\mGG,-)} 
   \ar@{.>}@<-1.5ex>@/_2pc/[dd]_{\overline{\Hom(\mGG,-)}} & \\
   & \MODD\mEE\ar@<-.5ex>[d]_{U} \ar@<-1ex>@/_1pc/[ul]_{-\sm_\mEE\mGG} \\
\mL_\mCC\MODD\mEE^{\cofr} \ar@<-.5ex>[r]_-{Q} 
   \ar@{.>}@<0ex>@/^2pc/[uu]_{\overline{-\sm_\mEE\mGG}}
   &\MODD\mEE^{\cofr}\ar@<-.5ex>[l]_-{P}\ar@<-.5ex>[u]_{V}}
\quad\xymatrix@M=.7pc{\Ho\mKK\ar@<-.5ex>[d]_{\tF}
   \ar@<0ex>@/^1pc/[dr]_{F} \ar@{.>}@<-4.5ex>@/_2pc/[dd]_{\bar{F}} & \\
\DAE\ar@<-1ex>@{^{(}->}[r]_R \ar@<-.5ex>[u]_{\tJ} 
   & \DE\ar@<-1ex>[l]_L\ar@<-.5ex>[d]_{U^R} \ar@<-1ex>@/_1pc/[ul]_J \\
\Ho (\mL_\mCC\MODD\mEE^{\cofr}) \ar@<-.5ex>[r]_-{Q^R}\ar@{.>}[u]_H
   \ar@{.>}@<3ex>@/^2pc/[uu]_{\bar{J}}
   &\mD((\mEE^{\cofr})^{\op} )\ar@<-.5ex>[l]_-{P^L}\ar@<-.5ex>[u]_{V^L}}
\end{equation*}

The left one is a diagram
of model categories, the right one a diagram of homotopy categories,
resp.~triangulated
categories. The right diagram contains all derived functors from the left
one, but there are also functors which are only defined on the
triangulated level since $\DAE$ has only been defined as a triangulated
category (Definition \ref{Appendix}) and not as the homotopy
category of any model category. 
The adjoint pairs $(J,F)$, $(L,R)$ and $(\tJ,\tF)$
have been studied in Section~4; $\tJ$ and $\tF$ are inverse triangulated
equivalences (Proposition~\ref{eq}).

If we apply the composition $J V^L$ of left derived functors
to a map in $\mCC$ we
get the map
$
J(\coprod_{i\in I}F(G_i)) \to JF(\coprod_{i\in I}G_i)
$.
Since $J=\tJ L$, this map is an isomorphism by Proposition~\ref{locprop}.
Hence the universal property of the Bousfield localization yields a
Quillen functor pair $(\overline{-\sm_\mEE\mGG},\overline{\Hom(\mGG,-)})$
such that $(\overline{-\sm_\mEE\mGG})\circ P=(-\sm_\mEE\mGG)\circ V$ (and hence
$Q\circ(\overline{\Hom(\mGG,-)})=U\circ\Hom(\mGG,-)$). 

Our goal is to
show that $\overline{\Hom(\mGG,-)}$ is a Quillen equivalence.
It suffices to check that $\bar{J}$ is a triangulated equivalence.
By Corollary~\ref{Oxford}, the essential image of $Q^R$ 
is the same as $\mWW'\loc$. 
Since $\DAE$ is the same
as $\mWW\loc$ (Proposition~\ref{locprop}) and 
$\mWW\subset\DE$ corresponds to
$\mWW'\subset\mD((\mEE^{\cofr})^{\op})$ via the equivalences $V^L$
and $U^R$, we get an induced equivalence of categories
$H:\Ho(\mL_\mCC\MODD\mEE^{\cofr})\to\mD((\mEE^{\cofr})^{\op})$ such that
$RH\iso V^L Q^R$. 
Recall that the right adjoint $Q^R$ is fully faithful 
and the counit is hence an isomorphism $P^LQ^R\iso\id$.
Moreover, the equality 
$(\overline{-\sm_\mEE\mGG})\circ P=(-\sm_\mEE\mGG)\circ V$
gives us an isomorphism $\bar{J}P^L\iso JV^L$. Using these, we obtain
an isomorphism
\[
\tJ H\iso JRH\iso JV^LQ^R\iso\bar{J}P^LQ^R\iso\bar{J}
\]
and thus $\bar{J}$ is an equivalence since $\tJ$ and $H$ are equivalences.
This shows that the Quillen pair
$(\overline{-\sm_\mEE\mGG},\overline{\Hom(\mGG,-)})$ 
is indeed a Quillen equivalence.


\end{proof}

\begin{cor}
Let $\mKK$ be a model category. Then the following are equivalent.
\begin{enumerate}
\item[\textnormal{(i)}] $\mKK$ is Quillen equivalent to 
a well generated spectral model category.
\item[\textnormal{(ii)}] $\mKK$ is Quillen equivalent to a 
Bousfield localization $\mL_\mCC\MODD\mRR$ for 
some pointwise cofibrant spectral category $\mRR$ and 
some set $\mCC$ of morphisms in $\MODD\mRR$
whose image in $\mD(\mRR^{\op})=\Ho\MODD\mRR$ 
is, up to isomorphism, closed under (de-)sus\-pen\-sions.
\end{enumerate}
\end{cor}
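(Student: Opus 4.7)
The plan is to deduce the corollary directly from Theorem~\ref{lift} (for the forward direction) and from a combination of Corollary~\ref{Oxford}, Lemma~\ref{Methusalix}, and Proposition~\ref{Neeman's} (for the backward direction); no new constructions are needed.

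For the direction (i) $\Rightarrow$ (ii), by transitivity of Quillen equivalence I may assume that $\mKK$ is itself a well generated spectral model category. Applying Theorem~\ref{lift} produces a pointwise cofibrant spectral category $\mEE^{\cofr}$, a set of morphisms $\mCC$ in $\MODD\mEE^{\cofr}$, and a right Quillen equivalence $\mKK \to \mL_\mCC\MODD\mEE^{\cofr}$. It then remains to verify the (de-)suspension closure of the image of $\mCC$ in $\mD((\mEE^{\cofr})^{\op})$. But in the construction of Theorem~\ref{lift}, the elements of $\mCC$ are (cofibrant$/$fibrant replacements under $U$ of) the canonical maps $\coprod_{i\in I}\Hom(\mGG,G_i)^{\cofr} \to \Hom(\mGG,\coprod_{i\in I}G_i)^{\fibr}$, and their image in $\mD((\mEE^{\cofr})^{\op})$ corresponds via the derived equivalence $(V^L,U^R)$ to the set $\mWW \subset \mD(\mEE^{\op})$ appearing in Proposition~\ref{locprop}. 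Since the generating set $\mGG$ is, by Notation~\ref{notation}, closed under (de-)suspensions up to isomorphism, so is $\mWW$, and hence so is its image in $\mD((\mEE^{\cofr})^{\op})$.

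For the direction (ii) $\Rightarrow$ (i), it suffices to show that any model category of the form $\mL_\mCC\MODD\mRR$ appearing in (ii) is itself a well generated spectral model category. Since $\mRR$ is pointwise cofibrant, $\MODD\mRR$ is left proper by Lemma~\ref{leftproper} and cellular by Proposition~\ref{cellular}, so Hirschhorn's existence theorem \cite[Theorem~4.1.1]{Hirschhorn} guarantees that the Bousfield localization exists, and Lemma~\ref{letztes-Lemma} then shows it is spectral. To see that its homotopy category is well generated, write $\mWW$ for the image of $\mCC$ in $\mD(\mRR^{\op})$; by Corollary~\ref{Oxford} the derived functors yield a localization (in the sense of Definition~\ref{loc})
\[
\xymatrix@M=.7pc{\mD(\mRR^{\op}) \ar@<-.5ex>[r]_-{P^L} & \Ho\mL_\mCC\MODD\mRR \ar@<-.5ex>[l]_-{Q^R}}
\]
whose essential image of locals equals $\mWW\loc$. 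On the other hand, Lemma~\ref{Methusalix}, applied to the well generated category $\mD(\mRR^{\op})$ and the set $\mWW$, produces a localization of $\mD(\mRR^{\op})$ with the same locals $\mWW\loc$ and with acyclics equal to $\langle\mSS\rangle$, where $\mSS$ is a set containing one cofiber for each map in $\mWW$. As a localization is determined up to triangulated equivalence by its subcategory of locals (Remark~\ref{Minerva}), the two localizations agree, and in particular the acyclics of $P^L$ are generated by the set $\mSS$. Since $\mD(\mRR^{\op})$ is compactly generated by the free modules \cite[Theorem~A.1.1]{SS03} and hence well generated, Proposition~\ref{Neeman's} implies that $\Ho\mL_\mCC\MODD\mRR$ is well generated, completing the proof.

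The only step that is not immediate is the identification of the acyclics of the abstract localization produced by Corollary~\ref{Oxford} with the set-generated subcategory $\langle\mSS\rangle$; this is the main point where the machinery of Section~1.2 (in particular the bijection between localizations and their subcategories of locals) is used to feed the output of Hirschhorn's theorem into Proposition~\ref{Neeman's}. No further obstacle arises.
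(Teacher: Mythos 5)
Your argument mirrors the paper's proof in structure. The (i)~$\Rightarrow$~(ii) direction is Theorem~\ref{lift} together with the observation that the image of the set $\mCC$ constructed there is closed under (de-)suspensions; the (ii)~$\Rightarrow$~(i) direction combines Lemma~\ref{letztes-Lemma}, Corollary~\ref{Oxford}, Lemma~\ref{Methusalix}, and well-generatedness. The paper closes the latter direction by citing Theorem~\ref{characterization}, whereas you invoke Proposition~\ref{Neeman's} directly; but this is exactly what the relevant implication of Theorem~\ref{characterization} reduces to, so there is no material difference.

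There is one missing step in the (ii)~$\Rightarrow$~(i) direction. Corollary~\ref{Oxford} rests on Lemma~\ref{local-objects}, whose hypotheses explicitly require the domains and codomains of the maps in $\mCC$ to be cofibrant; the hypothesis (ii) of the corollary you are proving makes no such assumption. You apply Corollary~\ref{Oxford} to the given $\mCC$ without arranging this. The paper handles it in its opening sentence of the argument: one may replace the elements of $\mCC$ by cofibrant approximations, which changes neither the $\mCC$-local objects (by homotopy invariance of the mapping spaces $\map(-,W)$) nor, consequently, the Bousfield localization $\mL_\mCC\MODD\mRR$, and which changes the image of $\mCC$ in $\mD(\mRR^{\op})$ only up to isomorphism, so neither $\mWW\loc$ nor the closure under (de-)suspensions is affected. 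You should insert a sentence to this effect before invoking Corollary~\ref{Oxford}; with that addition your proof is complete.
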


\begin{proof}
(i) $\Rightarrow$ (ii) This is Theorem~\ref{lift}. Its proof shows 
that the image of $\mCC$ 
in the homotopy category is indeed closed, up to isomorphism, under 
(de-)sus\-pen\-sions. 

(ii) $\Rightarrow$ (i) 
We may assume that the domains and codomains of the elements
of $\mCC$ are cofibrant (otherwise we could replace them cofibrantly, this
would not have any effect on the localization). Let $\mWW$ denote
the image of $\mCC$ in $\mD(\mRR)$.
By Lemma~\ref{letztes-Lemma}, $\mL_\mCC\MODD\mRR$ is spectral. Its homotopy
category is a localization of the compactly
generated triangulated category $\mD(R^{\op})=\Ho\MODD\mRR$ by 
Corollary~\ref{Oxford}. Here, the acyclics are generated by the set
containing one cofiber for each map in $\mWW$ (cf.~Lemma~\ref{Methusalix}).
Hence, by Theorem~\ref{characterization}, the homotopy category
of $\mL_\mCC\MODD\mRR$ is well generated.

\end{proof}

\newpage
\appendix
\section{Module categories}
\subsection{The one object case}
We use the terminology of \cite[Chapter~4]{Hovey}. 
Let $\mCC$ be a closed symmetric monoidal category 
with $\sm$ denoting the monoidal product, $\mS$ the unit, and 
$\Hom$ the internal Hom-functor
(the \emph{set} of morphisms from $X$ to $Y$ will be denoted by $\mCC (X,Y)$).
Let
$R$, 
$S$ and $T$ be monoids therein. We can then consider module categories, 
even bimodule categories like $R \MMODD S$ (which is isomorphic to the category
of $R\sm S^{\op}$-modules). There are bifunctors
\begin{eqnarray}
\sm_S:R \MMODD S \;\times \; S \MMODD T \to R \MMODD T, \nonumber\\
\Hom_R:(R \MMODD S)^{\op}\;\times \;R \MMODD T 
\to S \MMODD T,\label{bifunctors}\\
\Hom_T:(S \MMODD T)^{\op}\;\times \; R \MMODD T \to R \MMODD S,\nonumber
\end{eqnarray}
where the last one should better be denoted by $\Hom_{T^{\op}}$ instead, 
but let us allow ourselves this slight abuse of notation.
The object
$X\sm_SY$ is defined as the coequalizer in $\mCC$ of the diagram
\[
\xymatrix@M=.7pc{X\sm S \sm Y\ar@<.5ex>[r] \ar@<-.5ex>[r] & X\sm Y}
\]
where the upper map uses the right action of $S$ on $X$ and 
the lower the left action of $S$ on $Y$. This gives an object in $\mCC$
which has a left action of $R$ via the left action of $R$ on $X$ and
a right action of $T$ via the right action of $T$ on $Y$.
Similarly, $\Hom_R(X,Y)$ is defined as the equalizer in $\mCC$ of the diagram
\[
\xymatrix@M=.7pc{\Hom (X,Y)\ar@<.5ex>[r] \ar@<-.5ex>[r] & \Hom(R\sm X,Y).}
\]
Here, both maps can be defined via their adjoint maps
\[
\xymatrix@M=.7pc{R\sm X\sm\Hom(X,Y)\ar@<.5ex>[r] \ar@<-.5ex>[r] &Y,}
\]
where the upper map is first using the action of $R$ on $X$ and then the
evaluation map, and the lower is first evaluation and then the action
of $R$ on $Y$. The left $S$-action on $\Hom_R(X,Y)$
comes from the right $S$-action on the contravariant variable $X$, the
right $T$-action comes from the right $T$-action on $Y$.
The three bifunctors (\ref{bifunctors}) give an adjunction of two variables
in the sense of \cite[Definition~4.1.12]{Hovey}. Note that the
forgetful functor and the functor given by smashing with the
free module yield an adjunction
\begin{equation}\label{ev}
\xymatrix@M=.7pc{R\MMOD\ar@<-.5ex>[r]  &\mCC\ar@<-.5ex>[l]_-{R\sm -}.}
\end{equation}

From now on let $\mCC$ be a closed symmetric monoidal \emph{model} category
\cite[Definition~4.2.6]{Hovey} which is cofibrantly generated 
\cite[Definition~2.1.17]{Hovey}, 
has 
only small objects (small in the sense of \cite{SS00}, i.e., 
$\kappa$-small with respect to the whole category for some cardinal $\kappa$),
 and satisfies the monoid axiom 
\cite[Definition~3.3]{SS00}. There are many examples of 
such model categories: simplicial sets, symmetric spectra, stable module
categories, chain complexes ($\mZ$-graded, unbounded, over some 
commutative ground ring), and others \cite[Section~5]{SS00}. We are mainly
interested in  symmetric
spectra and chain complexes.
Then both the module category over a 
fixed monoid
in $\mCC$ and the category of monoids in $\mCC$
have a cofibrantly generated 
model structure where fibrations, resp.~weak equivalences,
are just fibrations, resp.~weak equivalences, in the underlying
category $\mCC$ and cofibrations are determined by the lifting property
with respect to trivial fibrations \cite[Theorem~4.1]{SS00}. 
In particular, the adjunction (\ref{ev}) is a Quillen functor pair
(recall that we use the convention according to which the left adjoint arrow
is drawn above the right adjoint).
If $R$ is a monoid in $\mCC$, the homotopy category of $R \MMOD$ will
be called the \emph{derived} category of $R$ and we denote it by $\mD (R)$.

\begin{lemma}\label{lemma1}
If in \textnormal{(\ref{bifunctors})} $S$ is cofibrant 
in $\mCC$ then $\sm_S$ together with
$\Hom _R$ and $\Hom _T$ is  
a Quillen bifunctor (in the sense of
\cite[Definition~4.2.1]{Hovey}) and hence induces an adjunction of two 
variables on the level of homotopy categories.
\end{lemma}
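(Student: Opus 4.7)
The plan is to verify the pushout product axiom for $\sm_S$ with respect to the model structures on the three bimodule categories, reducing everything to the pushout product axiom for the symmetric monoidal model category $(\mCC,\sm,\mS)$. The adjunction-of-two-variables structure on $(\sm_S,\Hom_R,\Hom_T)$ is a formal consequence of the coequalizer/equalizer definitions of these functors together with the adjunction of two variables on $\mCC$, so I would dispose of it quickly and concentrate on the pushout product axiom.

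First, I would recall the standard reduction that, in a cofibrantly generated setting, the pushout product axiom for a bifunctor that preserves colimits separately in each variable needs only to be verified on generating (trivial) cofibrations of both source categories: the class of maps $i$ with the property that $i\Box_S j$ is a (trivial) cofibration is closed under pushouts, transfinite compositions, retracts, and coproducts. By the construction of the model structures on bimodule categories in \cite[Theorem~4.1]{SS00}, the generating (trivial) cofibrations of $R\MMODD S$ are precisely the maps of the form $\tilde f = R\sm f\sm S$ for $f$ a generating (trivial) cofibration of $\mCC$, and similarly for $S\MMODD T$.

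Next, I would carry out the key computation. For $f:A\to B$ and $g:C\to D$ in $\mCC$, the coequalizer definition of $\sm_S$ gives canonical identifications
\[
(R\sm A\sm S)\sm_S(S\sm C\sm T) \iso R\sm A\sm S\sm C\sm T,
\]
and similarly for the other three corners of the pushout-product square. Consequently the pushout product $\tilde f\,\Box_S\,\tilde g$ in $R\MMODD T$ is canonically isomorphic to
\[
R\sm\bigl(f\,\Box\,(S\sm g)\bigr)\sm T,
\]
where the inner $\Box$ is formed in $\mCC$. Now the cofibrancy hypothesis on $S$ enters: since $\emptyset\to S$ is a cofibration in $\mCC$, the pushout product axiom for $\mCC$ gives that $S\sm g$ is a (trivial) cofibration in $\mCC$ whenever $g$ is. Applying the pushout product axiom in $\mCC$ a second time, $f\,\Box\,(S\sm g)$ is then a cofibration in $\mCC$, and trivial as soon as either $f$ or $g$ is. Finally, the functor $R\sm-\sm T:\mCC\to R\MMODD T$ is a left Quillen functor (it is left adjoint to the forgetful functor, and \cite[Theorem~4.1]{SS00} shows that cofibrations and trivial cofibrations of bimodules are generated by images of those in $\mCC$), so it preserves (trivial) cofibrations. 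This establishes the pushout product axiom on generators, and hence in general by the closure argument above.

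The only real obstruction is the role of the cofibrancy of $S$: without it, $S\sm g$ need not be a cofibration, and the reduction to the pushout product axiom in $\mCC$ would fail. Once this point is isolated and handled via the pushout product axiom applied to $\emptyset\to S$, the rest of the argument is essentially a bookkeeping exercise in how generating (trivial) cofibrations of bimodule categories behave under the relative smash product.
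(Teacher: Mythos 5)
Your proof is correct and follows essentially the same route as the paper: reduce to generating (trivial) cofibrations, compute that the relative pushout product of $R\sm f\sm S$ and $S\sm g\sm T$ is $R\sm\bigl(f\,\Box\,(S\sm g)\bigr)\sm T$, and then invoke cofibrancy of $S$, the pushout product axiom in $\mCC$, and the left Quillen property of $R\sm{-}\sm T$. Your presentation is marginally cleaner in isolating the double application of the pushout product axiom (first to $\emptyset\to S$ and $g$, then to $f$ and $S\sm g$), but the content is identical to the paper's argument.
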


\begin{proof}
We have to check the \emph{pushout product axiom} 
\cite[Definition~4.2.1]{Hovey}. It suffices to do this for the generating
cofibrations and the generating trivial cofibrations. 
Such a generating cofibration in $R \MMODD S$
is of the form
$R\sm A\sm S \cof R\sm B\sm S$ with $A\cof B$ a cofibration in $\mCC$, 
similarly 
for a generating trivial cofibration. If $S\sm X\sm T \cof S\sm Y\sm T$
is a generating cofibration in $S \MMODD T$, the pushout product map
is isomorphic to the map
\[
R\sm(A\sm S\sm Y\amalg_{A\sm S\sm X} B\sm S\sm X)\sm T 
\to R\sm(B\sm S\sm Y)\sm T.
\]
This is a cofibration in $R \MMODD T$ since $S$ is cofibrant
in $\mCC$ (hence smashing with $S$ preserves cofibrations),
the pushout product 
axiom holds in $\mCC$, and
$R\sm -\sm T\iso$ preserves cofibrations (as a left Quillen functor). 
If one of the above cofibrations
is a trivial one the same proof shows that the pushout product map
is a trivial cofibration.
\end{proof}

In particular, if $S$ is cofibrant, smashing 
over $S$ with a cofibrant bimodule gives a
left Quillen functor between bimodule categories.
If we do not assume $S$ to be cofibrant, the 
functor $\sm_S$ in (\ref{bifunctors}) is not a Quillen bifunctor 
in general. For example, if the unit $\mS$
is cofibrant in $\mCC$, the monoid $S$ is not cofibrant in $\mCC$,
and $R=T=\mS$, then $S$ 
is cofibrant as a right resp.~left $S$-module 
(take $R=\mS$ in the Quillen adjunction $(S\sm_R -,f^\ast)$ in
Lemma~\ref{lemma3}) 
whereas $S\sm_SS\iso S$
is not cofibrant in $\mCC$ by assumption. That is, $\sm_S$ is not
a Quillen bifunctor in this case.

However, in the case where $S$ is not necessarily assumed to be cofibrant,
smashing over $S$ with an $R\textnormal{-}S$-bimodule  
can be a left Quillen functor:

\begin{lemma}\label{lemma2}
Suppose that the unit $\mS$ in $\mCC$ is cofibrant. If moreover
$X\in R \MMODD S$ is cofibrant in $R\MMOD$ then the adjoint pair
\[
\xymatrix@M=.7pc{S\MMODD T\ar@<.5ex>[rr]^-{X\sm_S -}
  &&R\MMODD T \ar@<.5ex>[ll]^-{\Hom_R(X,-)}}
\]
is a Quillen pair.
\end{lemma}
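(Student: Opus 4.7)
The plan is to verify that $X\sm_S-$ is a left Quillen functor by checking the defining property on generators. Since the model structure on $S\MMODD T$ is cofibrantly generated, it suffices to show that $X\sm_S-$ sends the generating cofibrations and generating trivial cofibrations to cofibrations and trivial cofibrations in $R\MMODD T$, respectively. These generating maps have the form $S\sm A\sm T\to S\sm B\sm T$ with $A\to B$ a generating (trivial) cofibration in $\mCC$.

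Applied to such a map, using associativity of the smash product together with the unit isomorphism $X\sm_S S\iso X$ in $R\MMODD S$, one identifies $X\sm_S(S\sm A\sm T)\iso X\sm A\sm T$ and likewise for $B$, as $R$-$T$-bimodules. Hence the induced map is
\[
X\sm A\sm T\to X\sm B\sm T
\]
in $R\MMODD T$, and I would factor it through $\MODD T$, applying first $-\sm T:\mCC\to\MODD T$ and then $X\sm-:\MODD T\to R\MMODD T$. The first functor is left Quillen because its right adjoint, the forgetful functor $\MODD T\to\mCC$, preserves fibrations and weak equivalences by the very definition of the model structure on right $T$-modules; hence $A\sm T\to B\sm T$ is a (trivial) cofibration in $\MODD T$.

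For the second functor I would invoke Lemma~\ref{lemma1} with $(R,\mS,T)$ in place of $(R,S,T)$. Since $\mS$ is cofibrant by hypothesis, that lemma yields a Quillen bifunctor
\[
\sm\,:\,R\MMOD\,\times\,\MODD T\,\to\,R\MMODD T.
\]
Since $X$ is cofibrant in $R\MMOD$ by hypothesis, the pushout product axiom applied to the cofibration $\emptyset\cof X$ and any (trivial) cofibration in $\MODD T$ shows that $X\sm-:\MODD T\to R\MMODD T$ is itself left Quillen. Composing the two functors, $X\sm A\sm T\to X\sm B\sm T$ is a (trivial) cofibration in $R\MMODD T$, as required.

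The only subtlety I anticipate is bookkeeping: one must verify that the identification $X\sm_S(S\sm A\sm T)\iso X\sm A\sm T$ is an isomorphism of $R$-$T$-bimodules, and that the composite $X\sm(-\sm T)$ indeed produces, up to this identification, the map obtained by applying $X\sm_S-$ rather than just a map isomorphic to it. Both points follow by unwinding the universal property of the coequalizer defining $\sm_S$ and the canonical actions on free bimodules, and require no essentially new input.
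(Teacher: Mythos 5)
Your argument is correct but takes a different, ``dual'' route from the paper. You verify that $X\sm_S -$ is left Quillen by testing on the generating (trivial) cofibrations $S\sm A\sm T\cof S\sm B\sm T$, rewriting the image $X\sm_S(S\sm A\sm T)\to X\sm_S(S\sm B\sm T)$ as $X\sm A\sm T\to X\sm B\sm T$, and factoring it through the two left Quillen functors $-\sm T\colon\mCC\to\MODD T$ and (via Lemma~\ref{lemma1} with middle monoid $\mS$) $X\sm -\colon\MODD T\to R\MMODD T$. The paper instead shows that the \emph{right} adjoint $\Hom_R(X,-)$ preserves (trivial) fibrations: it uses the square of forgetful functors from the bimodule categories $S\MMODD T$ and $R\MMODD T$ down to $\mCC$ and $R\MMOD$ respectively, observes that these forgetful functors preserve and reflect (trivial) fibrations by construction of the module model structures, and then applies Lemma~\ref{lemma1} with $S=T=\mS$ to deduce that $\Hom_R(X,-)\colon R\MMOD\to\mCC$ is right Quillen. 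Both proofs rest on the same two ingredients---$\mS$ cofibrant and $X$ cofibrant as an $R$-module, fed into Lemma~\ref{lemma1}---but the paper's version avoids the generator bookkeeping you flag as a subtlety: preservation of fibrations is tested directly after forgetting structure, with no need to identify $X\sm_S$ on free objects. Your version, on the other hand, makes the left adjoint visible and spells out exactly which free modules are involved, which some readers may find more concrete. Both are correct.
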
 
Recall that we use the convention according to 
which the left adjoint arrow is drawn
above the right adjoint.
The derived adjoint pair will be denoted by $$\xymatrix@M=.7pc{\mD 
(S\sm T^{\op}) 
\ar@<.5ex>[rr]^-{X\sm_S^L -}
&&\mD (R\sm T^{\op}) \ar@<.5ex>[ll]^-{\RHom_R(X,-)}}.$$

\begin{proof}
We show that $\Hom_R(X,-)$ is a right Quillen functor.
In the diagram
\[
\xymatrix@M=.7pc{S\MMODD T \ar[d] &&R\MMODD T \ar[ll]_-{\Hom_R(X,-)}\ar[d]\\
  \mCC && R\MMOD \ar[ll]_{\Hom_R(X,-)}}
\]
the vertical functors are the forgetful functors. They 
preserve and reflect fibrations and trivial fibrations by the definition of
the model structure on module categories. As a consequence of 
Lemma~\ref{lemma1}, $\Hom_R(X,-):R\MMOD \to \mCC$ is a right 
Quillen functor, so it 
preserves both fibrations and trivial fibrations. Now it follows that
\[
\Hom_R(X,-):R\MMODD T \to S\MMODD T
\]
also preserves them. 
\end{proof}

\begin{lemma}\label{lemma3}
Let $f:R\to S$ be a map of monoids in $\mCC$.
The induced functor (restriction of scalars)
\[
f^\ast: S\MMOD\to R\MMOD 
\]
has both a left adjoint $S\sm_R -$ and a right adjoint 
$\Hom_R(S,-)$. Moreover, $(S\sm_R -,f^\ast)$ is
always a Quillen pair, and $\left(f^\ast,\Hom_R(S,-)\right)$
is a Quillen pair whenever the unit $\mS$ is cofibrant in $\mCC$ and
$S$ is cofibrant in $R\MMOD$.
\end{lemma}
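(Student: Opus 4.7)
The plan is to handle the three claims in order: first the existence of both adjoints by a direct formal argument, then the Quillen pair $(S\sm_R-, f^\ast)$ essentially for free from the definition of the model structure, and finally the Quillen pair $(f^\ast, \Hom_R(S,-))$ by reducing to Lemma~\ref{lemma2}.

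For the adjunctions, I would view $S$ as an $S$-$R$-bimodule (left action by multiplication, right action by $f$), so that $S\sm_R - : R\MMOD \to S\MMOD$ is the functor from (\ref{bifunctors}) with the appropriate choice of monoids; the natural bijection $S\MMOD(S\sm_R X, Y) \cong R\MMOD(X, f^\ast Y)$ then falls out of the coequalizer definition of $\sm_R$ upon unwinding that an $R$-equivariant map $X\to Y$ (with $R$-action on $Y$ via $f$) is the same as a map that equalises the two ways of acting. Dually, $S$ may be viewed as an $R$-$S$-bimodule (left action via $f$, right action by multiplication), and the equalizer defining $\Hom_R(S,-): R\MMOD \to S\MMOD$ yields $R\MMOD(f^\ast Y, X) \cong S\MMOD(Y, \Hom_R(S,X))$ by a dual chase together with the enriched Yoneda-style identity $\Hom_R(S,-) \cong \Hom_R(S\sm_S -, -)$ restricted to the $S$-module argument.

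For the pair $(S\sm_R-, f^\ast)$, note that both $R\MMOD$ and $S\MMOD$ carry the model structure in which fibrations and weak equivalences are created by the forgetful functors to $\mCC$ \cite[Theorem~4.1]{SS00}. Since $f^\ast$ does not alter the underlying object of $\mCC$, it sends fibrations to fibrations and trivial fibrations to trivial fibrations. This is exactly the condition for its left adjoint $S\sm_R -$ to be a left Quillen functor, with no cofibrancy hypothesis required.

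For the pair $(f^\ast, \Hom_R(S,-))$, I would apply Lemma~\ref{lemma2} in the special case $T=\mS$ and $X=S$, where $S$ is taken as an $R$-$S$-bimodule via $f$ on the left and multiplication on the right. The hypotheses of that lemma — $\mS$ cofibrant in $\mCC$ and $X=S$ cofibrant in $R\MMOD$ — are exactly the assumptions we make here. The lemma then tells us that $S\sm_S - : S\MMOD \to R\MMOD$ is left Quillen with right adjoint $\Hom_R(S,-)$. The final step is to identify $S\sm_S - $ with $f^\ast$: for any $S$-module $Y$, the coequalizer defining $S\sm_S Y$ in $\mCC$ is canonically isomorphic to $Y$ via the right $S$-action map, and under this identification the left $R$-action on $S\sm_S Y$ (inherited from the left $R$-action on $S$, which is multiplication precomposed with $f$) is precisely the $R$-action on $f^\ast Y$; this identification is natural in $Y$.

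The main obstacle is purely notational: correctly tracking which side of $S$ carries the action via $f$ in each of the two adjunctions, and checking that the natural isomorphism $S\sm_S Y \cong f^\ast Y$ is $R$-linear. Once the bimodule bookkeeping is set up, every step is a formal consequence of lemmas already proved.
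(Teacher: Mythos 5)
Your proof is correct and takes essentially the same route as the paper's: both view $S$ as an $S$-$R$-bimodule for the left adjoint, note that $f^\ast$ preserves fibrations and trivial fibrations since these are created in $\mCC$, and handle the right adjoint by viewing $S$ as an $R$-$S$-bimodule, identifying $f^\ast$ with $S\sm_S-$, and invoking Lemma~\ref{lemma2} (with $T=\mS$). The paper states these steps more tersely but the argument is the same.
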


\begin{proof}
For the definition of the left adjoint (extension of scalars)
$S\sm_R -:R\MMOD\to S\MMOD$, the $S$-$S$-bimodule $S$ is considered
as an $S$-$R$-bimodule via restriction of scalars along the map
$\id\sm f^{\op}:S\sm R^{\op}\to S\sm S^{\op}$. Since (trivial) 
fibrations are just (trivial) fibrations in $\mCC$, they are
preserved by the restriction of
scalars functor $f^\ast$.

For the definition of the right adjoint $\Hom_R(S,-):R\MMOD\to S\MMOD$,
we consider $S$ as an $R$-$S$-bimodule. Now $f^\ast$ is the same as the
functor $S\sm_S -:S\MMOD\to R\MMOD $, which has $\Hom_R(S,-)$ as
a right adjoint. If $\mS$ is cofibrant in $\mCC$ and $S$ is cofibrant
as an $R$-module, we can apply Lemma~\ref{lemma2} to deduce that
$\left(f^\ast,\Hom_R(S,-)\right)$ is a Quillen pair.
\end{proof}


Consider the special case of the map of monoids $\iota: R\to R\sm S^{\op}$. The
right adjoint of the restriction functor is 
$\Hom_R(R\sm S,-)\iso\Hom_{\mCC}(S,-)$.
\begin{cor}\label{cor2}
Suppose that $\mS$ is cofibrant in $\mCC$. 
Let $R$ and $S$ be monoids such that 
$S$ is cofibrant in $\mCC$. Then we have a Quillen pair
\[
\xymatrix@M=.7pc{R\MMODD S\ar@<.5ex>[rr]^-{\iota^\ast}
  &&R\MMOD \ar@<.5ex>[ll]^-{\Hom(S,-)}.}
\]
In particular, a cofibrant $R$-$S$-bimodule is then also cofibrant as an
$R$-module.
$\hfill \square$
\end{cor}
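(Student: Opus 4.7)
The plan is to deduce the statement as a direct application of Lemma \ref{lemma3} to the monoid homomorphism $\iota : R \to R\sm S^{\op}$, after checking the cofibrancy hypothesis and identifying the resulting right adjoint with $\Hom(S,-)$.

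First I would verify that the hypotheses of Lemma \ref{lemma3} are met for $\iota$. The assumption that $\mS$ is cofibrant is given, so what remains is to show that the target $R\sm S^{\op}$ of $\iota$ is cofibrant as a left $R$-module. Here I would use the Quillen adjunction (\ref{ev}), in which $R\sm - : \mCC \to R\MMOD$ is the left adjoint to the forgetful functor (note that the forgetful functor preserves fibrations and trivial fibrations by the very definition of the model structure on $R\MMOD$). Since $S$ is cofibrant in $\mCC$ by assumption, applying the left Quillen functor $R\sm -$ shows that $R\sm S^{\op}$ (which as an underlying $R$-module is just $R\sm S$) is cofibrant in $R\MMOD$.

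Having verified the hypotheses, Lemma \ref{lemma3} applied to $\iota$ gives a Quillen pair $(\iota^\ast , \Hom_R(R\sm S^{\op},-))$ between $R\MMODD S$ and $R\MMOD$. To get the statement of the corollary, I would identify the right adjoint with $\Hom(S,-) = \Hom_\mCC(S,-)$. This follows from the enriched version of the adjunction $(R\sm -, \text{forget})$: for every $R$-module $Y$ one has a natural isomorphism $\Hom_R(R\sm X, Y) \iso \Hom_\mCC(X, Y)$, which can be read off directly from the equalizer definition of $\Hom_R$ given in (\ref{bifunctors}).

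For the ``in particular'' part, the point is that the functor $\iota^\ast$ is now known to be a left Quillen functor, and left Quillen functors preserve cofibrant objects: if $0 \to X$ is a cofibration in $R\MMODD S$, then $\iota^\ast(0) = 0 \to \iota^\ast(X)$ is a cofibration in $R\MMOD$. Since $\iota^\ast$ is simply the functor that forgets the right $S$-action, this is exactly the assertion that a cofibrant $R$-$S$-bimodule is cofibrant as an $R$-module. There is no real obstacle in this argument; the only mild subtlety is the identification of internal Homs in the second step, which is a routine piece of enriched bookkeeping.
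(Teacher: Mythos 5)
Your proposal is correct and matches the paper's intended argument: the corollary is deduced by applying Lemma~\ref{lemma3} to $\iota:R\to R\sm S^{\op}$, with the cofibrancy of $R\sm S^{\op}$ in $R\MMOD$ supplied by the left Quillen functor $R\sm-$ applied to the cofibrant object $S$, and the identification $\Hom_R(R\sm S,-)\iso\Hom_\mCC(S,-)$ read off from the tensor-Hom adjunction. Your handling of the ``in particular'' clause via preservation of cofibrant objects by the left Quillen functor $\iota^\ast$ is likewise exactly the intended reading.
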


Many monoidal model categories have the property that smashing with
a cofibrant module $X$ over some monoid $R$ preserves weak equivalences.
The functor $-\sm_RX$
induces then a functor between the homotopy categories without being
a Quillen functor in general. Symmetric spectra have this smashing
property \cite[Lemma~5.4.4]{HSS} and there are many other examples 
\cite[Section~5]{SS00} including chain complexes.

\subsection{The several objects case}
Let us now consider the case of monoids with several objects, which
we need for Part~2 of this paper. We use
results from \cite[Sections~6 and 7]{SS03'}; results for the case
of spectral categories can also be found in \cite[Appendix~A]{SS03}.
As in Section~A.1, we fix a closed symmetric monoidal model category
$(\mCC, \sm, \mS)$ which is cofibrantly generated, 
has only small objects, and satisfies
the monoid axiom. The reader is encouraged to think of $\mCC$ as the category
of symmetric spectra (cf.~Example~\ref{Verleihnix}(1)).

A $\mCC$\emph{-category} or a \emph{monoid in} $\mCC$ \emph{with several
objects} \cite[Definition~6.2.1]{Borceux2} is a small category $\mRR$
that is enriched
over $\mCC$. This means, for any two objects $R$ and $R'$ of $\mRR$
there is a $\Hom$-object $\mRR(R,R')$ in $\mCC$
together with an identity `element'
$\mS\to\mRR (R,R)$ for each $R$ in $\mRR$ and composition morphisms
\[
\mRR (R',R'') \sm \mRR (R,R') \to \mRR(R,R'')
\]
which are associative and unital with respect to the identity elements.
When $\mRR$ has only one object, it is the same as a monoid in $\mCC$.
A \emph{morphism} of $\mCC$-categories is a $\mCC$-functor 
\cite[Definition~6.2.3]{Borceux2}.
If $\mCC$ is the category of symmetric spectra, $\mCC$-categories are called
spectral categories; if $\mCC$ is the category of chain complexes
(Example~\ref{Verleihnix}(2)), then $\mCC$-categories are called
DG categories.
We have required the smallness condition (i.e., the objects of
a $\mCC$-category form a set)
since we want to consider module categories over $\mCC$-categories.

A \emph{left module} over a $\mCC$-category $\mRR$ is a $\mCC$-functor
$X:\mRR\to\mCC$, i.e., an object $X(R)$ in $\mCC$ for every $R\in\mRR$
and morphisms in $\mCC$
\[
\mRR(R,R')\to\Hom_{\mCC}\left(X(R),X(R')\right)
\]
which are compatible with composition and identities.
By adjunction,
these maps correspond to a left action of $\mRR$ on X, i.e., maps in $\mCC$
\[
\mRR(R,R')\sm X(R)\to X(R')
\]
which are associative and unital. A \emph{right module} over $\mRR$ is
a left $\mRR^{\op}$-module. A morphism $X\to Y$ of $\mRR$-modules
is a family $X(R)\to Y(R)$ of maps in $\mCC$ compatible with the action
of $\mRR$.
An important point which distinguishes the several objects case 
from the one object case is, that there is 
not just one free $\mRR$-module but 
\emph{one for each object} of $\mRR$, namely
the \emph{free module} $F^\mRR_R$ with respect to $R$. It is defined by
$F^\mRR_R(R')=\mRR(R,R')$ (hence we have $F^{\mRR^{\op}}_R(R')=\mRR(R',R)$ 
for the free right modules). We will sometimes omit the upper index
and just write $F_R$. Note that the enriched Yoneda lemma yields 
an adjoint pair
\begin{equation}\label{ev-sov}
\xymatrix@M=.7pc{\mRR\MMOD\ar@<-.5ex>[rr]_-{\ev_R}
&&\mCC \ar@<-.5ex>[ll]_-{F_R\sm -}}
\end{equation}
where $\ev_R$ is the evaluation functor with
$\ev_R(X)=X(R)$ and $F_R\sm Y$ is given by
\[
(F_R\sm Y)(R')=F_R(R')\sm Y=\mRR(R,R')\sm Y
\]
with the obvious left action of $\mRR$.

Schwede and Shipley \cite[Theorem~6.1]{SS03'} have shown that
the category $\mRR\MMOD$ of $\mRR$-modules is a cofibrantly generated
model category with weak equivalences and fibrations defined objectwise.
As usual, the cofibrations are determined by the lifting property.
A set of generating (trivial) cofibrations is given by all maps
of the form
\[
F_R\sm A\to F_R\sm B
\]
for $A\to B$ a generating (trivial) cofibration in $\mCC$.
Note that the adjunction (\ref{ev-sov}) is indeed a Quillen functor pair.
The homotopy category of $\mRR\MMOD$ is called the \emph{derived}
category of $\mRR$ and we denote it by $\mD(\mRR)$.

The \emph{smash product} $\mRR\sm\mSS$
of two $\mCC$-categories $\mRR$ and $\mSS$
\cite[Section~A.2]{SS03} has as set of objects the product
of the sets of objects of $\mRR$ and $\mSS$. The morphism objects are
given by
\[
{\mRR\sm\mSS}\left((R,S),(R',S')\right)=\mRR(R,R')\sm\mSS(S,S').
\]
This allows us to consider bimodule categories as $\mRR\MMODD\mSS$ with
objects the $\mRR\sm\mSS^{\op}$-modules. Note that a spectral category
$\mRR$ can itself be regarded as an $\mRR\sm\mRR^{\op}$-module in a natural
way. For $\mRR$, $\mSS$ and $\mTT$ spectral categories, there are,
as in the one object case, bifunctors
\begin{eqnarray}
\sm_\mSS:\mRR \MMODD \mSS \;\times \; \mSS \MMODD \mTT \to \mRR \MMODD \mTT, 
\nonumber\\
\Hom_\mRR:(\mRR \MMODD \mSS)^{\op}\;\times \;\mRR \MMODD \mTT 
\to \mSS \MMODD \mTT,\label{bifunctors-sov}\\
\Hom_\mTT:(\mSS \MMODD \mTT)^{\op}\;\times \; \mRR \MMODD \mTT \to \mRR 
\MMODD \mSS,\nonumber
\end{eqnarray}
which form an adjunction of two variables
\cite[Definition~4.1.12]{Hovey}. 
For example, if $X\in\mRR\MMODD\mSS$ and $Y\in\mSS\MMODD\mTT$, then
$(X\sm_\mSS Y)(R,T)$ is the coequalizer of the diagram
\[
\xymatrix@M=.7pc{\coprod\limits_{S,S'\in\mSS}X(R,S)\sm\mSS(S',S)\sm Y(S',T)
\ar@<1.5ex>[r]\ar@<0.5ex>[r]&\coprod\limits_{S''\in\mSS}X(R,S'')\sm Y(S'',T)}
\]
where the upper map uses the right action of $\mSS$ on $X$ and the lower
the left action of $\mSS$ on $Y$.
The left action of $\mRR$ on $X$ and the right action of $\mTT$ on $Y$
yield the $\mRR\sm\mTT^{\op}$-module structure.
Similarly, $\Hom_\mRR$ and $\Hom_\mTT$ (which should be more precisely
denoted by $\Hom_{\mTT^{\op}}$) are given by certain equalizers.

To state the next lemma, which is an analog to Lemma~\ref{lemma1},
we need the following

\begin{defn}\label{pointwise-cofibrant}
A $\mCC$-category is \emph{pointwise cofibrant} if $\mRR(R,R')$ is cofibrant
in $\mCC$ for all $R,R'\in\mRR$.
\end{defn}

\begin{lemma}\label{lemma1-sov}
If in \textnormal{(\ref{bifunctors-sov})} $\mSS$ is pointwise cofibrant,
then $\sm_\mSS$ together with
$\Hom _\mRR$ and $\Hom _\mTT$ is  
a Quillen bifunctor (in the sense of
\cite[Definition~4.2.1]{Hovey}) 
and hence induces an adjunction of two 
variables on the level of homotopy categories.
\end{lemma}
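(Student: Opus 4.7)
The proof runs parallel to that of Lemma~\ref{lemma1}, with some extra bookkeeping for the several-objects setting. It suffices to check the pushout product axiom on generating (trivial) cofibrations. A generating (trivial) cofibration in $\mRR\MMODD\mSS=(\mRR\sm\mSS^{\op})\MMOD$ can be taken of the form
\[
i\colon\, F_R^\mRR\sm A\sm F_S^{\mSS^{\op}}\to F_R^\mRR\sm B\sm F_S^{\mSS^{\op}}
\]
for $A\to B$ a generating (trivial) cofibration in $\mCC$ and $R\in\mRR$, $S\in\mSS$; indeed, the free $(\mRR\sm\mSS^{\op})$-module at $(R,S)$ evaluates at $(R',S')$ to $\mRR(R,R')\sm\mSS(S',S)$, which is canonically isomorphic to $(F_R^\mRR\sm F_S^{\mSS^{\op}})(R',S')$ with the obvious bimodule structure. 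A generator of $\mSS\MMODD\mTT$ is described analogously as
\[
j\colon\, F_{S'}^\mSS\sm X\sm F_T^{\mTT^{\op}}\to F_{S'}^\mSS\sm Y\sm F_T^{\mTT^{\op}}.
\]

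The key computation is the enriched co-Yoneda identification, which gives as a coend
\[
F_S^{\mSS^{\op}}\sm_\mSS F_{S'}^\mSS\,\iso\,\mSS(S',S)\quad\text{in}\quad\mCC.
\]
Combined with the associativity of $\sm$ and $\sm_\mSS$, this rewrites the pushout product $i\,\Box\,j$ as
\[
F_R^\mRR\sm\bigl(A\sm Y\amalg_{A\sm X}B\sm X\to B\sm Y\bigr)\sm\mSS(S',S)\sm F_T^{\mTT^{\op}}.
\]

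Three observations then close the argument. First, since $\mSS$ is pointwise cofibrant, $\mSS(S',S)$ is cofibrant in $\mCC$, so smashing with it preserves (trivial) cofibrations by the pushout product axiom in $\mCC$. Second, the pushout product axiom in $(\mCC,\sm,\mS)$ itself asserts that the displayed map in the middle is a cofibration in $\mCC$, trivial as soon as either $A\to B$ or $X\to Y$ is. Third, the functor $F_R^\mRR\sm -\sm F_T^{\mTT^{\op}}\colon\mCC\to\mRR\MMODD\mTT$ coincides with $F_{(R,T)}^{\mRR\sm\mTT^{\op}}\sm -$, which is left adjoint to $\ev_{(R,T)}$; since the model structure on $\mRR\MMODD\mTT$ has objectwise fibrations and weak equivalences, this is a left Quillen functor and hence preserves (trivial) cofibrations. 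Combining these three facts shows $i\,\Box\,j$ is a cofibration, trivial whenever either of $i$, $j$ is, which is the pushout product axiom.

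The main obstacle is purely bookkeeping: tracking the various one-sided and two-sided actions through the coend calculation $F_S^{\mSS^{\op}}\sm_\mSS F_{S'}^\mSS\iso\mSS(S',S)$ and checking that the isomorphism respects the remaining $\mRR$- and $\mTT$-actions carried on the outer factors $F_R^\mRR$ and $F_T^{\mTT^{\op}}$. Once this identification is in place, the proof reduces to the pushout product axiom in $\mCC$ exactly as in the one-object case.
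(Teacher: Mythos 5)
Your proof is correct and follows essentially the same route as the paper's: reduce to generating (trivial) cofibrations, use the co-Yoneda identification $F_S^{\mSS^{\op}}\sm_\mSS F_{S'}^\mSS\iso\mSS(S',S)$ (the paper computes this coequalizer explicitly), apply the pushout product axiom in $\mCC$ using pointwise cofibrancy of $\mSS$, and then transport via the left Quillen functor $F_R^\mRR\sm-\sm F_T^{\mTT^{\op}}$. The only cosmetic difference is that you commute the cofibrant factor $\mSS(S',S)$ outside the pushout and apply the pushout product axiom twice, whereas the paper keeps it inside and applies the axiom once; both are valid.
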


\begin{proof}
The proof is a generalization of the proof of Lemma~\ref{lemma1}. We have to
verify the pushout product axiom 
\cite[Definition~4.2.1]{Hovey} for generating (trivial) cofibrations.
Let $A\cof B$ and $X\cof Y$ be cofibrations in $\mCC$. We have
to consider the pushout product of the maps 
\[
F^\mRR_R\sm A\sm F^{\mSS^{\op}}_S\to F^\mRR_R\sm B\sm F^{\mSS^{\op}}_S
\quad \textrm{and}\quad
F^\mSS_{S'}\sm X\sm F^{\mTT^{\op}}_T\to F^\mSS_{S'}\sm Y\sm F^{\mTT^{\op}}_T
\]
for $R\in\mRR$, $S,S'\in\mSS$ and $T\in\mTT$. By definition,
$F^{\mSS^{\op}}_S\sm_\mSS F^\mSS_{S'}$
is the coequalizer of the diagram
\[
\xymatrix@M=.7pc{\coprod\limits_{S_1,S_2\in\mSS}\mSS(S_1,S)\sm\mSS(S_2,S_1)
\sm \mSS(S',S_2)
\ar@<1.5ex>[r]\ar@<0.5ex>[r]&\coprod\limits_{S_3\in\mSS}\mSS(S_3,S)\sm 
\mSS(S',S_3)}
\]
which is just $\mSS(S',S)$.
Moreover, as a left adjoint, $F^\mRR_{R}\sm - \sm F^{\mTT^{\op}}_T
\iso F^{\mRR\sm\mTT^{\op}}_{(R,T)}$
preserves colimits. Thus the pushout product map is isomorphic
to
\begin{eqnarray*}
F^\mRR_R\sm\left(A\sm\mSS(S',S)\sm Y\underset{A\sm\mSS(S',S)\sm X}{\amalg}
B\sm\mSS(S',S)\sm Y\right)\sm F^{\mTT^{\op}}_T\hspace{2cm}\\
\hspace{2cm}\to
F^\mRR_R\sm\left(B\sm\mSS(S',S)\sm Y\right)\sm F^{\mTT^{\op}}_T
\end{eqnarray*}
But since $\mSS(S',S)$ is cofibrant by assumption and the pushout
product axiom holds in $\mCC$,
this map is just the image of a cofibration in $\mCC$ under
the left Quillen functor
\[
F^\mRR_{R}\sm - \sm F^{\mTT^{\op}}_T:\mCC\to\mRR\MMODD\mTT
\]
 and hence
a cofibration. The same arguments show that the pushout product map
is a trivial cofibration whenever one of the cofibrations $A\cof B$ and 
$X\cof Y$ is trivial.
\end{proof}
Consequently, if $\mSS$ is pointwise cofibrant, smashing over $\mSS$
with a cofibrant bimodule gives a left Quillen functor. 

We now want to prove
an analog of Lemma~\ref{lemma2}. For this purpose we need a further
notion. If $\mRR$ is $\mCC$-category, then $\mS_\mRR$
denotes the `discrete' 
$\mCC$-category associated to $\mRR$, i.e., $\mS_\mRR$ has the same objects as
$\mRR$, and $\mS_\mRR(R,R')$ is the unit $\mS$ if $R=R'$ and the
trivial object $\ast$ otherwise \cite[Section~A.1]{SS03}.
An $\mS_\mRR$-module is simply a family of objects in $\mCC$ 
indexed by the objects 
of $\mRR$, and $\MODD \mS_\mRR$ carries the product model structure, that is,
weak equivalences, cofibrations and fibrations are defined objectwise.
There is a canonical morphism $f:\mS_\mRR\to\mRR$ of $\mCC$-functors
induced by
the unit maps. It induces the restriction of scalars functor
\[
f^\ast:\mRR\MMOD\to\mS_\mRR\MMOD ,\quad X\mapsto X\circ f .
\]

\begin{lemma}\label{lemma2-sov}
Suppose that the unit $\mS$ in $\mCC$ is cofibrant. 
If moreover
$X\in\mRR\MMODD\mSS$ is cofibrant in $\mRR\MMODD\mS_\mSS$ then
the adjoint pair
\[
\xymatrix@M=.7pc{\mSS\MMODD \mTT\ar@<.5ex>[rr]^-{X\sm_\mSS -}
  &&\mRR\MMODD\mTT \ar@<.5ex>[ll]^-{\Hom_\mRR(X,-)}}
\]
is a Quillen pair.
\end{lemma}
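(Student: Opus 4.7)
The plan is to mimic the proof of Lemma~\ref{lemma2}, reducing to a situation where Lemma~\ref{lemma1-sov} applies. I would show that the right adjoint $\Hom_\mRR(X,-):\mRR\MMODD\mTT\to\mSS\MMODD\mTT$ preserves both fibrations and trivial fibrations by factoring the argument through the restriction along $\mS_\mSS\to\mSS$.

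First I would observe that the discrete $\mCC$-category $\mS_\mSS$ is pointwise cofibrant: its morphism objects are either the (cofibrant) unit $\mS$ or the initial object $\ast$, which is always cofibrant. Therefore Lemma~\ref{lemma1-sov} applies with $\mS_\mSS$ in place of $\mSS$, so that the bifunctor
\[
\sm_{\mS_\mSS}:\mRR\MMODD\mS_\mSS \;\times\; \mS_\mSS\MMODD\mTT \to \mRR\MMODD\mTT
\]
together with its two Hom-adjoints is a Quillen bifunctor. Since $X$ is by assumption cofibrant in $\mRR\MMODD\mS_\mSS$, this yields in particular that the functor
\[
\Hom_\mRR(X,-):\mRR\MMODD\mTT \to \mS_\mSS\MMODD\mTT
\]
(with $X$ viewed as an $\mRR$-$\mS_\mSS$-bimodule via restriction along $f:\mS_\mSS\to\mSS$) is a right Quillen functor.

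Next I would consider the square of solid arrows
\[
\xymatrix@M=.7pc{\mSS\MMODD\mTT \ar[d]_{f^\ast}
  & & \mRR\MMODD\mTT \ar[ll]_-{\Hom_\mRR(X,-)}\ar@{=}[d]\\
\mS_\mSS\MMODD\mTT
  & & \mRR\MMODD\mTT \ar[ll]_-{\Hom_\mRR(X,-)}}
\]
where $f^\ast$ is the restriction-of-scalars functor along $f:\mS_\mSS\to\mSS$. A direct inspection of the equalizer definition of $\Hom_\mRR$ shows that this square commutes up to canonical natural isomorphism: applying $f^\ast$ after $\Hom_\mRR(X,-)$ just forgets the $\mSS$-action on the output down to its $\mS_\mSS$-action, which is the same as computing $\Hom_\mRR(f^\ast X,-)$. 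Moreover, because fibrations and weak equivalences in $\mSS\MMODD\mTT$ and $\mS_\mSS\MMODD\mTT$ are both defined objectwise, the functor $f^\ast$ preserves \emph{and} reflects fibrations and trivial fibrations.

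Combining these two facts finishes the proof: given a (trivial) fibration $Y\fib Y'$ in $\mRR\MMODD\mTT$, the lower composite sends it to a (trivial) fibration in $\mS_\mSS\MMODD\mTT$ (by the first step), which by commutativity is $f^\ast$ applied to $\Hom_\mRR(X,Y)\to\Hom_\mRR(X,Y')$, and so by the reflection property of $f^\ast$ the upper map $\Hom_\mRR(X,Y)\to\Hom_\mRR(X,Y')$ is itself a (trivial) fibration in $\mSS\MMODD\mTT$. The main conceptual point (and only real subtlety) is the reduction from $\mSS$ to $\mS_\mSS$: the hypothesis that $X$ be cofibrant only in the weaker category $\mRR\MMODD\mS_\mSS$ is precisely what is needed so that Lemma~\ref{lemma1-sov} can be invoked without requiring pointwise cofibrancy of $\mSS$ itself.
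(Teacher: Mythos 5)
Your proof is correct and follows essentially the same strategy as the paper: discretize $\mSS$ to $\mS_\mSS$, invoke Lemma~\ref{lemma1-sov} (whose hypothesis only requires the \emph{middle} category to be pointwise cofibrant, which $\mS_\mSS$ is because $\mS$ is cofibrant), and transport the right-Quillen property back along the restriction functor $f^\ast$, which preserves and reflects fibrations and trivial fibrations since these are objectwise and $f$ is the identity on objects. The only difference is a small economy on your part: the paper additionally discretizes $\mTT$ to $\mS_\mTT$ and works with the lower row $\mS_\mSS\MMODD\mS_\mTT \ot \mRR\MMODD\mS_\mTT$, whereas you keep $\mTT$ fixed; this is harmless either way because Lemma~\ref{lemma1-sov} imposes no cofibrancy condition on $\mTT$, so the extra replacement in the paper is not actually used.
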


\begin{proof}
Note that via restriction of scalars along $\mRR\sm\mS_\mSS
\to\mRR\sm\mSS$, 
the module $X$ can indeed also be considered
as an object of $\mRR\MMODD\mS_\mSS$.
We show that $\Hom_\mRR(X,-)$ is a right Quillen functor.
In the diagram
\[
\xymatrix@M=.7pc{\mSS\MMODD\mTT \ar[d] 
&&\mRR\MMODD\mTT \ar[ll]_-{\Hom_\mRR(X,-)}\ar[d]\\
\mS_\mSS\MMODD\mS_\mTT && \mRR\MMODD\mS_\mTT \ar[ll]_{\Hom_\mRR(X,-)}}
\]
the vertical functors are induced by restriction of scalars. Hence
they preserve and reflect fibrations and trivial fibrations.
Since $\mS$ was assumed to be cofibrant, the discrete $\mCC$-category
$\mS_\mSS$ is pointwise cofibrant. Hence we can apply 
Lemma~\ref{lemma1-sov}, which implies that $\Hom_\mRR(X,-):\mRR\MMODD\mS_\mTT 
\to\mS_\mSS\MMODD\mS_\mTT$ is a right Quillen functor since $X$ is
cofibrant in $\mRR\MMODD\mS_\mSS$, so it preserves fibrations and trivial
fibrations. Consequently, $\Hom_\mRR(X,-):\mRR\MMODD\mTT
\to\mSS\MMODD\mTT$ preserves them as well.
\end{proof}

\begin{lemma}\label{lemma3-sov}
Let $f:\mRR\to\mSS$ be a morphism of $\mCC$-categories.
Then the induced functor (restriction of scalars)
\[
f^\ast:\mSS\MMOD\to\mRR\MMOD ,\quad X\mapsto X\circ f ,
\]
has both a left adjoint $\mSS\sm_\mRR -$ and a right adjoint 
$\Hom_\mRR(\mSS,-)$. Moreover, $(\mSS\sm_\mRR -,f^\ast)$ is
always a Quillen pair, and $\left(f^\ast,\Hom_\mRR(\mSS,-)\right)$
is a Quillen pair whenever the unit $\mS$ is cofibrant in $\mCC$ and
$\mSS$ is cofibrant in $\mRR\MMODD\mS_\mSS$.
\end{lemma}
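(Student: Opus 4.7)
The plan is to mimic the proof of Lemma~\ref{lemma3}, adapted to the several-objects setting via the bifunctors in~(\ref{bifunctors-sov}).

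First I would construct the two adjoints. Viewing the morphism $f:\mRR\to\mSS$ as equipping $\mSS$ with the structure of an $\mSS$-$\mRR$-bimodule (via the tautological left $\mSS$-action and the right $\mRR$-action obtained by restricting the right $\mSS$-action along $\id\sm f^{\op}$), the tensor functor $\mSS\sm_\mRR-:\mRR\MMOD\to\mSS\MMOD$ is defined. Dually, viewing $\mSS$ as an $\mRR$-$\mSS$-bimodule via $f$, the functor $\Hom_\mRR(\mSS,-):\mRR\MMOD\to\mSS\MMOD$ is defined. That these form adjoint pairs $(\mSS\sm_\mRR-,f^\ast)$ and $(f^\ast,\Hom_\mRR(\mSS,-))$ follows from the two-variable adjunction property of the bifunctors in~(\ref{bifunctors-sov}), together with the identification $f^\ast\iso\mSS\sm_\mSS(-)$ (where on the right $\mSS$ is regarded as an $\mRR$-$\mSS$-bimodule via $f$), plus the calculation $\mSS\sm_\mRR X\iso f_!X$ through an explicit coequalizer check analogous to the one-object case.

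Next I would verify that $(\mSS\sm_\mRR-,f^\ast)$ is always a Quillen pair. Since both fibrations and weak equivalences in $\mSS\MMOD$ and $\mRR\MMOD$ are defined objectwise (by \cite[Theorem~6.1]{SS03'}), and since $f^\ast X=X\circ f$ merely relabels (i.e., $(f^\ast X)(R)=X(f(R))$), the functor $f^\ast$ trivially preserves fibrations and trivial fibrations, so it is a right Quillen functor.

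Finally, for the Quillen pair $(f^\ast,\Hom_\mRR(\mSS,-))$, I would apply Lemma~\ref{lemma2-sov} with the roles $\mRR\leadsto\mRR$, $\mSS\leadsto\mSS$, $\mTT\leadsto\mS_\ast$ (the trivial one-object $\mCC$-category with morphism object $\mS$) and with the bimodule $X=\mSS$, regarded as an $\mRR$-$\mSS$-bimodule via $f$. By hypothesis $\mSS$ is cofibrant in $\mRR\MMODD\mS_\mSS$, so Lemma~\ref{lemma2-sov} applies and yields that $\mSS\sm_\mSS-\iso f^\ast$ is a left Quillen functor, whose right adjoint is $\Hom_\mRR(\mSS,-)$. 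The main (very minor) obstacle is simply keeping track of bimodule structures on $\mSS$ — left versus right, via $f$ versus via $f^{\op}$ — so that the hypotheses of Lemma~\ref{lemma2-sov} match up; the cofibrancy of $\mS$ is needed only to ensure that the target $\mS_\mSS\MMOD$-model structure in Lemma~\ref{lemma2-sov} (applied in the discrete envelope) is available through Lemma~\ref{lemma1-sov}.
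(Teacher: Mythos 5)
Your proof is correct and takes essentially the same approach as the paper: you construct the two adjoints via the appropriate bimodule structures on $\mSS$, note that $f^\ast$ preserves objectwise-defined fibrations and trivial fibrations, and reduce the second Quillen pair to Lemma~\ref{lemma2-sov} with $X=\mSS$ regarded as an $\mRR$-$\mSS$-bimodule (taking $\mTT$ to be the trivial one-object $\mCC$-category, which the paper leaves implicit). The only small imprecision is your final remark: cofibrancy of $\mS$ is used to make $\mS_\mSS$ pointwise cofibrant so that Lemma~\ref{lemma1-sov} applies, not to secure the model structure on $\mS_\mSS\MMOD$ itself, which always exists.
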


\begin{proof}
Note that for the definition of the left adjoint (extension
of scalars) $\mSS\sm_\mRR -:\mRR\MMOD\to\mSS\MMOD$,
the $\mSS\sm\mSS^{\op}$-module $\mSS$
is considered as an $\mSS\sm\mRR^{\op}$-module via restriction
of scalars along the map $\id\sm f^{\op}:\mSS\sm\mRR^{\op}\to\mSS\sm\mSS^{\op}$.
Clearly, $f^\ast$ preserves fibrations and trivial fibrations and 
$(\mSS\sm_\mRR -,f^\ast)$ is thus a Quillen pair.

For the definition of the right adjoint $\Hom_\mRR(\mSS,-)$, we consider
$\mSS$ as an $\mRR\sm\mSS^{\op}$-module.
Restriction
of scalars is the same as the functor $\mSS\sm_\mSS -:\mSS\MMOD\to\mRR\MMOD$.
This functor has $\Hom_\mRR(\mSS,-):\mRR\MMOD\to\mSS\MMOD$ as a right
adjoint, which is by Lemma~\ref{lemma2-sov} 
a right Quillen  functor, whenever $\mS$ is cofibrant
in $\mCC$ and $\mSS$ is cofibrant in $\mRR\MMODD\mS_\mSS$.
\end{proof}

The following corollary is used in Section~5.2. Note that if
$\mCC$ is the category of symmetric spectra, the unit $\mS$ is 
indeed cofibrant.

\begin{cor}\label{objectwise}
For each object $R$ in a $\mCC$-category $\mRR$, 
the evaluation functor 
\[
\ev_R:\mRR\MMOD\to\mCC
\]
 with
$\ev_R(X)=X(R)$ preserves fibrations, weak equivalences, limits and colimits. 
Moreover, if the unit $\mS$ is cofibrant in $\mCC$
and $\mRR$ is pointwise cofibrant 
(Definition~\textnormal{\ref{pointwise-cofibrant}}),
$\ev_R$ preserves cofibrations and trivial cofibrations.
\end{cor}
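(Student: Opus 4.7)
My plan is to realize $\ev_R$ as a restriction-of-scalars functor and then read off the desired preservation properties from the adjoint pairs described in Lemma~\ref{lemma3-sov}. Let $\ast$ denote the one-object $\mCC$-category with endomorphism object $\mS$, so that $\ast\MMOD\simeq\mCC$, and let $i_R:\ast\to\mRR$ be the $\mCC$-functor sending the unique object to $R$. Under the equivalence $\ast\MMOD\simeq\mCC$, the restriction of scalars $i_R^\ast$ is precisely $\ev_R$. By Lemma~\ref{lemma3-sov}, the functor $i_R^\ast$ admits both a left adjoint $\mRR\sm_\ast-$ and a right adjoint $\Hom_\ast(\mRR,-)$; the left adjoint is readily identified with the functor $F_R\sm-$ of the adjunction (\ref{ev-sov}). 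Being simultaneously a left and a right adjoint, $\ev_R$ therefore preserves all limits and all colimits.

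Preservation of fibrations and weak equivalences is immediate: by \cite[Theorem~6.1]{SS03'} (as recalled after (\ref{ev-sov})), both classes are defined objectwise on $\mRR\MMOD$, so applying $\ev_R$ to a fibration (resp.\ a weak equivalence) simply extracts its value at $R$, which is one by assumption.

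For the second assertion I would invoke the Quillen-pair half of Lemma~\ref{lemma3-sov}, which with $f=i_R$ asserts that $(\ev_R,\Hom_\ast(\mRR,-))$ is a Quillen pair as soon as $\mS$ is cofibrant in $\mCC$ and $\mRR$ is cofibrant in $\ast\MMODD\mS_\mRR$. The first hypothesis is given, and for the second I would unpack $\ast\MMODD\mS_\mRR$: its modules are just families $(X_{R'})_{R'\in\mRR}$ of objects of $\mCC$ with the product model structure, and under this identification $\mRR$ corresponds to the family $(\mRR(R,R'))_{R'\in\mRR}$. Thus $\mRR$ is cofibrant in $\ast\MMODD\mS_\mRR$ precisely when each $\mRR(R,R')$ is cofibrant in $\mCC$, which follows from pointwise cofibrancy of $\mRR$ (Definition~\ref{pointwise-cofibrant}). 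Consequently $\ev_R$ is a \emph{left} Quillen functor and preserves cofibrations and trivial cofibrations. The argument is largely bookkeeping; the only step that requires a moment of care is the identification of $\mRR$ as an $\ast$-$\mS_\mRR$-bimodule and the matching of the resulting cofibrancy condition with Definition~\ref{pointwise-cofibrant}.
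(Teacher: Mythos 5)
Your argument is correct and rests on the same key input as the paper, namely Lemma~\ref{lemma3-sov}, but you instantiate it at a different $\mCC$-functor. The paper applies the lemma to the canonical map $f:\mS_\mRR\to\mRR$, deducing that $f^\ast:\mRR\MMOD\to\mS_\mRR\MMOD$ is simultaneously a left and a right Quillen functor, and then reads off all the claims about $\ev_R$ because over the discrete $\mCC$-category $\mS_\mRR$ everything is objectwise. You instead apply the lemma to the one-object inclusion $i_R:\ast\to\mRR$, which identifies $\ev_R$ with restriction of scalars $i_R^\ast$ outright; this is marginally more direct, at the cost of having to unwind what cofibrancy in $\ast\MMODD\mS_\mRR$ means. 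Both decompositions arrive at the same place by the same mechanism. One small indexing slip in your unwinding: under the identification of $\ast\MMODD\mS_\mRR$ with families of objects of $\mCC$ indexed by the objects of $\mRR$, the bimodule $\mRR$ restricted along $i_R\sm\id$ corresponds to $(\mRR(R',R))_{R'\in\mRR}$ — it is the free right module $F_R^{\mRR^{\op}}$ viewed over $\mS_\mRR$ — not $(\mRR(R,R'))_{R'}$ as you wrote; but since pointwise cofibrancy of $\mRR$ makes all the $\mRR(R',R'')$ cofibrant, the conclusion is unaffected.
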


\begin{proof}
The assumption that $\mRR$ is pointwise cofibrant 
implies that $\mRR$, considered as an
bimodule, is cofibrant in $\mS_\mRR\MMODD\mS_\mRR$ (in fact, both conditions
are equivalent).
Hence we can apply 
Lemma~\ref{lemma3-sov} to the canonical morphism of $\mCC$-categories
$f:\mS_\mRR\to\mRR$ and use the fact that in a module category
over a discrete $\mCC$-category everything (limits, colimits, fibrations, 
cofibrations, and weak equivalences) is defined objectwise.
\end{proof}

\newpage
\bibliographystyle{alpha}
\bibliography{literatur}
\end{document}